\documentclass[reqno,11pt]{amsart}
\usepackage{xcolor}
\usepackage[top=2.0cm,bottom=2.0cm,left=3cm,right=3cm]{geometry}
\usepackage{amsthm,amsmath,amssymb,dsfont}
\usepackage{mathrsfs,amsfonts,functan,extarrows,mathtools}

\usepackage{cite}
\usepackage{indentfirst, latexsym, amssymb, enumerate,amsmath,graphicx}
\usepackage{relsize}
\usepackage{marginnote}
\usepackage{stmaryrd}
\usepackage{esint}
\usepackage{graphicx}
\usepackage{bm}
\usepackage{caption}
\usepackage{subfigure}
\usepackage{cite}
\usepackage{color}
\usepackage{graphicx}
\usepackage{subfigure}
\usepackage{paralist}
\usepackage{indentfirst}
\usepackage{cite}
\allowdisplaybreaks %[4]
\theoremstyle{plain}
\newtheorem{thm}{Theorem}[section]

\newtheorem{lem}[thm]{Lemma}
\newtheorem{prop}[thm]{Proposition}
\newtheorem{rem}{Remark}[section]

\newtheorem{defn}{Definition}[section]

\numberwithin{equation}{section}

\usepackage{appendix}
\usepackage{xcolor}

\newcommand{\eps}{{\varepsilon}}

\begin{document}

\title[The compressible  Navier-Stokes-transport system]
{Low Mach number limit and optimal time decay rates of the compressible  Navier-Stokes-transport system in   critical  Besov spaces}

\author[F. Li]{Fucai Li}
\address{School of Mathematics, Nanjing University, Nanjing
 210093, P. R. China}
\email{fli@nju.edu.cn}

\author[J. Ni]{Jinkai Ni$^*$}  \thanks{$^*$\! Corresponding author}
\address{School  of Mathematics, Nanjing University, Nanjing 
 210093, P. R. China}
\email{jinkaini123@gmail.com}

\author[Y. Wang]{Yuzhu Wang}
\address{School  of Mathematics, Nanjing University, Nanjing 
 210093, P. R. China}
\email{yuzhuwang@smail.nju.edu.cn}

\begin{abstract}
In this paper, we investigate the 
%low Mach number limit and optimal time decay rates for the compressible 
Navier-Stokes-Transport (NST) system in the framework of Besov spaces. This system  contains 
of a compressible Navier-Stokes system for the density and momentum  of a fluid, and  a transport equation for the potential temperature of the fluid.
In stark contrast to the well-known Navier-Stokes-Fourier (NSF) system where the temperature satisfies a parabolic type equation providing dissipative effect for the temperature and the density,  the temperature in our NST system  enjoys a transport equation which  precludes a dissipative mechanism for the density, leading to significant different  effects to the whole system. 
% from the nonlinear pressure term in the momentum equation. 
We first establish the global well-posedness of strong solutions to the compressible NST system in critical Besov spaces over $\mathbb{R}^d$ with $d \geq 2$. 
Furthermore, by introducing the Mach number  $\varepsilon > 0$, we rigorously prove the low Mach number limit as $\varepsilon \to 0$, showing that the solutions converge to that of the incompressible  inhomogeneous Navier-Stokes system. 
This singular limit holds globally in time, even for {\it ill-prepared} initial data. 
To address the challenge posed by the lack of dissipation on the density and temperature, we develop a refined energy analysis and establish optimal time decay rates for strong solutions in $\mathbb{R}^d$ with $d \geq 3$. Notably, the density remains uniformly bounded in time, displaying asymptotic behavior fundamentally distinct from that in the NSF system, where the density possesses a dissipative structure via  the momentum and temperature equations and exhibits temporal decay.

%In this paper, we investigate the low Mach number limit and optimal time decay rates for the compressible Navier–Stokes–Transport (NST) system, which couples the compressible Navier–Stokes equations with a transport equation for potential temperature. 
%In contrast to the Navier–Stokes–Fourier (NSF) system, where pressure is typically a function of density alone, in our NST system, pressure depends nonlinearly on both density and potential temperature.
%This dependence precludes a dissipative mechanism for the density, leading to significant coupling effects from nonlinear terms in the momentum equation. 
%We first establish the global well-posedness of strong solutions to the compressible NST system in critical Besov spaces over $\mathbb{R}^d$ with $d \geq 2$. 
%Furthermore, by introducing the Mach number parameter $\varepsilon > 0$, we rigorously prove the incompressible limit as $\varepsilon \to 0$, showing that solutions converge to those of the incompressible NST system. 
%This singular limit holds globally in time, even for {\it ill-prepared} initial data. 
%To address the challenge posed by the lack of density dissipation, we develop a refined energy analysis and establish optimal time decay rates for strong solutions in $\mathbb{R}^d$ with $d \geq 3$. Notably, the density remains uniformly bounded in time, displaying asymptotic behavior fundamentally distinct from that in the NSF system, where density possesses a dissipative structure and exhibits temporal decay.

\end{abstract}

\keywords{Compressible Navier-Stokes-Transport system, Global well-posedness, Low Mach number limit, Optimal time-decay rates, Critical regularity.}

\subjclass[2020]{76N10, 35Q30, 35B25, 35B40}

\maketitle

%\tableofcontents

%-----------------section one-------------------------------------------------------------
%\renewcommand{\theequation}{\thesection.\arabic{equation}} 
%\setcounter{equation}{0}
\setcounter{equation}{0}
 \indent \allowdisplaybreaks

\section{Introduction and main results}
\subsection{Introduction}

In this paper, we investigate a  fluid model
%low Mach number limit and optimal time decay rates for the compressible 
% Navier–Stokes–Transport (NST) system 
in the framework of Besov spaces. This model  contains of  a compressible Navier-Stokes system for the density and momentum  of a fluid, and  a transport equation for the potential temperature of the fluid. It  describes the evolution of the fluid  under the assumption that diabatic processes and molecular diffusion effects on potential temperature are neglected, and  is widely used in meteorology and astrophysics 
 (see \cite{FKNZ-M3AS-2016,almgren-06,klein} and references therein). Mathematically, this model in $\mathbb{R}^d ( d\geq 2)$ can be formulated as follows (\!\!\cite{LS-JMFM-2022}):
\begin{equation}\label{A1}   
\left\{  
\begin{aligned}  
& \partial_t \rho + {\rm div}\,(\rho u)=0,\\  
& \partial_t (\rho u)+{\rm div}\,(\rho u\otimes u)-\mu\Delta u-(\mu+\lambda)\nabla {\rm div}\,u+\nabla P(\rho,\theta)=0, \\  
& \partial_t(\rho\theta)+{\rm div}\,(\rho\theta u)=0,  
\end{aligned}  
\right.  
\end{equation}  
with the initial data
\begin{align}\label{A1-1}
(\rho(t,x),u(t,x),\theta(t,x))|_{t=0}=(\rho_0(x),u_0(x),\theta_0(x)),\quad x\in\mathbb{R}^d.     
\end{align}
Here, the unknown functions $\rho = \rho(t,x) > 0$, $u = (u_1, u_2, \dots, u_d) \in \mathbb{R}^d$, and $\theta = \theta(t,x) > 0$ represent the density, velocity field, and potential temperature of the fluid, respectively, where $t > 0$ denotes time and $x = (x_1, x_2, \dots, x_d) \in \mathbb{R}^d$ denotes the spatial position. The scalar pressure $P = P(\rho, \theta)$ is given by  
\begin{align*}
P(\rho, \theta) = A(\rho \theta)^\gamma, \quad A > 0, \quad \gamma > 1.
\end{align*}
The parameters $\lambda$ and $\mu > 0$ are the viscosity coefficients satisfying the physical condition $  2\mu+\lambda > 0$.  In the earlier literature, the system \eqref{A1} is usually referred as the \emph{Navier-Stokes equations with entropy transport}, for example, see  \cite{MM-JMFM-2015,M-JDE-2016}. Due to the link between potential temperature and entropy, to emphasize the effect of  potential temperature, the system \eqref{A1} is 
renamed as \emph{Navier-Stokes equations with potential temperature transport} in \cite{LS-JMFM-2022}. In current paper, we call the system  \eqref{A1} as 
 \emph{Navier-Stokes-Transport} (\emph{NST}) \emph{system} for  presentation brevity.

Due to its physical importance, complexity,   and mathematical challenges, there have been   a lot of studies on the NST system \eqref{A1}, see, for example,  \cite{MM-JMFM-2015,lions1998,M-JDE-2016,LS-JMFM-2022,LS-JMFM-2023,
LS-JDE-2025,BDGL-SAP-2022,FKNZ-M3AS-2016,ZLZ-CMS-2023,FKNZ-M3AS-2016,almgren-06,klein} by physicists and mathematicians. Particularly, Mich\'{a}lek \cite{MM-JMFM-2015} established a stability result for solutions under mild assumptions on the sequence of densities for $\gamma>3/2$ in $\mathbb R^3$, thus differing from Lions' work \cite{lions1998}, which addressed the case $\gamma\geq 9/5$. Maltese  et al. \cite{M-JDE-2016} investigated global weak solutions to the NST system under the conditions $\gamma> 3/2$ in $\mathbb R^3$ or $\gamma>1$ in $\mathbb R^2$. 
Later, the concept of dissipative measure-valued (DMV) solutions for the system \eqref{A1} was introduced in \cite{LS-JMFM-2022,LS-JMFM-2023}, where the existence and DMV-strong uniqueness of such solutions were rigorously established.   Recently, the existence and uniqueness of local-in-time strong solutions, together with a blow-up criterion for these solutions, were proven in \cite{LS-JDE-2025}. Fan and Huang \cite{fanhuang}  obtained global strong solutions and asymptotic behavior for arbitrarily large initial data in  $\mathbb{R}^2$ for special viscous coefficients initial introduced by Vaigant and  Kazhikhov  \cite{vk}.  In \cite{ZLZ-CMS-2023}, Zhai, Li and Zhou obtained the   global strong solutions to \eqref{A1} in the framework of Besov spaces   when the   initial data is sufficient small and no   decay rate estimate of these solutions was given.

There are also a few results on the singular limits  to the compressible NST system \eqref{A1} in the framework of weak solutions.   Bresch et al. \cite{BDGL-SAP-2022}   gave a formal asymptotic analysis  on 
the low  Mach number limit  of  the system \eqref{A1}   in a periodic domain with ill-prepared initial data for $\gamma>9/5$.  
Feireisl et al. \cite{FKNZ-M3AS-2016} considered the coupled low Mach number/Froude number limits to \eqref{A1} with well-prepared initial data on a flat torus and  ill-prepared initial data in an infinite slab for $\gamma>3/2$.  
Until now, the singular limit to the physically relevant case $1<\gamma\leq {3}/{2}$ remains an open problem. 

Based on the aforementioned results on the system \eqref{A1}, the objectives  of this paper are two folds: (1) We establish the global well-posedness and optimal decay rates of the strong solutions to the system \eqref{A1} in the framework of \emph{critical Besov spaces}. 
(2) We investigate the low Mach number limit of \eqref{A1} in the Besov space with ill-prepared initial data for $\gamma>1$.

By criticality, we mean that the scaling transformation preserving the norms of the function space in which the solution lies leaves \eqref{A1} invariant. This central idea, originating from the work of Fujita and Kato \cite{FK-1964-ARMA}, asserts that the “optimal" function spaces for the well-posedness of system \eqref{A1} must respect the following scaling invariance:
\begin{align*}
\rho_{\nu}(t,x) = \rho(\nu^2 t, \nu x), \quad u_{\nu}(t,x) = \nu u(\nu^2 t, \nu x), \quad \theta_{\nu}(t,x) = \theta(\nu^2 t, \nu x),
\end{align*}
along with the corresponding scaling of the initial data:
\begin{align*}
\rho_0(x) \leadsto  \rho_0(\nu x), \quad u_0(x) \leadsto  \nu u_0(\nu x), \quad \theta_0(x) \leadsto  \nu \theta_0(\nu x),
\end{align*}
for any $\nu > 0$. It is then evident that $\dot B_{2,1}^{\frac{d}{2}} \times \dot B_{2,1}^{\frac{d}{2}-1} \times \dot B_{2,1}^{\frac{d}{2}}$ constitutes a critical space for the system \eqref{A1}.

%Our is to establish the low Mach number limit for strong solutions to system \eqref{A2}. Even though we concentrate on strong solutions, we take into account the general case $\gamma >1$. 
%One of the main objectives is to establish a rigorous justification for the convergence of the compressible Navier-Stokes-Transport equations \eqref{A2} to the incompressible Navier-Stokes-Transport equations \eqref{A5}. Since no decay results are currently available for the compressible Navier-Stokes-Transport equations \eqref{A1}, another primary goal of this work is to establish  
%the optimal decay rate for solutions to the system \eqref{A3}-\eqref{A3-1}.
%This paper discusses strong solutions in the so-called critical Besov spaces. 

In order to state our results, we fist give some analyses on the system \eqref{A1}. To address the difficulties from  the  strong nonlinearity of the pressure in the momentum equation\eqref{A1}$_2$, following \cite{ZLZ-CMS-2023}, we combine the equations for $\rho$ and $\theta$ and reformulate \eqref{A1} in terms of $\rho$, $u$  and $P$ as %. This yields the following: 
\begin{equation}\label{A2}   
\left\{  
\begin{aligned}  
& \partial_t \rho + {\rm div}\,(\rho u)=0,\\  
& \partial_t (\rho u)+{\rm div}\,(\rho u\otimes u)-\mu\Delta u-(\mu+\lambda)\nabla {\rm div}\,u+\nabla P  =0, \\  
& \partial_t P+  u\cdot\nabla P+\gamma P{\rm div}\,u=0.
\end{aligned}  
\right.  
\end{equation}  
It is evident that the system \eqref{A2} constitutes a hyperbolic-parabolic system, where \eqref{A2}$_1$ and \eqref{A2}$_3$ are transport equations, and dissipation arises  only from $u$. We consider the initial value problem for \eqref{A2} in $\mathbb{R}^d$ with $d\geq 2$, subject to the initial data  
\begin{align}\label{A2-1}
(\rho, u, P)(0,x) = (\rho_0, u_0, P_0)(x) \to (\rho_{\infty}, 0, P_{\infty}) \quad \text{as} \quad |x| \to \infty,
\end{align}
where $\rho_{\infty} > 0$ and $P_{\infty} > 0$ are positive constants.
For the sake of simplicity, we set $\rho_{\infty} = P_{\infty}= 1$.
By setting $a = \rho - 1$ and $z = P - 1$, we  rewrite system \eqref{A2} as
\begin{equation}\label{A3}   
\left\{  
\begin{aligned}  
& \partial_t a +{\rm div}\,u=- {\rm div}\,(a u) ,\\  
& \partial_t u-\mu\Delta u-(\mu+\lambda)\nabla {\rm div}\,u+ \nabla z \\
&\quad=-u\cdot\nabla u-f(a)\nabla z+f(a)(\mu\Delta u+(\mu+\lambda)\nabla {\rm div}\,u), \\  
& \partial_t z +\gamma{\rm div}\,u=- u\cdot\nabla z-\gamma z{\rm div}\,u, 
\end{aligned}  
\right.  
\end{equation}  
with the initial data 
\begin{align}\label{A3-1}
(a(t,x),u(t,x),z(t,x))|_{t=0}=(a_0(x),u_0(x),z_0(x)),\quad x\in\mathbb{R}^d,     
\end{align}
where $f(a)$ is defined by
\begin{align*}
f(a)=-\frac{a}{1+a}.    
\end{align*}

Let $\omega=\gamma a-z$. From the equations \eqref{A3}$_1$ and \eqref{A3}$_3$, it follows that  
\begin{align}\label{A3-2}
\partial_t \omega + u\cdot\nabla\omega+  \omega{\rm div}\,u-(\gamma-1)z{\rm div}\,u= 0,
\end{align}
which yields a new transport equation for the derived quantity $\omega$. Although $\omega$ does not possess a dissipative structure, it provides a crucial link between the unknowns $a$ and $z$, thereby overcoming the difficulty introduced by the additional linear term $\mathrm{div}\,u$ in \eqref{A3}$_1$ and \eqref{A3}$_3$.

In this paper, we shall also investigate the asymptotic behavior of \eqref{A2} in the low Mach number regime. Let $\varepsilon\in(0,1]$ be the Mach number, a dimensionless parameter. 
Inspired by \cite[Section 5.1]{Danchin-2018} and \cite{DH-MATHANN-2016}, we consider the following change of variables:
\begin{align}\label{G4.1}
(a,u,z)(t,x) := \varepsilon (a^\varepsilon,u^\varepsilon,z^\varepsilon)(\varepsilon^2 t,\varepsilon x),
\end{align}
along with the corresponding transformation of the initial data:
\begin{align}\label{G4.2}
(a_0,u_0,z_0)(x) := \varepsilon (a^\varepsilon_0,u^\varepsilon_0,z^\varepsilon_0)(\varepsilon x).
\end{align}
% where  $a = \rho - 1$ and $z = P - 1$.
 {In addition, $\mu=\varepsilon\bar{\mu}$, $\lambda=\varepsilon\bar{\lambda}$, where $\bar{\mu}$ and $\bar{\lambda}$ are constants independent of $\varepsilon$.}
It is straightforward to verify that $(a^\varepsilon,u^\varepsilon,z^\varepsilon)$ satisfies the following scaled compressible NST system:
%Following the framework established in \cite{PM-JMPA-1998}, we introduce the following scaling transformations:
%\begin{align*}
%\rho(t,x) = \rho^{\varepsilon}(\varepsilon t,x),\quad u(t,x) = \varepsilon u^{\varepsilon}(\varepsilon t,x),\quad P(t,x) = P^{\varepsilon}(\varepsilon t,x),\quad \mu = \varepsilon\bar\mu,\quad \lambda = \varepsilon\bar\lambda,
%\end{align*}
%where $\bar\mu$ and $\bar\lambda$ are constants independent of $\varepsilon$. Thus, the system \eqref{A2} becomes
%\begin{equation}\label{A3-3}   
%\left\{  
%\begin{aligned}  
%&  \partial_t  \rho^\varepsilon  +{\rm div}\,(\rho^{\varepsilon} u^{\varepsilon})=0,\\  
%& \partial_t\big(\rho^{\varepsilon}   u^{\varepsilon}\big) +{\rm div}\,(\rho^{\varepsilon}   u^\varepsilon\otimes u^{\varepsilon})+\frac{\nabla   P^{\varepsilon}}{\varepsilon^2 } \\%
%%+\frac{\nabla P(1+\varepsilon z^{\varepsilon})}{\varepsilon^2} \\
%&\quad= \bar\mu \Delta u^{\varepsilon}+(\bar\mu+\bar\lambda)\nabla {\rm div}\,u^{\varepsilon},\\
%&  \partial_t  P^\varepsilon  +  u^{\varepsilon} \cdot\nabla P^\varepsilon
%+ \gamma P^\varepsilon{\rm div}\,u^\varepsilon=0.
%\end{aligned}  
%\right.  
%\end{equation}  
%By expressing $\rho^\varepsilon(t,x) = 1 + \varepsilon a^{\varepsilon}(t\varepsilon,x)$ and $P^\varepsilon(t,x) = 1 + \varepsilon z^{\varepsilon}(t\varepsilon,x)$, the system \eqref{A3-3} can be reformulated as
%
\begin{equation}\label{A4}   
\left\{  
\begin{aligned}  
&  \partial_t  a^\varepsilon  + \frac{{\rm div}\, u^{\varepsilon}}{\varepsilon} + {\rm div}\,(a^{\varepsilon} u^{\varepsilon})=0,\\  
& \partial_t\big((1+\varepsilon a^{\varepsilon} ) u^{\varepsilon}\big) +{\rm div}\,((1+\varepsilon a^{\varepsilon} ) u^\varepsilon\otimes u^{\varepsilon})+\frac{\nabla   z^{\varepsilon}}{\varepsilon } \\%
%+\frac{\nabla P(1+\varepsilon z^{\varepsilon})}{\varepsilon^2} \\
&\quad= \bar\mu \Delta u^{\varepsilon}+(\bar\mu+\bar\lambda)\nabla {\rm div}\,u^{\varepsilon},\\
&  \partial_t  z^\varepsilon  + \frac{\gamma{\rm div}\, u^{\varepsilon}}{\varepsilon} +  u^\varepsilon\cdot\nabla z^\varepsilon+ \gamma z^{\varepsilon}{\rm div}\,u^\varepsilon=0,\\
&(a^\varepsilon,u^\varepsilon,z^\varepsilon)|_{t=0} := (a^\varepsilon_0,u^\varepsilon_0,z^\varepsilon_0).
\end{aligned}  
\right.  
\end{equation}  
Assume that $a_0^\varepsilon=\frac{\rho_0^\varepsilon-1}{\varepsilon}$, $u_0^\varepsilon$ and $z_0^\varepsilon=\frac{P_0^\varepsilon-1}{\varepsilon}$ are uniformly bounded with respect to $\varepsilon$ in some suitable sense. Passing to the limits $(a^\varepsilon,u^\varepsilon,z^\varepsilon)$  as $\varepsilon \rightarrow 0$
and  assuming that $(\varrho,v, 0)$ is the limit, then 
it is readily to check that  $(\varrho,v)$ satisfies the following incompressible inhomogeneous Navier-Stokes  system:
\begin{equation}\label{A5}   
\left\{  
\begin{aligned}  
&\partial_t\varrho+{\rm  div}(\varrho v)=0,\\
&  \partial_t v+v\cdot\nabla v-\bar\mu \Delta v+\nabla \Pi=0,\\
&{\rm div}\,v=0,
\end{aligned}  
\right.  
\end{equation} 
with the initial data
\begin{align}\label{A5-1}
(\varrho(0,x),v(0,x))=(\varrho_0(x),v_0(x))\rightarrow0,\quad \text{as}\quad |x|\rightarrow\infty.  
\end{align}
This process corresponds to the well-known  low Mach number limit, which is both a mathematically challenging and physically significant problem. Formally, due to the appearance of the singular terms $\frac{\nabla z^\varepsilon}{\eps}$ and $\frac{{\rm div} u^\varepsilon}{\eps}$ in \eqref{A4}, it is difficult to derive uniform estimates with respect to the Mach number $\varepsilon$ and to rigorously establish convergence to the incompressible 
inhomogeneous Navier-Stokes equations \eqref{A5}.

%There have been extensive studies on the mathematical justification of the low Mach limit in the framework of Sobolev spaces, which was initiated by Ebin\cite{Ebin}, Klainerman-Majda\cite{KM-1982,KM-2010} for local strong solutions of compressible Navier-Stokes or Euler equations in the whole space with well-prepared initial data. These works were extended by several authors in different settings concerninig isentropic case in the last few decades, for example, previous studies\cite{asa,Ukai1986,ig,isa,isb,Se-00} and references therein. In fact, fluid dynamics equations are often related to temperature or entropy. It is then very natural and interesting to study the low Mach number limit of non-isentropic equations. In the non-isentropic case, the low Mach limit is much more complicated due to the complex structure of the coupled system. For the non-isentropic Euler equations and ill-prepared initial data, the breakthrough was made by Métivier and Schochet\cite{MetivierSchochet2001}, where the methods of pseudodifferential operators and wave-package transform were applied to the low Mach limit. For further development, one can motion the works\cite{AlazardADE2005,AlazardARMA2006,FN-ARMA-2007,FN-CMP-2013,jiangouJMPA,juouJMPA} and references therein.  

Let's mention that there have been extensive studies on the mathematical justification of the low Mach number limit  in the framework of Sobolev spaces, initiated by Ebin \cite{Ebin} and Klainerman-Majda \cite{KM-1982,KM-2010} for local strong solutions of the compressible Navier-Stokes and Euler equations in the whole space  or a torus with well-prepared initial data. 
These foundational results were subsequently extended by numerous authors in various settings, particularly for the isentropic case, over the past few decades; see, for example, \cite{asa,Ukai1986,isa} and the references cited therein. 
However, fluid dynamics equations are inherently coupled with thermodynamic variables such as temperature or entropy. Therefore, it is both natural and significant to investigate the low Mach number limit for non-isentropic fluid systems. In this context, the limit process becomes substantially more complex due to the intricate coupling among the  velocity, pressure, and thermal fields.
A major breakthrough on low Mach number limit for the non-isentropic Euler equations with ill-prepared initial data was achieved by Métivier and Schochet \cite{MetivierSchochet2001}, who achieved this goal by employing advanced tools such as pseudo-differential operators and wave packet transforms. Further developments in this direction can be found,
for example,  in \cite{ AlazardARMA2006,FN-ARMA-2007,FN-CMP-2013,jiangouJMPA} and the cited references therein.
%related works.
%In fact, in the study of slightly commpressible fluids, finding a good functional framework is one of the key points, which induces to look for a better space in the process of incompressible limit. As regards the study of the low Mach number limit for the Navier-Stokes equations in the framework of critical space, one can refer to \cite{Dr-2002-AJM} for the case of periodic boundary conditions and \cite{Dr-2002-ASENS} for the case of the whole space. Very recently, Fujii\cite{FM-2024-MA} showed the low Mach number limit of the Navier-Stokes equations in the scaling critical Besov space, where the author pointed out the method to obtain the convergence dose not require different arguments for different spatial dimensions. 

 In the study of slightly compressible fluids, identifying an appropriate functional framework is a crucial aspect, as it naturally leads to the search for more suitable function spaces when considering the low Mach number limit. Regarding the low Mach number limit for the isentropic Navier-Stokes equations  in  critical Besov spaces, one may refer to \cite{Dr-2002-AJM} for the case of periodic boundary conditions and to \cite{Danchin-2002} for the whole space. See also \cite{DH-MATHANN-2016} on non-isentropic fluid equations. Very recently, Fujii \cite{FM-2024-MA} established the low Mach number limit for the isentropic Navier-Stokes equations in a scaling-critical Besov space, where it was emphasized that the method used to prove the convergence does not require distinct arguments depending on the spatial dimension.

%Let us now turn to more specifically on the study of the Navier-Stokes-Transport equations, which is more related to the interest of the current paper. Mich\'{a}lek\cite{MM-JMFM-2015} showed a kind of stability result for solutions under mild assumptions on the sequence of densities for any $\gamma>3/2, d=3$, which is different from the work by Lions\cite{lions1998} for the case $\gamma\geq 9/5$. Maltese\cite{M-JDE-2016} studied the global weak solutions to the Navier-Stokes-Transport system under the assumption $\gamma\geq 9/5, d=3$ or $\gamma>1, d=2$. Recently, Zhai, Li and Zhou\cite{ZLZ-CMS-2023} studied the global strong solutions to the Cauchy problem of the Navier-Stokes-Transport equations in the Besov spaces. Lukáčová-Medvid'ová and Sch\"{o}mer\cite{LS-JMFM-2022,LS-JMFM-2023} introduced the concept of dissipative measure-valued(DMV) solutions for the Navier-Stokes-Transport equations and proved the existence and DMV-strong uniqueness of such solutions. Later, they\cite{LS-JDE-2025} proved the existence and uniqueness of local-in-time strong solutions and a blow-up criterion for the strong solution to the Navier-Stokes-Transport equations.

\subsection{Main results}

Now we are in a position to state our results.  The notations in the following results shall be  presented  in Section 2 below. 
We first focus on the well-posedness. 

\begin{thm}[Local well-posedness]\label{T1.1}
 Let $d\geq 2$. Assume that the initial data $(a_0,u_0,z_0)$ satisfy
\begin{align}\label{TA.1}
a_0\in \dot B_{2,1}^{\frac{d}{2}},\quad \inf_{x\in\mathbb R^d} (1+a_0)(x)>0, \quad u_0\in     \dot B_{2,1}^{\frac{d}{2}-1},\quad z_0\in\dot B_{2,1}^{\frac{d}{2} }.
\end{align}
Then, there exists a time $T>0$, such that the Cauchy problem \eqref{A3}--\eqref{A3-1} admits a unique strong solution $(a,u,z)$ satisfying for $t\in[0,T)$ that 
\begin{equation}\label{TA.2}\left\{
\begin{aligned}
&    a\in \mathcal{C}([0,T); \dot B_{2,1}^{\frac{d}{2}}),\quad \inf_{x\in\mathbb R^d} (1+a )(x,t)>0,      \\
&  u \in    \mathcal{C}([0,T); \dot B_{2,1}^{\frac{d}{2}-1})\cap L^1(0,T;\dot B_{2,1}^{\frac{d}{2}+1}),\\
& z\in \mathcal{C}([0,T); \dot B_{2,1}^{\frac{d}{2}}).
\end{aligned}\right.
\end{equation}
\end{thm}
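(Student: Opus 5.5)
We prove Theorem~\ref{T1.1} by a Friedrichs (or iterative) approximation scheme in the spirit of Danchin's local theory for the barotropic compressible Navier--Stokes equations with large data in critical spaces. The system \eqref{A3} is viewed as two transport equations for $(a,z)$ coupled to a Lamé-type parabolic system for $u$. The key structural point is that $a$ and $z$ enter the velocity equation only through $\nabla z$ and through the variable coefficient $f(a)$, whose product with $u$-terms is paired against second derivatives. Since the data are not small, we do not treat $f(a)(\mu\Delta u+(\mu+\lambda)\nabla{\rm div}\,u)$ as a perturbation. Instead we write the momentum equation as
\[
\partial_t u-\frac{1}{1+a}\bigl(\mu\Delta u+(\mu+\lambda)\nabla{\rm div}\,u\bigr)=-u\cdot\nabla u-\frac{\nabla z}{1+a},
\]
and use estimates for parabolic systems with variable coefficients $b=1/(1+a)$ bounded below. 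These are available in $\dot B^{\frac d2-1}_{2,1}$ provided $b-1\in \dot B^{\frac d2}_{2,1}$, after splitting $b$ into a smooth low-frequency part $S_m b$ plus a small remainder. This is where $\inf(1+a_0)>0$ enters.

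\textbf{Step 1: a priori estimates on a short interval.} Let $u_L$ be the solution of the free Lamé system with data $u_0$. Then $\|u_L\|_{L^1_T(\dot B^{\frac d2+1}_{2,1})}\to0$ as $T\to0$, by dominated convergence on the dyadic blocks. For the transport equations of $a$ and $z$ we use the standard $\dot B^{\frac d2}_{2,1}$ estimate:
\[
\|(a,z)\|_{L^\infty_T(\dot B^{\frac d2}_{2,1})}\le e^{C\|u\|_{L^1_T(\dot B^{\frac d2+1}_{2,1})}}\Bigl(\|(a_0,z_0)\|_{\dot B^{\frac d2}_{2,1}}+C\|u\|_{L^1_T(\dot B^{\frac d2+1}_{2,1})}\Bigr).
\]
Here the source terms $a\,{\rm div}u$, $\gamma z\,{\rm div}u$ and ${\rm div}u$ are bounded using the algebra property of $\dot B^{\frac d2}_{2,1}$. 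The positivity of $1+a$ is propagated by the mass equation along characteristics. For $\bar u=u-u_L$ we apply the variable-coefficient parabolic estimate, for which $\|a\|_{L^\infty_T(\dot B^{\frac d2}_{2,1})}$ is the relevant norm. The forcing terms $\nabla z/(1+a)$ and $u\cdot\nabla u$ are estimated in $L^1_T(\dot B^{\frac d2-1}_{2,1})$. The first is at most $T\|z\|_{L^\infty_T\dot B^{\frac d2}}(1+\|a\|_{L^\infty_T\dot B^{\frac d2}})$, which is small for small $T$. The second is bounded by $\|u\|_{L^2_T\dot B^{\frac d2}}^2$, which is small by interpolation once $\|u\|_{L^1_T\dot B^{\frac d2+1}}$ is small and $\|u\|_{L^\infty_T\dot B^{\frac d2-1}}$ is bounded. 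Choosing $T$ small closes a bound of the form
\[
\|(a,z)\|_{L^\infty_T\dot B^{\frac d2}}\le 2C_0,\qquad \|\bar u\|_{L^\infty_T\dot B^{\frac d2-1}\cap L^1_T\dot B^{\frac d2+1}}\le \eta
\]
for a small $\eta$.

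\textbf{Step 2: existence.} We build approximate solutions, for instance by Friedrichs truncation or by solving the linear problems successively with coefficients frozen at the previous iterate. Step 1 gives uniform bounds. Strong compactness on compact sets follows from bounds on the time derivatives in lower regularity together with the Aubin--Lions lemma. We then pass to the limit and recover time continuity from the transport and parabolic theory.

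\textbf{Step 3: uniqueness and stability, the main obstacle.} The difference of two solutions cannot be estimated at the same regularity. Transport equations lose one derivative, so $\delta a$ and $\delta z$ must be measured in $\dot B^{\frac d2-1}_{2,1}$ and $\delta u$ in $\dot B^{\frac d2-2}_{2,1}\cap L^1(\dot B^{\frac d2}_{2,1})$. For $d=2$ this lands at regularity $-1$, where product laws fail and a logarithmic interpolation or Osgood-type argument is needed. The terms $\delta u\cdot\nabla a$ and $\delta a\,\Delta u$ require product estimates at the endpoint $\dot B^{\frac d2-1}\times\dot B^{\frac d2-2}$. I would first try the approach in Danchin's work: use the weaker space $\dot B^{\frac d2-1}_{2,\infty}$ for the density difference, which allows a Gronwall argument in general dimension $d\ge3$, and treat $d=2$ by a logarithmic interpolation inequality combined with Osgood's lemma. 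The extra unknown $z$ behaves exactly like $a$ in these estimates, so it does not add new difficulties beyond bookkeeping.
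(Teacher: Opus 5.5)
Your outline is correct and follows essentially the route the paper intends. The paper omits the proof and only says to adapt the large-data local theory of Danchin and Haspot for the isentropic system, treating $a$ and $z$ jointly as transported unknowns and exploiting the smallness of $T$, which is the framework you set up. When writing it out, note two points. First, the constant in the variable-coefficient Lam\'e estimate depends on the cut-off index $m$, so you must fix $m$ from $a_0$ and show that $\|(\mathrm{Id}-\dot S_m)a\|_{\widetilde L^\infty_T(\dot B^{\frac d2}_{2,1})}$ stays small uniformly over the approximate solutions; this follows by applying the transport estimate to the high frequencies, using smallness of $\|u\|_{L^1_T(\dot B^{\frac d2+1}_{2,1})}$. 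Second, the equation for $\bar u$ also contains the forcing $f(a)\bigl(\mu\Delta u_L+(\mu+\lambda)\nabla{\rm div}\,u_L\bigr)$, which is harmless because $\|u_L\|_{L^1_T(\dot B^{\frac d2+1}_{2,1})}\to0$.
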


\begin{rem}
Noticing that the system \eqref{A3} has a structure similar to that of the compressible isentropic Navier-Stokes equations, we can  treat the variables $a$ and $z$ collectively and exploit the smallness of $T$. By adapting the approach employed in \cite{Danchin-00-IM} and
\cite[Section 5]{Haspot-2011-JDE} on the  isentropic Navier-Stokes equations, the local well-posedness can be established directly. For brevity, we omit the proof here. 
\end{rem}

\begin{thm}[Global well-posedness]\label{T1.2}
 Let $d\geq 2$. There exists a positive constant $\delta_0>0$ such that if the initial data $(a_0,u_0,z_0)$ satisfy 
$a_0,z_0\in \dot B_{2,1}^{\frac{d}{2}-1}\cap  \dot B_{2,1}^{\frac{d}{2}}$, $u_0\in \dot B_{2,1}^{\frac{d}{2}-1}$ and 
\begin{align}\label{TB.1}
\|(a_0,z_0)\|_{\dot B_{2,1}^{\frac{d}{2}-1}\cap  \dot B_{2,1}^{\frac{d}{2}}}+   \|u_0\|_{\dot B_{2,1}^{\frac{d}{2}-1}} \leq \delta_0,
\end{align} 
then the   Cauchy problem \eqref{A3}--\eqref{A3-1} admits a unique global strong solution 
$(a,u,z)$, which satisfies
\begin{equation}\label{TB.2}\left\{
\begin{aligned}
&    a\in \mathcal{C}_b(\mathbb R^+; \dot B_{2,1}^{\frac{d}{2}-1}\cap B_{2,1}^{\frac{d}{2}}) ,      \\
&  u \in    \mathcal{C}_b(\mathbb R^+;  \dot B_{2,1}^{\frac{d}{2}-1})\cap L^1(\mathbb R^+;\dot B_{2,1}^{\frac{d}{2}+1}),\\
& z\in \mathcal{C}_b(\mathbb R^+; \dot B_{2,1}^{\frac{d}{2}-1}\cap B_{2,1}^{\frac{d}{2}}) \cap L^1 (\mathbb R^+;\dot B_{2,1}^{\frac{d}{2}+1}+\dot B_{2,1}^{\frac{d}{2}}),\\
&  \gamma a-z\in \mathcal{C}_b(\mathbb R^+; \dot B_{2,1}^{\frac{d}{2}-1}\cap B_{2,1}^{\frac{d}{2}}) ,
\end{aligned}\right.
\end{equation}
Moreover, there exists a positive constant $C_1$ independent of time $t$ 
such that, for any $t>0$,
\begin{align}\label{TB.3}
\|\gamma a-z\|_{\widetilde L_t^\infty( \dot B_{2,1}^{\frac{d}{2}-1} )}\leq&\,  C_1 \Big(\|(a_0,z_0,u_0)\|_{\dot B_{2,1}^{\frac{d}{2}-1} }+ \|z_0\|_{ \dot B_{2,1}^{\frac{d}{2}}}\Big),\nonumber\\  
 \|\gamma a-z\|_{\widetilde L_t^\infty(  \dot B_{2,1}^{\frac{d}{2}})}\leq&\,  C_1\Big(\|(u_0,z_0)\|_{\dot B_{2,1}^{\frac{d}{2}-1}}+  \|(a_0,z_0)\|_{ \dot B_{2,1}^{\frac{d}{2}}}\Big) ,      
\end{align}
and
\begin{align}\label{TB.4}
& \|(a,z)\|_{\widetilde L_t^\infty (\dot B_{2,1}^{\frac{d}{2}-1}\cap B_{2,1}^{\frac{d}{2}})}+\|u\|_{\widetilde L_t^\infty(\dot B_{2,1}^{\frac{d}{2}-1})} +\|u\|_{L_t^1(\dot B_{2,1}^{\frac{d}{2}+1})}+\|z\|_{L_t^1(\dot B_{2,1}^{\frac{d}{2}+1}+\dot B_{2,1}^{\frac{d}{2}})} \nonumber\\
&\quad\leq C_1 \Big(\|(a_0,z_0)\|_{\dot B_{2,1}^{\frac{d}{2}-1}\cap  \dot B_{2,1}^{\frac{d}{2}}}+\|u_0\|_{\dot B_{2,1}^{\frac{d}{2}-1}}\Big).
\end{align}
\end{thm}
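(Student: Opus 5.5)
We prove Theorem \ref{T1.2} by a continuation argument: the local solution of Theorem \ref{T1.1} is combined with a priori estimates that close under the smallness assumption \eqref{TB.1}. The key structural observation is that the linearization of \eqref{A3} in the variables $(z,u)$ is exactly the linearized isentropic compressible Navier--Stokes system with sound speed $\sqrt{\gamma}$. The remaining unknown $a$ is recovered from $a=(\omega+z)/\gamma$, where $\omega=\gamma a-z$ obeys the pure transport equation \eqref{A3-2} with no linear source. We therefore work with the unknowns $(z,u,\omega)$. We define the energy functional $\mathcal{X}(t)$ as the left-hand side of \eqref{TB.4} plus $\|\omega\|_{\widetilde L^\infty_t(\dot B_{2,1}^{\frac d2-1}\cap\dot B_{2,1}^{\frac d2})}$, and we aim to show $\mathcal{X}(t)\le C\mathcal{X}_0+C\mathcal{X}(t)^2$ as long as $\mathcal X(t)$ is small. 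This closes by a bootstrap.

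\textbf{Step 1 (dissipative part $(z,u)$).} We run Danchin's hybrid analysis on the $(z,u)$ subsystem. At low frequencies $j\le j_0$ we use a Lyapunov functional mixing $\|\dot\Delta_j z\|^2$, $\|\dot\Delta_j u\|^2$ and the cross term $\langle \dot\Delta_j u,\nabla\dot\Delta_j z\rangle$; this gives parabolic decay of both $z$ and $u$ in $\dot B^{\frac d2-1}$, and hence $z^\ell\in L^1(\dot B^{\frac d2+1})$. At high frequencies we use the effective velocity $w=\nabla z+\nu^{-1}\mathbb{Q}u$-type combination (or, equivalently, the cross term $\langle \dot\Delta_j u,\nabla\dot\Delta_j z\rangle$ weighted differently). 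This yields damping of $z^h$ in $L^1(\dot B^{\frac d2})$ and parabolic smoothing $u^h\in L^1(\dot B^{\frac d2+1})$. Together these give the $L^1_t(\dot B^{\frac d2+1}+\dot B^{\frac d2})$ control of $z$.

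\textbf{Step 2 (transport part and nonlinear terms).} For $a$ itself we do not get decay; we only need $\widetilde L^\infty$ bounds. Applying the standard transport estimates in $\dot B_{2,1}^{s}$, $s=\frac d2-1,\frac d2$, to \eqref{A3-2} gives
\[
\|\omega\|_{\widetilde L^\infty_t(\dot B^{s}_{2,1})}\le e^{C\|u\|_{L^1_t(\dot B^{\frac d2+1}_{2,1})}}\Big(\|\omega_0\|_{\dot B^{s}_{2,1}}+C\int_0^t\|z\operatorname{div}u\|_{\dot B^{s}_{2,1}}\,d\tau\Big).
\]
The source is bounded by $\|z\|_{L^\infty(\dot B^{\frac d2})}\|u\|_{L^1(\dot B^{\frac d2+1})}$, using the product law in $\dot B^{\frac d2}$ for $s=\frac d2$ and $\dot B^{\frac d2}\times\dot B^{\frac d2-1}\to\dot B^{\frac d2-1}$ for $s=\frac d2-1$. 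This produces \eqref{TB.3}; note that the linear $\omega$ equation has no coupling, so the dependence on the data there is only through $\omega_0$ and the $(z,u)$ bounds.

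The momentum nonlinearities are $u\cdot\nabla u$, $f(a)\nabla z$ and $f(a)\mathcal{A}u$, and $f(a)$ is controlled via composition estimates, given that $\|a\|_{L^\infty}$ is small. These are estimated in $L^1(\dot B^{\frac d2-1})$ using $a\in L^\infty(\dot B^{\frac d2})$ and $u\in L^1(\dot B^{\frac d2+1})$. The delicate term is $f(a)\nabla z$. At low frequencies $\nabla z$ is only in $L^1(\dot B^{\frac d2})$, which fits since $\dot B^{\frac d2}\cdot\dot B^{\frac d2}\subset\dot B^{\frac d2}\hookrightarrow$ the high-frequency norm; at low frequency we need $L^1(\dot B^{\frac d2-1})$. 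We split $z=z^\ell+z^h$. Then $f(a)\nabla z^\ell$ uses $z^\ell\in L^1(\dot B^{\frac d2+1})$, and $f(a)\nabla z^h$ uses $z^h\in L^1(\dot B^{\frac d2})$ with $\nabla z^h\in \dot B^{\frac d2-1}$. In the $z$ equation, the terms $u\cdot\nabla z$ at the $\dot B^{\frac d2}$ level require a commutator estimate to avoid losing a derivative. Hence the high-frequency $z$ estimate is done in transport form, exactly as in the isentropic high-frequency analysis.

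\textbf{Main obstacle.} The hardest step is that $a$, and thus the coefficient $f(a)$, does not decay, so every product involving $a$ must be paired with a time-integrable factor from $u$ or $z$. In particular, the high-frequency effective-velocity estimate contains $f(a)\nabla z$ and the commutator $[\dot\Delta_j,u\cdot\nabla]z$. I would handle these by closing the high-frequency bound with $\|a\|_{L^\infty_t(\dot B^{\frac d2})}\|z\|_{L^1_t(\dot B^{\frac d2+1}+\dot B^{\frac d2})}$, keeping the assumption that $\mathcal X$ is small throughout.

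Uniqueness and continuity in time follow from Theorem \ref{T1.1}. The inhomogeneous $B^{\frac d2}_{2,1}$ regularity of $a$ and $z$ follows from the $\dot B^{\frac d2-1}\cap\dot B^{\frac d2}$ bounds.
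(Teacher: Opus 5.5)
Your plan is essentially the paper's proof. Both use frequency-localized energy estimates for the $(u,z)$ subsystem, the transport estimate of Lemma~\ref{LA.6} for $\omega=\gamma a-z$ in $\dot B^{\frac d2-1}_{2,1}$ and $\dot B^{\frac d2}_{2,1}$, recovery of $a=(\omega+z)/\gamma$, and continuation from Theorem~\ref{T1.1}. The main difference is at high frequencies: the paper uses no effective velocity, but the same Lyapunov functional at every $k$ (with the cross term $\int\dot\Delta_k u\cdot\nabla\dot\Delta_k z$ and $\|\nabla\dot\Delta_k z\|_{L^2}^2$), followed by Lam\'e maximal regularity (Lemma~\ref{LA.7}) to place $u^h$ in $L^1_t(\dot B^{\frac d2+1}_{2,1})$.

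Three small corrections are needed:
\begin{enumerate}
\item At $s=\frac d2-1$, the source $z\,{\rm div}\,u$ must be bounded by $\|z\|_{\widetilde L^\infty_t(\dot B^{\frac d2-1}_{2,1})}\|u\|_{L^1_t(\dot B^{\frac d2+1}_{2,1})}$, not via $\|z\|_{\dot B^{\frac d2}_{2,1}}$, since that pairing would require the uncontrolled $\|u\|_{L^1_t(\dot B^{\frac d2}_{2,1})}$.
\item The low-frequency part of $f(a)\nabla z^\ell$ in $\dot B^{\frac d2-1}_{2,1}$ needs $\|f(a)\|_{\dot B^{\frac d2-1}_{2,1}}$. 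This is where the $\dot B^{\frac d2-1}_{2,1}$ bound on $a$ enters; $\|a\|_{\dot B^{\frac d2}_{2,1}}$ alone is not enough there.
\item Your closing remark fails for $d\geq3$, since $\dot B^{\frac d2-1}_{2,1}\cap\dot B^{\frac d2}_{2,1}\not\hookrightarrow L^2$. So the $B^{\frac d2}_{2,1}$ in the statement must be read as $\dot B^{\frac d2}_{2,1}$, which is all the paper proves.
\end{enumerate}
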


\begin{rem} 
Zhai et al. \cite{ZLZ-CMS-2023} have already established these global strong solutions to the system \eqref{A1}, here we use a different method.  In addition, our   assumption  on the initial data $(a_0,u_0, z_0)$ is different from  that used in \cite{ZLZ-CMS-2023}. More importantly, we shall give the optimal time decay rates of the strong solutions which are absent in \cite{ZLZ-CMS-2023}.
\end {rem}

\begin{rem}
In stark contrast to \cite{DX-ARMA-2017,CMZ-CPAM-2010} on the isentropic Navier-Stokes equations, where all variables  exhibit dissipative structures, our system \eqref{A3} features the unknowns $a$ and $z$ \emph{(}hence $\omega$ due to the relation  $\omega=\gamma a-z$\emph{)} which lack such structures. Consequently, when handling the nonlinear term $\omega{\rm div}\,u$ in \eqref{A3-2}, the only viable approach is to exploit the dissipative structure inherent in $u$. To this end, we consider Bony's decomposition:
\begin{align*}
(\omega u)^\ell = (\dot T_\omega u)^\ell + \big(R(\omega,u)\big)^\ell + \big(\dot T_{u} \omega^\ell\big)^\ell + \big(\dot T_u\omega^h\big)^\ell.
\end{align*}
The term $\big\|\big(\dot T_{u} \omega^\ell\big)^\ell\big\|_{L_t^1(\dot B_{2,1}^\frac{d}{2})}$ can be bounded by $\|u\|_{\widetilde L_t^\infty(\dot B_{p,1}^{\frac{d}{p}-1})}\|\omega^\ell\|_{L_t^1(\dot B_{2,1}^{\frac{d}{2}+1})}$ \emph{(}see  {(115)} in \cite{Danchin-2018}\emph{)}. However, due to the absence of a dissipative structure for $\omega$, controlling this term relies entirely on the dissipation of $u$, which proves insufficient. As a result, this term becomes uncontrollable, implying that the solution $(a,u,z)$ may fail to be well-defined in the $L^p$-type Besov framework.
\end{rem}

\begin{rem}
The new mode $\omega=\gamma a-z$ in the equation \eqref{A3-2} allows for a more accurate estimate as shown in \eqref{TB.3}, compared to \eqref{TB.4}. Although $\omega$ lacks a dissipative structure, combining it   with the equation for $z$   derives a uniform estimate for the density $a$.
\end{rem}

Next, we focus on the low Mach number limit of the NST system \eqref{A3}.

\begin{thm}[Low Mach number limit]\label{T1.4} 
Let $d\geq 2$ and let $\varepsilon\in (0,1)$ be a constant. There exists a positive constant $\delta_1>0$ such that if the initial data $(a_0^\varepsilon,u_0^\varepsilon,z_0^\varepsilon)$ satisfies $a_0^\varepsilon,z_0^\varepsilon\in \dot B_{2,1}^{\frac{d}{2}-1}\cap \dot B_{2,1}^{\frac{d}{2}}$, $u_0^\varepsilon\in \dot B_{2,1}^{\frac{d}{2}-1}$ and
\begin{align}\label{TD.1}
\|(a_0^\varepsilon,z^\varepsilon_0)\|_{\dot B_{2,1}^{\frac{d}{2}-1} }+   \|u_0^\varepsilon\|_{\dot B_{2,1}^{\frac{d}{2}-1}}+ \varepsilon\|(a_0^\varepsilon,z^\varepsilon_0)\|_{\dot B_{2,1}^{\frac{d}{2} } }\leq \delta_1,
\end{align} 
then the NST system \eqref{A4} with 
the initial data $(a_0^\varepsilon, u_0^\varepsilon, z_0^\varepsilon)$ admits a unique strong solution $(a^\varepsilon, u^\varepsilon, z^\varepsilon)$ satisfying
\begin{align}\label{TD.2}
\|\gamma a^\varepsilon-z^\varepsilon\|_{\widetilde L_t^\infty( \dot B_{2,1}^{\frac{d}{2}-1})}\leq&\,  C_2 \|(a^\varepsilon_0,u_0^\varepsilon,z^\varepsilon_0)\|_{\dot B_{2,1}^{\frac{d}{2}-1}}+C_2\varepsilon\|z_0^\varepsilon\|_{\dot B_{2,1}^{\frac{d}{2}}}  ,\nonumber \\  
  \varepsilon\|\gamma a^\varepsilon-z^\varepsilon\|_{\widetilde L_t^\infty( \dot B_{2,1}^{\frac{d}{2} })}\leq&\,  C_2\|(u^\varepsilon_0,z_0^\varepsilon)\|_{\dot B_{2,1}^{\frac{d}{2}-1}}+ C_2 \varepsilon \|(a^\varepsilon_0,z^\varepsilon_0)\|_{\dot B_{2,1}^{\frac{d}{2} }}  , 
\end{align}
and
\begin{align}\label{TD.3}
 &\|(a^\varepsilon,z^\varepsilon )\|_{\widetilde L_t^\infty (\dot B_{2,1}^{\frac{d}{2}-1} )}+\varepsilon\|(a^\varepsilon,z^\varepsilon )\|_{\widetilde L_t^\infty (\dot B_{2,1}^{\frac{d}{2}} )} +\|u^\varepsilon\|_{\widetilde L_t^\infty(\dot B_{2,1}^{\frac{d}{2}-1})} \nonumber\\
& \quad+\|u^\varepsilon\|_{L_t^1(\dot B_{2,1}^{\frac{d}{2}+1})}+\|z^\varepsilon\|_{L_t^1(\dot B_{2,1}^{\frac{d}{2}+1} )}^{\ell,\varepsilon}+\frac{1}{\varepsilon}\|z^\varepsilon\|_{L_t^1(\dot B_{2,1}^{\frac{d}{2}} )}^{h,\varepsilon}  \nonumber\\
&\quad\quad\leq C_2 \Big(\|(a_0^\varepsilon,z^\varepsilon_0)\|_{\dot B_{2,1}^{\frac{d}{2}-1} }+\|u^\varepsilon_0\|_{\dot B_{2,1}^{\frac{d}{2}-1}}+\varepsilon\|(a_0^\varepsilon,z^\varepsilon_0)\|_{\dot B_{2,1}^{\frac{d}{2}} }\Big),   
\end{align}
for any $t>0$, where $C_2>0$ is independent of $t$ and $\varepsilon$.
In addition,   it holds that
\begin{equation}\label{TD.4}\left\{
\begin{aligned}
&   \|(\mathbb{Q}u^\varepsilon,z^\varepsilon)\|_{\widetilde L_t^2(\dot B_{p,1}^{\frac{5}{2p}-\frac{1}{4}})}\leq C_2\varepsilon^{\frac{1}{4}-\frac{1}{2p}} ,\quad\text{for}\quad p\in(2,\infty),  \quad\quad d=2;\\
&\|(\mathbb{Q}u^\varepsilon,z^\varepsilon)\|_{\widetilde L_t^2(\dot B_{p,1}^{\frac{d+1}{p}-\frac{1}{2}})}\leq C_2\varepsilon^{\frac{1}{2}-\frac{1}{p}},\,\,\,\,\,\,\text{for}\quad p\in(2,6)  ,  \,\,\, \,\,\,\,\, \quad d\geq3.
\end{aligned}\right.
\end{equation}
Assume further that there exists $v_0$ such that as $\varepsilon \rightarrow 0$, $\mathbb{P} u_0^\varepsilon \rightharpoonup v_0$. Then, $\mathbb{P} u^\varepsilon$ converges to the limit $v$, and the new mode $\omega^{\varepsilon}:=\gamma a^\varepsilon - z^\varepsilon$ converges to the limit $\varrho$ in the sense of distributions, where $(\varrho, v)$ is the global solution of the incompressible inhomogeneous Navier-Stokes system \eqref{A5} with the initial data $(\varrho_0, v_0)$.

\end{thm}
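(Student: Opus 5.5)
The first part of the statement, namely the uniform bounds \eqref{TD.2}--\eqref{TD.3}, will come from Theorem~\ref{T1.2} by scaling. If $(a^\varepsilon,u^\varepsilon,z^\varepsilon)$ solves \eqref{A4}, then $(a,u,z)$ defined by \eqref{G4.1} solves \eqref{A3} with $\mu=\varepsilon\bar\mu$ and $\lambda=\varepsilon\bar\lambda$. Homogeneous Besov norms scale as
\[
\|\varepsilon f(\varepsilon\cdot)\|_{\dot B^{s}_{2,1}}=\varepsilon^{1+s-\frac d2}\|f\|_{\dot B^{s}_{2,1}},
\]
so $\dot B^{\frac d2-1}_{2,1}$ is invariant, and the $\dot B^{\frac d2}_{2,1}$ norm picks up exactly one factor of $\varepsilon$. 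This is why \eqref{TD.1} carries the weight $\varepsilon\|(a_0^\varepsilon,z_0^\varepsilon)\|_{\dot B^{d/2}_{2,1}}$.

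The viscosity, however, now depends on $\varepsilon$, so Theorem~\ref{T1.2} cannot be quoted verbatim. I would instead redo its a priori estimates directly on \eqref{A4}, keeping track of the $\varepsilon$-dependence.
\begin{itemize}
\item The frequency threshold between low and high frequencies sits at $|\xi|\sim 1/\varepsilon$. This produces the norms $\|\cdot\|^{\ell,\varepsilon}$ and $\|\cdot\|^{h,\varepsilon}$.
\item At low frequencies, the linear part of $(u^\varepsilon,z^\varepsilon)$ is a partially dissipative acoustic system with singular coupling $\varepsilon^{-1}$. A Lyapunov functional combining $\|u\|^2$, $\gamma^{-1}\|z\|^2$ and a cross term $\varepsilon\,(\mathbb{Q}u\,|\,\nabla z)$ yields parabolic smoothing of $z$ and $\mathbb{Q}u$ in $L^1_t(\dot B^{\frac d2+1}_{2,1})$.
\item At high frequencies, the effective-velocity (damped mode) method gives $z$ a damping of strength $1/\varepsilon$. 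This is the source of the term $\frac1\varepsilon\|z^\varepsilon\|^{h,\varepsilon}_{L^1_t(\dot B^{d/2}_{2,1})}$.
\item The mode $\omega^\varepsilon=\gamma a^\varepsilon-z^\varepsilon$ carries no singular term, since the two $\frac1\varepsilon$ contributions cancel exactly. It therefore satisfies a pure transport equation of the form \eqref{A3-2}. I would estimate it in $\widetilde L^\infty_t(\dot B^{\frac d2-1}_{2,1})$ and, with weight $\varepsilon$, in $\dot B^{\frac d2}_{2,1}$, using commutator estimates and $\int_0^t\|\nabla u^\varepsilon\|_{\dot B^{d/2}_{2,1}}$. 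This gives \eqref{TD.2}. Then $a^\varepsilon=(\omega^\varepsilon+z^\varepsilon)/\gamma$ is bounded as well.
\end{itemize}
The nonlinear terms are handled by product laws in $\dot B^{\frac d2-1}_{2,1}\times\dot B^{\frac d2}_{2,1}$, and a bootstrap closes \eqref{TD.3} under the smallness of $\delta_1$.

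Next I would prove the dispersive estimate \eqref{TD.4}. The compressible part $(\mathbb{Q}u^\varepsilon,z^\varepsilon)$ satisfies a damped wave system with speed $\sqrt{\gamma}/\varepsilon$, forced by nonlinear terms that are controlled by \eqref{TD.3}. I would apply the Strichartz estimates for the wave equation, as in \cite{Danchin-2002} and \cite{FM-2024-MA}, to the rescaled acoustic operator $e^{\pm it|D|/\varepsilon}$. Interpolating between the $\widetilde L^2_t$ Strichartz bound and the energy and dissipation bounds then gives the stated powers of $\varepsilon$. The range $p\in(2,6)$ for $d\ge3$, and the exponents $\frac14-\frac1{2p}$ for $d=2$, arise from the admissibility conditions together with a frequency split at $1/\varepsilon$. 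High frequencies are estimated directly using the factor $\varepsilon$ from the $\frac1\varepsilon$-damping.

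Finally comes the passage to the limit. Since $\omega^\varepsilon$ and $\mathbb{P}u^\varepsilon$ are uniformly bounded in the spaces above, and $\partial_t\mathbb{P}u^\varepsilon$ and $\partial_t\omega^\varepsilon$ are bounded in weaker spaces, an Aubin--Lions argument together with a localization gives strong compactness on compact sets. Meanwhile $\mathbb{Q}u^\varepsilon$ and $z^\varepsilon$ tend to $0$ in $\widetilde L^2_t(\dot B^{\cdot}_{p,1})$ by \eqref{TD.4}. I would then pass to the limit in the projected momentum equation and in the $\omega^\varepsilon$ equation.
\begin{itemize}
\item In $\mathbb{P}(u^\varepsilon\cdot\nabla u^\varepsilon)$, the terms involving $\mathbb{Q}u^\varepsilon$ vanish by \eqref{TD.4}.
\item The term $\mathbb{P}\,\partial_t(\varepsilon a^\varepsilon u^\varepsilon)$ and the other $\varepsilon a^\varepsilon$ contributions vanish because $\varepsilon a^\varepsilon$ is $O(\varepsilon)$.
\item The $\omega$ equation tends to $\partial_t\varrho+\mathrm{div}(\varrho v)=0$.
\end{itemize}
Uniqueness for \eqref{A5} then upgrades subsequential convergence to convergence of the whole family.

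I expect the main obstacle to be the uniform-in-$\varepsilon$ closure of the estimates, specifically the $\varepsilon\dot B^{d/2}_{2,1}$ bound for the nondissipative $\omega^\varepsilon$ and $a^\varepsilon$. Two features make this delicate: the commutator terms $[\dot\Delta_j,u\cdot\nabla]\omega$ must be absorbed using only the dissipation of $u$, and the pressure term $f(\varepsilon a)\nabla z/\varepsilon$ must be bounded with the correct power of $\varepsilon$ on both sides of the threshold $1/\varepsilon$. My first approach would be to estimate the pressure term as $\|a\|_{\dot B^{d/2}}\|z\|^{h,\varepsilon}_{\dot B^{d/2}}$. The $\varepsilon$ weight on $a$ then compensates the $\frac1\varepsilon$, and the $z$ factor is absorbed into the $\frac1\varepsilon$-damped high-frequency norm.
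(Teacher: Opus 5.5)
Your overall architecture matches the paper's: uniform bounds, Strichartz estimates for the acoustic pair $(\Lambda^{-1}{\rm div}\,u^\varepsilon,z^\varepsilon)$ via Lemma~\ref{LA.8}, Aubin--Lions compactness for $\omega^\varepsilon$ and for the solenoidal part of the velocity/momentum, and passage to the limit in the $\mathbb P$-projected momentum equation and in the $\omega^\varepsilon$ equation.

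The one real deviation is in the uniform bounds, and the reason you give for it is wrong: the change of variables \eqref{G4.1} does \emph{not} change the viscosity. Substituting \eqref{G4.1} into \eqref{A3}$_2$, the terms $\partial_t u$, $u\cdot\nabla u$ and $\mu\Delta u+(\mu+\lambda)\nabla{\rm div}\,u$ all pick up a factor $\varepsilon^3$, while $\nabla z$ picks up $\varepsilon^2$. Dividing by $\varepsilon^3$ and multiplying by $1+\varepsilon a^\varepsilon$ gives exactly \eqref{A4}$_2$ with $\bar\mu=\mu$ and $\bar\lambda=\lambda$. The relation $\mu=\varepsilon\bar\mu$ belongs to the Lions--Masmoudi scaling $(t,x)\mapsto(\varepsilon t,x)$, not to \eqref{G4.1}.

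Hence Theorem~\ref{T1.2} applies verbatim with $\varepsilon$-independent constants. The paper obtains global existence, uniqueness, \eqref{TD.2} and \eqref{TD.3} at once by transporting norms:
\begin{itemize}
\item $\|\varepsilon f(\varepsilon\cdot)\|_{\dot B^s_{2,1}}\sim\varepsilon^{1+s-\frac d2}\|f\|_{\dot B^s_{2,1}}$;
\item the frequency threshold $k=0$ becomes $2^k\varepsilon\sim1$;
\item $\|z\|_{L^1_t(\dot B^{\frac d2+1}_{2,1}+\dot B^{\frac d2}_{2,1})}$ becomes $\|z^\varepsilon\|^{\ell,\varepsilon}_{L^1_t(\dot B^{\frac d2+1}_{2,1})}+\frac1\varepsilon\|z^\varepsilon\|^{h,\varepsilon}_{L^1_t(\dot B^{\frac d2}_{2,1})}$.
\end{itemize}
Your $\varepsilon$-tracking energy method is Section~3 rewritten in rescaled variables. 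It works, but it buys nothing over the scaling argument.

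There are two smaller points.

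\textbf{Strichartz step and the ranges in \eqref{TD.4}.} Lemma~\ref{LA.8} gives no $L^2_t$ Strichartz estimate when $d=2,3$, since $r=2$ would require $(d-1)(\frac12-\frac1p)\ge1$. The paper instead uses $r=\frac{2p}{p-2}$ for $d\ge3$ and $r=\frac{4p}{p-2}$ for $d=2$, with the viscous term placed in the source. It then interpolates the low frequencies against $\|\cdot\|^{\ell,\varepsilon}_{L^1_t(\dot B^{\frac d2+1}_{2,1})}$ to reach $L^2_t$, and treats $2^k\varepsilon>2^{k_0}$ by Bernstein with a gain of the corresponding power of $\varepsilon$. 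This yields every exponent in $(2,\infty)$ for $d\ge3$, but only exponents in $(2,6)$ for $d=2$: there $r\ge4$ forces the interpolated exponent $q$ to satisfy $\frac1q\ge\frac16$. So the ranges are not produced the way you describe. The $d=2$ range $p\in(2,\infty)$ displayed in \eqref{TD.4} is not what this route gives, and the ranges there appear to be interchanged.

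\textbf{Convergence of the whole family.} Uniqueness for \eqref{A5} upgrades subsequential convergence to whole-family convergence only for $\mathbb Pu^\varepsilon$, whose limit solves the decoupled Navier--Stokes equation with prescribed datum $v_0$. Nothing is assumed on $\omega_0^\varepsilon$, so $\varrho_0=\gamma a_0-z_0$ is only a subsequential weak limit. The convergence of $\omega^\varepsilon$ therefore holds along subsequences, as in the paper.
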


\begin{rem}
The orthogonal projectors $\mathbb{P}$ and $\mathbb{Q}$ in Theorem \ref{T1.4} are defined as  
\begin{align*}
\mathbb P:={\rm Id}+\nabla(-\Delta)^{-1}\mathrm{div},\quad \mathbb{Q} := -\nabla(-\Delta)^{-1}\mathrm{div}.    
\end{align*}  
\end{rem}

\begin{rem}
The low Much number  limit result in Theorem \ref{T1.4} corresponds to the so-called {\it ill-prepared} data case, characterized by ${\rm div}\, u_0^\varepsilon = \mathcal{O}(1)$ and
  $\nabla z^\varepsilon  = \mathcal{O}(1)$,  % $\nabla P^\varepsilon  = \mathcal{O}(\varepsilon)$, 
 as opposed to the  {\it well-prepared}  case, which requires ${\rm div}\, u_0^\varepsilon = \mathcal{O}(\varepsilon)$ and 
 $\nabla z^\varepsilon = \mathcal{O}(\varepsilon )$. % $\nabla P^\varepsilon = \mathcal{O}(\varepsilon^2)$.
\end{rem}

Finally, we  establish the optimal time decay rate for the  compressible NST  system \eqref{A3} for $d\geq 3$. This optimality is evident by combining the results obtained in Theorems \ref{T1.5}--\ref{T1.7} below.

\begin{thm}[Upper-bound: the bounded condition] \label{T1.5}
Let $d\geq 3$. Under the assumptions of Theorem \ref{T1.2}, if the initial data $u_0$ and $z_0$ additionally satisfy  
\begin{align}\label{TE.1}
u_0^\ell,z_0^\ell\in   \dot B_{2,\infty}^{\sigma_0} \quad \text{with}\quad \sigma_0\in \Big[-\frac{d}{2},\frac{d}{2}-2\Big).
\end{align}
Then for all $t \geq 1$, there exists a universal constant $C_3 > 0$ such that  
\begin{align}\label{TE.2}
\|(u,z)(t)\|_{\dot B_{2,1}^\sigma}\leq&\, C_3 \delta_{*} (1+t)^{-\frac{1}{2}(\sigma-\sigma_0)}, \\  \label{TE.3}
\|u(t)\|^h_{\dot B_{2,1}^{\frac{d}{2}+1}}+\|z(t)\|^h_{\dot B_{2,1}^{\frac{d}{2}}}\leq&\, C_3 \delta_{*} (1+t)^{-\frac{1}{2}(\frac{d}{2}-1-\sigma_0)}, 
\end{align}
for all $\sigma\in\big(\sigma_0,\frac{d}{2}-1 \big]$. Here $\delta_{*}$ is defined by
\begin{align}\label{TE.4}
\delta_{*}:=\|(u_0^\ell,z_0^\ell)\|_{\dot B_{2,\infty}^{\sigma_0}} +\|u_0^h\|_{\dot B_{2,1}^{\frac{d}{2}-1}}+\|z_0^h\|_{\dot B_{2,1}^{\frac{d}{2} }} +\|a_0\|_{\dot B_{2,1}^{\frac{d}{2}-1}\cap \dot B_{2,1}^{\frac{d}{2} }}.  
\end{align}
\end{thm}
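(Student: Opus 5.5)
We follow the Lyapunov-functional approach of Guo--Wang and Xin--Xu, adapted so that the nondissipative mode $\omega=\gamma a-z$ stays bounded and never has to decay. The starting point is the linear part of \eqref{A3} restricted to $(u,z)$:
\[
\partial_t u-\mu\Delta u-(\mu+\lambda)\nabla{\rm div}\,u+\nabla z=F,\qquad \partial_t z+\gamma{\rm div}\,u=G .
\]
This is exactly the linearized isentropic Navier--Stokes system with sound speed $\sqrt\gamma$; the density $a$ enters only through $f(a)$ in $F$. So $(u,z)$ enjoy the full hyperbolic--parabolic dissipation: heat-like damping at low frequencies, and damping of $z$ together with parabolic smoothing of $u$ at high frequencies. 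The density is controlled only through the uniform bounds \eqref{TB.3}--\eqref{TB.4} of Theorem \ref{T1.2}, and it is never required to decay.

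\textbf{Step 1 (Lyapunov inequality).} Localize in frequency and build the usual hypocoercive functional $\mathcal L_j\sim\|\dot\Delta_j(u,z)\|_{L^2}^2$. At low frequencies we add a cross term $\eta\langle \dot\Delta_j u,\nabla\dot\Delta_j z\rangle$. At high frequencies we use the effective-flux or damped-mode technique on $z$ together with $\dot B^{\frac d2+1}$ smoothing for $u$. This yields
\[
\frac{d}{dt}\mathcal L(t)+c\Big(\|(u,z)\|^{\ell}_{\dot B_{2,1}^{\frac d2+1}}+\|u\|^h_{\dot B_{2,1}^{\frac d2+1}}+\|z\|^h_{\dot B_{2,1}^{\frac d2}}\Big)\le C\,(\text{nonlinear terms}),
\]
where $\mathcal L\sim\|(u,z)\|^\ell_{\dot B_{2,1}^{\frac d2-1}}+\|u\|^h_{\dot B_{2,1}^{\frac d2-1}}+\|z\|^h_{\dot B_{2,1}^{\frac d2}}$.

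Using the smallness from \eqref{TB.4}, the nonlinear terms are absorbed. These terms are $u\cdot\nabla u$, $u\cdot\nabla z$, $z\,{\rm div}\,u$, and $f(a)(\nabla z-\mu\Delta u-\cdots)$. Each is a product of a factor that stays bounded uniformly in time (namely $a$ in $\dot B^{\frac d2}_{2,1}$, or $u,z$ in critical norms) and a dissipated quantity. The $f(a)$ terms are handled by product and composition estimates, since $\|a\|_{\widetilde L^\infty_t(\dot B^{\frac d2}_{2,1})}\lesssim\delta_0$.

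\textbf{Step 2 (propagation of low-frequency negative regularity).} Show that
\[
\|(u,z)(t)\|^\ell_{\dot B_{2,\infty}^{\sigma_0}}\le C\delta_*\quad\text{for all }t .
\]
Apply $\dot\Delta_j$ for $j\le j_0$, use the linear low-frequency energy estimate in $\dot B^{\sigma_0}_{2,\infty}$, and bound $\|(F,G)\|^\ell_{L^1_t(\dot B^{\sigma_0}_{2,\infty})}$. The range $\sigma_0\in[-\frac d2,\frac d2-2)$ is what the product laws $\dot B^{\frac d2-1}_{2,1}\times\dot B^{\frac d2}_{2,1}\to\dot B^{\sigma_0}_{2,\infty}$ and the $L^2\times L^2\to\dot B^{-\frac d2}_{2,\infty}$ embedding allow; the requirement $d\ge3$ enters here.

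The main obstacle is the term $f(a)\nabla z$ (and $f(a)\Delta u$), because $a$ is merely bounded and carries no negative-regularity smallness. We would first try to split $f(a)\nabla z=f(a)\nabla z^\ell+f(a)\nabla z^h$. The first piece is bounded by $\|a\|_{\dot B^{\frac d2}_{2,1}}\|z^\ell\|_{\dot B^{\sigma_0+1}}$, which is controlled by interpolation between the $\dot B^{\sigma_0}_{2,\infty}$ and $\dot B^{\frac d2+1}_{2,1}$ dissipation. The second piece uses $\|a\|_{\dot B^{\frac d2-1}_{2,1}}\|z\|^h_{\dot B^{\frac d2}}$, which is integrable in time. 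This is why $\|a_0\|_{\dot B^{\frac d2-1}\cap\dot B^{\frac d2}}$ appears in $\delta_*$.

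The resulting inequality $X(t)\le C\delta_*+CX(t)\,\mathcal E(t)$, where $\mathcal E(t)$ is the left-hand side of \eqref{TB.4} (small by Theorem \ref{T1.2}), closes.

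\textbf{Step 3 (decay).} Interpolate the low-frequency norm as
\[
\|\cdot\|^\ell_{\dot B^{\frac d2-1}}\lesssim \big(\|\cdot\|^\ell_{\dot B^{\sigma_0}_{2,\infty}}\big)^{\theta}\big(\|\cdot\|^\ell_{\dot B^{\frac d2+1}}\big)^{1-\theta},\qquad \theta=\frac{2}{\frac d2+1-\sigma_0}.
\]
High frequencies trivially dominate $\mathcal L$. This gives
\[
\frac{d}{dt}\mathcal L+c\,(C\delta_*)^{-\frac{2}{\frac d2-1-\sigma_0}}\,\mathcal L^{1+\frac{2}{\frac d2-1-\sigma_0}}\le0,
\]
and hence $\mathcal L(t)\lesssim\delta_*(1+t)^{-\frac12(\frac d2-1-\sigma_0)}$. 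This is \eqref{TE.2} for $\sigma=\frac d2-1$ and \eqref{TE.3} for the high-frequency part of $z$. For the $\dot B^{\frac d2+1}$ high-frequency bound on $u$, we use parabolic smoothing on $[t/2,t]$ via a time-weighted estimate $t\|u\|^h_{\dot B^{\frac d2+1}}$.

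Finally, for $\sigma\in(\sigma_0,\frac d2-1)$ we interpolate the low frequencies between $\dot B^{\sigma_0}_{2,\infty}$ (bounded) and $\dot B^{\frac d2-1}_{2,1}$ (decaying), and bound the high frequencies by $\mathcal L$, which decays at least as fast. This gives \eqref{TE.2} for all $\sigma\in(\sigma_0,\frac d2-1]$.
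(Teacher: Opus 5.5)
Your overall scheme is sound, and replacing the paper's $t^M$-weighted functional $\mathcal{E}_M(t)$ (Lemmas~\ref{L5.2}--\ref{L5.3}) by a pointwise Lyapunov ODE is a legitimate alternative. However, the step you single out as the main obstacle fails as written.

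In Step 2 you bound the low-frequency piece of $f(a)\nabla z$ by $\|a\|_{\dot B^{\frac d2}_{2,1}}\|z^\ell\|_{\dot B^{\sigma_0+1}_{2,\infty}}$, and this must then be integrated in time. Since $a$ does not decay, all time integrability has to come from $z^\ell$. But $\|z^\ell\|_{\dot B^{\sigma_0+1}_{2,\infty}}$ is not in $L^1(\mathbb{R}^+)$. Interpolating between $\widetilde L^\infty_t(\dot B^{\sigma_0}_{2,\infty})$ and $L^1_t(\dot B^{\frac d2+1}_{2,1})$ only places it in $L^{\frac d2+1-\sigma_0}_t$, so your bound grows like $t^{1-1/(\frac d2+1-\sigma_0)}$. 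Decay cannot rescue this. Already for the heat flow, $\|e^{t\Delta}z_0\|_{\dot B^{\sigma_0+1}_{2,\infty}}$ decays only like $t^{-1/2}$ for general $z_0\in\dot B^{\sigma_0}_{2,\infty}$. Hence $X(t)\le C\delta_*+CX(t)\mathcal{E}(t)$ does not follow, and Step 3 is left without the uniform $\dot B^{\sigma_0}_{2,\infty}$ bound it interpolates against.

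The fix, which is what the paper does in \eqref{G5.8}, is to move one derivative from $a$ onto $z$:
\[
\|f(a)\nabla z^\ell\|_{\dot B^{\sigma_0}_{2,\infty}}\lesssim \|f(a)\|_{\dot B^{\frac d2-1}_{2,1}}\|z\|^\ell_{\dot B^{\sigma_0+2}_{2,\infty}},\qquad \|f(a)\|_{\dot B^{\frac d2-1}_{2,1}}\lesssim \|a\|_{\dot B^{\frac d2-1}_{2,1}}\big(1+\|a\|_{\dot B^{\frac d2}_{2,1}}\big).
\]
The first inequality is the product law $\dot B^{\frac d2-1}_{2,1}\times\dot B^{\sigma_0+1}_{2,\infty}\to\dot B^{\sigma_0}_{2,\infty}$, valid for $-\frac d2\le\sigma_0<\frac d2-1$. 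The second follows from $f(a)=-a-af(a)$.

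After time integration, the $z$-factor is exactly the low-frequency dissipation $\|z\|^\ell_{\widetilde L^1_t(\dot B^{\sigma_0+2}_{2,\infty})}$, which is propagated together with $\|(u,z)\|^\ell_{\widetilde L^\infty_t(\dot B^{\sigma_0}_{2,\infty})}$. The $a$-factor is $\lesssim\mathcal{E}(t)(1+\mathcal{E}(t))$, which is small, so the estimate closes. This low-frequency piece, not the high-frequency one, is where $\|a_0\|_{\dot B^{\frac d2-1}_{2,1}}$ in $\delta_*$ is genuinely needed. Your high-frequency pairing $\|a\|_{\dot B^{\frac d2-1}_{2,1}}\|z\|^h_{\dot B^{\frac d2}_{2,1}}$ happens to work too, since $\sigma_0+2\le\frac d2$.

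One further point on your Step 1. A pointwise-in-time Lyapunov inequality must contain $\|u\|^h_{\dot B^{\frac d2+1}_{2,1}}$ in the dissipation at each time, in order to absorb $f(a)\Delta u$ and $f(a)\nabla{\rm div}\,u$. The paper's functional $\mathcal{E}_k$ only damps $\|u\|^h_{\dot B^{\frac d2-1}_{2,1}}$. It recovers $\|u\|^h_{L^1_t(\dot B^{\frac d2+1}_{2,1})}$ from Lemma~\ref{LA.7} only in time-integrated form, which suits its weighted scheme but not an ODE. The effective-velocity construction you mention can supply the pointwise bound, but you need to carry it out.
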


\begin{rem}
Since the density $a$ lacks a dissipative structure, the nonlinear terms $f(a)\nabla z$, $f(a)\Delta u$, and $f(a)\nabla {\rm div}\, u$ in the $(a,z)$ system pose significant technical challenges, resulting in the restriction condition $\sigma_0\in\big[-\frac{d}{2},\frac{d}{2}-2\big)$ for the case when $d\geq 3$. 
This contrasts with the cases in \cite{XX-JDE-2021,LS-SIMA-2023} on the isentropic 
Navier-Stokes equations, where the density equation possesses a dissipative structure which allows $\sigma_0\in  \big[-\frac{d}{2}, \frac{d}{2} - 1\big)$, thereby accommodating the case $d = 2$.
\end{rem}

\begin{rem}
For the compressible isentropic Navier-Stokes equations  analyzed in \cite{Danchin-00-IM} 
it exhibits a hyperbolic-parabolic structure in terms of the variables $(a,u)$,  endows  
the density $a$ with dissipative properties. However, in our system \eqref{A3},   the dissipative mechanism
of $u$ does not effectively act on the density $a$, and 
the classical perturbation theory developed in \cite{MN-JMKU-1980} is not applicable.
% to our system \eqref{A3}. 
 This limitation fundamentally explains the absence of a long time decay estimate for the variable $a$. A similar situation arises in the pressureless Navier-Stokes system studied in \cite{Dr-2024-PMP}.
\end{rem}

\begin{rem}
Although \eqref{A3}$_2$--\eqref{A3}$_3$ constitute a hyperbolic-parabolic system in terms of $u$ and $z$, the coupling arising from nonlinear terms involving the  density  $a$, such as $f(a)\nabla z$, $f(a)\Delta u$  and $f(a)\nabla{\rm div}\,u$,  introduces significant challenges. Since the  density $a$ itself lacks a dissipation mechanism, these terms not only lead to a loss of derivatives but also complicate the decay estimates.
\end{rem}

\begin{rem}
Particularly, when $d=3$ and $\sigma_0 = -\frac{3}{2}$, it follows from \eqref{TE.2} that by utilizing the embeddings $L^1(\mathbb{R}^3) \hookrightarrow \dot{B}_{2,1}^{-\frac{3}{2}}(\mathbb{R}^3)$ and $\dot{B}_{2,1}^{0}(\mathbb{R}^3) \hookrightarrow L^2(\mathbb{R}^3)$, we obtain the following optimal time-decay estimate for $u$ and $z$  in the sense of Matsumura and Nishida \cite{MN-PJASAMS-1979}: 
\begin{align*}
\|u(t)\|_{L^2}+\|z(t)\|_{L^2} \lesssim (1+t)^{-\frac{3}{4}}.
\end{align*}
\end{rem}

If we further assume that the low frequency part of the initial data $(u_0, z_0)$, i.e. $(u_0^\ell, z_0^\ell)$, is sufficiently small in $\dot B_{2,\infty}^{\sigma_0}$ and that $a_0^\ell \in \dot B_{2,\infty}^{\sigma_0}$ for $\sigma_0 \in \big [-\frac{d}{2}, \frac{d}{2}-2\big)$, then the decay estimates of  $\|(u,z)(t)\|_{\dot B_{2,1}^{\sigma}}$ holds  for  the broader range $\sigma \in \big(\sigma_0, \frac{d}{2}+1\big]$, stated as follows.

\begin{thm}[Upper-bound: the smallness condition] \label{T1.6}
Let $d\geq 3$. Under the assumptions of Theorem \ref{T1.2}, if  the initial data $(a_0,u_0,z_0)$ further satisfy 
\begin{align}\label{TF.1}
  a_0^\ell,u_0^\ell,z_0^\ell \in \dot B_{2,\infty}^{\sigma_0},\quad\|( u_0^\ell,z_0^\ell)\|_{\dot B_{2,\infty}^{\sigma_0}}\leq\eps_1 \quad \text{with}\quad \sigma_0\in \Big[-\frac{d}{2},\frac{d}{2}-2\Big),
\end{align}
for some sufficiently small constant $\eps_1 > 0$.
Then for all $t \geq 1$, there exists a universal constant $C_4> 0$ such that  
\begin{align}\label{TF.2}
\|a(t)\|_{\dot B_{2,1}^\sigma}+\|(\gamma a-z)(t)\|_{\dot B_{2,1}^\sigma}\leq&\,C_4\delta_{*},  
\end{align}
for all $\sigma\in \big(\sigma_0,\frac{d}{2}\big]$,
and
\begin{align}\label{TF.3}
\|(u,z)(t)\|_{\dot B_{2,1}^\sigma}^\ell\leq&\, C_4 \delta_{*} (1+t)^{-\frac{1}{2}(\sigma-\sigma_0)}, 
\\ \label{TF.4}
\|u(t)\|_{\dot B_{2,1}^{\frac{d}{2}+1}}^h+\|z(t)\|_{\dot B_{2,1}^{\frac{d}{2}}}^h\leq&\, C_4 \delta_{*} (1+t)^{-\frac{1}{2}(\frac{d}{2}+1- \sigma_0 )}, 
\end{align}
for all $\sigma\in\big(\sigma_0,\frac{d}{2}+1\big]$. Here $\delta_{*}$ is defined by \eqref{TE.4}.
\end{thm}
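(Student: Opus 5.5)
We propose to prove Theorem \ref{T1.6} in two stages. First we propagate negative-regularity bounds uniformly in time. Then we run a Lyapunov-type decay argument with time weights, in the spirit of Xin--Xu \cite{XX-JDE-2021}.

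\textbf{Step 1: uniform $\dot B^{\sigma_0}_{2,\infty}$ bounds.} We localize \eqref{A3}$_2$--\eqref{A3}$_3$ at low frequencies. The linearized $(u,z)$ block is the isentropic hyperbolic--parabolic block, so the standard low-frequency energy (with the cross term $\langle \dot\Delta_j u,\nabla\dot\Delta_j z\rangle$) gives
\[
\|(u,z)\|^\ell_{\widetilde L^\infty_t(\dot B^{\sigma_0}_{2,\infty})}+\|(u,z)\|^\ell_{\widetilde L^1_t(\dot B^{\sigma_0+2}_{2,\infty})}\lesssim \|(u_0,z_0)\|^\ell_{\dot B^{\sigma_0}_{2,\infty}}+\|N(a,u,z)\|^\ell_{\widetilde L^1_t(\dot B^{\sigma_0}_{2,\infty})}.
\]
The nonlinear terms $u\cdot\nabla u$, $z\,{\rm div}\,u$, $u\cdot\nabla z$, $f(a)\nabla z$ and $f(a)\Delta u$ are estimated with the product law $\dot B^{\frac d2-1}_{2,1}\times\dot B^{\sigma_0}_{2,\infty}\to\dot B^{\sigma_0}_{2,\infty}$ (valid for $-\frac d2\le\sigma_0<\frac d2-1$) or $\dot B^{\frac d2}_{2,1}\times \dot B^{\sigma_0}_{2,\infty}\to \dot B^{\sigma_0}_{2,\infty}$. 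The terms involving $a$ are the delicate ones. For example, $f(a)\Delta u$ requires $\|a\|_{\widetilde L^\infty_t(\dot B^{\sigma_0}_{2,\infty})}\|u\|_{L^1_t(\dot B^{\frac d2+1}_{2,1})}$ plus a paraproduct piece $\|a\|_{L^\infty(\dot B^{d/2}_{2,1})}\|u\|_{L^1(\dot B^{\sigma_0+2}_{2,\infty})}$. This is why we must also propagate $\|a\|_{\dot B^{\sigma_0}_{2,\infty}}$.

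For $a$ (or, more conveniently, $\omega=\gamma a-z$ and then $a=(\omega+z)/\gamma$), we use the transport equations \eqref{A3}$_1$ and \eqref{A3-2}. Standard transport estimates in $\dot B^{\sigma_0}_{2,\infty}$ (commutator estimates valid for $\sigma_0\ge -\frac d2$) give
\[
\|\omega\|_{\widetilde L^\infty_t(\dot B^{\sigma_0}_{2,\infty})}\lesssim \|\omega_0\|_{\dot B^{\sigma_0}_{2,\infty}}+\int_0^t\|\nabla u\|_{\dot B^{d/2}_{2,1}}\|\omega\|_{\dot B^{\sigma_0}_{2,\infty}}+\|z\,{\rm div}\,u\|_{\dot B^{\sigma_0}_{2,\infty}}.
\]
Gronwall together with $\|u\|_{L^1(\dot B^{d/2+1}_{2,1})}\le C\delta_0$ from \eqref{TB.4} then gives boundedness. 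Combining this with the high-frequency bounds of Theorem \ref{T1.2}, the smallness $\eps_1$ closes a bootstrap. This yields the uniform bound by $C\delta_*$, and interpolating with \eqref{TB.3}--\eqref{TB.4} gives \eqref{TF.2} for $\sigma\in(\sigma_0,\frac d2]$.

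\textbf{Step 2: decay.} We build a functional $\mathcal L(t)\sim\|(u,z)\|^\ell_{\dot B^{\frac d2-1}_{2,1}}+\|u\|^h_{\dot B^{\frac d2-1}_{2,1}}+\|z\|^h_{\dot B^{\frac d2}_{2,1}}$, with $a$ excluded because it does not decay. The high-frequency part uses the effective-velocity/damped-mode structure for $(u,z)$ as in Theorem \ref{T1.2}. We aim for
\[
\frac{d}{dt}\mathcal L+c\,\mathcal H\le (\text{small})\,\mathcal H,\qquad \mathcal H\sim\|(u,z)\|^\ell_{\dot B^{\frac d2+1}_{2,1}}+\|u\|^h_{\dot B^{\frac d2+1}_{2,1}}+\|z\|^h_{\dot B^{\frac d2}_{2,1}}.
\]
Interpolating between $\dot B^{\sigma_0}_{2,\infty}$ (uniformly bounded by Step 1) and $\dot B^{\frac d2+1}$ gives $\mathcal H\gtrsim \mathcal L^{1+\frac{2}{d/2-1-\sigma_0}}\delta_*^{-\frac{2}{d/2-1-\sigma_0}}$. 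Solving the resulting ODE yields $\mathcal L\lesssim\delta_*(1+t)^{-\frac12(\frac d2-1-\sigma_0)}$, and interpolation again gives \eqref{TF.3} for $\sigma\le\frac d2-1$.

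For the full range $\sigma\le\frac d2+1$ at low frequencies and the sharper high-frequency rate \eqref{TF.4}, we use time-weighted estimates. We multiply the localized energy inequalities by $t^{M}$ and estimate $\|t^M(u,z)\|^\ell_{L^\infty(\dot B^{\frac d2+1})}$ and $\|t^M u\|^h_{L^\infty(\dot B^{\frac d2+1})}$. Low-frequency norms of higher index are bounded via Bernstein by $\dot B^{\frac d2-1}$ norms. The high-frequency parts are controlled through parabolic smoothing of $u$ and damping of $z$, using the extra exponential decay of the linear high-frequency semigroup. The improved rate $\frac12(\frac d2+1-\sigma_0)$ comes from the nonlinear source decaying faster than the linear low-frequency part.

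\textbf{Main obstacle.} The hard step is handling the terms involving the non-decaying density, $f(a)\nabla z$ and $f(a)(\mu\Delta u+(\mu+\lambda)\nabla{\rm div}\,u)$, both in the $\dot B^{\sigma_0}_{2,\infty}$ propagation and in the weighted decay estimates. There $a$ only contributes a bounded factor, so all the decay must be extracted from $u$ and $z$ without derivative loss. We would split these terms with Bony's decomposition into $\dot T_a(\cdot)+\dot T_{(\cdot)}a+R$, and bound $\dot T_{\nabla z}a$ using the low regularity of $a$ in $\dot B^{\sigma_0}_{2,\infty}$ together with the decay of $\nabla z$. This is exactly where the restriction $\sigma_0<\frac d2-2$ (so that $\sigma_0+2<\frac d2$ in the product laws) and $d\ge3$ enter.
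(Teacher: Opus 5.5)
\textbf{Where the plan works, and where it stops.} Your Step 1 is essentially the paper's Lemmas \ref{L5.1} and \ref{L5.4}: propagation of the $\dot B^{\sigma_0}_{2,\infty}$ norm for $(u,z)$, and for $a$ through the transport equation of $\omega=\gamma a-z$. The Lyapunov/interpolation ODE at the start of Step 2 is a standard alternative to the time-weighted argument used for Theorem \ref{T1.5}. But that ODE only reaches $\sigma\le\frac d2-1$.

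\textbf{The gap.} The genuinely new part of Theorem \ref{T1.6} is \eqref{TF.3} for $\sigma\in(\frac d2-1,\frac d2+1]$, together with \eqref{TF.4}, and your sketch does not reach it.
\begin{itemize}
\item The Bernstein bound $\|\cdot\|^\ell_{\dot B^{\frac d2+1}_{2,1}}\lesssim\|\cdot\|^\ell_{\dot B^{\frac d2-1}_{2,1}}$ only transfers the rate $\frac12(\frac d2-1-\sigma_0)$.
\item A $t^M$-weighted energy estimate with source in $L^1_t(\dot B^\sigma_{2,1})$ gives at best $\|(u,z)(t)\|^\ell_{\dot B^\sigma_{2,1}}\lesssim t^{-M}\int_0^t\tau^M\|N(\tau)\|^\ell_{\dot B^\sigma_{2,1}}\,d\tau$. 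Since $a$ does not decay, the paraproduct piece $\dot T_{\nabla z}f(a)$ of $f(a)\nabla z$ decays no faster than $\|\nabla z\|_{L^\infty}\sim\langle\tau\rangle^{-\alpha}$, with $\alpha=\frac12(\frac d2+1-\sigma_0)$. This yields $t^{1-\alpha}=t^{-\frac12(\frac d2-1-\sigma_0)}$ whatever $\sigma$ is.
\item For the same reason, the rate in \eqref{TF.4} does not come from a source decaying faster than the low-frequency part. It is exactly the low-frequency rate at regularity $\frac d2+1$, transmitted to high frequencies through $f(a)\nabla z^\ell$ and $f(a)\Delta u^\ell$. Your damped high-frequency argument is the right one, but it needs that low-frequency rate as input.
\end{itemize}

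\textbf{The missing idea: the paper's Duhamel argument (Lemmas \ref{L5.5}--\ref{L5.6}).} At low frequencies one measures the whole source in the lowest norm $\dot B^{\sigma_0}_{2,\infty}$ and lets the heat-type kernel produce the rate:
\[
\|(u,z)(t)\|^\ell_{\dot B^\sigma_{2,1}}\lesssim\langle t\rangle^{-\frac12(\sigma-\sigma_0)}\|(u_0,z_0)\|^\ell_{\dot B^{\sigma_0}_{2,\infty}}+\int_0^t\langle t-\tau\rangle^{-\frac12(\sigma-\sigma_0)}\|N(\tau)\|^\ell_{\dot B^{\sigma_0}_{2,\infty}}\,d\tau .
\]
The $a$-terms are bounded by
\[
\|a\|_{\dot B^{\sigma_0+1}_{2,\infty}}\|u\|_{\dot B^{\frac d2+1}_{2,1}}+\|a\|_{\dot B^{\sigma_0}_{2,\infty}}\|\nabla z\|^\ell_{\dot B^{\frac d2}_{2,1}}+\|a\|_{\dot B^{\sigma_0+1}_{2,\infty}}\|\nabla z\|^h_{\dot B^{\frac d2-1}_{2,1}},
\]
that is, the non-decaying $a$ at its lowest regularity against the fastest-decaying norms of $(u,z)$. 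All of these are $\lesssim\langle\tau\rangle^{-\alpha}$ times the functional $\mathcal Z(t)$ of \eqref{G5.32}. Since $\alpha>1$ and $\frac12(\sigma-\sigma_0)\le\alpha$, Lemma \ref{LA.9} gives $\langle t\rangle^{-\frac12(\sigma-\sigma_0)}$ for every $\sigma\le\frac d2+1$.

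This is where $a_0^\ell\in\dot B^{\sigma_0}_{2,\infty}$ is really needed. It is not needed for the $\dot B^{\sigma_0}_{2,\infty}$ propagation of $(u,z)$, which Lemma \ref{L5.1} carries out using only $\|a\|_{\dot B^{\frac d2-1}_{2,1}\cap\dot B^{\frac d2}_{2,1}}$.

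\textbf{Where the smallness enters.} Terms like $u\cdot\nabla u$, $zu$ and $z\,{\rm div}\,u$ are bounded by $\mathcal Z^2$, so the argument closes through $\mathcal Z\lesssim\delta_*+\delta_*\mathcal Z+\mathcal Z^2$. That quadratic bootstrap is where the smallness $\eps_1$ is actually used. In your plan, $\eps_1$ is spent in Step 1 instead. Those estimates are linear in the $\dot B^{\sigma_0}_{2,\infty}$ norms with $O(\delta_0)$ coefficients and need no such smallness.

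\textbf{A side remark on $\delta_*$.} $\delta_*$ in \eqref{TE.4} contains no $\|a_0^\ell\|_{\dot B^{\sigma_0}_{2,\infty}}$. So your bound of $\omega$ by $C\delta_*$ requires adding that norm; the last step of \eqref{G5.34} in the paper has the same issue.
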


\begin{rem}
From \eqref{TF.3} and \eqref{TF.4}, we directly obtain that 
\begin{align}\label{TTU}
 \|u(t)\|_{\dot B_{2,1}^\sigma} \lesssim&\,   (1+t)^{-\frac{1}{2}(\sigma-\sigma_0)},  \quad \sigma\in\Big(\sigma_0,\frac{d}{2}+1\Big],  \\ \label{TTZ}
  \|z(t)\|_{\dot B_{2,1}^\sigma} \lesssim&\,   (1+t)^{-\frac{1}{2}(\sigma-\sigma_0)},  \quad \sigma\in\Big(\sigma_0,\frac{d}{2} \Big].
\end{align}
In contrast to the assumption on the pressureless isentropic Navier-Stokes system studied in \cite{LNZ-2025-preprint}, which requires $a_0^\ell\in \dot B_{2,\infty}^{\sigma_0+1}$, our work maintains the  regularity assumption $a_0^\ell\in \dot B_{2,\infty}^{\sigma_0}$, owing to the presence of the additional nonlinear term $f(a)\nabla z$ in \eqref{A3}$_2$. By leveraging the newly introduced mode $\gamma a-z$, we derive an improved estimate for the density $a$ as stated in \eqref{TF.2}.
\end{rem}

\begin{rem}
The decay estimates in \eqref{TF.4} do not exhibit faster decay as we expected, since the density $a$ in our system lacks a dissipative structure, which prevents the nonlinear coupling term from contributing an additional decay rate. This behavior is markedly different from that in \cite{LS-SIMA-2023} on the coupled isentropic Navier-Stokes/Euler equations, where $(a^h,u^h)$ decay at a faster rate.
% %\emph{(}see the definitions
%of $\cdot^\ell, \cdot^h$, and the norms $\|\cdot\|_{\dot B_{2,1}^{\frac{d}{2}+1}}^\ell$ and $\|\cdot\|_{\dot B_{2,1}^{\frac{d}{2}+1}}^h$ in Section 2 below\emph{)}.
\end{rem}

\begin{rem}
For $p \geq 2$ and $t \geq 1$, let $\Lambda := (-\Delta)^{-\frac{1}{2}}$. By combining \eqref{TTU} and \eqref{TTZ} with the embedding $\dot B_{2,1}^{\frac{d}{2}-\frac{d}{p}}(\mathbb R^d) \hookrightarrow L^p(\mathbb R^d)$, we obtain the following $L^p$-type time decay  estimates for   $u$ and $z$:
\begin{align*}
\|\Lambda^\sigma u \|_{L^p}\lesssim&\, (1+t)^{-\frac{1}{2}(\sigma+\frac{d}{2}-\frac{d}{p}-\sigma_0)},\quad \text{with}\quad   \sigma+\frac{d}{2}-\frac{d}{p}\in \Big(\sigma_0,\frac{d}{2}+1\Big],\\
\|\Lambda^\sigma z \|_{L^p}\lesssim&\, (1+t)^{-\frac{1}{2}(\sigma+\frac{d}{2}-\frac{d}{p}-\sigma_0)},\quad \text{with}\quad   \sigma+\frac{d}{2}-\frac{d}{p}\in \Big(\sigma_0,\frac{d}{2}\Big].
\end{align*}
\end{rem}

To demonstrate the optimality of the time decay rate of the strong solution $(u,z)$ to the  compressible NST system \eqref{A3}, it is essential to establish a lower bound for them. To this end, we introduce a subset of the Besov spaces $\dot B_{2,\infty}^{\sigma_1}$ with $\sigma_1 \in \mathbb{R}$ (see \cite[Section 3]{Bl-2016-SIMA} and \cite[Theorem 1.2]{BSXZ-Adv-2024}):
\begin{equation}\label{G1.12}
\dot{\mathfrak{B}}_{2,\infty}^{\sigma_1}:=
\bigg\{ f\in\dot B_{2,\infty}^{\sigma_1}\,\bigg|
 \begin{array}{l}
 \exists \,  c_0,   M_0>0, \,\exists \, \{k_j\}_{j\in\mathbb N}\subset \mathbb Z,\,  \textrm{such that}\, \, k_j\rightarrow-\infty,   \\
  |k_j-k_{j-1}|\leq M_0, \,\,\textrm{and}\,\, 2^{\sigma_1k_j} \|\dot\Delta_{k_j}f\|_{L^2}\geq c_0
 \end{array}
   \bigg\}.
\end{equation} 

\begin{thm}[Lower-bound] \label{T1.7}
Let $d\geq 3$. Under the assumptions of Theorem \ref{T1.2}, if the initial data $(a_0,u_0,z_0)$ further satisfy
\begin{align*}
a_0^\ell\in \dot B_{2,\infty}^{\sigma_0},\quad u_0^\ell,z_0^\ell\in 
\dot {\mathfrak B}_{2,\infty}^{\sigma_0},\quad\|( u_0^\ell,z_0^\ell)\|_{\dot B_{2,\infty}^{\sigma_0}}\leq\eps_2 \quad \text{with}\quad \sigma_0\in\Big[-\frac{d}{2},\frac{d}{2}-2\Big),
\end{align*}
for some sufficiently small constant $\eps_2 > 0$.
Then for all $t \geq 1$, there exist two universal constants $c_5>0$ and  $C_5 > 0$ such that  
\begin{align}\label{TG.1}
c_5(1+t)^{-\frac{1}{2}(\sigma-\sigma_0)}\leq \|(u,z)(t)\|_{\dot B_{2,1}^\sigma}\leq C_5 (1+t)^{-\frac{1}{2}(\sigma-\sigma_0)},    
\end{align}
for all $\sigma\in \big(\sigma_0,\frac{d}{2}\big]$, where  $\dot{\mathfrak{B}}_{2,\infty}^{\sigma_0}$ is defined in \eqref{G1.12}.
\end{thm}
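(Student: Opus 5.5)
The upper bound in \eqref{TG.1} is already contained in Theorem \ref{T1.6}. With $\eps_2\le\eps_1$, estimates \eqref{TF.3}--\eqref{TF.4} together with the embedding of high frequencies of $\dot B^{\frac d2+1}_{2,1}$ (respectively $\dot B^{\frac d2}_{2,1}$) into $\dot B^\sigma_{2,1}$ for $\sigma\le\frac d2$ give $\|(u,z)(t)\|_{\dot B^\sigma_{2,1}}\le C_5(1+t)^{-\frac12(\sigma-\sigma_0)}$. So the work is the lower bound. The plan is to write the solution as the linear evolution of the data plus a Duhamel correction, bound the linear part from below using the structure of $\dot{\mathfrak B}^{\sigma_0}_{2,\infty}$, and show the correction decays strictly faster.

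\textbf{Linear lower bound.} First we study the linearized system $\partial_tu-\mu\Delta u-(\mu+\lambda)\nabla{\rm div}\,u+\nabla z=0$, $\partial_tz+\gamma{\rm div}\,u=0$.
\begin{itemize}
\item Project onto $\mathbb P u$ and onto $(\Lambda^{-1}{\rm div}\,u,z)$. In low frequencies $|\xi|\le 2^{J_0}$, the incompressible part evolves by $e^{\mu t\Delta}$, and the compressible part is a damped wave with eigenvalues $-\frac{2\mu+\lambda}{2}|\xi|^2\pm i\sqrt\gamma|\xi|+O(|\xi|^3)$.
\item A Lyapunov functional built from $|\widehat{\mathbb Q u}|^2+\gamma^{-1}|\hat z|^2$ is equivalent to the energy and yields the two-sided bound $|\widehat{(u,z)}_L(t,\xi)|\ge c\,e^{-C|\xi|^2t}|(\hat u_0,\hat z_0)(\xi)|$. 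The oscillation only rotates the pair and does not reduce its modulus.
\item Hence $\|\dot\Delta_k(u_L,z_L)(t)\|_{L^2}\ge c\,e^{-C2^{2k}t}\|\dot\Delta_k(u_0,z_0)\|_{L^2}$ for $k\le J_0$.
\item Given $t\ge1$, choose $k_j$ from \eqref{G1.12} with $2^{2k_j}t\sim1$. This is possible because consecutive $k_j$ differ by at most $M_0$.
\item Then $\|(u_L,z_L)(t)\|_{\dot B^\sigma_{2,1}}\ge c\,2^{(\sigma-\sigma_0)k_j}\,2^{\sigma_0k_j}\|\dot\Delta_{k_j}(u_0,z_0)\|_{L^2}\ge c_0c\,t^{-\frac12(\sigma-\sigma_0)}$.
\end{itemize}
If the $\mathfrak B$ condition is imposed componentwise, we handle $u_0$ and $z_0$ through the combined energy, which controls $(\hat u,\hat z)$ from below by the data jointly.

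\textbf{Nonlinear difference.} Next we estimate $(u-u_L,z-z_L)$, driven by the right-hand sides of \eqref{A3}, in low frequencies.
\begin{itemize}
\item Run the same low-frequency decay argument as in the proof of Theorem \ref{T1.6}, with data $0$ and sources $u\cdot\nabla u$, $f(a)\nabla z$, $f(a)\mathcal A u$, $u\cdot\nabla z$, $z\,{\rm div}\,u$.
\item Measure the sources in $\dot B^{\sigma_0}_{2,\infty}$, using product laws $\dot B^{\frac d2}_{2,1}\times\dot B^{s}_{2,1}\to\dot B^{\sigma_0}_{2,\infty}$ (or the $\dot B^{-\frac d2}_{2,\infty}$ endpoint).
\item Feed in the decay rates \eqref{TF.3}--\eqref{TF.4} and the uniform bound \eqref{TF.2} on $a$.
\item This should give $\|(u-u_L,z-z_L)(t)\|^\ell_{\dot B^\sigma_{2,1}}\le C\delta_*\,\eps_2\,(1+t)^{-\frac12(\sigma-\sigma_0)}$ (gain of a small factor), or a gain in the rate.
\end{itemize}

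\textbf{Main obstacle.} This step is where the difficulty lies, because $a$ does not decay. The terms $f(a)\nabla z$ and $f(a)\mathcal Au$ gain no extra time decay from $a$, only smallness, and $\|a\|_{\dot B^{\sigma_0}}$-type norms are merely bounded.
\begin{itemize}
\item First attempt: show that the smallness $\eps_2$ of the low-frequency data, together with $\|a\|_{L^\infty_t(\dot B^{\frac d2}_{2,1})}\lesssim\delta_0$, makes the Duhamel term at most $C(\delta_0+\eps_2)(1+t)^{-\frac12(\sigma-\sigma_0)}$. Since $c_0$ is fixed and $\delta_0,\eps_2$ are small, it is then absorbed by the linear lower bound.
\item Here the key point is that $c_0$ in \eqref{G1.12} is not required to be small relative to the data norm. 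We would scale so that $\eps_2$ controls the full norm while $c_0/\eps_2$ stays bounded below, i.e.\ compare ratios. Equivalently, we prove the correction is $\le C\delta_*\eta(1+t)^{-\frac12(\sigma-\sigma_0)}$ with $\eta\to0$ as the total smallness shrinks.
\item The restriction $\sigma_0<\frac d2-2$ is needed so that time integrals like $\int_0^t(1+t-\tau)^{-\frac12(\sigma-\sigma_0)}(1+\tau)^{-\frac12(\frac d2+1-\sigma_0)}\,d\tau$ converge with the correct rate.
\end{itemize}

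\textbf{Conclusion.} Combining the linear lower bound with the absorbed correction, and observing that high frequencies contribute nonnegatively, gives the left inequality of \eqref{TG.1} for $t\ge1$.
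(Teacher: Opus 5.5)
\textbf{Comparison with the paper.} Your architecture is the paper's:
\begin{itemize}
\item the upper bound is taken from Theorem~\ref{T1.6};
\item the solution is split as $(u,z)=(u_L,z_L)+(u_N,z_N)$;
\item the linear part gets a pointwise low-frequency lower bound, combined with a choice of $k_{j_0}$ satisfying $2^{k_{j_0}}\sim\langle t\rangle^{-1/2}$ through the gap bound $M_0$ (Lemmas~\ref{L5.7}--\ref{L5.8});
\item $(u_N,z_N)$ is bounded by Duhamel with the sources measured in $\dot B^{\sigma_0}_{2,\infty}$, exactly as in Lemma~\ref{L5.5} (this is Lemma~\ref{L5.9}).
\end{itemize}
For the linear part your energy route is cleaner than the paper's explicit Green matrix. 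With $m=\Lambda^{-1}{\rm div}\,u$, the cross terms cancel exactly: $\frac{{\rm d}}{{\rm d}t}\big(|\widehat m|^2+\gamma^{-1}|\widehat z|^2\big)=-2(2\mu+\lambda)|\xi|^2|\widehat m|^2$. Hence $|\widehat m|^2+\gamma^{-1}|\widehat z|^2\ge e^{-2(2\mu+\lambda)|\xi|^2t}\big(|\widehat m_0|^2+\gamma^{-1}|\widehat z_0|^2\big)$ for every $\xi$. This needs no restriction $|\xi|\le\beta$ and no trigonometric expansion.

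\textbf{The gap: the absorption step.} By \eqref{G1.12}, $c_0\le 2^{\sigma_0k_j}\|\dot\Delta_{k_j}u_0^\ell\|_{L^2}\le\|u_0^\ell\|_{\dot B^{\sigma_0}_{2,\infty}}\le\eps_2$.
\begin{itemize}
\item So $c_0$ is \emph{not} fixed while the smallness parameters shrink. A correction of size $C(\delta_0+\eps_2)(1+t)^{-\frac12(\sigma-\sigma_0)}$ can never be absorbed into $c_0(1+t)^{-\frac12(\sigma-\sigma_0)}$.
\item A gain in the rate is not available in general either. The $a$-terms inherit no decay from $a$, and at $\sigma_0=-\frac d2$ a product cannot be placed in a low-frequency space better than $\dot B^{-\frac d2}_{2,\infty}$.
\item What is needed is \emph{multiplicative} smallness. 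Every source term in $F_1,F_2$ is at least linear in $(u,z)$. The paper exploits this by rerunning the estimates of Lemma~\ref{L5.5} with zero data, obtaining $\|(u_N,z_N)(t)\|^\ell_{\dot B^\sigma_{2,1}}\lesssim(\delta_*\mathcal Z(t)+\mathcal Z^2(t))\langle t\rangle^{-\frac12(\sigma-\sigma_0)}\lesssim\delta_*^2\langle t\rangle^{-\frac12(\sigma-\sigma_0)}$, and then subtracts this from the linear bound.
\item Your hoped-for bound $C\delta_*\eps_2$ is not what these estimates give. The high-frequency parts of $(u,z)$ are controlled by $\|u_0^h\|_{\dot B^{\frac d2-1}_{2,1}}+\|z_0^h\|_{\dot B^{\frac d2}_{2,1}}$, not by $\eps_2$. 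They interact with $a$ (and with themselves) and feed the low frequencies.
\end{itemize}

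\textbf{A condition you must state explicitly.} Even the quadratic bound closes only if $C\delta_*^2\le\frac12 c_0$. The quantity $\delta_*$ contains $\|a_0\|_{\dot B^{\frac d2-1}_{2,1}\cap\dot B^{\frac d2}_{2,1}}$ and the high-frequency norms, none of which $c_0\,(\le\eps_2)$ controls. So this is a genuine nondegeneracy relation between $c_0$ and $\delta_*$, for instance $c_0\ge\kappa\delta_*$ with $\delta_*$ small depending on $\kappa$. The paper's final step $c_0-\delta_*^2\ge c_7$ uses it tacitly. Your ``compare ratios'' remark points the right way, but the ratio must be taken against $\delta_*$, not $\eps_2$. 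It should be stated as a hypothesis, not derived from $\eps_2$-smallness.
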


\begin{rem}
Since the analysis focuses on the hyperbolic-parabolic structure of the coupled solution $(u, z)$ governed by the equations \eqref{A3}$_2$--\eqref{A3}$_3$, the condition $a_0^\ell \in \dot{\mathfrak{B}}_{2,\infty}^{\sigma_0}$ is no longer necessary. This differs slightly from \cite{BSXZ-Adv-2024} on the isentropic Navier-Stokes equations, which requires $a_0^\ell \in \dot{\mathfrak{B}}_{2,\infty}^{\sigma_0}$. 
\end{rem}

\begin{rem}
In the current framework, a lower bound estimate for $\|u\|_{\dot B_{2,1}^{\sigma}}$ with $\sigma \in \big(\frac{d}{2}, \frac{d}{2}+1\big]$ cannot be established due to the intrinsic coupling between $u$ and $z$. This stands in contrast to \cite{LNZ-2025-preprint}, where only the velocity equation for $u$ was analyzed, enabling a high regularity estimate of $u$.
\end{rem}

\begin{rem}
Unlike the isentropic Navier-Stokes equations  studied  in \cite{CTWZ-JDE-2020,LZ-M2AS-2011}, which impose additional smallness assumptions on the initial data at low frequencies, the present framework does not require such restrictions.  Inspired by \cite{BSXZ-Adv-2024}, we find that, thanks to the  coupling mechanism of $(u, z)$, the hyperbolic component in the frequency spectrum can be effectively suppressed during spectral analysis.
\end{rem}

\subsection{Strategies in our proofs}
Different from   the classical compressible Navier-Stokes-Fourier system studied \cite{MN-PJASAMS-1979,MN-JMKU-1980,Danchin-01-CPDE}, 
where the density $a$ exhibits a dissipative structure and possesses a long-time decay rate, in our compressible NST system \eqref{A3}, the density lacks such a dissipative structure and does not decay over time.
This absence poses a significant challenge in establishing the global well-posedness and time decay rates 
of solutions to the system \eqref{A3}.

In the proof of Theorem \ref{T1.2}, in contrast to the approaches developed in \cite{CMZ-CPAM-2010,Danchin-00-IM} and \cite[Section 4]{Danchin-2018}, where the orthogonal projectors $\mathbb{P}$ and $\mathbb{Q}$ are applied to decompose the velocity field into divergence-free and potential components, respectively, 
we estimate the incompressible part $\mathbb{P}u$ and the potential part $\mathbb{Q}u$ separately in \emph{both low and high frequency regimes}. In particular, for the high-frequency component, the effective velocity method introduced in \cite{DH-MATHANN-2016,Haspot-2011-JDE} is employed. 
In this paper, we treat the subsystem \eqref{A3}$_2$--\eqref{A3}$_3$ as a {\it new} hyperbolic-parabolic system in the variables $(u,z)$. By applying hyperbolic-parabolic theory in \cite{SK-1985-HMJ} , we construct a frequency-localized Lyapunov inequality (see \eqref{G3.9}--\eqref{G3.11}), which captures the dissipative behavior of $(u,z)$ across different frequency ranges: at low frequencies $k \leq 0$, $(u,z)$ behaves like a heat-diffusive system, while at high frequencies $k \geq -1$, it exhibits a damping effect. 
Furthermore, certain nonlinear terms such as $f(a)\nabla z$, $f(a)\Delta u$, and $f(a)\nabla{\rm div}\, u$, lead to a loss of derivatives due to the absence of a dissipative structure in the density $a$. This necessitates more refined estimates for both the high- and low-frequency components of the solution.

On the other hand, for the remaining subsystem \eqref{A3}$_1$, the main difficulty lies in establishing an estimate for $\|a\|_{\widetilde L_t^\infty(\dot B_{2,1}^{\frac{d}{2}-1})}$, as the linear term ${\rm div}\,u$ poses a significant obstacle --- particularly because $\|u\|_{L_t^1(\dot B_{2,1}^{\frac{d}{2}})}$ cannot be controlled. To overcome this issue, we analyze the equation \eqref{A3-2} governing the new variable $\omega =\gamma a - z$, and establish an estimate for $\omega$ in the space $\dot B_{2,1}^{\frac{d}{2}-1} \cap \dot B_{2,1}^{\frac{d}{2}}$ (see Lemma \ref{L3.3}). By means of this new {\it mode} $\omega$, we are able to derive a desired estimate of  $a$. Consequently, we obtain uniform {a priori} estimates with critical regularity. Combined with the local well-posedness result established in Theorem \ref{T1.1}, this guarantees the global well-posedness of solutions to the system \eqref{A3}.

To prove Theorem \ref{T1.4}, we first employ a rescaling argument analogous to that used in the isentropic viscous flows \cite{PM-JMPA-1998,Danchin-2002,DH-MATHANN-2016}, specifically by leveraging the uniform estimates in critical norms established in Theorem \ref{T1.2} with $\varepsilon=1$. Since the density $a^{\varepsilon}$ in \eqref{A4} lacks a dissipative structure, we initially focus on the equation \eqref{A4}$_3$. Thanks to the dissipative nature of the coupling term ${\rm div}\,(zu)$, we  adapt the method from \cite[Section 5.1]{Danchin-2018} to show that \eqref{A4}$_3$ converges to \eqref{A5}$_3$ in the sense of distributions.
Since $a^{\varepsilon} \notin L^2(\mathbb R^+; \dot B_{2,1}^{\frac{d}{2}})$, it follows that ${\rm div}\,(a^{\varepsilon} u^{\varepsilon }) \notin L^1(\mathbb R^+; \dot B_{2,1}^{\frac{d}{2}-1})$, which prevents the previous argument from being applied to establish the convergence of \eqref{A4}$_1$ toward \eqref{A5}$_1$ in the sense of distributions. To overcome this difficulty, we introduce the new variable $\omega^\varepsilon := \gamma a^\varepsilon - z^\varepsilon$, which satisfies  
\begin{align}\label{NJKW}  
\partial_t \omega^{\varepsilon}+{\rm div}(\omega^{\varepsilon }u^{\varepsilon }) -  (\gamma-1)  z^{\varepsilon} {\rm div}\,u^{\varepsilon} = 0.    
\end{align}  
We then employ the approach used in \cite[Section 6]{LSZ-2025-arXiv} to rigorously justify the convergence of \eqref{NJKW} toward \eqref{A5}$_1$ in the sense of distributions. 
For the remaining subsystem \eqref{A4}$_2$, since $a^{\varepsilon}$ in \eqref{A4} lacks a dissipative structure, analogous to that in \cite[Section 6]{LSZ-2025-arXiv}, we apply the operator $\mathbb P$ to \eqref{A4}$_2$ to obtain
\begin{align}
\label{NJKWW}
\partial_t \mathbb P m^{\varepsilon } - \bar\mu\Delta\mathbb P m^{\varepsilon } + \mathbb P{\rm div}\, (m^{\varepsilon }\otimes u^{\varepsilon }) = 0,
\end{align}  
where the momentum is defined as $m^{\varepsilon } := (1+\varepsilon a^{\varepsilon })u^{\varepsilon }$. Using some refined analytical techniques, we establish that $\partial_t\mathbb P m^{\varepsilon } - \bar\mu\Delta\mathbb P m^{\varepsilon } \rightharpoonup v - \bar\mu \Delta v$ as $\varepsilon\rightarrow 0$. To further prove that $\mathbb P{\rm div}\,(m^{\varepsilon } \otimes u^{\varepsilon }) \rightharpoonup \mathbb P(v\cdot\nabla v)$ as $\varepsilon\rightarrow 0$, we require the strong convergence ${\rm div}\,u^\varepsilon \to 0$ as $\varepsilon\rightarrow 0$. To achieve this, we derive dispersive estimates for $z^{\varepsilon }$ and $\mathbb Q u^{\varepsilon }$, following the method originally introduced in \cite{Danchin-2002}. As a result, we obtain the convergence rates stated in \eqref{TD.4} for $( \mathbb Qu^\varepsilon,z^\varepsilon)$, which  allow  us to rigorously justify the low Mach number limit.

In the proof of Theorem \ref{T1.5}, the main difficulty arises from the nonlinear terms involving the density $a$, such as $f(a)\nabla z$, $f(a)\Delta u$,   and $f(a)\nabla{\rm div}\,u$. Due to technical challenges, we need restrict $\sigma_0$ to the interval $\big[-\frac{d}{2}, \frac{d}{2}-2 \big)$, which implies that our analysis is limited to $\mathbb{R}^3(d \geq 3)$. This situation contrasts with previous works such as \cite{LS-SIMA-2023, DX-ARMA-2017} on the isentropic Navier-Stokes equations, where the density equation exhibits a dissipative structure which permits $\sigma_0 \in \big[-\frac{d}{2}, \frac{d}{2} - 1\big)$, thus including the two-dimensional case.  
In Lemma \ref{L5.1}, we first establish the propagation of the $\dot B_{2,\infty}^{\sigma_0}$ norm of $(u,z)$ in the low-frequency regime. Then, by employing the time-weighted energy method from \cite{LS-SIMA-2023} and deriving time-weighted estimates for $(u,z)$ at both high and low frequencies in Lemmas \ref{L5.2}--\ref{L5.3}, we obtain an estimate for $\mathcal{E}_M(t)$ (see its definition in \eqref{G5.9}): 
\begin{align*}
  \mathcal{E}_{M}(t) \lesssim \delta_{*} t^{M - \frac{1}{2}\left(\frac{d}{2} - 1 - \sigma_0\right)},
\end{align*}
which yields \eqref{TE.2}. Additionally, \eqref{TE.3} follows from maximal regularity estimates provided in Lemma \ref{LA.7}. Thus, we complete the proof of the time decay estimates of $(u,z)$ in the case of lower regularity.   

To obtain the time decay estimates for $(u,z)$ in the case of higher regularity stated in Theorem \ref{T1.6}, 
a stronger smallness assumption on $ (u_0^\ell,z_0^\ell)$ is required. 
Following some ideas developed in \cite{GW-CPDE-2012,XX-JDE-2021}, we establish optimal time decay rates for $(u,z)$ in both low and high frequencies separately, as presented in Lemmas \ref{L5.5}--\ref{L5.6}. A key difficulty arises from the derivative loss induced by the density $a$, which lacks a dissipative structure. Consequently, we require estimates for the density $a$ in   Besov space $\dot B_{2,\infty}^{\sigma_0} \cap \dot B_{2,\infty}^{\sigma_0+1}$. However, obtaining the $\dot B_{2,\infty}^{\sigma_0}$ estimate for the density $a$  is challenging due to the presence of the linear term ${\rm div}\,u$ in the  equation \eqref{A3}$_1$. To overcome this
difficulty, we introduce a new method by analyzing the {\it new mode} $\omega =\gamma a - z$ in Lemma \ref{L5.4}, from which the desired estimate for the density $a$ can be derived. 
This allows us to close the energy estimate for $\mathcal{Z}(t)$ (see its definition in \eqref{G5.32}) and thereby completing the proof of Theorem \ref{T1.6}. 
It is worth noting that the decay estimates in \eqref{TF.4} do not exhibit accelerated decay, this behavior contrasts with the results in \cite{LS-SIMA-2023}, where $(a^h,u^h)$ display faster decay rates.

Finally, to derive the lower bound estimates of $(u,z)$, we decompose them into a linear component $(u_L,z_L)$ and a nonlinear component $(u_N,z_N)$.
For the linearized system \eqref{G5.53}, we apply the so-called Hodge decomposition to split the system into \eqref{G5.54} and \eqref{G5.55}. Then, by adapting the method from \cite[Section 3]{BSXZ-Adv-2024}, we establish in Lemma \ref{L5.7} the lower bound decay rate for the pointwise behavior of the solution $(u_L,z_L)$ in the low-frequency regime. It is worth noting that we have eliminated the hyperbolic structure of $(u,z)$ at low frequencies, which \emph{removes} the need to impose additional smallness assumptions on the low-frequency part of the initial data --- a key distinction from earlier works such as \cite{LZ-M2AS-2011} on the isentropic Navier-Stokes equations. Subsequently, through linear analysis, we derive the estimates of $(u_L,z_L)$ in the homogeneous Besov space $\dot B_{2,1}^{\sigma}$ with $\sigma \in (\sigma_0, \frac{d}{2}]$, as stated in Lemma \ref{L5.8}. The corresponding estimates for the nonlinear component $(u_N,z_N)$ is obtained in Lemma \ref{L5.9} via nonlinear analysis. By applying Duhamel's principle and combining the estimates of both the linear and nonlinear parts, we ultimately obtain the desired estimates for $(u,z)$ in Theorem \ref{T1.7}.

\subsection{Structure of this paper}
The rest of this paper is organized as follows. 
In Section 2, we introduce the necessary preliminaries on notations, the Littlewood–Paley decomposition, and certain mixed Besov spaces.
In Section 3, we establish a priori estimates for the solution $(a, u, z)$ to 
prove the global well-posedness of the Cauchy problem \eqref{A3}--\eqref{A3-1}. 
In Section 4, we rigorously justify the low Mach number limits stated in Theorem \ref{T1.4}. 
In Section 5, we derive the optimal decay rates of $(u, z)$ in $L^2$-type Besov norms.
For the upper bound estimates of $(u, z)$, we first derive decay estimates at low regularity; furthermore, under a smallness assumption on the initial data $( u_0, z_0)$, we obtain decay estimates at higher regularity. 
For the lower bound analysis, we first employ spectral analysis to establish pointwise estimates of the solution at low frequencies, and then combine linear and nonlinear analysis to derive the time decay rates of $(u, z)$. 
Finally, Appendix A collects several fundamental properties of Besov spaces and essential technical lemmas.

\section{Preliminaries}
In this section, we introduce several important notations, the Littlewood-Paley decomposition, Besov spaces  and Chemin-Lerner spaces, which will be frequently used throughout this paper.

\subsection{Notations}
The letters $C $ and $c$ represent two generic positive constants that are independent of both time $t$ and the Mach number $\varepsilon$. The notation $A \lesssim B$ means $A \leq CB$, $A \gtrsim B$ means $A \geq cB$, and $A \sim B$ signifies that $cB \leq A \leq CB$. For any Banach space $X$ and functions $f, g \in X$, we define $\|(f,g)\|_{X} := \|f\|_{X} + \|g\|_{X}$. For any $T > 0$ and $1 \leq p \leq \infty$, $L^p(0,T;X)$ denotes the space of measurable functions $f: [0,T] \to X$ such that the mapping $t \mapsto \|f(t)\|_{X}$ belongs to $L^p(0,T)$, equipped with the norm $\|\cdot\|_{L^p(0,T;X)} =: \|\cdot\|_{L_T^p(X)}$. Let $\mathcal{F}(f)=\widehat{f}$ and $\mathcal{F}^{-1}(f)=\check{f}$ denote the Fourier transform of $f$ and its inverse, respectively.

\subsection{Littlewood-Paley decomposition}
We now recall the Littlewood–Paley decomposition. For further details, interested reader can refer to \cite[Chapters 2–3]{BCD-Book-2011}. Let $\chi(\xi)$ be a smooth, radial, non-increasing function supported in $B(0,\frac{4}{3})$ such that $\chi(\xi) = 1$ for all $\xi \in B(0,\frac{3}{4})$. Define $\phi(\xi) := \chi\big(\frac{\xi}{2}\big) - \chi(\xi)$. Then the function $\phi$ satisfies
\begin{align*}
\sum_{k \in \mathbb{Z}} \phi(2^{-k}\xi) = 1 \quad \text{for all } \xi \neq 0,
\quad \text{and} \quad
\mathrm{supp}\,\phi \subset \Big\{ \xi \in \mathbb{R}^d \,\Big|\, \frac{3}{4} \leq |\xi| \leq \frac{8}{3} \Big\}.
\end{align*}
For any $k \in \mathbb{Z}$, the homogeneous dyadic block $\dot{\Delta}_k$ is defined by  
\begin{align*}
\dot{\Delta}_k f := \mathcal{F}^{-1}\big( \phi(2^{-k}\cdot)\mathcal{F}(f) \big) = 2^{kd} h(2^k\cdot) \ast f, \quad \text{where} \quad h := \mathcal{F}^{-1}\phi.
\end{align*}
Let $\mathcal{P}$ denote the class of all polynomials  on $\mathbb R^d$,   and let
$\mathcal{S}^\prime_h=\mathcal{S}^\prime/\mathcal{P}$ represent the
tempered distributions on $\mathbb R^d$ modulo polynomials.  Then for any $f\in \mathcal{S}^\prime_h$, one gets 
\begin{align*}
f=\sum_{k\in\mathbb Z}\dot\Delta_k f  ,\quad\text{where}\quad \dot\Delta_k\dot\Delta_l=0\quad {\rm if}\quad |k-l|\geq2. 
\end{align*}

With the aid of those dyadic blocks, we next present the following definition of homogeneous Besov spaces and mixed space-time Besov spaces.

\subsection{Besov spaces  and Chemin-Lerner spaces}

\begin{defn}
For $ s \in \mathbb{R} $ and $ 1 \leq p, r \leq \infty $, the homogeneous Besov spaces $ \dot{B}_{p,r}^s$ are defined as 
\begin{align*}
\dot B_{p,r}^s:=\big\{ f\in \mathcal{S}^\prime_h \,\big{|}\, \|f\|_{\dot{B}_{p,r}^s}:=\big\|\{2^{ks}\|\dot\Delta_k u\|_{L^p}\}_{k\in\mathbb Z}\big\|_{l^r}<\infty\big\}. 
\end{align*}
\end{defn}
It is natural to recall the class of mixed space-time Besov spaces introduced by Chemin and Lerner \cite{C-N-1995}.

\begin{defn}
For $T>0$, $s\in \mathbb R$, $1\leq \varkappa,p,r\leq \infty $,  the homogeneous Chemin-Lerner spaces  $\widetilde L^\kappa(0,T;\dot B_{p,r}^s)$ are defined by
\begin{align*}
\widetilde L^\varkappa (0,T;\dot B_{p,r}^s)=\big\{f\in L^\varkappa (0,T;\mathcal{S}^\prime_h)\,\big{|}\, \|f\|_{\widetilde L^\varkappa (0,T;\dot B_{p,r}^s)}:= \big\|\{2^{ks}\|\dot\Delta_k f\|_{L^\varkappa _T(L^p)}\}_{k\in\mathbb Z}\big\|_{l^r}<\infty             \big\} .   
\end{align*}
\end{defn}
By applying Minkowski’s inequality, one has
\begin{align*}
\|f\|_{\widetilde L^\varkappa _T(\dot B_{p,r}^s)}\leq \|f\|_{  L^\varkappa _T(\dot B_{p,r}^s)}\quad {\rm if} \quad  \varkappa  \leq r; \quad \|f\|_{\widetilde L^\varkappa _T(\dot B_{p,r}^s)}\geq \|f\|_{  L^\varkappa _T(\dot B_{p,r}^s)}\quad {\rm if} \quad   \varkappa  \geq r.     
\end{align*}
where $\|\cdot\|_{L_T^\varkappa (\dot B_{p,r}^s)}$ is the usual Lebesgue-Besov norm. Moreover, we denote
\begin{align*}
\mathcal{C}_b(\mathbb R^+;\dot B_{p,r}^s):=\big\{f\in \mathcal{C}(\mathbb R^+;\dot B_{p,r}^s)\,\big|\, \|f\|_{\widetilde L^\infty(\mathbb R^+;\dot B_{p,r}^{s})}<\infty  \big\}.    
\end{align*}
Restricting Besov norms to the low- and high-frequency components of distributions is central to our approach. We consistently use the following notation for any $s\in \mathbb R$ and $1\leq p,r\leq \infty$:
\begin{equation*} \left\{
\begin{array}{ll}
 \!\!   \|f\|^\ell_{\dot B_{p,r}^s}:=\big\|\{2^{ks}\|\dot\Delta_k f\|_{L^p}\}_{k\leq 0} \big\|_{l^r}, \ & \|f\|^\ell_{\widetilde L^\varkappa _T(\dot B_{p,r}^s)}:=\big\|\{2^{ks}\|\dot\Delta_k f\|_{L^\varkappa _T(L^p)}\}_{k\leq 0} \big\|_{l^r},       \\
   \!\!   \|f\|^h_{\dot B_{p,r}^s}:=\big\|\{2^{ks}\|\dot\Delta_k f\|_{L^p}\}_{k\geq -1} \big\|_{l^r},\  & \|f\|^h_{\widetilde L^\varkappa _T(\dot B_{p,r}^s)}:= \big\|\{2^{ks}\|\dot\Delta_k f\|_{L^\varkappa _T(L^p)}\}_{k\geq -1} \big\|_{l^r}.            
\end{array}\right.
\end{equation*}
Define
\begin{align*}
 f ^\ell:= \sum_{k\leq -1}\dot\Delta_k f,\quad  f ^h:= \sum_{k\geq 0}\dot\Delta_k f.  
\end{align*}
For any $s^\prime > 0$, it holds that
\begin{equation*} \left\{
\begin{array}{ll}
    \!\!   \|f^\ell\|_{\dot B_{p,r}^s}\lesssim \|f\|_{\dot B_{p,r}^s}^\ell\lesssim \|f\|_{\dot B_{p,r}^{s-s^\prime}}^\ell, \quad & \|f^h\|_{\dot B_{p,r}^s}\lesssim \|f\|_{\dot B_{p,r}^s}^h\lesssim \|f\|_{\dot B_{p,r}^{s+s^\prime}}^h,     \\
   \!\!   \|f^\ell\|_{\widetilde L^\varkappa _T(\dot B_{p,r}^s)}\lesssim \|f\|_{\widetilde L^\varkappa _T(\dot B_{p,r}^s)}^\ell\lesssim \|f\|_{\widetilde L^\varkappa _T(\dot B_{p,r}^{s-s^\prime})}^\ell, \  & \|f^h\|_{\widetilde L^\varkappa _T(\dot B_{p,r}^s)}\lesssim \|f\|_{\widetilde L^\varkappa _T(\dot B_{p,r}^s)}^h\lesssim \|f\|_{\widetilde L^\varkappa _T(\dot B_{p,r}^{s+s^\prime})}^h.          
\end{array}\right.
\end{equation*}
In addition, we have
\begin{equation*} \left\{
\begin{aligned}
&    \|f\|_{\dot B_{p,r}^s\cap \dot B_{p,r}^{s^\prime}}=\|f\|_{\dot B_{p,r}^s}^\ell+\|f\|_{\dot B_{p,r}^{s^\prime}}^h ,\quad s<s^\prime, \\
&   \|f\|_{\dot B_{p,r}^s+ \dot B_{p,r}^{s^\prime}}=\|f\|_{\dot B_{p,r}^s}^\ell+\|f\|_{\dot B_{p,r}^{s^\prime}}^h ,\quad s>s^\prime.   
\end{aligned}\right.
\end{equation*}

Finally, to investigate the low Mach number limit of the scaled NST system \eqref{A4} as it converges to the incompressible inhomogeneous Navier-Stokes  equations \eqref{A5} , we introduce several auxiliary notations. For any function $f \in \mathcal{S}'(\mathbb{R}^d)$, we define 
\begin{align*}
f^{\ell,\varepsilon}:=\sum_{2^k \varepsilon\leq 2^{k_0}}\dot\Delta_k  f,\quad f^{h,\varepsilon}:= \sum_{2^k \varepsilon> 2^{k_0}}\dot\Delta_k  f, 
\end{align*}
where $k_0$ is a sufficiently large nonnegative integer depending only on the spatial dimension $d$. Additionally, we introduce the following norms:
\begin{align*}
\|f\|_{\dot B_{2,1}^s}^{\ell,\varepsilon}:=\sum_{2^k\varepsilon\leq 2^{k_0}} 2^{ks}\|\dot\Delta_k f\|_{L^2},\quad    \|f\|_{\dot B_{2,1}^s}^{h,\varepsilon}:=\sum_{2^k\varepsilon\geq 2^{k_0}} 2^{ks}\|\dot\Delta_k f\|_{L^2}.
\end{align*}

\section{Global existence of the compressible NST system \eqref{A3}}
In this section, we establish the global existence of strong solutions to the Cauchy problem \eqref{A3}--\eqref{A3-1}. For clarity, we first define the dissipation functional as
\begin{align}\label{G3.1}
\mathcal{D}(t):=\|u\|_{L_t^1(\dot B_{2,1}^{\frac{d}{2}+1})}+\|z\|_{L_t^1(\dot B_{2,1}^{\frac{d}{2}+1}+\dot B_{2,1}^{\frac{d}{2}})} , 
\end{align}
and the norm of the initial data as
\begin{align}\label{G3.2}
\mathcal{X}_0:= \|(a_0,z_0)\|_{\dot B_{2,1}^{\frac{d}{2}-1}\cap  \dot B_{2,1}^{\frac{d}{2}}}+   \|u_0\|_{\dot B_{2,1}^{\frac{d}{2}-1}}.    
\end{align}

We now establish the following uniform-in-time {\it a priori} estimates.

\begin{prop}\label{P3.1}
Assume that $(a,u,z)$ is a strong solution to the Cauchy problem \eqref{A3}--\eqref{A3-1} defined on $[0,T)\times \mathbb R^d$ with a given time $T>0$. For any $t\in (0,T)$, the following inequality holds with a generic constant $0 < \delta < 1$ to be chosen later:
\begin{align}\label{G3.3}
 \|(a,z)\|_{\widetilde L_t^\infty(\dot B_{2,1}^{\frac{d}{2}-1}\cap \dot B_{2,1}^{\frac{d}{2}})}+\|u\|_{\widetilde L_t^\infty(\dot B_{2,1}^{\frac{d}{2}-1})}  \leq \delta.
\end{align}
Then, the solution $(a,u,z)$ satisfies
\begin{align}
\|\gamma a-z\|_{\widetilde L_t^\infty(\dot B_{2,1}^{\frac{d}{2}-1} )}\leq&\, C_0 \Big(\|(a_0,u_0,z_0)\|_{\dot B_{2,1}^{\frac{d}{2}-1} }+ \|z_0\|_{ \dot B_{2,1}^{\frac{d}{2}}}\Big),\label{g333}\\
 \|\gamma a-z\|_{\widetilde L_t^\infty(  \dot B_{2,1}^{\frac{d}{2}})}\leq&\,C_0\Big(\|(u_0,z_0)\|_{\dot B_{2,1}^{\frac{d}{2}-1}}+  \|(a_0,z_0)\|_{ \dot B_{2,1}^{\frac{d}{2}}}\Big), \label{G3.4}
\end{align}
and
\begin{align}\label{G3.5}
 \|(a,z)\|_{\widetilde L_t^\infty(\dot B_{2,1}^{\frac{d}{2}-1}\cap \dot B_{2,1}^{\frac{d}{2}})}+\|u\|_{\widetilde L_t^\infty(\dot B_{2,1}^{\frac{d}{2}-1})}+\mathcal{D}(t)\leq C_0 \mathcal{X}_0,   
\end{align}
where $C_0$ is a constant independent of the time $t$, and $\mathcal{D}(t)$ and $\mathcal{X}_0$ are defined in \eqref{G3.1} and \eqref{G3.2}, respectively.
\end{prop}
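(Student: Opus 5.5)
We split the a priori estimate into three blocks, following the strategy outlined in the introduction: (i) a frequency-localized Lyapunov estimate for the hyperbolic--parabolic pair $(u,z)$ governed by \eqref{A3}$_2$--\eqref{A3}$_3$; (ii) transport estimates for the new mode $\omega=\gamma a-z$ from \eqref{A3-2}; (iii) recovery of $a$ through $a=(\omega+z)/\gamma$.

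\textbf{Block (i): the $(u,z)$ system.} We apply $\dot\Delta_k$ to \eqref{A3}$_2$--\eqref{A3}$_3$ and treat the right-hand sides as sources. We split $u$ as $\mathbb P u+\mathbb Q u$.
\begin{itemize}
\item The part $\mathbb P u$ solves a heat equation.
\item For low frequencies $k\le 0$, we use the symmetrizable structure of $(\mathbb Qu,z)$ (weighting $z$ by $\gamma^{-1}$). We add the cross term $\eta\,2^{-2k}(\dot\Delta_k\mathbb Qu\mid\nabla\dot\Delta_k z)$, in the spirit of Shizuta--Kawashima. This yields a Lyapunov functional $\mathcal L_k\sim\|\dot\Delta_k(u,z)\|_{L^2}^2$ with dissipation $\gtrsim 2^{2k}\mathcal L_k$.
\item For high frequencies $k\ge-1$, we use the effective velocity $\mathbb Qu-(2\mu+\lambda)^{-1}\nabla\Delta^{-1}z$. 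It satisfies a heat equation with lower-order coupling, while $z$ satisfies a damped transport equation $\partial_t z+u\cdot\nabla z+\frac{\gamma}{2\mu+\lambda}z=\dots$. This gives damping of $\|z\|^h_{\dot B^{d/2}_{2,1}}$.
\end{itemize}
For $z$ at high frequencies, we handle the convection term with commutator estimates (Lemma A-type), which is possible because $\|\nabla u\|_{L^\infty}\lesssim\|u\|_{\dot B^{d/2+1}_{2,1}}$. Summing over $k$, we obtain
\[\|(u,z)\|^\ell_{\widetilde L^\infty_t(\dot B^{d/2-1}_{2,1})}+\|u\|^h_{\widetilde L^\infty_t(\dot B^{d/2-1}_{2,1})}+\|z\|^h_{\widetilde L^\infty_t(\dot B^{d/2}_{2,1})}+\mathcal D(t)\lesssim\mathcal X_0+\text{(nonlinear terms)}.\]
The nonlinear terms $u\cdot\nabla u$, $z\,{\rm div}\,u$ and $u\cdot\nabla z$ are bounded by $\mathcal X(t)\mathcal D(t)$ via product laws in $\dot B^{d/2}_{2,1}$.

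\textbf{Main obstacle: the terms involving $a$.} The terms $f(a)\nabla z$ and $f(a)(\mu\Delta u+(\mu+\lambda)\nabla{\rm div}\,u)$ are the delicate ones, since $a$ has no dissipation. We only have $\|a\|_{\widetilde L^\infty_t(\dot B^{d/2-1}_{2,1}\cap\dot B^{d/2}_{2,1})}\le\delta$.
\begin{itemize}
\item For $f(a)\Delta u$ in $L^1_t(\dot B^{d/2-1}_{2,1})$, we use $\|f(a)\|_{\dot B^{d/2}_{2,1}}\|u\|_{L^1_t(\dot B^{d/2+1}_{2,1})}\lesssim\delta\,\mathcal D(t)$.
\item For $f(a)\nabla z$, the high-frequency part is bounded by $\|a\|_{\dot B^{d/2}_{2,1}}\|z\|^h_{L^1_t(\dot B^{d/2}_{2,1})}$. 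For the low-frequency part, we place $\nabla z^\ell$ in $L^1_t(\dot B^{d/2}_{2,1})$ and $a$ in $\dot B^{d/2-1}_{2,1}\cap\dot B^{d/2}_{2,1}$, splitting paraproducts so that derivatives never fall on $a$ beyond $\dot B^{d/2}_{2,1}$. We use the low-frequency norm $\dot B^{d/2-1}_{2,1}$ (one derivative less) to absorb the loss.
\end{itemize}
We expect this bookkeeping, ensuring every $a$-term is bounded by $\delta\,\mathcal D(t)$ with no derivative loss, to be the hardest step. We will try Bony's decomposition with $\dot T_{\nabla z}a$ estimated in $\dot B^{d/2-1}_{2,1}$ using $\|\nabla z\|_{L^\infty}$.

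\textbf{Block (ii): the mode $\omega$.} Equation \eqref{A3-2} is a pure transport equation with source $-\omega\,{\rm div}\,u+(\gamma-1)z\,{\rm div}\,u$. The standard transport estimate in $\dot B^{s}_{2,1}$ for $s=d/2-1$ and $s=d/2$ gives
\[\|\omega\|_{\widetilde L^\infty_t(\dot B^{s}_{2,1})}\le e^{C\|u\|_{L^1_t(\dot B^{d/2+1}_{2,1})}}\Big(\|\omega_0\|_{\dot B^s_{2,1}}+C\int_0^t\|z\,{\rm div}\,u\|_{\dot B^s_{2,1}}\,d\tau\Big),\]
with the $\omega\,{\rm div}\,u$ term absorbed into the exponential. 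The $z$-source is bounded by $\|z\|_{\widetilde L^\infty_t(\dot B^{d/2}_{2,1})}\mathcal D(t)\lesssim\delta\,\mathcal D(t)$, and $\mathcal D(t)$ is in turn controlled by block (i).

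Now $\omega_0=\gamma a_0-z_0$. To reach the precise right-hand sides of \eqref{g333} and \eqref{G3.4}, we substitute the block-(i) bound. At the level $\dot B^{d/2-1}_{2,1}$, only $\|(a_0,u_0,z_0)\|_{\dot B^{d/2-1}_{2,1}}$ and $\|z_0\|_{\dot B^{d/2}_{2,1}}$ appear. This is because block (i) depends on $a$ only through the small factor $\delta$, which is absorbed after the bootstrap combination.

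\textbf{Block (iii): closing.} We write $a=\gamma^{-1}(\omega+z)$, so that
\[\|a\|_{\widetilde L^\infty_t(\dot B^{d/2-1}_{2,1}\cap\dot B^{d/2}_{2,1})}\lesssim\|\omega\|_{\widetilde L^\infty_t(\dot B^{d/2-1}_{2,1}\cap\dot B^{d/2}_{2,1})}+\|z\|_{\widetilde L^\infty_t(\dot B^{d/2-1}_{2,1}\cap\dot B^{d/2}_{2,1})}.\]
The low-frequency part of $z$ in $\dot B^{d/2}_{2,1}$ is controlled by its $\dot B^{d/2-1}_{2,1}$ norm, and the high-frequency part of $z$ in $\dot B^{d/2-1}_{2,1}$ is controlled by its $\dot B^{d/2}_{2,1}$ norm. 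This avoids the uncontrollable linear term ${\rm div}\,u$ in \eqref{A3}$_1$.

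Summing the three blocks gives $\mathcal X(t)+\mathcal D(t)\le C\mathcal X_0+C\delta(\mathcal X(t)+\mathcal D(t))$. Choosing $\delta$ small yields \eqref{G3.5}, and \eqref{g333}--\eqref{G3.4} follow by reinserting the result into block (ii).
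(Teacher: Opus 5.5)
Your blocks (ii) and (iii) coincide with the paper's Lemmas~\ref{L3.3} and~\ref{L3.4}. There is the transport estimate for $\omega$ at regularities $\frac d2-1$ and $\frac d2$, with source $\gamma(a-z)\,{\rm div}\,u=\big(\omega-(\gamma-1)z\big){\rm div}\,u$, then $a=\gamma^{-1}(\omega+z)$. You also make the observation that the $(u,z)$ bound involves only $u_0,z_0$, which is what gives the asymmetric right-hand sides of \eqref{g333}--\eqref{G3.4}.

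The difference is in block (i). The paper uses neither Helmholtz projections nor an effective velocity. For every $k\in\mathbb Z$ it takes the single functional
\[
\mathcal E_k=\tfrac12\big(\|\dot\Delta_k u\|_{L^2}^2+\gamma^{-1}\|\dot\Delta_k z\|_{L^2}^2\big)+\eta_1\big((\dot\Delta_k u\mid\nabla\dot\Delta_k z)+\tfrac{2\mu+\lambda}{2\gamma}\|\nabla\dot\Delta_k z\|_{L^2}^2\big),
\]
which is equivalent to $\|\dot\Delta_k(u,z,\nabla z)\|_{L^2}^2$ and is dissipated at rate $\min\{1,2^{2k}\}$. This gives the heat-like low-frequency behaviour and the high-frequency damping of $z$ at once, with no threshold and no nonlocal operators. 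It only yields $\|u\|^h_{L^1_t(\dot B^{d/2-1}_{2,1})}$, however. The paper therefore applies Lam\'e maximal regularity (Lemma~\ref{LA.7}) to $u^h$, with $\nabla z$ treated as a source, to recover $\|u\|^h_{L^1_t(\dot B^{d/2+1}_{2,1})}$; this is also what absorbs the commutators $[u\cdot\nabla,\partial_j\dot\Delta_k]z$.

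Your Haspot-type effective velocity gives the parabolic gain for $\mathbb Qu$ and the damped transport equation for $z$ directly. The price is nonlocal sources such as $\nabla\Delta^{-1}(u\cdot\nabla z+\gamma z\,{\rm div}\,u)$. In addition, the lower-order coupling $\frac{\gamma}{2\mu+\lambda}\mathbb Qu$ can only be absorbed above a threshold $k_0$ taken large in terms of $\gamma$ and $2\mu+\lambda$, with the intermediate frequencies handled by your bounded-frequency functional. Both routes close.

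The step you expect to be hardest is routine. Split $z=z^\ell+z^h$ and use the product law $\dot B^{d/2-1}_{2,1}\times\dot B^{d/2}_{2,1}\to\dot B^{d/2-1}_{2,1}$ for $f(a)\nabla z^\ell$ and $\dot B^{d/2}_{2,1}\times\dot B^{d/2-1}_{2,1}\to\dot B^{d/2-1}_{2,1}$ for $f(a)\nabla z^h$. This gives \eqref{G3.17} with no paraproduct bookkeeping.

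\textbf{Correction to block (i).} At low frequencies the Shizuta--Kawashima cross term must be $\eta(\dot\Delta_k\mathbb Qu\mid\nabla\dot\Delta_k z)$, without the factor $2^{-2k}$. Since $|(\dot\Delta_k\mathbb Qu\mid\nabla\dot\Delta_k z)|\lesssim 2^k\|\dot\Delta_k u\|_{L^2}\|\dot\Delta_k z\|_{L^2}$, your weighted term has size $2^{-k}\|\dot\Delta_k u\|_{L^2}\|\dot\Delta_k z\|_{L^2}$. Then $\mathcal L_k\sim\|\dot\Delta_k(u,z)\|_{L^2}^2$ fails as $k\to-\infty$.

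\textbf{Correction to block (ii).} At $s=\frac d2-1$ the source $z\,{\rm div}\,u$ must be bounded by $\|z\|_{\widetilde L^\infty_t(\dot B^{d/2-1}_{2,1})}\|u\|_{L^1_t(\dot B^{d/2+1}_{2,1})}$, not with $z$ in $\dot B^{d/2}_{2,1}$. The latter would require low frequencies of $u$ in $L^1_t(\dot B^{d/2}_{2,1})$, which are not controlled.
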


The proof of Proposition \ref{P3.1} relies on the results established in Lemmas \ref{L3.2}--\ref{L3.4}.

\subsection{Estimates of $u$ and $z$}
Noting that the equations \eqref{A3}$_2$--\eqref{A3}$_3$ form a hyperbolic-parabolic system, we follow the approach of \cite{SK-1985-HMJ} to show that the solution pair $(u,z)$ exhibits heat-equation-like behavior at low frequencies and damping-dominated dynamics at high frequencies. We first establish uniform regularity estimates of $u$ and $z$.

\begin{lem}\label{L3.2}
Let $(a,u,z)$ be a strong solution to the Cauchy problem \eqref{A3}--\eqref{A3-1} on $[0,T)\times \mathbb R^d$. Then, under the condition \eqref{G3.3}, it holds that
\begin{align}\label{G3.6}
 &\|u\|_{\widetilde L_t^\infty(\dot B_{2,1}^{\frac{d}{2}-1})}+\|z\|_{\widetilde L_t^\infty(\dot B_{2,1}^{\frac{d}{2}-1}\cap \dot B_{2,1}^{\frac{d}{2}})} +  \mathcal{D}(t) \lesssim    \|u_0\|_{\dot B_{2,1}^{\frac{d}{2}-1}}+\|z_0\|_{\dot B_{2,1}^{\frac{d}{2}-1}\cap  \dot B_{2,1}^{\frac{d}{2}}} ,
\end{align}
where $\mathcal{D}(t)$  is defined in \eqref{G3.1}.  
\end{lem}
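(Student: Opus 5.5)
We treat \eqref{A3}$_2$--\eqref{A3}$_3$ as a linear hyperbolic-parabolic system in $(u,z)$ with source terms
\begin{align*}
F:=-u\cdot\nabla u-f(a)\nabla z+f(a)\big(\mu\Delta u+(\mu+\lambda)\nabla{\rm div}\,u\big),\qquad G:=-u\cdot\nabla z-\gamma z\,{\rm div}\,u .
\end{align*}
We localize with $\dot\Delta_k$ and derive a frequency-localized Lyapunov inequality in the spirit of \cite{SK-1985-HMJ}. The incompressible part $\mathbb P u$ solves a heat equation with source $\mathbb PF$, so it satisfies the standard parabolic bound in $\widetilde L^\infty_t(\dot B^{d/2-1}_{2,1})\cap L^1_t(\dot B^{d/2+1}_{2,1})$ for all frequencies. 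For $(\mathbb Q u,z)$ we work on the compressible part with $\nu:=2\mu+\lambda$.

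At low frequencies $k\le 0$ we use the functional $\mathcal L_k^2=\|\dot\Delta_k \mathbb Q u\|_{L^2}^2+\frac1\gamma\|\dot\Delta_k z\|_{L^2}^2+\eta 2^{-2k}(\dot\Delta_k\mathbb Q u,\nabla\dot\Delta_k z)$ with $\eta$ small. The antisymmetric terms $\nabla z$ and $\gamma{\rm div}\,u$ cancel, and the cross term yields the dissipation $2^{2k}\|\dot\Delta_k z\|^2$. This gives
\begin{align*}
\frac{d}{dt}\mathcal L_k+c2^{2k}\mathcal L_k\lesssim \|\dot\Delta_k(F,G)\|_{L^2}.
\end{align*}
At high frequencies $k\ge-1$ we use the effective velocity $w=\mathbb Qu-\nu^{-1}(-\Delta)^{-1}\nabla z$ from \cite{Haspot-2011-JDE,DH-MATHANN-2016}. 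Then $w$ satisfies a heat equation with lower-order linear term $\nu^{-1}w+\nu^{-2}(-\Delta)^{-1}\nabla z$, and $z$ satisfies the damped transport $\partial_t z+u\cdot\nabla z+\frac{\gamma}{\nu}z=-\gamma\,{\rm div}\,w+\dots$. For the convection term $u\cdot\nabla z$ we perform an $L^2$ energy estimate on $\dot\Delta_k z$ with the commutator $[u\cdot\nabla,\dot\Delta_k]z$, bounded by $\|\nabla u\|_{\dot B^{d/2}_{2,1}}\|z\|_{\dot B^{d/2}_{2,1}}$ (Lemma~A type commutator estimates). Integrating in time, multiplying by $2^{k(d/2-1)}$ at low frequencies and by $2^{k(d/2-1)}$ for $w$ and $2^{kd/2}$ for $z$ at high frequencies, and summing, gives the left-hand side of \eqref{G3.6} for $(u,z)$. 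The low frequencies contribute $\|z\|^\ell_{L^1_t(\dot B^{d/2+1}_{2,1})}$, the high frequencies contribute $\|z\|^h_{L^1_t(\dot B^{d/2}_{2,1})}$, and $\dot B^{d/2-1}_{2,1}\cap\dot B^{d/2}_{2,1}$ is recovered for $z$ since low frequencies in $\dot B^{d/2-1}$ control $\dot B^{d/2}$ there and vice versa. The right-hand side is the initial norm plus
\begin{align*}
\|F\|_{L^1_t(\dot B^{d/2-1}_{2,1})}+\|G\|^\ell_{L^1_t(\dot B^{d/2-1}_{2,1})}+\|G\|^h_{L^1_t(\dot B^{d/2}_{2,1})}
\end{align*}
(modulo the commutator).

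The nonlinear bounds use product laws in $\dot B^{d/2}_{2,1}$ and the composition estimate $\|f(a)\|_{\dot B^{s}_{2,1}}\lesssim\|a\|_{\dot B^s_{2,1}}$ for $s=d/2-1,d/2$ under \eqref{G3.3}. We get
\begin{align*}
\|f(a)\Delta u\|_{L^1_t(\dot B^{d/2-1}_{2,1})}&\lesssim \|a\|_{\widetilde L^\infty_t(\dot B^{d/2}_{2,1})}\|u\|_{L^1_t(\dot B^{d/2+1}_{2,1})}\le\delta\mathcal D(t),\\
\|u\cdot\nabla u\|_{L^1_t(\dot B^{d/2-1}_{2,1})}&\lesssim \|u\|_{\widetilde L^\infty_t(\dot B^{d/2-1}_{2,1})}\|u\|_{L^1_t(\dot B^{d/2+1}_{2,1})},\\
\|z\,{\rm div}\,u\|_{L^1_t(\dot B^{d/2}_{2,1})}&\lesssim\delta\mathcal D(t).
\end{align*}
The \textbf{main obstacle} is $f(a)\nabla z$, since $a$ does not dissipate and $\nabla z$ is only in $L^1_t$ of $\dot B^{d/2}$ at low frequencies or $\dot B^{d/2-1}$ at high frequencies. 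We would split $z=z^\ell+z^h$. For the low part,
\begin{align*}
\|f(a)\nabla z^\ell\|_{L^1_t(\dot B^{d/2-1}_{2,1})}\lesssim\|a\|_{L^\infty_t(\dot B^{d/2}_{2,1})}\|z\|^\ell_{L^1_t(\dot B^{d/2}_{2,1})}\lesssim\delta\,\|z\|^\ell_{L^1_t(\dot B^{d/2+1}_{2,1})}.
\end{align*}
For the high part we use the product law $\dot B^{d/2}_{2,1}\times\dot B^{d/2-1}_{2,1}\to\dot B^{d/2-1}_{2,1}$ (valid since $d\ge2$):
\begin{align*}
\|f(a)\nabla z^h\|_{L^1_t(\dot B^{d/2-1}_{2,1})}\lesssim\|a\|_{L^\infty_t(\dot B^{d/2}_{2,1})}\|z\|^h_{L^1_t(\dot B^{d/2}_{2,1})}\le\delta\mathcal D(t).
\end{align*}
This is exactly where the weaker $\dot B^{d/2+1}+\dot B^{d/2}$ dissipation for $z$ suffices. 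The term $u\cdot\nabla z$ in $G$ at high frequency would lose a derivative; we handle it by the commutator estimate above rather than by a product law. Collecting terms, all nonlinear contributions are bounded by $C\delta(\text{LHS})$, and choosing $\delta$ small absorbs them, yielding \eqref{G3.6}.
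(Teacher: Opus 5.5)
Your bound for the low-frequency piece of $f(a)\nabla z$ fails. The last step uses $\|z\|^\ell_{L^1_t(\dot B^{\frac d2}_{2,1})}\lesssim\|z\|^\ell_{L^1_t(\dot B^{\frac d2+1}_{2,1})}$. But for $k\le 0$ one has $2^{k(\frac d2+1)}\le 2^{k\frac d2}$, so the inequality runs the other way; you use the correct direction yourself when recovering $\dot B^{\frac d2-1}_{2,1}\cap\dot B^{\frac d2}_{2,1}$ for $z$.

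Moreover, $\|z\|^\ell_{L^1_t(\dot B^{\frac d2}_{2,1})}$ is genuinely not controlled. The low-frequency dissipation gives only $z^\ell\in\widetilde L^\infty_t(\dot B^{\frac d2-1}_{2,1})\cap L^1_t(\dot B^{\frac d2+1}_{2,1})$, hence only $z^\ell\in L^2_t(\dot B^{\frac d2}_{2,1})$ by interpolation. Since $a$ does not dissipate, you cannot borrow time integrability from $a$ either. So with only $\|a\|_{\widetilde L^\infty_t(\dot B^{\frac d2}_{2,1})}$, the term $f(a)\nabla z$ cannot be closed, and it also enters $\mathbb PF$ and your $w$-equation.

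The missing idea is to put the lower regularity on $a$; this is why \eqref{G3.3} also contains $\|a\|_{\widetilde L^\infty_t(\dot B^{\frac d2-1}_{2,1})}$. Use the product law with $(s_1,s_2)=(\frac d2-1,\frac d2)$, admissible since $s_1+s_2=d-1>0$:
\[
\|f(a)\nabla z^\ell\|_{\dot B^{\frac d2-1}_{2,1}}\lesssim\|f(a)\|_{\dot B^{\frac d2-1}_{2,1}}\|\nabla z^\ell\|_{\dot B^{\frac d2}_{2,1}}\lesssim\|a\|_{\dot B^{\frac d2-1}_{2,1}}\big(1+\|a\|_{\dot B^{\frac d2}_{2,1}}\big)\|z\|^\ell_{\dot B^{\frac d2+1}_{2,1}}.
\]
Here $\|f(a)\|_{\dot B^{\frac d2-1}_{2,1}}$ is handled through $f(a)=-a-af(a)$, since the composition lemma needs $s>0$, which fails at $s=\frac d2-1$ when $d=2$. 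Together with your (correct) high-frequency bound, this is exactly \eqref{G3.17}.

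Two smaller slips also need fixing.

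First, the weight $\eta 2^{-2k}$ on $(\dot\Delta_k\mathbb Qu,\nabla\dot\Delta_k z)$ in your low-frequency functional is wrong. The cross term is then of size $2^{-k}\|\dot\Delta_k\mathbb Qu\|_{L^2}\|\dot\Delta_k z\|_{L^2}$, so $\mathcal L_k^2$ is not equivalent to $\|\dot\Delta_k(\mathbb Qu,z)\|_{L^2}^2$ as $k\to-\infty$. Its time derivative also produces $-\eta\gamma2^{-2k}\|{\rm div}\,\dot\Delta_k u\|^2_{L^2}\sim-\eta\gamma\|\dot\Delta_k\mathbb Qu\|^2_{L^2}$, which the viscous term $\nu 2^{2k}\|\dot\Delta_k\mathbb Qu\|^2_{L^2}$ cannot absorb. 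A constant weight $\eta$ is what yields $2^{2k}\|\dot\Delta_k z\|^2_{L^2}$.

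Second, with $(-\Delta)^{-1}$ the multiplier $|\xi|^{-2}$, the effective velocity must be $w=\mathbb Qu+\nu^{-1}(-\Delta)^{-1}\nabla z$. Your sign gives ${\rm div}\,u={\rm div}\,w-\nu^{-1}z$, hence anti-damping for $z$.

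With these corrections, your $\mathbb P/\mathbb Q$ plus effective-velocity architecture is a workable alternative to the paper's. The paper avoids projections: it uses one functional containing $\frac{2\mu+\lambda}{2\gamma}\|\nabla\dot\Delta_k z\|_{L^2}^2$ at every $k$, giving $\min\{1,2^{2k}\}$ dissipation for $(u,z,\nabla z)$. It then recovers $\|u\|^h_{L^1_t(\dot B^{\frac d2+1}_{2,1})}$ from Lam\'e maximal regularity (Lemma \ref{LA.7}).
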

\begin{proof}
Applying the operator $\dot\Delta_k$ to the subsystem \eqref{A3}$_2$--\eqref{A3}$_3$ and employing the standard $L^2$ energy method, we have  
\begin{align}\label{G3.7}
&\frac{1}{2}{\frac{{\rm d}}{{\rm d}t}} \big(  \|\dot\Delta_k u\|_{L^2}^2+\frac{1}{\gamma}\|\dot\Delta_k z\|_{L^2}^2     \big) +(\mu+\lambda)\|{\rm div}\,\dot\Delta_k u\|_{L^2}^2+\mu \|\nabla \dot\Delta_k u\|_{L^2}^2\nonumber\\  
&\quad\lesssim  \big(\|\dot\Delta(u\cdot\nabla u)\|_{L^2}+ \|\dot\Delta_k  (f(a) (\mu\Delta u+(\mu+\lambda)\nabla{\rm div}\,u )       )  \|_{L^2}+\|\dot\Delta_k(f(a)\nabla z)\|_{L^2} \nonumber\\
&\quad\quad+\|\dot\Delta_k (z{\rm div}\,u)\|_{L^2}+\|[u\cdot\nabla,\dot\Delta_k]z\|_{L^2}+\|{\rm div}\,u\|_{L^\infty}\|\dot\Delta_k z\|_{L^2}\big) \|\dot\Delta_k(u,z)\|_{L^2},
\end{align}
and
\begin{align}\label{G3.8}
&\frac{{\rm d}}{\rm {d}t}\bigg( \int_{\mathbb R^d}\dot\Delta_k u\cdot\nabla\dot\Delta_k z{\rm d}x+ \frac{2\mu+\lambda}{2\gamma} \|\nabla\dot\Delta_k z\|_{L^2}^2  \bigg)+  \|\nabla\dot\Delta_k z\|_{L^2}^2-\gamma\|{\rm div}\,\dot\Delta_k u\|_{L^2}^2      \nonumber\\
&\quad \lesssim  \bigg( \|\dot\Delta_k  (f(a) (\mu\Delta u+(\mu+\lambda)\nabla{\rm div}\,u )       )  \|_{L^2}+\|\dot\Delta_k(f(a)\nabla z)\|_{L^2} \nonumber\\
&\qquad +\|\nabla\dot\Delta_k(z{\rm div}\,u)\|_{L^2}
+\|[u\cdot\nabla,\dot\Delta_k]u\|_{L^2}+\sum_{j=1}^d\|[u\cdot\nabla,\partial_j\dot\Delta_k]z\|_{L^2}\nonumber\\
&\qquad +\|{\rm div}\,u\|_{L^\infty} \| \dot\Delta_k(u,\nabla z)\|_{L^2} \bigg) \|\dot\Delta_k(u,\nabla z)\|_{L^2},
\end{align}
for any $k\in \mathbb Z$.
We now  define the following Lyapunov functional together with its associated dissipation rate:
\begin{align*}
\mathcal{E}_k(t):=&\,\frac{1}{2}\big(\|\dot\Delta_k u\|_{L^2}^2+  \frac{1}{\gamma}\|\dot\Delta_k z\|_{L^2}^2\big)+\eta_1 \bigg( \int_{\mathbb R^d}\dot\Delta_k u\cdot\nabla\dot\Delta_k z{\rm d}x+ \frac{2\mu+\lambda}{2\gamma} \|\nabla\dot\Delta_k z\|_{L^2}^2  \bigg), \\ 
\mathcal{D}_k(t):=&\,(\mu+\lambda)\|{\rm div}\,\dot\Delta_k u\|_{L^2}^2+\mu \|\nabla \dot\Delta_k u\|_{L^2}^2+\eta_1 \big(\|\nabla\dot\Delta_k z\|_{L^2}^2-\gamma\|{\rm div}\,\dot\Delta_k u\|_{L^2}^2      \big),  
\end{align*}
where $\eta_1$ is a small constant to be chosen later.
Adding \eqref{G3.7} and \eqref{G3.8} together yields
\begin{align}\label{G3.9}
\frac{{\rm d}}{{\rm d}t}\mathcal{E}_{k}(t)+\mathcal{D}_k(t)
\lesssim\,& \bigg( \|\dot\Delta_k  (f(a) (\mu\Delta u+(\mu+\lambda)\nabla{\rm div}\,u )       )  \|_{L^2}+\|\dot\Delta_k(f(a)\nabla z)\|_{L^2}   \nonumber\\
&+\|\dot\Delta_k(z{\rm div}\,u)\|_{L^2}  +\|\nabla\dot\Delta_k(z{\rm div}\,u)\|_{L^2}+\|[u\cdot\nabla,\dot\Delta_k] u\|_{L^2}\nonumber\\
&+\|[u\cdot\nabla,\dot\Delta_k] z\|_{L^2}+\sum_{j=1}^d\|[u\cdot\nabla,\partial_j\dot\Delta_k]z\|_{L^2}\nonumber\\
& +\|{\rm div}\, u\|_{L^\infty}\|\dot\Delta_k(u,z,\nabla z)\|_{L^2}\bigg) \|\dot\Delta_k(u,z,\nabla z)\|_{L^2}.
\end{align}
Then,  choosing  $\eta_1>0$ sufficiently small and applying Young's inequality, we obtain  
\begin{align}\label{G3.10}
\mathcal{E}_k(k)\backsim \|\dot\Delta_k(u,z,\nabla z)\|_{L^2}^2,
\end{align}
and
\begin{align}\label{G3.11}
\quad \min\{1,2^{2k}\} \|\dot\Delta_k(u,z,\nabla z)\|_{L^2}^2\lesssim D_k(t). 
\end{align}
Substituting \eqref{G3.10}--\eqref{G3.11} into \eqref{G3.9}, we   infer that
\begin{align}\label{G3.12}
&\|\dot\Delta_k(u,z,\nabla z)\|_{L_t^\infty(L^2)} +\min\{1,2^{2k}\}\|\dot\Delta_k (u,z,\nabla z) \|_{L_t^1(L^2)}\nonumber\\
&\quad \lesssim \|\dot\Delta_k(u_0,z_0,\nabla z_0)\|_{L^2}+\|\dot\Delta_k  (f(a) (\mu\Delta u+(\mu+\lambda)\nabla{\rm div}\,u )   )  \|_{L_t^1(L^2)}\nonumber\\
&\qquad +\|\dot\Delta_k(f(a)\nabla z)\|_{L_t^1(L^2)}+\|\dot\Delta_k(z{\rm div}\,u)\|_{L_t^1(L^2)}+\|\nabla\dot\Delta_k(z{\rm div}\,u)\|_{L_t^1(L^2)}\nonumber\\
&\qquad +\|[u\cdot\nabla,\dot\Delta_k] u\|_{L^1(L^2)}+\|[u\cdot\nabla,\dot\Delta_k] z\|_{L_t^1(L^2)}\nonumber\\
&\qquad +\sum_{j=1}^d\|[u\cdot\nabla,\partial_j\dot\Delta_k]z\|_{L_t^1(L^2)}+\|{\rm div}\,u\|_{L_t^1(L^\infty)}\|\dot\Delta_k(u,z,\nabla z)\|_{L_t^\infty(L^2)}.
\end{align}
Multiplying \eqref{G3.12} by $2^{k(\frac{d}{2}-1)}$, taking the supremum over $[0,t]$, and summing over $k\in\mathbb{Z}$, we arrive at
\begin{align*} 
&\|u\|_{\widetilde L_t^\infty(\dot B_{2,1}^{\frac{d}{2}-1})}+\|z\|_{\widetilde L_t^\infty(\dot B_{2,1}^{\frac{d}{2}-1}\cap \dot B_{2,1}^{\frac{d}{2}})} +\|u\|_{L_t^1(\dot B_{2,1}^{\frac{d}{2}+1}+\dot B_{2,1}^{\frac{d}{2}-1})}+ \|z\|_{L_t^1(\dot B_{2,1}^{\frac{d}{2}+1}+\dot B_{2,1}^{\frac{d}{2}})}\nonumber\\
&\quad\lesssim  \|u_0\|_{\dot B_{2,1}^{\frac{d}{2}-1}}+\|z_0\|_{\dot B_{2,1}^{\frac{d}{2}-1}\cap \dot B_{2,1}^{\frac{d}{2}}}+\|   f(a) (\mu\Delta u+(\mu+\lambda)\nabla{\rm div}\,u   )  \|_{L_t^1(\dot B_{2,1}^{\frac{d}{2}-1})}\nonumber\\
&\quad\quad+\|f(a)\nabla z\|_{L_t^1(\dot B_{2,1}^{\frac{d}{2}-1})}+\|z{\rm div}\,u\|_{L_t^1(\dot B_{2,1}^{\frac{d}{2}-1}\cap \dot B_{2,1}^{\frac{d}{2}})}+\sum_{k\in \mathbb Z} 2^{k(\frac{d}{2}-1)} \|[u\cdot\nabla,\dot\Delta_k] u\|_{L^1(L^2)}\nonumber\\
&\quad\quad+\sum_{k\in \mathbb Z} 2^{k(\frac{d}{2}-1)} \|[u\cdot\nabla,\dot\Delta_k] z\|_{L^1(L^2)}+\sum_{k\in \mathbb Z} 2^{k(\frac{d}{2}-1)} \sum_{j=1}^d \|[u\cdot\nabla,\partial_j\dot\Delta_k]z\|_{L_t^1(L^2)}\nonumber\\
&\quad\quad+\|{\rm div}u\|_{L_t^1(L^\infty)}\Big( \|u\|_{\widetilde L_t^\infty(\dot B_{2,1}^{\frac{d}{2}-1})}+\|z\|_{\widetilde L_t^\infty(\dot B_{2,1}^{\frac{d}{2}-1}\cap \dot B_{2,1}^{\frac{d}{2}})}        \Big),
\end{align*}
which combined with the embedding $\dot B_{2,1}^{\frac{d}{2}}(\mathbb R^d) \hookrightarrow L^\infty(\mathbb{R}^d)$, the smallness assumption \eqref{G3.3}, and \eqref{A.3}--\eqref{A.3-1} in Lemma \ref{LA.5}, yields
\begin{align} \label{G3.13}
&\|u\|_{\widetilde L_t^\infty(\dot B_{2,1}^{\frac{d}{2}-1})}+\|z\|_{\widetilde L_t^\infty(\dot B_{2,1}^{\frac{d}{2}-1}\cap \dot B_{2,1}^{\frac{d}{2}})} +\|u\|_{L_t^1(\dot B_{2,1}^{\frac{d}{2}+1}+\dot B_{2,1}^{\frac{d}{2}-1})}+ \|z\|_{L_t^1(\dot B_{2,1}^{\frac{d}{2}+1}+\dot B_{2,1}^{\frac{d}{2}})}\nonumber\\
&\quad\lesssim  \|u_0\|_{\dot B_{2,1}^{\frac{d}{2}-1}}+\|z_0\|_{\dot B_{2,1}^{\frac{d}{2}-1}\cap \dot B_{2,1}^{\frac{d}{2}}}+\|   f(a) (\mu\Delta u+(\mu+\lambda)\nabla{\rm div}\,u   )  \|_{L_t^1(\dot B_{2,1}^{\frac{d}{2}-1})}\nonumber\\
&\quad\quad+\|f(a)\nabla z\|_{L_t^1(\dot B_{2,1}^{\frac{d}{2}-1})}+\|z{\rm div}\,u\|_{L_t^1(\dot B_{2,1}^{\frac{d}{2}-1}\cap \dot B_{2,1}^{\frac{d}{2}})}+ \delta \|u\|_{L_t^1(\dot B_{2,1}^{\frac{d}{2}+1})}. 
\end{align}

To eliminate $\delta\|u\|_{L_t^1(\dot B_{2,1}^{\frac{d}{2}+1})}$, we require the maximal regularity estimate of $u$ in the high-frequency regime. For the equation \eqref{A3}$_2$, using Lemma \ref{LA.7} gives
\begin{align}\label{G3.14}
\|u\|_{\widetilde L_t^\infty(\dot B_{2,1}^{\frac{d}{2}-1})}^h +\|u\|_{  L_t^1(\dot B_{2,1}^{\frac{d}{2}+1})}^h \lesssim&\,  \|u_0\|_{\dot B_{2,1}^{\frac{d}{2}-1}}^h+ \|   f(a) (\mu\Delta u+(\mu+\lambda)\nabla{\rm div}\,u   )  \|_{L_t^1(\dot B_{2,1}^{\frac{d}{2}-1})}^h\nonumber\\
&+\|f(a)\nabla z\|_{L_t^1(\dot B_{2,1}^{\frac{d}{2}-1})}^h+\delta \|u\|_{L_t^1(\dot B_{2,1}^{\frac{d}{2}+1})}.
\end{align}
Combining \eqref{G3.13} and \eqref{G3.14}, we end up with
\begin{align}\label{G3.15}
&\|u\|_{\widetilde L_t^\infty(\dot B_{2,1}^{\frac{d}{2}-1})}+\|z\|_{\widetilde L_t^\infty(\dot B_{2,1}^{\frac{d}{2}-1}\cap \dot B_{2,1}^{\frac{d}{2}})} +\|u\|_{L_t^1(\dot B_{2,1}^{\frac{d}{2}+1})}+ \|z\|_{L_t^1(\dot B_{2,1}^{\frac{d}{2}+1}+\dot B_{2,1}^{\frac{d}{2}})}\nonumber\\
&\quad\lesssim  \|u_0\|_{\dot B_{2,1}^{\frac{d}{2}-1}}+\|z_0\|_{\dot B_{2,1}^{\frac{d}{2}-1}\cap \dot B_{2,1}^{\frac{d}{2}}}+\|   f(a) (\mu\Delta u+(\mu+\lambda)\nabla{\rm div}\,u   )  \|_{L_t^1(\dot B_{2,1}^{\frac{d}{2}-1})}\nonumber\\
&\quad\quad+\|f(a)\nabla z\|_{L_t^1(\dot B_{2,1}^{\frac{d}{2}-1})}+\|z{\rm div}\,u\|_{L_t^1(\dot B_{2,1}^{\frac{d}{2}-1}\cap \dot B_{2,1}^{\frac{d}{2}})}.     
\end{align}
Below we estimate the nonlinear terms on the right-hand side of \eqref{G3.15} one by one. From Lemma \ref{LA.4} and the inequality \eqref{A.2} in Lemma \ref{LA.5}, it follows that
\begin{align}\label{G3.16}
 \|   f(a) (\mu\Delta u+(\mu+\lambda)\nabla{\rm div}\,u   )  \|_{L_t^1(\dot B_{2,1}^{\frac{d}{2}-1})}\lesssim&\, \|a\|_{\widetilde L_t^\infty(\dot B_{2,1}^{\frac{d}{2}})}\|u\|_{  L_t^1(\dot B_{2,1}^{\frac{d}{2}+1})}\nonumber\\
 \lesssim&\,\delta \|u\|_{  L_t^1(\dot B_{2,1}^{\frac{d}{2}+1})},\\ \label{G3.17}
 \|f(a)\nabla z\|_{L_t^1(\dot B_{2,1}^{\frac{d}{2}-1})}\lesssim&\, \|a\|_{\widetilde L_t^\infty(\dot B_{2,1}^{\frac{d}{2}-1}\cap \dot B_{2,1}^{\frac{d}{2}})} \|z\|_{L_t^1(\dot B_{2,1}^{\frac{d}{2}+1}+\dot B_{2,1}^{\frac{d}{2}}   ) }\nonumber\\
 \lesssim&\, \delta \|z\|_{L_t^1(\dot B_{2,1}^{\frac{d}{2}+1}+\dot B_{2,1}^{\frac{d}{2}}   ) },\\\label{G3.18}
 \|z{\rm div}\,u\|_{L_t^1(\dot B_{2,1}^{\frac{d}{2}-1}\cap \dot B_{2,1}^{\frac{d}{2}})}\lesssim&\, \|z\|_{\widetilde L_t^\infty(\dot B_{2,1}^{\frac{d}{2}-1}\cap \dot B_{2,1}^{\frac{d}{2}})}  \|u\|_{  L_t^1(\dot B_{2,1}^{\frac{d}{2}+1})}\nonumber\\
 \lesssim&\,\delta \|u\|_{  L_t^1(\dot B_{2,1}^{\frac{d}{2}+1})}.
\end{align}
Plugging the estimates \eqref{G3.16}--\eqref{G3.18} into \eqref{G3.15}, we consequently obtain
\begin{align*} 
&\|u\|_{\widetilde L_t^\infty(\dot B_{2,1}^{\frac{d}{2}-1})}+\|z\|_{\widetilde L_t^\infty(\dot B_{2,1}^{\frac{d}{2}-1}\cap \dot B_{2,1}^{\frac{d}{2}})} +\|u\|_{L_t^1(\dot B_{2,1}^{\frac{d}{2}+1})}+ \|z\|_{L_t^1(\dot B_{2,1}^{\frac{d}{2}+1}+\dot B_{2,1}^{\frac{d}{2}})}\nonumber\\
&\quad\lesssim  \|u_0\|_{\dot B_{2,1}^{\frac{d}{2}-1}}+\|z_0\|_{\dot B_{2,1}^{\frac{d}{2}-1}\cap \dot B_{2,1}^{\frac{d}{2}}}+\delta \Big(\|u\|_{L_t^1(\dot B_{2,1}^{\frac{d}{2}+1})}+ \|z\|_{L_t^1(\dot B_{2,1}^{\frac{d}{2}+1}+\dot B_{2,1}^{\frac{d}{2}})}\Big), 
\end{align*}
which together with smallness of $\delta$ leads to \eqref{G3.6}.
\end{proof}

\subsection{Estimate of $\gamma a-z$ and $a$}
Notice that the linear term ${\rm div}\,u$ in \eqref{A3}$_1$ prevents us from obtaining the estimate of $\|a\|_{\widetilde L_t^\infty(\dot B_{2,1}^{\frac{d}{2}-1})}$, since $\|u\|_{L_t^1(\dot B_{2,1}^{\frac{d}{2}})}$ cannot be controlled directly. To overcome this difficulty, we study the equation \eqref{A3-2} concerning the new variable $\omega = \gamma a - z$.
\begin{lem}\label{L3.3}
Let $(a,u,z)$ be a strong solution to the Cauchy problem \eqref{A3}--\eqref{A3-1} on $[0,T)\times \mathbb R^d$. Then, under the condition \eqref{G3.3}, it holds that
\begin{align}
   \|\gamma a-z\|_{\widetilde L_t^\infty(\dot B_{2,1}^{\frac{d}{2}-1} )}   \lesssim &\,   \|(a_0,u_0,z_0)\|_{\dot B_{2,1}^{\frac{d}{2}-1}}+\| z_0\|_{\dot B_{2,1}^{\frac{d}{2} }} ,\label{g333a}\\
   \|\gamma a-z\|_{\widetilde L_t^\infty(\dot B_{2,1}^{\frac{d}{2} } )}   \lesssim&\,   \|(u_0,z_0)\|_{\dot B_{2,1}^{\frac{d}{2}-1 }}+ \|(a_0,z_0)\|_{\dot B_{2,1}^{\frac{d}{2} }}.\label{G3.19}
\end{align}
\end{lem}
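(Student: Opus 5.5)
We treat \eqref{A3-2} as a pure transport equation for $\omega=\gamma a-z$ with a source,
\begin{align*}
\partial_t\omega+u\cdot\nabla\omega=-\omega\,{\rm div}\,u+(\gamma-1)z\,{\rm div}\,u,
\end{align*}
and apply the standard estimate for transport equations in Besov spaces (the transport lemma in the Appendix, cf. \cite[Chapter 3]{BCD-Book-2011}). For $s\in\{\frac d2-1,\frac d2\}$, which lies in the admissible range $(-\frac d2,\frac d2+1]$ since $d\ge2$, it gives
\begin{align*}
\|\omega\|_{\widetilde L_t^\infty(\dot B_{2,1}^{s})}\le \exp\Big(C\|\nabla u\|_{L_t^1(\dot B_{2,1}^{\frac d2})}\Big)\Big(\|\omega_0\|_{\dot B_{2,1}^{s}}+\|\omega\,{\rm div}\,u\|_{L_t^1(\dot B_{2,1}^{s})}+(\gamma-1)\|z\,{\rm div}\,u\|_{L_t^1(\dot B_{2,1}^{s})}\Big).
\end{align*}
Equivalently, one can localize with $\dot\Delta_k$, take the $L^2$ energy, and bound the commutator $[u\cdot\nabla,\dot\Delta_k]\omega$ by Lemma \ref{LA.5}. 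By Lemma \ref{L3.2}, $\|u\|_{L_t^1(\dot B_{2,1}^{\frac d2+1})}\lesssim \|u_0\|_{\dot B_{2,1}^{\frac d2-1}}+\|z_0\|_{\dot B_{2,1}^{\frac d2-1}\cap\dot B_{2,1}^{\frac d2}}\lesssim\delta$, so the exponential factor is bounded by a constant.

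For the source terms we use the product law $\|fg\|_{\dot B_{2,1}^{s}}\lesssim\|f\|_{\dot B_{2,1}^{s}}\|g\|_{\dot B_{2,1}^{\frac d2}}$ with $s\in(-\frac d2,\frac d2]$ (Lemma \ref{LA.4}). This yields
\begin{align*}
\|\omega\,{\rm div}\,u\|_{L_t^1(\dot B_{2,1}^{s})}\lesssim\|\omega\|_{\widetilde L_t^\infty(\dot B_{2,1}^{s})}\|u\|_{L_t^1(\dot B_{2,1}^{\frac d2+1})}\lesssim\delta\|\omega\|_{\widetilde L_t^\infty(\dot B_{2,1}^{s})},
\end{align*}
and this term is absorbed into the left-hand side because $\delta$ is small. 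Likewise,
\begin{align*}
\|z\,{\rm div}\,u\|_{L_t^1(\dot B_{2,1}^{s})}\lesssim\|z\|_{\widetilde L_t^\infty(\dot B_{2,1}^{s})}\|u\|_{L_t^1(\dot B_{2,1}^{\frac d2+1})}.
\end{align*}
Lemma \ref{L3.2} bounds $\|z\|_{\widetilde L^\infty_t(\dot B_{2,1}^{s})}$ by $\lesssim1$ (it is $\le\delta$). Lemma \ref{L3.2} also bounds $\|u\|_{L^1_t(\dot B^{\frac d2+1}_{2,1})}$ by $\|u_0\|_{\dot B_{2,1}^{\frac d2-1}}+\|z_0\|_{\dot B_{2,1}^{\frac d2-1}\cap\dot B_{2,1}^{\frac d2}}$. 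So this source is controlled linearly by the data of $(u_0,z_0)$ alone, with no dependence on $a_0$.

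It remains to bound the initial term. For $s=\frac d2-1$ we have $\|\omega_0\|_{\dot B_{2,1}^{\frac d2-1}}\le\gamma\|a_0\|_{\dot B_{2,1}^{\frac d2-1}}+\|z_0\|_{\dot B_{2,1}^{\frac d2-1}}$, which together with the source bound gives \eqref{g333a}. For $s=\frac d2$ we have $\|\omega_0\|_{\dot B_{2,1}^{\frac d2}}\lesssim\|(a_0,z_0)\|_{\dot B_{2,1}^{\frac d2}}$, and the source contributes $\|u_0\|_{\dot B_{2,1}^{\frac d2-1}}+\|z_0\|_{\dot B_{2,1}^{\frac d2-1}\cap\dot B_{2,1}^{\frac d2}}$. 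Together these give exactly \eqref{G3.19}.

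The delicate point is the endpoint $s=\frac d2$. There the commutator estimate needs $s\le\frac d2+1$, which holds, and the product law $\dot B^{\frac d2}_{2,1}\times\dot B^{\frac d2}_{2,1}\to\dot B^{\frac d2}_{2,1}$ holds since $\dot B^{\frac d2}_{2,1}$ is an algebra. At the low end, $s=\frac d2-1>-\frac d2$ requires $d>1$, which is fine. A more careful point is that we must avoid using any linear dissipation of $\omega$, which does not exist. The only integrability in time comes from $\|u\|_{L^1_t(\dot B^{\frac d2+1}_{2,1})}$, and since every nonlinear term carries a factor of ${\rm div}\,u$, this suffices.
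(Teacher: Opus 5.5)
Your proof is correct and follows essentially the paper's argument. Both apply the transport estimate of Lemma \ref{LA.6} to $\omega=\gamma a-z$, bound the exponential factor using the smallness of $\|u\|_{L^1_t(\dot B^{\frac d2+1}_{2,1})}$, absorb the $\omega\,{\rm div}\,u$ contribution, and control the $z\,{\rm div}\,u$ contribution through Lemma \ref{L3.2}; the paper merely writes the source as $-\gamma(a-z){\rm div}\,u$ before splitting it. Note that the product law you invoke is Lemma \ref{LA.3}, not Lemma \ref{LA.4} (the composition lemma).
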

\begin{proof}
The equation \eqref{A3-2} can be reformulated as
\begin{align*} 
\partial_t   \omega+ u\cdot\nabla\omega   =-\gamma(a-z){\rm div}\,u .
\end{align*}
By applying Lemma \ref{LA.6} and the estimate \eqref{G3.6}, one has
\begin{align*}
\|\omega\|_{\widetilde L_t^\infty(\dot B_{2,1}^{\frac{d}{2}-1})} \lesssim&\,  e^{C\mathcal{D}(t)}  \Big(\|\omega_0\|_{ \dot B_{2,1}^{\frac{d}{2}-1}  } + \|\gamma(a-z){\rm div}\,u\|_{L_t^1( \dot B_{2,1}^{\frac{d}{2}-1})}\Big)\nonumber\\
\lesssim&\, \|\omega_0\|_{ \dot B_{2,1}^{\frac{d}{2}-1}  } +\|\gamma(a-z)\|_{\widetilde L_t^\infty(\dot B_{2,1}^{\frac{d}{2}-1} )} \|u\|_{L_t^1(\dot B_{2,1}^{\frac{d}{2}+1})} \nonumber\\
\lesssim&\, \|\omega_0\|_{ \dot B_{2,1}^{\frac{d}{2}-1} } +\delta\|\omega\|_{\widetilde L_t^\infty(\dot B_{2,1}^{\frac{d}{2}-1} )}+\delta \|z\|_{\widetilde L_t^\infty(\dot B_{2,1}^{\frac{d}{2}-1} )},  
\end{align*}
and
\begin{align*}
\|\omega\|_{\widetilde L_t^\infty(  \dot B_{2,1}^{\frac{d}{2}})} \lesssim&\,  e^{C\mathcal{D}(t)}  \Big(\|\omega_0\|_{   \dot B_{2,1}^{\frac{d}{2}} } + \|\gamma(a-z){\rm div}\,u\|_{L_t^1(  \dot B_{2,1}^{\frac{d}{2}} )}\Big)\nonumber\\
\lesssim&\, \|\omega_0\|_{   \dot B_{2,1}^{\frac{d}{2}} } +\|\gamma(a-z) \|_{\widetilde L_t^\infty(  \dot B_{2,1}^{\frac{d}{2}})} \|u\|_{L_t^1(\dot B_{2,1}^{\frac{d}{2}+1})} \nonumber\\
\lesssim&\, \|\omega_0\|_{   \dot B_{2,1}^{\frac{d}{2}} } +\delta\|\omega\|_{\widetilde L_t^\infty(  \dot B_{2,1}^{\frac{d}{2}})}+\delta\|z\|_{\widetilde L_t^\infty(\dot B_{2,1}^{\frac{d}{2} } )}.
\end{align*}
 Due to the smallness of $\delta$ and uniform estimate \eqref{G3.6}, we derive \eqref{g333a} and \eqref{G3.19}.   
\end{proof}

\begin{lem}\label{L3.4}
 Let $(a,u,z)$ be a strong solution to the Cauchy problem \eqref{A3}--\eqref{A3-1} on $[0,T)\times \mathbb R^d$. Then, under the condition \eqref{G3.3}, it holds that
\begin{align} \label{G3.20}
 & \|a \|_{\widetilde L_t^\infty(\dot B_{2,1}^{\frac{d}{2}-1}\cap \dot B_{2,1}^{\frac{d}{2}})}   \lesssim    \|(a_0,z_0)\|_{\dot B_{2,1}^{\frac{d}{2}-1}\cap  \dot B_{2,1}^{\frac{d}{2}}}+\|u_0\|_{\dot B_{2,1}^{\frac{d}{2}-1}}.
\end{align}   
\end{lem}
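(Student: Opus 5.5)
The plan is to avoid estimating $a$ directly from \eqref{A3}$_1$, where the linear term ${\rm div}\,u$ would require control of $\|u\|_{L_t^1(\dot B_{2,1}^{\frac d2})}$, which we do not have. Instead we write $a$ algebraically in terms of the new mode $\omega=\gamma a-z$ and of $z$:
\begin{align*}
a=\frac{1}{\gamma}\,(\omega+z),\qquad\text{hence}\qquad \|a\|_{X}\leq \frac1\gamma\big(\|\omega\|_{X}+\|z\|_{X}\big),
\end{align*}
with $X=\widetilde L_t^\infty(\dot B_{2,1}^{\frac d2-1}\cap\dot B_{2,1}^{\frac d2})$. Since the Chemin--Lerner norm is defined blockwise, it is subadditive, so the triangle inequality applies.

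The estimate then follows by inserting the two earlier bounds.
\begin{itemize}
\item For $\omega$, we use \eqref{g333a} for the $\dot B_{2,1}^{\frac d2-1}$ part and \eqref{G3.19} for the $\dot B_{2,1}^{\frac d2}$ part. Together these are bounded by $\|(a_0,u_0,z_0)\|_{\dot B_{2,1}^{\frac d2-1}}+\|(a_0,z_0)\|_{\dot B_{2,1}^{\frac d2}}$.
\item For $z$, we use Lemma \ref{L3.2}, estimate \eqref{G3.6}, which gives the bound $\|u_0\|_{\dot B_{2,1}^{\frac d2-1}}+\|z_0\|_{\dot B_{2,1}^{\frac d2-1}\cap\dot B_{2,1}^{\frac d2}}$.
\end{itemize}
Summing the two bounds yields \eqref{G3.20}, because the right-hand sides are dominated by $\|(a_0,z_0)\|_{\dot B_{2,1}^{\frac d2-1}\cap\dot B_{2,1}^{\frac d2}}+\|u_0\|_{\dot B_{2,1}^{\frac d2-1}}$. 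Here we use that $\|f\|_{\dot B^{\frac d2-1}_{2,1}}+\|f\|_{\dot B^{\frac d2}_{2,1}}$ is equivalent to the norm of the intersection space.

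All the difficulty has already been absorbed into Lemmas \ref{L3.2}--\ref{L3.3}, in particular the closure of the $\omega$ estimate via the transport Lemma \ref{LA.6} under the smallness condition \eqref{G3.3}. The only point to check here is that the smallness hypothesis \eqref{G3.3} is the one those lemmas require, so that they apply on the same interval $[0,t]$. With that confirmed, the proof reduces to the linear identity above and the triangle inequality, and no genuine obstacle is expected.
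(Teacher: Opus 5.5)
Your proposal is correct and is the paper's own argument: write $a=\gamma^{-1}(\omega+z)$ and combine Lemmas \ref{L3.2} and \ref{L3.3} through the triangle inequality in $\widetilde L_t^\infty(\dot B_{2,1}^{\frac d2-1}\cap\dot B_{2,1}^{\frac d2})$. Your explicit use of \eqref{g333a} for the $\dot B_{2,1}^{\frac d2-1}$ part is in fact needed: the paper cites only \eqref{G3.19} for $\omega$, and that bound controls just the $\dot B_{2,1}^{\frac d2}$ norm.
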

\begin{proof}
By combining the estimates \eqref{G3.6} and \eqref{G3.19} and applying the triangle inequality
of the norm $ \|\cdot \|_{\widetilde L_t^\infty(\dot B_{2,1}^{\frac{d}{2}-1}\cap \dot B_{2,1}^{\frac{d}{2}})} $, we have
\begin{align*}
  \|a \|_{\widetilde L_t^\infty(\dot B_{2,1}^{\frac{d}{2}-1}\cap \dot B_{2,1}^{\frac{d}{2}})} \lesssim &\, \|\gamma a-z\|_{\widetilde L_t^\infty(\dot B_{2,1}^{\frac{d}{2}-1}\cap \dot B_{2,1}^{\frac{d}{2}})}+\|z \|_{\widetilde L_t^\infty(\dot B_{2,1}^{\frac{d}{2}-1}\cap \dot B_{2,1}^{\frac{d}{2}})}\nonumber\\
  \lesssim&\,  \|(a_0,z_0)\|_{\dot B_{2,1}^{\frac{d}{2}-1}\cap  \dot B_{2,1}^{\frac{d}{2}}}+\|u_0\|_{\dot B_{2,1}^{\frac{d}{2}-1}}.
\end{align*}
Thus, we complete the proof of \eqref{G3.20}.
\end{proof}

\begin{proof}[Proof of Proposition \ref{P3.1}]
It follows from \eqref{G3.6} and \eqref{G3.20} that
\begin{align*}
\|(a,z)\|_{\widetilde L_t^\infty(\dot B_{2,1}^{\frac{d}{2}-1} \cap \dot B_{2,1}^{\frac{d}{2}})} + \|u\|_{\widetilde L_t^\infty(\dot B_{2,1}^{\frac{d}{2}-1})} + \mathcal{D}(t) \leq C' \mathcal{X}_0,
\end{align*}
for some uniform constant $C' > 0$. Furthermore, combining this with \eqref{G3.19}, we obtain
\begin{align*}
\| \gamma a - z\|_{\widetilde L_t^\infty(\dot B_{2,1}^{\frac{d}{2}-1})} \leq&\, e^{C C' \mathcal{X}_0}\Big( \|(a_0,u_0, z_0)\|_{\dot B_{2,1}^{\frac{d}{2}-1}}+\| z_0 \|_{\dot B_{2,1}^{\frac{d}{2}}}  \Big),\\
\| \gamma a - z\|_{\widetilde L_t^\infty(\dot B_{2,1}^{\frac{d}{2}})} \leq&\, e^{C C' \mathcal{X}_0} \Big( \|(u_0, z_0)\|_{\dot B_{2,1}^{\frac{d}{2}-1}}+\|(a_0,z_0)\|_{\dot B_{2,1}^{\frac{d}{2}}}  \Big).
\end{align*}
Thus, the uniform estimates \eqref{g333}--\eqref{G3.5} are obtained, which completes the proof of Proposition~\ref{P3.1}.
\end{proof}

\subsection{Proof of Theorem \ref{T1.2}} With Proposition \ref{P3.1} in hand, this subsection is devoted to proving  the global existence
of $(a,u,z)$.

Factually, Theorem \ref{T1.1} establishes the existence of a maximal existence time $T_0$ such that the Cauchy problem \eqref{A3}--\eqref{A3-1} admits a unique strong solution $(a,u,z)$ satisfying \eqref{TA.2}. We define the new energy functional:
\begin{align}\label{G3.21}
\mathcal{X}(t):=\|(a,z)\|_{\widetilde L_t^\infty(\dot B_{2,1}^\frac{d}{2})}+\|u\|_{\widetilde L_t^\infty(\dot B_{2,1}^{\frac{d}{2}-1})}+\|u\|_{L_t^1(\dot B_{2,1}^{\frac{d}{2}+1})}.    
\end{align}
Then, we set
\begin{align}\label{G3.22}
T^*:=\sup \{t\in[0,T_0)\, | \,  \mathcal{X}(t)\leq \delta   \}.   
\end{align}
It is clear that $T^* \in (0, T_0]$. We aim to prove that $T^* = T_0$. Suppose, for contradiction, that $T^* < T_0$. Then we choose 
\begin{align}\label{G3.23}
\delta_0 := \min\Big\{\frac{\delta}{2C_0}, 1\Big\},
\end{align}
%where $\delta_0$ is as defined in \eqref{TB.1}.
where $C_0$ is as defined in Proposition~\ref{P3.1}.
From Proposition \ref{P3.1}, we have
\begin{align}\label{G3.24}
\mathcal{X}(t)\leq&\, \|(a,z)\|_{\widetilde L_t^\infty(\dot B_{2,1}^{\frac{d}{2}-1}\cap \dot B_{2,1}^{\frac{d}{2}})}+\|u\|_{\widetilde L_t^\infty(\dot B_{2,1}^{\frac{d}{2}-1})}+\mathcal{D}(t)\nonumber\\
\leq&\, C_0 \Big(\|(a_0,z_0)\|_{\dot B_{2,1}^{\frac{d}{2}-1}\cap  \dot B_{2,1}^{\frac{d}{2}}}+   \|u_0\|_{\dot B_{2,1}^{\frac{d}{2}-1}}  \Big)\nonumber\\
\leq&\, \frac{1}{2}\delta,
\end{align}
for all $t\in (0,T^*)$. By leveraging the time continuity property and \eqref{G3.24}, we obtain 
\begin{align*} \mathcal{X}(T^*) \leq \frac{\delta}{2}, 
\end{align*} 
which contradicts the definition of $T^*$ given in \eqref{G3.22}. Therefore, $T^*$ cannot be the maximal existence time, and thus $T_0 = T^*$. 
By employing the uniform estimate \eqref{G3.24} and a standard continuity argument, we   readily conclude that $T^* = T_0 = +\infty$.   Therefore, $(a,u,z)$ is the unique global solution to the Cauchy problem \eqref{A3}--\eqref{A3-1}. The proof of Theorem \ref{T1.2} is thus completed. \hfill $\square$

\section{Low much Number limit of the  compressible NST system \eqref{A4}}
In this section, we aim to rigorously prove Theorem \ref{T1.4},  which characterizes the low Mach number limit of the compressible NST system \eqref{A4} as it converges to the incompressible inhomogeneous Navier-Stokes equations \eqref{A5}.

In fact, Theorem \ref{T1.2} addresses the system \eqref{A4} with $\varepsilon=1$. 
Recall
\begin{align}\label{G4.1-1}
(a,u,z)(t,x)= \varepsilon (a^\varepsilon,u^\varepsilon,z^\varepsilon)(\varepsilon^2 t,\varepsilon x),
\end{align}
and 
\begin{align}\label{G4.2-1}
(a_0,u_0,z_0)(x) = \varepsilon (a^\varepsilon_0,u^\varepsilon_0,z^\varepsilon_0)(\varepsilon x).
\end{align}
%where $a = \rho - 1$ and $z = P - 1$.
%For any $\varepsilon\in(0,1)$, it is straightforward to verify that $(a^\varepsilon,u^\varepsilon,z^\varepsilon)$ satisfies the scaled compressible Navier-Stokes-Transport system \eqref{A4}.
Applying \eqref{scale} from Lemma \ref{LA.2} and combining it with \eqref{G4.1-1}--\eqref{G4.2-1} yields  
\begin{align*}
\|(a^\varepsilon_0,u^\varepsilon_0,z^\varepsilon_0) \|_{ \dot B_{2,1}^{\frac{d}{2}-1} }\backsim&\, \|(a_0 ,u_0,z_0 ) \|_{\dot B_{2,1}^{\frac{d}{2}-1} },\\
\varepsilon\|(a_0^\varepsilon, z_0^\varepsilon) \|_{ \dot B_{2,1}^{\frac{d}{2}}}\backsim&\, \|(a_0 , z_0 ) \|_{ \dot B_{2,1}^{\frac{d}{2}} },\\
\|\gamma a_0^\varepsilon-z_0^\varepsilon\|_{ \dot B_{2,1}^{\frac{d}{2}-1}  }\backsim&\, \|\gamma a_0-z_0\|_{ \dot B_{2,1}^{\frac{d}{2}-1} },\\
\varepsilon\|\gamma a_0^\varepsilon-z_0^\varepsilon\|_{ \dot B_{2,1}^{\frac{d}{2} }  }\backsim&\, \|\gamma a_0-z_0\|_{ \dot B_{2,1}^{\frac{d}{2} } },
\end{align*}
and
\begin{align*}
\|(a^\varepsilon,u^\varepsilon,z^\varepsilon) \|_{\widetilde L_t^\infty(\dot B_{2,1}^{\frac{d}{2}-1})}\backsim&\, \|(a ,u,z ) \|_{\widetilde L_t^\infty(\dot B_{2,1}^{\frac{d}{2}-1})},\\
\varepsilon\|(a^\varepsilon, z^\varepsilon) \|_{\widetilde L_t^\infty(\dot B_{2,1}^{\frac{d}{2}-1})}\backsim&\, \|(a , z ) \|_{\widetilde L_t^\infty(\dot B_{2,1}^{\frac{d}{2}-1})},\\
\|\gamma a^\varepsilon-z^\varepsilon\|_{\widetilde L_t^\infty(\dot B_{2,1}^{\frac{d}{2}-1} )}\backsim&\, \|\gamma a-z\|_{\widetilde L_t^\infty(\dot B_{2,1}^{\frac{d}{2}-1} )} ,\\
\varepsilon\|\gamma a^\varepsilon-z^\varepsilon\|_{\widetilde L_t^\infty(\dot B_{2,1}^{\frac{d}{2} } )}\backsim&\, \|\gamma a-z\|_{\widetilde L_t^\infty(\dot B_{2,1}^{\frac{d}{2} } )} ,\\
 \|u^\varepsilon\|_{L_t^1(\dot B_{2,1}^{\frac{d}{2}+1})}\backsim&\, \|u \|_{L_t^1(\dot B_{2,1}^{\frac{d}{2}+1})},\\
\|z^\varepsilon\|_{L_t^1(\dot B_{2,1}^{\frac{d}{2}+1} )}^{\ell,\varepsilon}+\frac{1}{\varepsilon}\|z^\varepsilon\|_{L_t^1(\dot B_{2,1}^{\frac{d}{2}} )}^{h,\varepsilon} \backsim&\, \|z\|_{L_t^1(\dot B_{2,1}^{\frac{d}{2}+1}+\dot B_{2,1}^{\frac{d}{2}})}.
\end{align*}
Hence, we obtain that the solution $(a^\varepsilon,u^\varepsilon,z^\varepsilon)$ is uniformly bounded in the space  $\Xi^{\varepsilon}$  defined via the norm:
\begin{align*} 
\|(a^\varepsilon,u^\varepsilon,z^\varepsilon)\|_{\Xi ^\varepsilon}:= &\,   \|(a^\varepsilon,z^\varepsilon )\|_{\widetilde L_t^\infty (\dot B_{2,1}^{\frac{d}{2}-1} )}+\varepsilon\|(a^\varepsilon,z^\varepsilon )\|_{\widetilde L_t^\infty (\dot B_{2,1}^{\frac{d}{2}} )} +\|u^\varepsilon\|_{\widetilde L_t^\infty(\dot B_{2,1}^{\frac{d}{2}-1})} \nonumber\\
& \quad+\|u^\varepsilon\|_{L_t^1(\dot B_{2,1}^{\frac{d}{2}+1})}+\|z^\varepsilon\|_{L_t^1(\dot B_{2,1}^{\frac{d}{2}+1} )}^{\ell,\varepsilon}+\frac{1}{\varepsilon}\|z^\varepsilon\|_{L_t^1(\dot B_{2,1}^{\frac{d}{2}} )}^{h,\varepsilon}. 
\end{align*}
Therefore, under the condition \eqref{TD.1}, we obtain the uniform estimate \eqref{TD.2}, and it follows that
\begin{align}\label{G4.3}
\|(a^\varepsilon,u^\varepsilon,z^\varepsilon)\|_{\Xi ^\varepsilon} \lesssim \|(a_0^\varepsilon,z^\varepsilon_0)\|_{\dot B_{2,1}^{\frac{d}{2}-1} }+   \|u_0^\varepsilon\|_{\dot B_{2,1}^{\frac{d}{2}-1}}+ \varepsilon\|(a_0^\varepsilon,z^\varepsilon_0)\|_{\dot B_{2,1}^{\frac{d}{2} } },    \end{align}
as stated in \eqref{TD.3}.
By  adapting the compactness arguments of Lions and Masmoudi in \cite{LM-CRASPSM-1999} and utilizing   \eqref{TD.2}--\eqref{TD.3}, we justify the low Mach number limit of the NST system \eqref{A4} in the sense of distributions below.

\begin{proof}[Proof of Theorem \ref{T1.4}]
Consider a family $(a_0^\varepsilon, u_0^\varepsilon, z_0^\varepsilon)$ of initial data  satisfying \eqref{TD.1} and such that  $\mathbb{P} u_0^\varepsilon \rightharpoonup v_0$ as $\varepsilon \to 0$. Since
\begin{align*}
\|(a_0^\varepsilon, z_0^\varepsilon)\|_{\dot B_{2,1}^{\frac{d}{2}-1}}^{h,\eps} \lesssim \varepsilon \|(a_0^\varepsilon, z_0^\varepsilon)\|_{\dot B_{2,1}^{\frac{d}{2}}}^{h,\eps},
\end{align*}
it follows that the sequence $(a_0^\varepsilon, u_0^\varepsilon, z_0^\varepsilon)$ is uniformly bounded in $\dot B_{2,1}^{\frac{d}{2}-1} \times \dot B_{2,1}^{\frac{d}{2}-1} \times \dot B_{2,1}^{\frac{d}{2}-1}$. Furthermore, \eqref{G4.3} implies that the corresponding solutions $(a^\varepsilon, u^\varepsilon, z^\varepsilon)$ are bounded in $\mathcal{C}_b(\mathbb{R}^+; \dot B_{2,1}^{\frac{d}{2}-1})$. Therefore, there exists a sequence $\{\varepsilon_n\}_{n \in \mathbb{N}}$ tending to $0$ such that
\begin{align}\label{G4.4}
(a^{\varepsilon_n}_0, u^{\varepsilon_n}_0, z^{\varepsilon_n}_0) &\rightharpoonup (a, v, z) \quad \text{weakly in}\quad \dot B_{2,1}^{\frac{d}{2}-1}, \\ \label{G4.5}
(a^{\varepsilon_n}, u^{\varepsilon_n}, z^{\varepsilon_n}) &\rightharpoonup (a, v, z) 
\quad  \text{weakly-$*$ in}\quad L^\infty(\mathbb{R}^+; \dot B_{2,1}^{\frac{d}{2}-1}),\quad 
\end{align}
 \text{as}  $\varepsilon_n \to 0$. Certainly, from \eqref{G4.4}, it follows that $\mathbb P u_0 = v_0$.

By leveraging the interpolation \eqref{A.1}, one has
\begin{align*}
 \|z^{\varepsilon_n}\|_{L^2(\mathbb R^+; \dot B_{2,1}^{\frac{d}{2}})} \lesssim&\,  \Big(  \|z^{\varepsilon_n}\|_{L^\infty(\mathbb R^+; \dot B_{2,1}^{\frac{d}{2}-1})}^{\ell,\varepsilon_n}  \Big)^{\frac{1}{2}}  \Big( \|z^{\varepsilon_n}\|_{L^1(\mathbb R^+; \dot B_{2,1}^{\frac{d}{2}+1})}^{\ell,\varepsilon_n}\Big)^{\frac{1}{2}}\nonumber\\ &+\Big(\varepsilon_n\|z^{\varepsilon_n}\|_{L^\infty(\mathbb R^+; \dot B_{2,1}^{\frac{d}{2}})}^{h,\varepsilon_n}\Big)\Big( \frac{1}{\varepsilon_n}    \|z^{\varepsilon_n}\|_{L^1(\mathbb R^+; \dot B_{2,1}^{\frac{d}{2}})}^{h,\varepsilon_n}\Big)^\frac{1}{2}\nonumber\\
 \lesssim&\, \|(a^{\varepsilon_n},u^{\varepsilon_n},z^{\varepsilon_n})\|_{\Xi ^{\varepsilon_n}}, 
\end{align*}
which implies that $\{z^{{\varepsilon_n}}\}$ is bounded in $L^2(\mathbb R^+; \dot B_{2,1}^{\frac{d}{2}})$.   Notice that $P^{\varepsilon_n} = 1 + \varepsilon_n z^{\varepsilon_n}$, we directly get $P^{\varepsilon_n} \to 1$ strongly as $\varepsilon_n \to 0$. To justify that ${\rm div}\,u=0$, we rewrite \eqref{A4}$_3$ as  
\begin{align}  \label{G4.6}
\gamma{\rm div}\, u^{\varepsilon_n} = -\varepsilon_n {\rm div}\,(z^{\varepsilon_n} u^{\varepsilon_n})-\varepsilon_n(\gamma-1)z^{\varepsilon_n}{\rm div}\,u^{\varepsilon_n} - \varepsilon_n \partial_t z^{\varepsilon_n},
\end{align}  
where $\gamma>1$.
Because 
\begin{align*}
 \|u^{\varepsilon_n}\|_{L^2(\mathbb R^+; \dot B_{2,1}^{\frac{d}{2}})} \lesssim&\,   \|u^{\varepsilon_n}\|_{L^\infty(\mathbb R^+; \dot B_{2,1}^{\frac{d}{2}-1})}    ^{\frac{1}{2}}   \|u^{\varepsilon_n}\|_{L^1(\mathbb R^+; \dot B_{2,1}^{\frac{d}{2}+1})}  ^{\frac{1}{2}} \lesssim  \|(a^{\varepsilon_n},u^{\varepsilon_n},z^{\varepsilon_n})\|_{\Xi ^{\varepsilon_n}} ,  
\end{align*}
the $u^{\varepsilon_n}$ is bounded in $L^2(\mathbb R^+; \dot B_{2,1}^{\frac{d}{2}})$. Then, the first term and second term on the right-hand side of \eqref{G4.6} is $\mathcal{O}(\varepsilon_n)$ in $L^1(\mathbb R^+;\dot B_{2,1}^{\frac{d}{2}-1})$. Regarding the last term in \eqref{G4.6}, it converges to $0$ in the sense of distributions, thanks to \eqref{G4.5}. Consequently, ${\rm div}\,u^{\varepsilon_n}\rightharpoonup 0$ as $\varepsilon_n \to 0$, which implies that  ${\rm div}\,v=0$. Furthermore, \eqref{A5}$_3$ is satisfied and $\mathbb Qu^{\varepsilon_n}\rightharpoonup 0$ as $\varepsilon_n \to 0$.

Although \eqref{A4}$_1$ is formally identical to the transport equation in \eqref{A4}$_3$, the situation for \eqref{A4}$_1$ is fundamentally different. Specifically, $a^{\varepsilon_n}$ lacks a dissipative structure (i.e., $a^{\varepsilon_n} \notin L^2(\mathbb R^+; \dot B_{2,1}^{\frac{d}{2}})$), which implies that ${\rm div}\,(a^{\varepsilon_n} u^{\varepsilon_n}) \notin L^1(\mathbb R^+; \dot B_{2,1}^{\frac{d}{2}-1})$. As a consequence, the previous argument cannot be applied to establish the convergence of \eqref{A4}$_1$ in the sense of distributions. To overcome this difficulty, we consider the new variable $\omega^{\varepsilon_n} := \gamma a^{\varepsilon_n} - z^{\varepsilon_n}$, derived from \eqref{A4}$_1$ and \eqref{A4}$_3$, which satisfies
\begin{align}\label{G4.7}
\partial_t \omega^{\varepsilon_n} + {\rm div}\,(\omega^{\varepsilon_n }u^{\varepsilon_n})-(\gamma-1)z^{\varepsilon_n}{\rm div}\,u^{\varepsilon_n}= 0.
\end{align}
Similar to \cite[Section 6]{LSZ-2025-arXiv}, we only require the strong compactness properties of $\omega^{\varepsilon_n}$ and $u^{\varepsilon_n}$. More precisely, since $u^{\varepsilon_n}$ is uniformly bounded in $L_t^2(\dot B_{2,1}^{\frac{d}{2}})\hookrightarrow L_t^2(L^\infty)$, by the uniqueness of limits, we conclude that  
\begin{equation}\label{G4.8}   
\left\{  
\begin{aligned}  
& u^{\varepsilon_n}\rightharpoonup v\quad\text{weakly-$*$ in}\quad L^2(\mathbb R^+,L^\infty)\quad  \text{for}\quad d\geq3,\\
& u^{\varepsilon_n}\rightharpoonup v\quad \text{weakly in}\quad L^2(\mathbb R^+,\dot H^1)\quad   \text{for}\quad d=2,
\end{aligned}  
\right.  
\end{equation} 
 \text{as} $ \varepsilon_n \to 0$. %  \quad\text{as} \quad\varepsilon_n \to 0\quad
On the other hand, from \eqref{TD.2} and \eqref{G4.7}, we observe that $\omega^{\varepsilon_n}$ and $\partial_t\omega^{\varepsilon_n}$ are uniformly bounded in $L_t^\infty(\dot B_{2,1}^{\frac{d}{2}-1})$ and $L_t^2(\dot B_{2,1}^{\frac{d}{2}-2})$, respectively. By virtue of the compact embeddings $\dot B_{2,1}^{\frac{d}{2}-1} \hookrightarrow L_{\rm loc}^2$ for $d \geq 3$ and $\dot B_{2,1}^{\frac{d}{2}-1} \hookrightarrow H_{\rm loc}^{-1}$ for $d = 2$, and by applying the Aubin–Lions lemma together with the Cantor diagonal argument, there exists a limit function $\varrho$ such that
\begin{equation}\label{G4.9}   
\left\{  
\begin{aligned}  
& \omega^{\varepsilon_n}\rightarrow\varrho\quad\text{strongly in}\quad \mathcal{C}([0,T],L^2_{\rm loc})\quad   \text{for}\quad d\geq3,\\
& \omega^{\varepsilon_n}\rightarrow \varrho \quad\text{strongly in}\quad \mathcal{C}([0,T],H^{-1}_{\rm loc})\,\,\,\,\, \text{for}\quad d=2,
\end{aligned}  
\right.  
\end{equation} 
as $\varepsilon_n \to 0 $ 
 for any finite $T>0$. By combining \eqref{G4.7} with \eqref{G4.8}--\eqref{G4.9}, we establish the validity of \eqref{A5}$_1$ in the sense of distributions. By the uniqueness of the limits, together with \eqref{G4.4} and \eqref{G4.5}, it follows that $\varrho = \gamma a - z$ and $\varrho_0 = \gamma a_0 - z_0$.

For the remaining convergence of \eqref{A4}$_2$ toward \eqref{A5}$_2$, note that $a^{\varepsilon_n}$ lacks a dissipative structure in this case, which differs from the settings in \cite{Danchin-2002} on the isentropic Navier-Stokes equations. Therefore, a new method needs to be developed to address the challenges arising from the absence of such a dissipative structure.
Inspired by  \cite[Section 6]{LSZ-2025-arXiv}, we apply the operator $\mathbb P$ to \eqref{A4}$_2$ to obtain
\begin{align}\label{G4.10}
\partial_t \mathbb P m^{\varepsilon_n}-\bar\mu\Delta\mathbb P m^{\varepsilon_n}+\mathbb P{\rm div}\, (m^{\varepsilon_n}\otimes u^{\varepsilon_n})=0,   
\end{align}
where the momentum $m^{\varepsilon_n}:=(1+\varepsilon_n a^{\varepsilon_n})u^{\varepsilon_n}$.  From \eqref{G4.5} and the boundedness of $u^{\varepsilon_n}$ in $L^2(\mathbb R^+,\dot B_{2,1}^{\frac{d}{2}})$, it follows that $\mathbb P m^{\varepsilon_n}$ and $\partial_t \mathbb P m^{\varepsilon_n}$ are uniformly bounded in $L^2(\mathbb R^+,\dot B_{2,1}^{\frac{d}{2}})$ and $L^1(\mathbb R^+,\dot B_{2,1}^{\frac{d}{2}-1})$, respectively. By the Aubin-Lions lemma, we obtain  
\begin{align}\label{G4.11}
\mathbb P m^{\varepsilon_n} \to v'\quad  \text{strongly in} \quad L^2(0,T;L_{\rm loc}^2),
\end{align}
as $\varepsilon_n \to 0$,  for any $T > 0$.
To show that $v^\prime = v$, we rewrite $\mathbb{P} m^{\varepsilon_n}$ as  
\begin{align}\label{G4.12}
 \mathbb P m^{\varepsilon_n}=\mathbb P u^{\varepsilon_n}+\varepsilon_n  \mathbb{P}(a^{\varepsilon_n}u^{\varepsilon_n}).   
\end{align}
From the boundedness of the $\mathbb P$ operator and the fact that $ a^{\varepsilon_{n}}u^{\varepsilon_n} $ is uniformly bounded in $L^2(0,T;\dot B_{2,1}^{\frac{d}{2}-1})$, it follows that $\varepsilon_n\mathbb P(a^{\varepsilon_{n}}u^{\varepsilon_n})$ 
is $\mathcal O(\varepsilon_n)$ in $L^2(0,T;\dot B_{2,1}^{\frac{d}{2}-1})$. By exploiting the compact embedding $\dot B_{2,1}^{\frac{d}{2}-1}(\mathbb{R}^d) \hookrightarrow L_{\rm loc}^2(\mathbb{R}^d)$ for $d \geq 3$ and the continuous embedding $\dot B_{2,1}^{\frac{d}{2}-1}(\mathbb{R}^d) \hookrightarrow L_{\rm loc}^{2}(\mathbb{R}^d)$ for $d = 2$, we conclude that
\begin{align}\label{G4.13}
\varepsilon_n \mathbb{P}(a^{\varepsilon_n}u^{\varepsilon_n}) \to 0\quad\text{strongly in} \quad L^2(0,T;L_{\rm loc}^2),  \quad  \text{as} \quad \varepsilon_n \to 0.
\end{align}
Due to the uniqueness of the limits, it holds that
\begin{align}\label{strongly}
 \mathbb P u^{\varepsilon_n}\rightarrow v\quad \text{strongly in} \quad L^2(0,T;L_{\rm loc}^2), \quad \text{as} \quad \varepsilon_n \to 0,
\end{align}
which together with \eqref{G4.11}, \eqref{G4.12} and \eqref{G4.13} gives rise to 
\begin{align*} 
\mathbb P m^{\varepsilon_n} \to v\quad \text{strongly in} \quad L^2(0,T;L_{\rm loc}^2), \quad \text{as} \quad \varepsilon_n \to 0,
\end{align*}
and $v^\prime=v$.

In the end, we are left only with the proof that $\mathbb P{\rm div}\,(m^{\varepsilon_n} \otimes u^{\varepsilon_n}) \rightharpoonup \mathbb P(v \cdot \nabla v)$  as $\varepsilon_n \to 0$. We decompose $\mathbb P m^{\varepsilon_n}$ into
\begin{align}\label{G4.15}
 \mathbb P{\rm div}\,(m^{\varepsilon_n} \otimes u^{\varepsilon_n})= \mathbb P(u^{\varepsilon_n}\cdot\nabla u^{\varepsilon_n})+\varepsilon_n\mathbb P (a^{\varepsilon_n}u^{\varepsilon_n}\otimes u^{\varepsilon_n})+\mathbb P({\rm div}\,{u}^{\varepsilon_n} u^{\varepsilon_n}).
\end{align}
The first term on the right-hand side of \eqref{G4.15} can be treated 
similarly to the approach outlined in \cite[Section 4]{DH-MATHANN-2016} and \cite[Section 5]{Danchin-2018}. We observe that
\begin{align*} 
u^{\varepsilon_n} \cdot \nabla u^{\varepsilon_n} = \frac{1}{2} \nabla |\mathbb{Q} u^{\varepsilon_n}|^2 + \mathbb{P} u^{\varepsilon_n} \cdot \nabla u^{\varepsilon_n} + \mathbb{Q} u^{\varepsilon_n} \cdot \nabla \mathbb{P} u^{\varepsilon_n}.
\end{align*}
Projecting the first term onto divergence-free vector fields yields $0$, and since ${\rm div}\,v=0$, we have $\mathbb P v = v$. Therefore, it suffices to show
\begin{align} \label{G4.16}
\mathbb P( \mathbb{P} u^{\varepsilon_n} \cdot \nabla u^{\varepsilon_n}) \rightharpoonup \mathbb P(  v\cdot\nabla v)=\mathbb P(\mathbb Pv\cdot\nabla v),\quad  \mathbb P(   \mathbb{Q} u^{\varepsilon_n} \cdot \nabla \mathbb{P} u^{\varepsilon_n})\rightharpoonup 0\quad \text{as} \quad \varepsilon_n \to 0.
\end{align}
By utilizing \eqref{G4.8} and \eqref{strongly}, we   derive \eqref{G4.16}, which implies the weak convergence  
\begin{align}\label{G4.17}
 \mathbb P(  u^{\varepsilon_n} \cdot \nabla u^{\varepsilon_n}) \rightharpoonup    \mathbb P(v\cdot\nabla v)\quad \text{as} \quad \varepsilon_n \to 0.
\end{align}
Thanks to the uniform regularity estimates for $a^{\varepsilon_n}$ and $u^{\varepsilon_n}$, we  directly conclude that
\begin{align}\label{G4.18}
 \varepsilon_n \mathbb{P}(a^{\varepsilon_n}
 u^{\varepsilon_n}\otimes u^{\varepsilon_n}) \to 0 \quad \text{as} \quad \varepsilon_n \to 0.   
\end{align}
Combining \eqref{G4.15} with \eqref{G4.17} and \eqref{G4.18}, it suffices to show that $\mathbb P({\rm div}\,{u}^{\varepsilon_n} u^{\varepsilon_n})\rightharpoonup 0$ as $\varepsilon_n \to 0$, which hinges on establishing that ${\rm div}\,{u}^{\varepsilon_n} \rightarrow 0$. To this
end, we aim to establish the dispersive estimates of $z^{\varepsilon_n}$ and $\mathbb Qu^{\varepsilon_n}$ as $\varepsilon_n \to 0$. Motivated by \cite{Danchin-2002} and \cite[Section 6]{LSZ-2025-arXiv}, we consider the transformation $q^{\varepsilon_n} := \Lambda^{-1}{\rm div}\,\mathbb{Q}u^{\varepsilon_n}$ and rewrite the equations \eqref{A4}$_2$--\eqref{A4}$_3$ as
\begin{equation}\label{G4.19}   
\left\{  
\begin{aligned}  
&  \partial_tq^{\varepsilon_n}-\frac{1}{\varepsilon_n} \Lambda z^{\varepsilon_n}=G_1^{\varepsilon_n} ,\\
&\frac{1}{\gamma}\partial_t z^{\varepsilon_n}+\frac{1}{ \varepsilon_n}\Lambda q^{\varepsilon_n}= G_2^{\varepsilon_n},\\
&(q^{\varepsilon_n},z^{\varepsilon_n})|_{t=0}=(\Lambda^{-1}{\rm div}\,\mathbb Q u_0^{\varepsilon_n},z_0^{\varepsilon_n}),
\end{aligned}  
\right.  
\end{equation} 
with
\begin{equation*}    
\left\{  
\begin{aligned}  
 G_1^{\varepsilon_n}:=& \,\Delta q^{\varepsilon_n}-\Lambda^{-1}{\rm div}\,\Big(u^{\varepsilon_n}\cdot\nabla u^{\varepsilon_n}+\frac{\varepsilon_na^{\varepsilon_n}}{1+\varepsilon_na^{\varepsilon_n}}\big[\bar\mu \Delta u^{\varepsilon_n}+(\bar\mu+\bar\lambda)\nabla{\rm div}\,u^{\varepsilon_n}\big]\Big)\nonumber\\
&-\Lambda^{-1}{\rm div}\,\Big(\frac{a^{\varepsilon_n}}{1+\varepsilon_na^{\varepsilon_n}}\nabla z^{\varepsilon_n}\Big),\\
G_2^{\varepsilon_n}:=&\,- \frac{1}{\gamma}{\rm div}\,(z^{\varepsilon_n} u^{\varepsilon_n})-\frac{(\gamma-1)}{\gamma}z^{\varepsilon_n}{\rm div}\,u^{\varepsilon_n}.
\end{aligned}  
\right.  
\end{equation*} 

For the case $d\geq3$, by applying the dispersive estimates from Lemma \ref{LA.8} to the system \eqref{G4.19}, we   derive that 
\begin{align*}
&\|(q^{\varepsilon_n},z^{\varepsilon_n})\|_{L^{\frac{2p}{p-2}}_t(\dot B_{p,1}^{\frac{d-1}{p}-\frac{1}{2}})}\nonumber\\
\lesssim&\, \varepsilon_n^{\frac{p-2}{2p}} \|(q^{\varepsilon_n}_0,z^{\varepsilon_n}_0)\|_{\dot B_{2,1}^{\frac{d}{2}-1}}+   \varepsilon^{\frac{p-2}{2p}} \| (G_1^{\varepsilon_n},G_2^{\varepsilon_n})\|_{L^1_t(\dot B_{2,1}^{\frac{d}{2}-1})}\nonumber\\
\lesssim&\,\varepsilon_n^{\frac{p-2}{2p}}\Big( \|(u^{\varepsilon_n}_0,z^{\varepsilon_n}_0)\|_{\dot B_{2,1}^{\frac{d}{2}-1}}+\|a^{\varepsilon_n}\|_{\widetilde L_t^\infty(\dot B_{2,1}^{\frac{d}{2}-1})} \|u^{\varepsilon_n}\|_{L_t^1(\dot B_{2,1}^{\frac{d}{2}+1})} +\|a^{\varepsilon_n}\|_{\widetilde L_t^\infty(\dot B_{2,1}^{\frac{d}{2}-1})}\|z^{\varepsilon_n}\|_{L_t^1(\dot B_{2,1}^{\frac{d}{2}+1})}^{\ell,\varepsilon_n} \nonumber\\
&+\|a^{\varepsilon_n}\|_{\widetilde L_t^\infty(\dot B_{2,1}^{\frac{d}{2}})}\|z^{\varepsilon_n}\|_{L_t^1(\dot B_{2,1}^{\frac{d}{2} })}^{h,\varepsilon_n} + \|u^{\varepsilon_n}\|_{\widetilde L_t^2(\dot B_{2,1}^{\frac{d}{2}})}^2+\|z^{\varepsilon_n}\|_{L^2(\mathbb R^+; \dot B_{2,1}^{\frac{d}{2}})}\|u^{\varepsilon_n}\|_{L^2(\mathbb R^+; \dot B_{2,1}^{\frac{d}{2}})}\Big)\nonumber\\
\lesssim&\, \varepsilon_n^{\frac{p-2}{2p}}\Big(  \|(u^{\varepsilon_n}_0,z^{\varepsilon_n}_0)\|_{\dot B_{2,1}^{\frac{d}{2}-1}}+\|(a^{\varepsilon_n},
u^{\varepsilon_n},z^{\varepsilon_n})\|_{\Xi ^{\varepsilon_n}}^2\Big)\nonumber\\ 
\lesssim&\, \varepsilon_n^{\frac{p-2}{2p}},
\end{align*}
for any $p \in( 2,\infty)$ and $t > 0$.
Therefore, we further get
\begin{align}\label{G4.20}
\|(\mathbb Q u^{\varepsilon_n},z^{\varepsilon_n})\|_{L^{\frac{2p}{p-2}}_t(\dot B_{p,1}^{\frac{d-1}{p}-\frac{1}{2}})} 
\lesssim \varepsilon_n^{\frac{p-2}{2p}},   
\end{align}
for  any $p \in(2,\infty)$ and $t > 0$.
By using \eqref{TD.3}, \eqref{G4.20} and \eqref{A.1}, one has
\begin{align}\label{G4.21}
&\|(\mathbb Q u^{\varepsilon_n},z^{\varepsilon_n})^{\ell,\varepsilon_n} \|_{{L_t^2}(\dot B_{q,1}^{\frac{d+1}{q}-\frac{1}{2}})}\nonumber\\
&\quad \lesssim \Big(\|(\mathbb Q u^{\varepsilon_n},z^{\varepsilon_n}) \|^{\ell,\varepsilon_n}_{{L_t^{\frac{2p}{p-2}}}(\dot B_{p,1}^{\frac{d-1}{p}-\frac{1}{2}})}\Big)^{\frac{p}{p+2}}   \Big(\|(\mathbb Q u^{\varepsilon_n},z^{\varepsilon_n}) \|^{\ell,\varepsilon_n}_{{L_t^1}(\dot B_{2,1}^{ \frac{d}{2}+1})}\Big)^{\frac{2}{p+2}}  
\lesssim \varepsilon_n^{\frac{q-2}{2q} },
\end{align}
for any $q=\frac{p+2}{2}\in (2,\infty)$ and $t>0$.
Consequently, we derive  from \eqref{TD.3} and \eqref{G4.21} that
\begin{align*}
\|(\mathbb Q u^{\varepsilon_n},z^{\varepsilon_n})  \|_{{L_t^2}(\dot B_{q,1}^{\frac{d+1}{q}-\frac{1}{2}})}   \lesssim&\,  \|(\mathbb Q u^{\varepsilon_n},z^{\varepsilon_n}) ^{\ell,\varepsilon_n} \|_{{L_t^2}(\dot B_{q,1}^{\frac{d+1}{q}-\frac{1}{2}})}+  \|(\mathbb Q u^{\varepsilon_n},z^{\varepsilon_n})  \|_{{L_t^2}(\dot B_{q,1}^{\frac{d+1}{q}-\frac{1}{2}})}^{h,\varepsilon_n}\nonumber\\
\lesssim&\, \|(\mathbb Q u^{\varepsilon_n},z^{\varepsilon_n})^{\ell,\varepsilon_n}  \|_{{L_t^2}(\dot B_{q,1}^{\frac{d+1}{q}-\frac{1}{2}})} +\varepsilon_n^{\frac{q-2}{2q}}\|(\mathbb Q u^{\varepsilon_n},z^{\varepsilon_n})  \|_{{L_t^2}(\dot B_{2,1}^{\frac{d }{2}})}^{h,\varepsilon_n}\nonumber\\
\lesssim&\,\varepsilon_n^{\frac{q-2}{2q} },
\end{align*}
for any $q\in(2,\infty)$ and $t>0$.

For the case $d=2$, by taking $r = \frac{4p}{p-2}$ in Lemma \ref{LA.8} for any $p \in (2, \infty]$, we obtain
\begin{align*}
 \|(\mathbb Q u^{\varepsilon_n},z^{\varepsilon_n})\|_{L^{\frac{4p}{p-2}}_t(\dot B_{p,1}^{\frac{3}{2p}-\frac{3}{4}})} 
\lesssim&\, \varepsilon_n^{\frac{p-2}{4p}} \|(q^{\varepsilon_n}_0,z^{\varepsilon_n}_0)\|_{\dot B_{2,1}^{\frac{d}{2}-1}}+   \varepsilon^{\frac{p-2}{4p}} \| (G_1^{\varepsilon_n},G_2^{\varepsilon_n})\|_{L^1_t(\dot B_{2,1}^{\frac{d}{2}-1})}\nonumber\\
 \lesssim&\, \varepsilon_n^{\frac{p-2}{4p}}\Big(  \|(u^{\varepsilon_n}_0,z^{\varepsilon_n}_0)\|_{\dot B_{2,1}^{\frac{d}{2}-1}}+\|(a^{\varepsilon_n},
 u^{\varepsilon_n},z^{\varepsilon_n})\|_{\Xi ^{\varepsilon_n}}^2\Big)\nonumber\\ 
\lesssim&\,   \varepsilon_n^{\frac{p-2}{4p}}, 
\end{align*}
which together with \eqref{A.1} gives
\begin{align*}
 &\|(\mathbb Q u^{\varepsilon_n},z^{\varepsilon_n})^{\ell,\varepsilon_n}\|_{L^{2}_t(\dot B_{q,1}^{\frac{5}{2q}-\frac{1}{4}})}\nonumber\\
 &\quad \lesssim \Big( \|(\mathbb Q u^{\varepsilon_n},z^{\varepsilon_n})^{\ell,\varepsilon_n}\|_{L^{\frac{4p}{p-2}}_t(\dot B_{p,1}^{\frac{3}{2p}-\frac{3}{4}})}\Big)^{\frac{2p}{3p+2}}
  \Big(\|(\mathbb Q u^{\varepsilon_n},z^{\varepsilon_n}) \|^{\ell,\varepsilon_n}_{{L_t^1}(\dot B_{2,1}^{ \frac{d}{2}+1})}\Big)^{\frac{p+2}{3p+2}} 
 \lesssim \varepsilon_n^{\frac{q-2}{4q}},
\end{align*}
for any $q=\frac{6p+4}{p+6}\in (2,6)$ and $t>0$.
Thus, we have
\begin{align*}
 &\|(\mathbb Q u^{\varepsilon_n},z^{\varepsilon_n}) \|_{L^{2}_t(\dot B_{q,1}^{\frac{5}{2q}-\frac{1}{4}})} 
  \lesssim   \|(\mathbb Q u^{\varepsilon_n},z^{\varepsilon_n})^{\ell,\varepsilon_n}\|_{L^{2}_t(\dot B_{q,1}^{\frac{5}{2q}-\frac{1}{4}})}+\varepsilon_n^{\frac{q-2}{4q}} \|(\mathbb Q u^{\varepsilon_n},z^{\varepsilon_n})\|_{L^{2}_t(\dot B_{2,1}^{1 })}^{h,\varepsilon_n} 
 \lesssim \varepsilon_n^{\frac{q-2}{4q}},
\end{align*}
for any $q\in (2,6)$ and $t>0$. Hence, we have   established the estimate \eqref{TD.4}. Furthermore, by combining \eqref{G4.15}, \eqref{G4.17}, and \eqref{G4.18}, we deduce that
\begin{align*}
    \mathbb P{\rm div}\,(m^{\varepsilon_n} \otimes u^{\varepsilon_n}) \rightharpoonup \mathbb P(v\cdot\nabla v) \quad \text{as} \quad \varepsilon_n \to 0.
\end{align*}
Therefore, the equation \eqref{A5}$_2$ holds in the sense of distributions, and the proof of Theorem \ref{T1.4} is completed.
\end{proof}

\section{Optimal time decay rates  of  the compressible NST system \eqref{A3} }

\subsection{The upper bound estimate of solutions under the boundedness condition}
In this subsection, we aim to prove Theorem \ref{T1.5}. To begin with, we establish the upper bound of the decay estimates of $(u,z)$. First, in the case where the initial data  $ (u_0^\ell,z_0^\ell)\in{\dot B_{2,\infty}^{\sigma_0}}$, we analyze the propagation of the $\dot B_{2,\infty}^{\sigma_0}$ norm of $(u,z)$ in the low-frequency regime. For convenience, we define 
\begin{align}\label{G5.1}
 \mathcal{E}(t):=\|(a,z)\|_{\widetilde L_t^\infty (\dot B_{2,1}^{\frac{d}{2}-1}\cap B_{2,1}^{\frac{d}{2}})}+\|u\|_{\widetilde L_t^\infty(\dot B_{2,1}^{\frac{d}{2}-1})} +\|u\|_{L_t^1(\dot B_{2,1}^{\frac{d}{2}+1})}+\|z\|_{L_t^1(\dot B_{2,1}^{\frac{d}{2}+1}+\dot B_{2,1}^{\frac{d}{2}})}.    
\end{align}
\begin{lem}\label{L5.1}
Let $(a, u, z)$ be the global strong solution to the Cauchy problem \eqref{A3}--\eqref{A3-1} established in  Theorem \ref{T1.2}. Then, under the assumptions of Theorem \ref{T1.5}, we have  
\begin{align}\label{G5.2}
\mathcal{E}_{\ell,\sigma_0}(t) := \|(u,z)\|^\ell_{\widetilde L_t^\infty(\dot B_{2,\infty}^{\sigma_0})} + \|(u,z)\|^\ell_{L_t^1(\dot B_{2,\infty}^{\sigma_0+2})} \leq C\delta_{*},
\end{align}
for all $t > 0$, where $\delta_*$ is defined in \eqref{TE.4} and $C > 0$ is a constant independent of $t$.
\end{lem}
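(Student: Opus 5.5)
We reuse the frequency-localized Lyapunov inequality \eqref{G3.12} from the proof of Lemma \ref{L3.2}, now restricted to low frequencies $k\le 0$. There $\min\{1,2^{2k}\}=2^{2k}$, $\|\dot\Delta_k\nabla z\|_{L^2}\lesssim\|\dot\Delta_k z\|_{L^2}$, and $\mathcal{E}_k\sim\|\dot\Delta_k(u,z)\|_{L^2}^2$. We multiply \eqref{G3.12} by $2^{k\sigma_0}$ and take the supremum over $k\le 0$ instead of summing. Since $\min\{1,2^{2k}\}=2^{2k}$ at low frequencies, this turns the left-hand side into $\mathcal{E}_{\ell,\sigma_0}(t)$. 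The right-hand side becomes $\|(u_0,z_0)\|^\ell_{\dot B^{\sigma_0}_{2,\infty}}$ plus the $\dot B^{\sigma_0}_{2,\infty}$ low-frequency norms, in $L^1_t$, of the source terms: $u\cdot\nabla u$, $f(a)(\mu\Delta u+(\mu+\lambda)\nabla{\rm div}\,u)$, $f(a)\nabla z$, $u\cdot\nabla z$ and $z\,{\rm div}\,u$. It is simpler to estimate these products directly rather than through commutators. The aim is the inequality
\begin{align*}
\mathcal{E}_{\ell,\sigma_0}(t)\lesssim \|(u_0,z_0)\|^\ell_{\dot B^{\sigma_0}_{2,\infty}}+\mathcal{E}(t)\,\mathcal{E}(t)+\mathcal{E}(t)\,\mathcal{E}_{\ell,\sigma_0}(t),
\end{align*}
with $\mathcal{E}(t)\lesssim \mathcal{X}_0$ small by Theorem \ref{T1.2}. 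Absorbing the last term then gives \eqref{G5.2}, since $\mathcal{X}_0\lesssim\delta_*$: the low-frequency $\dot B^{\frac d2-1}_{2,1}$ and $\dot B^{\frac d2}_{2,1}$ parts of $(u_0,z_0)$ are bounded by the $\dot B^{\sigma_0}_{2,\infty}$ norm because $\sigma_0<\frac d2-1$.

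For the products we use the standard estimate from the Appendix (of the type in Lemma \ref{LA.4}/\ref{LA.5}):
\begin{align*}
\|fg\|_{\dot B^{\sigma_0}_{2,\infty}}\lesssim \|f\|_{\dot B^{\frac d2}_{2,1}}\|g\|_{\dot B^{\sigma_0}_{2,\infty}}, \qquad -\tfrac d2\le\sigma_0<\tfrac d2 .
\end{align*}
Its companion $\|fg\|_{\dot B^{\sigma_0}_{2,\infty}}\lesssim\|f\|_{\dot B^{\sigma_0+1}_{2,\infty}}\|g\|_{\dot B^{\frac d2-1}_{2,1}}$ holds in the range $\sigma_0+1\in(-\frac d2,\frac d2)$.

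For the first product estimate, the key input is that $L^2$ embeds in $\dot B^{-\frac d2}_{2,\infty}$ at the endpoint. Applied to $u\cdot\nabla u$ and $u\cdot\nabla z$, and using $L^2\hookrightarrow \dot B^{-d/2}_{2,\infty}$ together with the low-frequency restriction $\sigma_0\ge-\frac d2$, we bound
\begin{align*}
\|u\cdot\nabla u\|^\ell_{L^1_t(\dot B^{\sigma_0}_{2,\infty})}\lesssim \|u\|_{L^2_t(\dot B^{\frac d2}_{2,1})}\,\|u\|_{L^2_t(\dot B^{\frac d2}_{2,1})}.
\end{align*}
The analogous bounds hold for $z\,{\rm div}\,u$ and $u\cdot\nabla z$, using $\|z\|_{L^2_t(\dot B^{\frac d2}_{2,1})}\lesssim\mathcal{E}(t)$ by interpolation, exactly as in Section 4. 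This yields terms quadratic in $\mathcal{E}$. More precisely, we place the $\dot B^{\sigma_0}_{2,\infty}$ weight on the low frequencies of one factor when needed, and otherwise on $\|\cdot\|_{\dot B^{\frac d2-1}_{2,1}}$ via the embedding $\dot B^{\frac d2-1}_{2,1}\hookrightarrow\dot B^{\sigma_0}_{2,\infty}$ at low frequencies.

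The main obstacle is the group of terms involving the non-decaying density: $f(a)\nabla z$ and $f(a)\nabla^2u$. Here $a$ has no time integrability, so the time-integrable factor must be $\nabla z$ or $\nabla^2 u$, and $a$ can only enter through $\|a\|_{\widetilde L^\infty_t(\dot B^{\frac d2-1}_{2,1}\cap\dot B^{\frac d2}_{2,1})}$. We plan to use the product law with $f(a)\in\dot B^{\frac d2}_{2,1}$ and $\nabla^2u$, $\nabla z$ measured in $\dot B^{\sigma_0}_{2,\infty}$. Splitting each of $u,z$ into low and high frequencies, the low parts give $\|(u,z)\|^\ell_{L^1_t(\dot B^{\sigma_0+2}_{2,\infty})}$, while $\nabla z$ costs only one derivative, which is controlled by interpolation. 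The high parts are bounded by $\|u\|^h_{L^1_t(\dot B^{\frac d2+1}_{2,1})}$ and $\|z\|^h_{L^1_t(\dot B^{\frac d2}_{2,1})}$, which requires $\sigma_0\le \frac d2-1$.

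The delicate piece is $f(a)\nabla z^\ell$. Since $\nabla z^\ell\in L^1_t(\dot B^{\sigma_0+1}_{2,\infty})$ is not directly available, we instead use $\|a\|_{\dot B^{\frac d2-1}_{2,1}}$ together with $\nabla z^\ell\in L^1_t(\dot B^{\frac d2+1}_{2,1})$ restricted suitably. This means applying the second product law with $a$ measured in $\dot B^{\frac d2-1}_{2,1}$ and $\nabla z$ in $L^1_t(\dot B^{\frac d2}_{2,1})$, which requires $\sigma_0+1<\frac d2-1$, i.e., the hypothesis $\sigma_0<\frac d2-2$. If this fails, we would fall back on interpolating $\nabla z^\ell$ between $L^1_t(\dot B^{\sigma_0+2}_{2,\infty})$ and $L^1_t(\dot B^{\frac d2+1}_{2,1})$, producing the term $\mathcal{E}\,\mathcal{E}_{\ell,\sigma_0}$, which is absorbed.
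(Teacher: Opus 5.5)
Your skeleton is the paper's, and you justify $\mathcal{X}_0\lesssim\delta_*$ correctly. You restrict \eqref{G3.12} to $k\le 0$, weight by $2^{k\sigma_0}$, take $\sup_{k\le0}$, aim for $\mathcal{E}_{\ell,\sigma_0}\lesssim\|(u_0,z_0)\|^\ell_{\dot B^{\sigma_0}_{2,\infty}}+\mathcal{E}(\mathcal{E}+\mathcal{E}_{\ell,\sigma_0})$, and absorb. However, the product estimates, which are the whole content of the lemma, fail as written.

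The displayed bound $\|u\cdot\nabla u\|^\ell_{L^1_t(\dot B^{\sigma_0}_{2,\infty})}\lesssim\|u\|^2_{L^2_t(\dot B^{\frac d2}_{2,1})}$ is false when $\sigma_0<\frac d2-1$, and so are its analogues for $u\cdot\nabla z$ and $z\,{\rm div}\,u$. At low frequencies one has $\|F\|^\ell_{\dot B^{\frac d2-1}_{2,1}}\lesssim\|F\|^\ell_{\dot B^{\sigma_0}_{2,\infty}}$ and not the converse, so critical norms alone cannot give a $\dot B^{\sigma_0}_{2,\infty}$ bound. Concretely, take $u(t,x)=\epsilon\,\chi(t)\phi(\lambda x)$ with fixed profiles $\chi,\phi$ and let $\lambda\to0$. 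The right-hand side stays of order $\epsilon^2$, while the left-hand side is of order $\epsilon^2\lambda^{\sigma_0+1-\frac d2}\to\infty$.

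The auxiliary facts you invoke are also off. $L^2$ does not embed into $\dot B^{-\frac d2}_{2,\infty}$; it is $L^1$ that does. And $\dot B^{\frac d2-1}_{2,1}\hookrightarrow\dot B^{\sigma_0}_{2,\infty}$ holds only at high frequencies, not low ones. What works, as in \eqref{G5.5}, is to keep one factor in $L^2_t(\dot B^{\frac d2}_{2,1})$ and put the other in $L^2_t(\dot B^{\sigma_0+1}_{2,\infty})$. That second norm is bounded by interpolating $\mathcal{E}_{\ell,\sigma_0}$ at low frequencies and by $\mathcal{E}$ at high frequencies. This is exactly where the term $\mathcal{E}\,\mathcal{E}_{\ell,\sigma_0}$ in your target inequality comes from.

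Your treatment of $f(a)\nabla z^\ell$, which you rightly flag as the delicate piece, is garbled in the same way. The norm you call ``not directly available'' is in fact part of $\mathcal{E}_{\ell,\sigma_0}$: $\|\nabla z^\ell\|_{L^1_t(\dot B^{\sigma_0+1}_{2,\infty})}\sim\|z\|^\ell_{L^1_t(\dot B^{\sigma_0+2}_{2,\infty})}$. What is genuinely unavailable is $\nabla z^\ell\in L^1_t(\dot B^{\sigma_0}_{2,\infty})$, which the pairing with $f(a)\in\dot B^{\frac d2}_{2,1}$ would need. Your fallback interpolation between $\dot B^{\sigma_0+2}$ and $\dot B^{\frac d2+1}$ can never reach the index $\sigma_0+1$ for $z$. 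Measuring $\nabla z$ in $L^1_t(\dot B^{\frac d2}_{2,1})$ against $a\in\dot B^{\frac d2-1}_{2,1}$ fails by the same scaling obstruction as above.

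The correct step, as in \eqref{G5.8}, is Lemma \ref{LA.3} with $(s_1,s_2)=(\frac d2-1,\sigma_0+1)$:
\[
\|f(a)\nabla z^\ell\|_{L^1_t(\dot B^{\sigma_0}_{2,\infty})}\lesssim\|f(a)\|_{\widetilde L^\infty_t(\dot B^{\frac d2-1}_{2,1})}\,\|z\|^\ell_{L^1_t(\dot B^{\sigma_0+2}_{2,\infty})}.
\]
Here $\|f(a)\|_{\widetilde L^\infty_t(\dot B^{\frac d2-1}_{2,1})}\lesssim(1+\mathcal{E})\mathcal{E}$ follows from $f(a)=-a-af(a)$, and the step only needs $-\frac d2\le\sigma_0<\frac d2-1$. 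So your explanation of the hypothesis $\sigma_0<\frac d2-2$ is misplaced. In the paper that hypothesis enters through ${\rm div}(zu)$, estimated in $\dot B^{\sigma_0+1}_{2,\infty}$ by $\|u\|_{\dot B^{\frac d2-1}_{2,1}}\|z\|_{\dot B^{\sigma_0+2}_{2,\infty}}$ in \eqref{G5.7}, which requires $\sigma_0+2<\frac d2$.
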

\begin{proof}
It follows from \eqref{G3.7}--\eqref{G3.12} that the following estimates can be derived:
\begin{align}\label{G5.3}
&\|\dot\Delta_k (u,z)\|_{L_t^\infty(L^2)}+2^{2k}\|\dot \Delta_k(u,z)\|_{L_t^1(L^2)}\nonumber\\
&\quad \lesssim     \|\dot\Delta_k (u_0,z_0)\|_{ L^2}+\|\dot\Delta_k(u\cdot\nabla u)\|_{L_t^1(L^2)}+\| \dot\Delta_k(  f(a)  \nabla{\rm div}\,u   )  \|_{L_t^1(L^2 )}\nonumber\\
&\qquad +\| \dot\Delta_k(  f(a) \Delta u   )  \|_{L_t^1(L^2 )}+\|\dot\Delta_k(f(a)\nabla z)\|_{L_t^1(L^2)}\nonumber\\
&\qquad +2^k\|\dot \Delta_k(z u)\|_{L_t^1(L^2)}+\|\dot\Delta_k(z{\rm div}\,u)\|_{L_t^1(L^2)},
\end{align}
for any $k \leq 0$, where we have   utilized the fact that $2^k \lesssim 1$ when $k \leq 0$.
Multiplying \eqref{G5.3} by $2^{k\sigma_0}$ and taking the supremum on  both $[0,t]$ and   $k \leq 0$, we have
\begin{align}\label{G5.4}
&\|(u,z)\|_{\widetilde L_t^\infty(\dot B_{2,\infty}^{ \sigma_0})}^\ell+\|(u,z)\|_{L_t^1(\dot B_{2,\infty}^{ \sigma_0+2})}^\ell\nonumber\\
&\quad\lesssim  \|(u_0,z_0)\|_{\dot B_{2,\infty}^{ \sigma_0}}^\ell +\|u\cdot\nabla u\|_{L_t^1(\dot B_{2,\infty}^{ \sigma_0})}^\ell +  \|f(a)\nabla{\rm div}\, u\|_{L_t^1(\dot B_{2,\infty}^{ \sigma_0})}^\ell\nonumber\\
&\quad\quad+\|f(a)\Delta u\|_{L_t^1(\dot B_{2,\infty}^{ \sigma_0})}^\ell+\|f(a)\nabla z\|_{L_t^1(\dot B_{2,\infty}^{ \sigma_0})}^\ell+\|zu\|_{L_t^1(\dot B_{2,\infty}^{ \sigma_0+1})}^\ell+\|z{\rm div}\,u\|_{L_t^1(\dot B_{2,\infty}^{ \sigma_0})}^\ell.
\end{align}
From Lemmas \ref{LA.2}--\ref{LA.4}, we have
\begin{align}\label{G5.5}
 \|u\cdot\nabla u\|_{L_t^1(\dot B_{2,\infty}^{ \sigma_0})} \lesssim&\, \|u\|_{ L_t^2(\dot B_{2,1}^{\frac{d}{2}})} \|u\|_{L_t^2(\dot B_{2,\infty}^{\sigma_0+1})}\nonumber\\
 \lesssim&\,    \|u\|_{ \widetilde L_t^\infty(\dot B_{2,1}^{\frac{d}{2}-1})}^{\frac{1}{2}}\|u\|_{ L_t^1(\dot B_{2,1}^{\frac{d}{2}+1})}^{\frac{1}{2}} \|u\|_{ \widetilde L_t^\infty(\dot B_{2,\infty}^{\sigma_0})}^{\frac{1}{2}} \|u\|_{ L_t^1(\dot B_{2,\infty}^{ \sigma_0+2})}^{\frac{1}{2}} \nonumber\\
 \lesssim&\, \mathcal{E}(t)\Big(\|u^\ell\|_{ \widetilde L_t^\infty(\dot B_{2,\infty}^{\sigma_0})}  +  \|u^h\|_{ \widetilde L_t^\infty(\dot B_{2,1}^{\frac{d}{2}-1})}  +\|u^\ell\|_{ L_t^1(\dot B_{2,\infty}^{ \sigma_0+2})} +\|u^h\|_{ L_t^1(\dot B_{2,1}^{ \frac{d}{2}+1})}     \Big)\nonumber\\
 \lesssim&\, \mathcal{E}(t)\big( \mathcal{E}_{\ell,\sigma_0}(t)+  \mathcal{E}(t)     \big).
\end{align}
Similarly, for the remaining coupling terms associated with the density $a$, 
where no dissipative structure exists, we derive
\begin{align}\label{G5.6}
&\, \|f(a)\nabla{\rm div}\, u\|_{L_t^1(\dot B_{2,\infty}^{ \sigma_0})}+\|f(a)\Delta u\|_{L_t^1(\dot B_{2,\infty}^{ \sigma_0})} \nonumber\\
\lesssim&\, \|f(a)\|_{\widetilde L_t^\infty(\dot B_{2,1}^\frac{d}{2})}\|u\|_{L_t^1(\dot B_{2,\infty}^{\sigma_0+2})}\nonumber\\
\lesssim&\, \|a\|_{\widetilde L_t^\infty(\dot B_{2,1}^\frac{d}{2})}\Big( \|u^\ell\|_{L_t^1(\dot B_{2,\infty}^{\sigma_0+2})}+\|u^h\|_{L_t^1(\dot B_{2,1}^{\frac{d}{2}+1})} \Big)\nonumber\\
\lesssim&\,    \mathcal{E}(t)\big( \mathcal{E}_{\ell,\sigma_0}(t)+  \mathcal{E}(t)     \big).
\end{align}
Through a direct calculation, one has
\begin{align}\label{G5.7}
&\|zu\|_{L_t^1(\dot B_{2,\infty}^{\sigma_0+1})}+\|z{\rm div}\,
u\|_{L_t^1(\dot B_{2,\infty}^{ \sigma_0})}\nonumber\\
\lesssim&\, \| z^\ell u\|_{L_t^1(\dot B_{2,\infty}^{\sigma_0+1})}+   \|z^hu\|_{L_t^1(\dot B_{2,\infty}^{\sigma_0+1 })}+\|z{\rm div}\,u^\ell\|_{L_t^1(\dot B_{2,\infty}^{ \sigma_0})}+\|z{\rm div}\,u^h\|_{L_t^1(\dot B_{2,\infty}^{ \sigma_0})} \nonumber\\
\lesssim&\, \|u\|_{\widetilde L_t^\infty(\dot B_{2,1}^{\frac{d}{2}-1})}\|z\|_{L_t^1(\dot B_{2,\infty}^{\sigma_0+2})}^\ell+
\|u\|_{\widetilde L_t^\infty(\dot B_{2,1}^{\frac{d}{2}-1})}\|z\|_{L_t^1(\dot B_{2,\infty}^{\sigma_0+2})}^h\nonumber\\
&+\|z\|_{\widetilde L_t^\infty(\dot B_{2,1}^{\frac{d}{2}-1})}\|u\|_{L_t^1(\dot B_{2,\infty}^{\sigma_0+2})}^\ell+\|z\|_{\widetilde L_t^\infty(\dot B_{2,1}^{\frac{d}{2}})}\|u\|^h_{L_t^1(\dot B_{2,\infty}^{\sigma_0+1})} \nonumber\\
\lesssim&\, \mathcal{E}(t) \big( \mathcal{E}_{\ell,\sigma_0}(t)+  \mathcal{E}(t)     \big).
\end{align}
To handle the nonlinear term $f(a)\nabla z$, we begin by decomposing $f(a)$ as  
\begin{align*}  
f(a) = -\frac{a}{1+a} = -a - a f(a),  
\end{align*}  
which gives
\begin{align*}
\|f(a)\|_{\widetilde L_t^\infty(\dot B_{2,1}^{\frac{d}{2}-1})}\lesssim&\,    \| a\|_{\widetilde L_t^\infty(\dot B_{2,1}^{\frac{d}{2}-1})} +\|a\|_{\widetilde L_t^\infty(\dot B_{2,1}^{\frac{d}{2}-1})}\|f(a)\|_{\widetilde L_t^\infty(\dot B_{2,1}^{\frac{d}{2} })}\nonumber\\
\lesssim&\, \| a\|_{\widetilde L_t^\infty(\dot B_{2,1}^{\frac{d}{2}-1})} \Big(1+\| a\|_{\widetilde L_t^\infty(\dot B_{2,1}^{\frac{d}{2} })}\Big)\nonumber\\
\lesssim&\,  (\mathcal{E}(t)+1 )\mathcal{E}(t).
\end{align*}
Therefore, we further get
\begin{align}\label{G5.8}
\|f(a)\nabla z\|_{L_t^1(\dot B_{2,\infty}^{ \sigma_0})}\lesssim&\,\|f(a) \|_{\widetilde L_t^\infty(\dot B_{2,1}^{ \frac{d}{2}-1})}  \|z\|^\ell_{L_t^1(\dot B_{2,\infty}^{\sigma_0+2})}   +\|f(a) \|_{\widetilde L_t^\infty(\dot B_{2,1}^{ \frac{d}{2}})}  \|z\|^h_{L_t^1(\dot B_{2,\infty}^{\sigma_0+1})}\nonumber\\
\lesssim&\,(1+\mathcal{E}(t)) \mathcal{E}(t)\big( \mathcal{E}_{\ell,\sigma_0}(t)+  \mathcal{E}(t)     \big).
\end{align}
Putting the estimates \eqref{G5.5}--\eqref{G5.8} into \eqref{G5.4}, one arrives at 
\begin{align*}
 \mathcal{E}_{\ell,\sigma_0}(t)\lesssim   \|(u_0,z_0)\|_{\dot B_{2,\infty}^{ \sigma_0}}^\ell +  (1+\mathcal{E}(t))\mathcal{E}(t)\big( \mathcal{E}_{\ell,\sigma_0}(t)+  \mathcal{E}(t)  \big), 
\end{align*}
which  together with the smallness of $\mathcal{E}(t)$ leads to
\begin{align*}
 \mathcal{E}_{\ell,\sigma_0}(t)\lesssim   \|(u_0,z_0)\|_{\dot B_{2,\infty}^{ \sigma_0}}^\ell + \|(a_0,z_0)\|_{\dot B_{2,1}^{\frac{d}{2}-1}\cap\dot B_{2,1}^{\frac{d}{2}}}+\|u_0\|_{\dot B_{2,1}^{\frac{d}{2}-1}}\lesssim  \delta_*.
\end{align*}
Hence, we obtain \eqref{G5.2}, thereby completing the proof of Lemma \ref{L5.1}.
\end{proof}

We now establish the time-weighted estimates for the low-frequency and high-frequency components of $(u,z)$ separately, as stated in Lemmas \ref{L5.2}--\ref{L5.3} below. 

For $M > \max\left\{\frac{1}{2}\big(\frac{d}{2} + 1 - \sigma_0\big), 1\right\}$ and $\tau\in (0,t],$ inspired by \cite{LS-SIMA-2023}, we introduce a time-weighted functional $\mathcal{E}_{M}(t)$ as:   
\begin{align}\label{G5.9}
\mathcal{E}_{M}(t) :=&\, \|\tau^M u(\tau)\|_{\widetilde L_t^\infty(\dot B_{2,1}^{\frac{d}{2}-1})}+ \|\tau^M z(\tau)\|_{\widetilde L_t^\infty(\dot B_{2,1}^{\frac{d}{2}-1})}^\ell+ \|\tau^M z(\tau)\|_{\widetilde L_t^\infty(\dot B_{2,1}^{\frac{d}{2}})}^h\nonumber\\
&+\|\tau^M u(\tau)\|_{L_t^1(\dot B_{2,1}^{\frac{d}{2}+1})}+\|\tau^M z(\tau)\|_{L_t^1(\dot B_{2,1}^{\frac{d}{2}+1})}^\ell+\|\tau^M z(\tau)\|_{L_t^1(\dot B_{2,1}^{\frac{d}{2}})}^h.
\end{align}

\begin{lem}\label{L5.2}
Let $(a,u,z)$ be the  global strong solution to the Cauchy problem \eqref{A3}--\eqref{A3-1} established in  Theorem \ref{T1.2}. Then, under the assumptions of Theorem \ref{T1.5},  for any $t>0$ and  $M > \max\left\{\frac{1}{2}\big(\frac{d}{2} + 1 - \sigma_0\big), 1\right\}$, it holds that
\begin{align} \label{G5.10}
 &\|\tau^M (u,z)\|_{\widetilde L_t^\infty(\dot B_{2,1}^{{\frac{d}{2}}-1})}^\ell+\|\tau^M (u,z)\|_{L_t^1(\dot B_{2,1}^{\frac{d}{2}+1})}^\ell\nonumber\\
 &\quad\lesssim   \big(\eta+\mathcal{E}(t)\big) \mathcal{E}_{M}(t)+\frac{\mathcal{E}(t)+\mathcal{E}_{\ell,\sigma_0}(t)}{\eta} t^{M-\frac{1}{2}(\frac{d}{2}-1-\sigma_0)},  
\end{align}
where $\eta>0$ is a constant to be determined later. Here, $\mathcal{E}(t)$, $\mathcal{E}_{\ell,\sigma_0}(t)$, and $\mathcal{E}_M(t)$ are defined in  \eqref{G5.1}, \eqref{G5.2} 
and \eqref{G5.9}, respectively.
\end{lem}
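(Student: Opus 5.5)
We multiply the localized low-frequency inequality by the time weight and estimate the resulting source and nonlinear terms, in the spirit of \cite{LS-SIMA-2023}. Starting from the Lyapunov inequality \eqref{G3.9} with \eqref{G3.10}--\eqref{G3.11} for $k\le 0$, we multiply by $\tau^{M}$. We get
\[
\frac{{\rm d}}{{\rm d}\tau}\big(\tau^{2M}\mathcal{E}_k\big)+\tau^{2M}\mathcal{D}_k\le 2M\tau^{2M-1}\mathcal{E}_k+\tau^{2M}(\text{nonlinear terms})\,\|\dot\Delta_k(u,z)\|_{L^2}.
\]
Dividing by the square root of $\tau^{2M}\mathcal E_k$ and integrating over $[0,t]$ gives
\begin{align*}
&\|\tau^M\dot\Delta_k(u,z)\|_{L^\infty_t(L^2)}+2^{2k}\|\tau^M\dot\Delta_k(u,z)\|_{L^1_t(L^2)}\\
&\quad\lesssim \int_0^t\tau^{M-1}\|\dot\Delta_k(u,z)\|_{L^2}\,{\rm d}\tau+\|\tau^M\dot\Delta_k N\|_{L^1_t(L^2)}.
\end{align*}
Here $N$ collects the nonlinear terms of \eqref{G5.3}. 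We then multiply by $2^{k(\frac d2-1)}$ and sum over $k\le0$. The initial data vanish because of the weight.

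The key step is the linear term $\int_0^t\tau^{M-1}\|(u,z)\|^\ell_{\dot B^{\frac d2-1}_{2,1}}\,{\rm d}\tau$. For each block we interpolate between $\dot B^{\sigma_0}_{2,\infty}$ and $\dot B^{\frac d2+1}_{2,1}$ (the real interpolation in Lemma \ref{LA.2}/\eqref{A.1}), giving
\[
\|f\|^\ell_{\dot B^{\frac d2-1}_{2,1}}\lesssim \big(\|f\|^\ell_{\dot B^{\sigma_0}_{2,\infty}}\big)^{\theta}\big(\|f\|^\ell_{\dot B^{\frac d2+1}_{2,1}}\big)^{1-\theta},\qquad \theta=\frac{2}{\frac d2+1-\sigma_0}.
\]
We write $\tau^{M-1}=\tau^{(M)(1-\theta)}\cdot\tau^{M\theta-1}$ and apply Hölder in time and Young's inequality with parameter $\eta$. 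The factor $\tau^{M}$ times the high norm is absorbed into $\eta\,\|\tau^M(u,z)\|^\ell_{L^1_t(\dot B^{\frac d2+1}_{2,1})}\le\eta\,\mathcal E_M(t)$. What remains is
\[
\eta^{-\frac{1-\theta}{\theta}}\,\mathcal E_{\ell,\sigma_0}(t)\int_0^t\tau^{M-\frac1\theta}\,{\rm d}\tau\sim \frac{\mathcal E_{\ell,\sigma_0}(t)}{\eta}\,t^{M+1-\frac12(\frac d2+1-\sigma_0)}=\frac{\mathcal E_{\ell,\sigma_0}(t)}{\eta}\,t^{M-\frac12(\frac d2-1-\sigma_0)}.
\]
The integral converges at $0$ because $M>\frac12(\frac d2+1-\sigma_0)$. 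The constant power of $\eta$ is harmless up to relabeling, and $\mathcal E_{\ell,\sigma_0}$ is bounded by Lemma \ref{L5.1}. A cleaner alternative is to split $[0,t]$ according to $2^{2k}\tau$ for each block.

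The nonlinear terms $u\cdot\nabla u$, $f(a)\Delta u$, $f(a)\nabla{\rm div}\,u$, $f(a)\nabla z$, $\nabla(zu)$ and $z{\rm div}\,u$ are estimated in $L^1_t(\dot B^{\frac d2-1}_{2,1})$ with the weight placed on the dissipated factor. Lemma \ref{LA.4}/\eqref{A.2} gives, for example,
\[
\|\tau^M f(a)\Delta u\|_{L^1_t(\dot B^{\frac d2-1}_{2,1})}\lesssim\|a\|_{\widetilde L^\infty_t(\dot B^{\frac d2}_{2,1})}\|\tau^Mu\|_{L^1_t(\dot B^{\frac d2+1}_{2,1})}\lesssim \mathcal E(t)\,\mathcal E_M(t).
\]
The term $f(a)\nabla z$ is estimated as in \eqref{G3.17}, by splitting $z$ into $z^\ell$ and $z^h$ and using $\|f(a)\|_{\dot B^{\frac d2-1}_{2,1}\cap\dot B^{\frac d2}_{2,1}}\lesssim\mathcal E(t)$. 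For the terms $z\,{\rm div}\,u$, $zu$ and $u\cdot\nabla u$, one factor goes in $\widetilde L^\infty_t$ without weight and the other in the weighted $L^1_t$ norm. For $\nabla(zu)$ at low frequency we use $2^k\lesssim1$. All of these are bounded by $\mathcal E(t)\,\mathcal E_M(t)$.

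The main obstacle I expect is the terms with the nondissipated density, especially $f(a)\nabla z$. There one needs the weighted $z^h$ norm in $L^1_t(\dot B^{\frac d2}_{2,1})$ together with $a\in\dot B^{\frac d2}_{2,1}$, and the weighted $z^\ell$ norm in $L^1_t(\dot B^{\frac d2+1}_{2,1})$ together with $a\in\dot B^{\frac d2-1}_{2,1}$. This works only because \eqref{G5.9} includes exactly these weighted pieces. Collecting all the bounds yields \eqref{G5.10}.
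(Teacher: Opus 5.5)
Your scheme is the paper's: a time-weighted Lyapunov inequality on the blocks $k\le 0$, then interpolation between $\dot B^{\sigma_0}_{2,\infty}$ and $\dot B^{\frac d2+1}_{2,1}$ with Hölder and Young in time for the $\tau^{M-1}$ term, then bounds of the form $\mathcal E(t)\mathcal E_M(t)$ for the nonlinear terms. Your $\tau^{2M}$ multiplication is a cleaner route to \eqref{G5.12}. Doing the interpolation directly on the blocks $k\le0$ means you do not need the high-frequency piece \eqref{G5.22}, and the power of $\eta$ is cosmetic. The bounds for $u\cdot\nabla u$, $z\,{\rm div}\,u$, $f(a)\Delta u$, $f(a)\nabla{\rm div}\,u$ and $f(a)\nabla z$ (with the $z^\ell/z^h$ split) are correct.

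The step that fails as written is the transport term of the $z$-equation, which you take in the conservative form of \eqref{G5.3}. There it enters as $2^k\|\dot\Delta_k(zu)\|_{L^2}$, so you need $\|\tau^M zu\|^\ell_{L^1_t(\dot B^{\frac d2}_{2,1})}\lesssim \mathcal E(t)\mathcal E_M(t)$. This does not follow from putting one factor unweighted in $\widetilde L^\infty_t$ and the other in a weighted $L^1_t$ norm. At low frequencies the weighted $L^1_t$ norms in $\mathcal E_M$ sit at index $\frac d2+1>\frac d2$, which is outside the range of Lemma \ref{LA.3}. In fact $\|zu\|_{\dot B^{\frac d2}_{2,1}}\lesssim \|z\|_{\dot B^{\frac d2-1}_{2,1}\cap\dot B^{\frac d2}_{2,1}}\|u\|_{\dot B^{\frac d2+1}_{2,1}}$ is false even for low output frequencies. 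Take $u=g(2^{-N}\cdot)$ with $\hat g\ge0$ supported in an annulus, and $z$ fixed with spectrum in a thin annulus around $|\xi|=1$. The left side stays $\gtrsim g(0)\|z\|_{L^2}$ while the right side is $O(2^{-N})$; the same happens with $z^\ell$ and $u$ exchanged. If your remark about using $2^k\lesssim1$ means discarding the gradient, that makes things worse. Products of the available norms reach $L^1_t(\dot B^{\frac d2}_{2,1})$ by scaling, and $\|\tau^M zu\|^\ell_{L^1_t(\dot B^{\frac d2-1}_{2,1})}$ is a strictly stronger low-frequency norm.

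The term is harmless once handled correctly, in any of three ways. One is to use $L^2_t\times L^2_t$: $\|\tau^Mzu\|_{L^1_t(\dot B^{\frac d2}_{2,1})}\lesssim \|\tau^Mz\|_{L^2_t(\dot B^{\frac d2}_{2,1})}\|u\|_{L^2_t(\dot B^{\frac d2}_{2,1})}\lesssim\mathcal E_M(t)\mathcal E(t)$. Here you interpolate the low and high parts of $\tau^Mz$ between the matching $\widetilde L^\infty_t$ and $L^1_t$ norms in $\mathcal E_M$. Another is Bony's decomposition, with the weight on $u$ in $\dot T_zu+R(z,u)$ and on $z$ in $\dot T_uz$. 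The third is the paper's route: keep $u\cdot\nabla z$ in non-conservative form. It then enters only through $[u\cdot\nabla,\dot\Delta_k]z$, estimated by \eqref{A.3} with $s=\frac d2-1$, and through $\|{\rm div}\,u\|_{L^\infty}\|\dot\Delta_k z\|_{L^2}$. Both are bounded by $\|z\|_{\widetilde L^\infty_t(\dot B^{\frac d2-1}_{2,1})}\|\tau^M u\|_{L^1_t(\dot B^{\frac d2+1}_{2,1})}$.
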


\begin{proof}
Similar to the derivation analysis in \eqref{G5.3}, multiplying \eqref{G3.9} by $t^{M}$ yields  
\begin{align}\label{G5.11}
&\frac{{\rm d}}{{\rm d}t}\big(t^M \mathcal{E}_k(t)\big)+t^M 2^{2k}\mathcal{E}_k(t)-Mt^{M-1} \mathcal{E}_k(t)  \nonumber\\
&\quad \lesssim t^M\sqrt{\mathcal{E}_k(t)}\big( \|\dot\Delta_k(u\cdot\nabla u)\|_{L^2}+\|\dot\Delta_k(f(a)\nabla{\rm div}\,u)\|_{L^2}\nonumber\\
&\qquad +\|\dot\Delta_k(f(a)\Delta u)\|_{L^2} +\|\dot\Delta_k(f(a)\nabla z)\|_{L^2} + \|\dot\Delta_k(z{\rm div}\,u)\|_{L^2}\nonumber\\
&\qquad +\|[u\cdot\nabla,\dot\Delta_k]z\|_{L^2}+\|{\rm div}\,u\|_{L^\infty}\|\dot\Delta_k(u,z)\|_{L^2}\big),
\end{align}
for any $k\leq 0$. Here, $\mathcal{E}_k(t)\backsim \|\dot\Delta_k(u,z)\|_{L^2}^2$.
By dividing \eqref{G5.11} by $\big( \mathcal{E}_k(t) + \epsilon^2 \big)^{\frac{1}{2}}$ for $\epsilon > 0$, integrating the resulting inequality over $[0, t]$, and then taking the limit as $\epsilon \to 0$, one has
\begin{align}\label{G5.12}
&t^M \|\dot\Delta_k (u,z)\|_{L^2}+2^{2k}\int_0^t \tau^M \|\dot\Delta_k (u,z)\|_{L^2}{\rm d}\tau  -\int_0^t \tau^{M-1}\|\dot\Delta_k(u,z)\|_{L^2}{\rm d}\tau \nonumber\\
&\quad  \lesssim\int_0^t\tau^M\big(\|\dot\Delta_k(u\cdot\nabla u)\|_{L^2}+\|\dot\Delta_k(f(a)\nabla{\rm div}\, u)\|_{L^2}\nonumber\\
& \qquad \qquad + \|\dot\Delta_k(f(a)\Delta u)\|_{L^2}   +\|\dot\Delta(f(a)\nabla z)\|_{L^2}    \big){\rm d}\tau\nonumber\\
& \qquad  + \int_0^t \tau^{M}\big(\|\dot\Delta_k(z{\rm div}\,u)\|_{L^2} + \|[u\cdot\nabla ,\dot\Delta_k]z\|_{L^2}+\|{\rm div}\,u\|_{L^\infty}\|\dot\Delta_k(u,z)\|_{L^2}\big){\rm d}\tau, 
\end{align}
for any $k\leq 0$.
Multiplying \eqref{G5.12} by $2^{k(\frac{d}{2}-1)}$, taking the supremum over $[0,t]$, and summing over all $k \leq 0$,  we find that
\begin{align}\label{G5.13}
&\|\tau^M (u,z)\|_{\widetilde L_t^\infty(\dot B_{2,1}^{\frac{d}{2}-1})}^\ell+\|\tau^M (u,z)\|_{  L_t^1(\dot B_{2,1}^{\frac{d}{2}+1})}^\ell\nonumber\\
&\quad \lesssim\|\tau^M u\cdot \nabla u\|_{L_t^1(\dot B_{2,1}^{\frac{d}{2}-1})}^\ell+\|\tau^Mf(a)\nabla{\rm div}\,u\|_{L_t^1(\dot B_{2,1}^{\frac{d}{2}-1})}^\ell+\|\tau^Mf(a)\Delta u\|_{L_t^1(\dot B_{2,1}^{\frac{d}{2}-1})}^\ell\nonumber\\
&\qquad +\|\tau^Mf(a)\nabla z\|_{L_t^1(\dot B_{2,1}^{\frac{d}{2}-1})}^\ell+ \|\tau^M z{\rm div}\,u\|_{L_t^1(\dot B_{2,1}^{\frac{d}{2}-1})}^\ell+\sum_{k\leq 0}2^{k(\frac{d}{2}-1)}\|[u\cdot\nabla ,\dot\Delta_k]\tau^M z\|_{L_t^1(L^2)}\nonumber\\
&\qquad +\int_0^t\|\tau^Mu\|_{\dot B_{2,1}^{\frac{d}{2}+1}}\|(u,z)\|_{\dot B_{2,1}^{\frac{d}{2}-1}}^\ell{\rm d}\tau+  \int_0^t \tau^{M-1}\|(u,z)\|_{\dot B_{2,1}^{\frac{d}{2}-1}}^\ell {\rm d}\tau.    
\end{align}
Similar to the estimates \eqref{G3.16}--\eqref{G3.18}, applying Lemmas \ref{LA.2}--\ref{LA.4} yields 
\begin{align}\label{G5.14}
 \|\tau^M u\cdot \nabla u\|_{L_t^1(\dot B_{2,1}^{\frac{d}{2}-1})} 
 \lesssim
 &\,  \|u\|_{\widetilde L_t^\infty(\dot B_{2,1}^{\frac{d}{2}-1})}\| \tau^Mu\|_{L_t^1(\dot B_{2,1}^{\frac{d}{2}+1})} 
 \lesssim   \mathcal{E}(t)\mathcal{E}_{M}(t),\\ \label{G5.15}
 \|\tau^Mf(a)\nabla{\rm div}\,u\|_{L_t^1(\dot B_{2,1}^{\frac{d}{2}-1})} \lesssim&\,\|a\|_{\widetilde L_t^\infty(\dot B_{2,1}^\frac{d}{2})}\| \tau^Mu\|_{L_t^1(\dot B_{2,1}^{\frac{d}{2}+1})} \lesssim   \mathcal{E}(t)\mathcal{E}_{M}(t),\\\label{G5.16}
  \|\tau^Mf(a)\Delta u\|_{L_t^1(\dot B_{2,1}^{\frac{d}{2}-1})}\lesssim&\,\|a\|_{\widetilde L_t^\infty(\dot B_{2,1}^\frac{d}{2})}\| \tau^Mu\|_{L_t^1(\dot B_{2,1}^{\frac{d}{2}+1})} \lesssim   \mathcal{E}(t)\mathcal{E}_{M}(t),\\
 \label{G5.17}
 \|\tau^Mf(a)\nabla z\|_{L_t^1(\dot B_{2,1}^{\frac{d}{2}-1})}\lesssim&\, \|a\|_{\widetilde L_t^\infty(\dot B_{2,1}^{\frac{d}{2}-1}\cap \dot B_{2,1}^{\frac{d}{2}})} \|\tau^Mz\|_{L_t^1(\dot B_{2,1}^{\frac{d}{2}+1}+\dot B_{2,1}^{\frac{d}{2}}   ) }  \lesssim \mathcal{E}(t)\mathcal{E}_{M}(t), \\ \label{G5.18}
 \|\tau^M z{\rm div}\,u\|_{L_t^1(\dot B_{2,1}^{\frac{d}{2}-1})}\lesssim&\,\|z\|_{\widetilde L_t^\infty(\dot B_{2,1}^{\frac{d}{2}-1}\cap \dot B_{2,1}^{\frac{d}{2}})}  \|\tau^Mu\|_{  L_t^1(\dot B_{2,1}^{\frac{d}{2}+1})}\lesssim \mathcal{E}(t)\mathcal{E}_{M}(t), \\ \label{G5.19}
 \int_0^t\|\tau^Mu\|_{\dot B_{2,1}^{\frac{d}{2}+1}}\|(u,z)\|_{\dot B_{2,1}^{\frac{d}{2}-1}}{\rm d}\tau\lesssim&\, \|(u,z)\|_{L_t^\infty(\dot B_{2,1}^{\frac{d}{2}-1})}\|\tau^Mu\|_{  L_t^1(\dot B_{2,1}^{\frac{d}{2}+1})}\lesssim \mathcal{E}(t)\mathcal{E}_{M}(t).
\end{align}
From \eqref{A.3}, we have
\begin{align}\label{G5.20}
 \sum_{k\leq 0}2^{k(\frac{d}{2}-1)}\|[u\cdot\nabla ,\dot\Delta_k]\tau^M z\|_{L_t^1(L^2)}\lesssim \|\tau^M u \|_{L_t^1(\dot B_{2,1}^{\frac{d}{2}+1})}  \|  z \|_{L_t^\infty(\dot B_{2,1}^{\frac{d}{2}-1})} \lesssim \mathcal{E}(t)\mathcal{E}_{M}(t).
\end{align}

To handle the last term on the right-hand side of \eqref{G5.13}, we decompose it into the following parts:
\begin{align*}
\int_0^t \tau^{M-1}\|(u,z)\|_{\dot B_{2,1}^{\frac{d}{2}-1}}  {\rm d}\tau\lesssim   \int_0^t \tau^{M-1}\|(u^\ell,z^\ell)\|_{\dot B_{2,1}^{\frac{d}{2}-1}}  {\rm d}\tau  +\int_0^t \tau^{M-1}\|(u^h,z^h)\|_{\dot B_{2,1}^{\frac{d}{2}-1}}  {\rm d}\tau.
\end{align*}
On the one hand, for the low-frequency component, it follows from the interpolation inequality \eqref{A.1} that
 \begin{align}\label{G5.21}
&\,\int_0^t \tau^{M-1}\|(u^\ell,z^\ell)\|_{\dot B_{2,1}^{\frac{d}{2}-1}}  {\rm d}\tau \nonumber\\
\lesssim&\, \int_0^t \tau^{M-1} \|(u^\ell,z^\ell)\|_{\dot B_{2,\infty}^{\sigma_0}}^{1-\zeta_1} \|(u^\ell,z^\ell)\|_{\dot B_{2,1}^{\frac{d}{2}+1}}^{\zeta_1} {\rm d}\tau \nonumber\\
\lesssim&\, \bigg(\int_0^t \tau^{M-\frac{1}{1-\zeta_1}}{\rm d}\tau  \bigg)^{1-\zeta_1} \|(u^\ell,z^\ell)\|_{\widetilde L_t^\infty(\dot B_{2,\infty}^{\sigma_0})}^{1-\zeta_1} \|(\tau^M u^\ell,\tau^M z^\ell)\|_{L_t^1(\dot B_{2,1}^{\frac{d}{2}+1})}^{\zeta_1}\nonumber\\
\lesssim&\,\Big( t^{M-\frac{1}{2}(\frac{d}{2}-1-\sigma_0)} \|(u,z)\|^\ell_{\widetilde L_t^\infty(\dot B_{2,\infty}^{\sigma_0})}\Big)^{1-\zeta_1} \Big(\|(\tau^Mu,\tau^M z)\|^\ell_{L_t^1(\dot B_{2,1}^{\frac{d}{2}+1})}\Big)^{\zeta_1}\nonumber\\
\lesssim&\, \eta {\mathcal{E}_M(t)}+\frac{\mathcal{E}_{\ell,\sigma_0}}{\eta} t^{M-\frac{1}{2}(\frac{d}{2}-1-\sigma_0)},
\end{align}
where the constant $\zeta_1=\frac{{d}/{2}-1-\sigma_0}{{d}/{2}+1-\sigma_0}\in (0,1)$. 
On the other hand, for the high-frequency component, 
exploiting the  interpolation inequality \eqref{A.1} and the dissipative properties of $(u^h,z^h)$ leads to  
\begin{align} \label{G5.22}
&\int_0^t \tau^{M-1}\|(u^h,z^h)\|_{\dot B_{2,1}^{\frac{d}{2}-1}}  {\rm d}\tau\nonumber\\
\lesssim&\,    \Big( t^{M-\frac{1}{2}(\frac{d}{2}-1-\sigma_0)} \|(u^h,z^h)\|_{\widetilde L_t^\infty(\dot B_{2,1}^{\frac{d}{2}-1})}\Big)^{1-\zeta_1} \Big(\|(\tau^Mu,\tau^M z)\|^h_{L_t^1(\dot B_{2,1}^{\frac{d}{2}-1})}\Big)^{\zeta_1}\nonumber\\
\lesssim&\,    \Big( t^{M-\frac{1}{2}(\frac{d}{2}-1-\sigma_0)} \Big(\|u\|_{\widetilde L_t^\infty(\dot B_{2,1}^{\frac{d}{2}-1})}^h +\| z\|^h_{\widetilde L_t^\infty(\dot B_{2,1}^{\frac{d}{2}})}\Big)\Big)^{1-\zeta_1} \Big(\|\tau^Mu \|^h_{L_t^1(\dot B_{2,1}^{\frac{d}{2}-1})}+\| \tau^M z \|^h_{L_t^1(\dot B_{2,1}^{\frac{d}{2} })}\Big)^{\zeta_1}\nonumber\\
\lesssim&\, \eta {\mathcal{E}_M(t)}+\frac{\mathcal{E}(t)}{\eta} t^{M-\frac{1}{2}(\frac{d}{2}-1-\sigma_0)}. 
\end{align}
Combining the estimates \eqref{G5.21} and \eqref{G5.22} together, one reaches
\begin{align} \label{G5.23}
\int_0^t \tau^{M-1}\|(u,z)\|_{\dot B_{2,1}^{\frac{d}{2}-1}}  {\rm d}\tau\lesssim    \eta {\mathcal{E}_M(t)}+\frac{\mathcal{E}(t)+\mathcal{E}_{\ell,\sigma_0}(t)}{\eta} t^{M-\frac{1}{2}(\frac{d}{2}-1-\sigma_0)}. 
\end{align}

Plugging the estimates \eqref{G5.14}--\eqref{G5.20} and \eqref{G5.23} into \eqref{G5.13}, we consequently derive the desired estimate \eqref{G5.10}. 
\end{proof}

\begin{lem}\label{L5.3}
Let $(a,u,z)$ be the  global strong solution to the Cauchy problem \eqref{A3}--\eqref{A3-1} established in  Theorem \ref{T1.2}. Then, under the assumptions of Theorem \ref{T1.5},  for any $t>0$ and  $M > \max\left\{\frac{1}{2}\big(\frac{d}{2} + 1 - \sigma_0\big), 1\right\}$, it holds that
\begin{align}  \label{G5.24}
 &\|\tau^M u\|_{\widetilde L_t^\infty(\dot B_{2,1}^{{\frac{d}{2}}-1})}^h+\|\tau^M  z\|_{\widetilde L_t^\infty(\dot B_{2,1}^{{\frac{d}{2}}})}^h+\|\tau^M u\|_{L_t^1(\dot B_{2,1}^{\frac{d}{2}+1})}^h+\|\tau^M u\|_{L_t^1(\dot B_{2,1}^{\frac{d}{2}})}^h\nonumber\\
 &\quad\lesssim    \big(\eta+\mathcal{E}(t)\big) \mathcal{E}_{M}(t)+\frac{\mathcal{E}(t) }{\eta} t^{M-\frac{1}{2}(\frac{d}{2}-1-\sigma_0)},  
\end{align}
where $\eta>0$ is a constant to be determined later. Here, $\mathcal{E}(t)$, $\mathcal{E}_{\ell,\sigma_0}(t)$, and $\mathcal{E}_M(t)$ are defined in  \eqref{G5.1}, \eqref{G5.2} 
and \eqref{G5.9}, respectively.
\end{lem}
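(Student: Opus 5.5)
For $k\ge -1$ I will run the same time-weighted argument as in Lemma \ref{L5.2}, now at high frequencies. I start from the localized Lyapunov inequality \eqref{G3.9}. By \eqref{G3.10}--\eqref{G3.11}, at high frequencies $\mathcal{E}_k\sim\|\dot\Delta_k(u,\nabla z)\|_{L^2}^2$ and $\mathcal{D}_k\gtrsim \mathcal{E}_k$, so we have damping rather than heat behaviour. Multiplying \eqref{G3.9} by $t^M$ gives
\[
\frac{{\rm d}}{{\rm d}t}\big(t^M\mathcal{E}_k\big)+c\,t^M\mathcal{E}_k-Mt^{M-1}\mathcal{E}_k\lesssim t^M\sqrt{\mathcal{E}_k}\,(\text{nonlinear terms}).
\]
I then divide by $(\mathcal{E}_k+\epsilon^2)^{1/2}$ and integrate, exactly as in \eqref{G5.12}, multiply by $2^{k(\frac{d}{2}-1)}$ and sum over $k\ge-1$. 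This yields a time-weighted bound for $\|\tau^M(u,\nabla z)\|^h$ in $\widetilde L^\infty_t(\dot B_{2,1}^{\frac d2-1})\cap L^1_t(\dot B_{2,1}^{\frac d2-1})$, i.e. for $\|\tau^M z\|^h_{\widetilde L^\infty_t(\dot B^{\frac d2}_{2,1})}$ and $\|\tau^M z\|^h_{L^1_t(\dot B^{\frac d2}_{2,1})}$. The price is the linear term $\int_0^t\tau^{M-1}\|(u,\nabla z)\|^h_{\dot B^{\frac d2-1}_{2,1}}{\rm d}\tau$ together with the nonlinear right-hand sides weighted by $\tau^M$.

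This hypocoercive estimate does not give the parabolic gain $\|\tau^M u\|^h_{L^1_t(\dot B^{\frac d2+1}_{2,1})}$. To recover it I will apply the maximal regularity Lemma \ref{LA.7} to the equation satisfied by $\tau^M u$:
\[
\partial_t(\tau^M u)-\mu\Delta(\tau^M u)-(\mu+\lambda)\nabla{\rm div}(\tau^M u)=M\tau^{M-1}u-\tau^M\nabla z+\tau^M\big(\text{nonlinear terms of \eqref{A3}}_2\big),
\]
which has zero initial data. At high frequencies this gives $\|\tau^Mu\|^h_{\widetilde L^\infty_t(\dot B^{\frac d2-1}_{2,1})}+\|\tau^Mu\|^h_{L^1_t(\dot B^{\frac d2+1}_{2,1})}$ bounded by $\|\tau^M z\|^h_{L^1_t(\dot B^{\frac d2}_{2,1})}$, by $\int_0^t\tau^{M-1}\|u\|^h_{\dot B^{\frac d2-1}_{2,1}}{\rm d}\tau$, and by the nonlinear terms. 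The first of these is already controlled by the Lyapunov step, with a small constant in front of it if needed. The nonlinear terms $\tau^M u\cdot\nabla u$, $\tau^Mf(a)\Delta u$, $\tau^Mf(a)\nabla{\rm div}\,u$, $\tau^M f(a)\nabla z$, $\tau^M z\,{\rm div}\,u$, the commutators $[u\cdot\nabla,\partial_j\dot\Delta_k]\tau^Mz$ and $\|{\rm div}\,u\|_{L^\infty}$ times the energy are all bounded by $\mathcal{E}(t)\mathcal{E}_M(t)$. The estimates are identical to \eqref{G5.14}--\eqref{G5.20}, except that the $z$-terms now require one more derivative, namely $\dot B^{\frac d2}_{2,1}$ for $\tau^M z\,{\rm div}\,u$. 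For that term I use $\|z\|_{\widetilde L^\infty_t(\dot B^{\frac d2}_{2,1})}\|\tau^Mu\|_{L^1_t(\dot B^{\frac d2+1}_{2,1})}$ and Lemma \ref{LA.5}.

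For the remaining linear term $\int_0^t\tau^{M-1}\|(u^h,z^h)\|\,{\rm d}\tau$, with $z$ measured in $\dot B^{\frac d2}_{2,1}$, I interpolate in time exactly as in \eqref{G5.22}:
\[
\int_0^t\tau^{M-1}\|(u,z)\|^h\,{\rm d}\tau\le\Big(t^{M-\frac12(\frac d2-1-\sigma_0)}\,\mathcal{E}(t)\Big)^{1-\zeta_1}\big(\mathcal{E}_M(t)\big)^{\zeta_1},
\]
and then apply Young's inequality to get $\eta\,\mathcal{E}_M(t)+\eta^{-1}\mathcal{E}(t)\,t^{M-\frac12(\frac d2-1-\sigma_0)}$. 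Here $M$ large ensures $\int_0^t\tau^{M-\frac{1}{1-\zeta_1}}{\rm d}\tau$ is finite and has the right power. The high-frequency damping means no $\dot B^{\sigma_0}_{2,\infty}$ information is needed, which is why only $\mathcal{E}(t)$ appears in the final term of \eqref{G5.24}.

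The main obstacle I anticipate is the derivative loss from $f(a)\nabla z$ and $f(a)\Delta u$ at high frequency. Since $a$ has no smoothing, these must be placed in $\dot B^{\frac d2-1}_{2,1}$ (for the $u$-equation) using $\|a\|_{\widetilde L^\infty_t(\dot B^{\frac d2}_{2,1})}$ times $\|\tau^M z\|^h_{L^1_t(\dot B^{\frac d2}_{2,1})}+\|\tau^Mz\|^\ell_{L^1_t(\dot B^{\frac d2+1}_{2,1})}$ or $\|\tau^M u\|_{L^1_t(\dot B^{\frac d2+1}_{2,1})}$. The danger is that in the $z$-equation the quantity $\nabla(z\,{\rm div}\,u)$ at level $\dot B^{\frac d2-1}_{2,1}$ requires ${\rm div}\,u$ in $\dot B^{\frac d2}_{2,1}$ with weight, which is available only through the maximal-regularity term. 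I will therefore close the argument in a specific order. First I combine the Lyapunov estimate and the maximal-regularity estimate, placing a small constant in front of $\|\tau^M z\|^h_{L^1_t(\dot B^{\frac d2}_{2,1})}$ in the latter. Then I absorb all terms of the form $\mathcal{E}(t)\mathcal{E}_M(t)$ into the smallness of $\mathcal{E}(t)$. What remains is \eqref{G5.24}.
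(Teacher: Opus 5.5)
Your proposal is correct and follows the paper's proof essentially step by step. The steps match: the $t^M$-weighted high-frequency Lyapunov estimate from \eqref{G3.9} leading to \eqref{G5.25}, the time interpolation as in \eqref{G5.22} for the $\tau^{M-1}$ term, and Lemma \ref{LA.7} applied to \eqref{G5.27} to recover $\|\tau^M u\|^h_{L^1_t(\dot B^{\frac d2+1}_{2,1})}$. Your concern about the order of closing is unnecessary. Since $\mathcal{E}_M(t)$ already contains the full norm $\|\tau^M u\|_{L^1_t(\dot B^{\frac d2+1}_{2,1})}$, the term $\tau^M z\,{\rm div}\,u$ is bounded by $\mathcal{E}(t)\mathcal{E}_M(t)$ directly, and such terms simply stay on the right-hand side of \eqref{G5.24} rather than being absorbed; absorption only happens later, when Lemmas \ref{L5.2} and \ref{L5.3} are combined.
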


\begin{proof}
For any $k \geq -1$, multiplying \eqref{G3.9} by $t^M$, we deduce that
\begin{align*} 
&\frac{{\rm d}}{{\rm d}t}\big(t^M \mathcal{E}_k(t)\big)+t^M  \mathcal{E}_k(t)-Mt^{M-1} \mathcal{E}_k(t)  \nonumber\\
&\quad \lesssim t^M\sqrt{\mathcal{E}_k(t)}\bigg(  \|\dot\Delta_k(f(a)\nabla{\rm div}\,u)\|_{L^2}+\|\dot\Delta_k(f(a)\Delta u)\|_{L^2} +\|\dot\Delta_k(f(a)\nabla z)\|_{L^2} \nonumber\\
&\qquad +2^{k} \|\dot\Delta_k(z{\rm div}\,u)\|_{L^2} +\|[u\cdot\nabla,\dot\Delta_k] u\|_{L^2}+\|[u\cdot\nabla,\dot\Delta_k] z\|_{L^2}\nonumber\\
&\qquad +\sum_{j=1}^d\|[u\cdot\nabla,\partial_j\dot\Delta_k]z\|_{L^2}+\|{\rm div}\,u\|_{L^\infty}\|\dot\Delta_k(u,\nabla z)\|_{L^2}\bigg),
\end{align*}
where we have utilized the fact that $2^k \gtrsim 1$. Here, $\mathcal{E}_k(t) \backsim  \|\dot\Delta_k(u,\nabla z)\|_{L^2}^2$. Performing direct computations on the above inequality gives
\begin{align*} 
& t^M \|\dot\Delta_k (u,\nabla z)\|_{L^2}+ \int_0^t \tau^M \|\dot\Delta_k (u,\nabla z)\|_{L^2}{\rm d}\tau  -\int_0^t \tau^{M-1}\|\dot\Delta_k(u,\nabla z)\|_{L^2}{\rm d}\tau  \nonumber \\
&\quad  \lesssim\int_0^t\tau^M\big\{ \|\dot\Delta_k(f(a)\nabla{\rm div}\, u)\|_{L^2}+ \|\dot\Delta_k(f(a)\Delta u)\|_{L^2}   +\|\dot\Delta(f(a)\nabla z)\|_{L^2}\nonumber\\
  &\qquad \qquad +2^{k} \|\dot\Delta_k(z{\rm div}\,u)\|_{L^2}     \big\}{\rm d}\tau  + \int_0^t \tau^{M}\bigg(  \|[u\cdot\nabla,\dot\Delta_k] (u,z)\|_{L^2}\nonumber \\
& \qquad +\sum_{j=1}^d\|[u\cdot\nabla,\partial_j\dot\Delta_k]z\|_{L^2}+\|{\rm div}\,u\|_{L^\infty}\|\dot\Delta_k(u,\nabla z)\|_{L^2}\bigg){\rm d}\tau, 
\end{align*}
for any $k\geq-1$, which implies that 
\begin{align} \label{G5.25}
&\|\tau^M u\|_{\widetilde L_t^\infty(\dot B_{2,1}^{\frac{d}{2}-1})}^h+\|\tau^M z\|_{\widetilde L_t^\infty(\dot B_{2,1}^{\frac{d}{2}})}^h+\|\tau^M u\|_{  L_t^1(\dot B_{2,1}^{\frac{d}{2}-1})}^h+\|\tau^M z\|_{  L_t^1(\dot B_{2,1}^{\frac{d}{2}})}^h\nonumber\\
&\quad \lesssim\|\tau^Mf(a)\nabla{\rm div}\,u\|_{L_t^1(\dot B_{2,1}^{\frac{d}{2}-1})}^h+\|\tau^Mf(a)\Delta u\|_{L_t^1(\dot B_{2,1}^{\frac{d}{2}-1})}^h+\|\tau^Mf(a)\nabla z\|_{L_t^1(\dot B_{2,1}^{\frac{d}{2}-1})}^h\nonumber\\
&\qquad   + \|\tau^M z{\rm div}\,u\|_{L_t^1(\dot B_{2,1}^{\frac{d}{2}})}^h+\int_0^t\|\tau^Mu\|_{\dot B_{2,1}^{\frac{d}{2}+1}}\|(u,\nabla z)\|_{\dot B_{2,1}^{\frac{d}{2}-1}}^h{\rm d}\tau+  \int_0^t \tau^{M-1}\|(u,\nabla z)\|_{\dot B_{2,1}^{\frac{d}{2}-1}}^h {\rm d}\tau\nonumber\\
&\qquad   +\sum_{k\geq -1}2^{k(\frac{d}{2}-1)}\bigg(\|[u\cdot\nabla ,\dot\Delta_k](\tau^M u,\tau^M z)\|_{L_t^1(L^2)}+\sum_{j=1}^d\|[u\cdot\nabla,\partial_j\dot\Delta_k]\tau^M z\|_{L^1_t(L^2)}\bigg) .    
\end{align}
Similar to  \eqref{G5.15}--\eqref{G5.17}, we get 
\begin{align*} 
& \|\tau^Mf(a)\nabla{\rm div}\,u\|_{L_t^1(\dot B_{2,1}^{\frac{d}{2}-1})}^h+\|\tau^Mf(a)\Delta u\|_{L_t^1(\dot B_{2,1}^{\frac{d}{2}-1})}^h+\|\tau^Mf(a)\nabla z\|_{L_t^1(\dot B_{2,1}^{\frac{d}{2}-1})}^h    \lesssim \mathcal{E}(t)\mathcal{E}_{M}(t).
\end{align*}
By a direct calculation, it holds that
\begin{align*}
&\|\tau^M z{\rm div}\,u\|_{L_t^1(\dot B_{2,1}^{\frac{d}{2}})}^h+\int_0^t\|\tau^Mu\|_{\dot B_{2,1}^{\frac{d}{2}+1}}\|(u,\nabla z)\|_{\dot B_{2,1}^{\frac{d}{2}-1}}^h{\rm d}\tau  \nonumber\\
\lesssim\,& \|z\|_{\widetilde L_t^\infty(\dot B_{2,1}^{\frac{d}{2}})}\|\tau^Mu\|_{L_t^1(\dot B_{2,1}^{\frac{d}{2}+1})}+\Big( 
\|u\|_{L_t^\infty(\dot B_{2,1}^{\frac{d}{2}-1})}^h+ \|z\|_{L_t^\infty(\dot B_{2,1}^{\frac{d}{2} })}^h\Big)\|\tau^Mu\|_{L_t^1(\dot B_{2,1}^{\frac{d}{2}+1})} \nonumber\\
\lesssim\,&    \mathcal{E}(t)\mathcal{E}_{M}(t).
\end{align*}
According to Lemma \ref{LA.5}, we have
\begin{align*}
&\sum_{k\geq -1}2^{k(\frac{d}{2}-1)}\bigg(\|[u\cdot\nabla ,\dot\Delta_k](\tau^M u,\tau^M z)\|_{L_t^1(L^2)}+\sum_{j=1}^d\|[u\cdot\nabla,\partial_j\dot\Delta_k]z\|_{L^1_t(L^2)}\bigg)    \nonumber\\
\lesssim\,& \|\tau^M u\|_{L^1_t(\dot B_{2,1}^{\frac{d}{2}+1})}\Big( \|  u \|_{\widetilde L^\infty_t(\dot B_{2,1}^{\frac{d}{2}-1})}+\| z \|_{\widetilde L^\infty_t(\dot B_{2,1}^{\frac{d}{2}-1}\cap\dot B_{2,1}^{\frac{d}{2} } )}\Big)\nonumber\\
\lesssim\,&    \mathcal{E}(t)\mathcal{E}_{M}(t).
\end{align*}
Similar to \eqref{G5.22}, we conclude that 
\begin{align*} 
\int_0^t\tau^{M-1} \|(u,\nabla z)\|_{\dot B_{2,1}^{\frac{d}{2}-1}}^h{\rm d}\tau\lesssim &\,  \Big( t^{M-\frac{1}{2}(\frac{d}{2}-1-\sigma_0)} \|u \|^h_{\widetilde L_t^\infty(\dot B_{2,1}^{\frac{d}{2}-1})}\Big)^{1-\zeta_2} \Big(\|\tau^Mu\|^h_{L_t^1(\dot B_{2,1}^{\frac{d}{2}+1})}\Big)^{\zeta_2}\nonumber\\
&+\Big( t^{M-\frac{1}{2}(\frac{d}{2}-1-\sigma_0)} \|z \|^h_{\widetilde L_t^\infty(\dot B_{2,1}^{\frac{d}{2} })}\Big)^{1-\zeta_2} \Big(\|\tau^M z\|^h_{L_t^1(\dot B_{2,1}^{\frac{d}{2}})}\Big)^{\zeta_2}\nonumber\\
\lesssim&\, \eta  {\mathcal{E}_M(t)}+\frac{\mathcal{E}(t)}{\eta} t^{M-\frac{1}{2}(\frac{d}{2}-1-\sigma_0)},  
\end{align*}
where the constant $\zeta_2=\frac{ {d}/{2}-1-\sigma_0}{{d}/{2}+1-\sigma_0}\in (0,1)$.
Putting all the above estimates into \eqref{G5.25}, we eventually obtain
\begin{align}\label{G5.26}
&\|\tau^M u\|_{\widetilde L_t^\infty(\dot B_{2,1}^{\frac{d}{2}-1})}^h+\|\tau^M z\|_{\widetilde L_t^\infty(\dot B_{2,1}^{\frac{d}{2}})}^h+\|\tau^M u\|_{  L_t^1(\dot B_{2,1}^{\frac{d}{2}-1})}^h+\|\tau^M z\|_{  L_t^1(\dot B_{2,1}^{\frac{d}{2}})}^h  \nonumber\\
&\quad\lesssim   \big(\eta+\mathcal{E}(t)\big) \mathcal{E}_{M}(t)+\frac{\mathcal{E}(t) }{\eta} t^{M-\frac{1}{2}(\frac{d}{2}-1-\sigma_0)},  
\end{align}

Finally, we aim to estimate $\|\tau^M u\|_{L_t^1(\dot B_{2,1}^{\frac{d}{2}+1})}^h$. Multiplying \eqref{A3}$_2$ by $t^M$, we arrive at 
\begin{align}\label{G5.27}
&\partial_t(t^M u)-\mu\Delta (t^M u)-(\mu+\lambda)\nabla{\rm div}\,(t^M u)\nonumber\\
&\quad =Mt^{M-1}u -\nabla(t^M z)-t^M u\cdot\nabla u-t^Mf(a)\nabla z \nonumber\\
&\qquad  +t^Mf(a)(\mu\Delta u+(\mu+\lambda)\nabla{\rm div}\,u).
\end{align}
Applying Lemma \ref{LA.7} to \eqref{G5.27}, we easily infer 
\begin{align*}
&\|\tau^M u\|_{\widetilde L^\infty_t(\dot B_{2,1}^{\frac{d}{2}-1})}^h+\|\tau^M u\|_{L^1_t(\dot B_{2,1}^{\frac{d}{2}+1})}^h\nonumber\\
&\quad \lesssim   \|\tau^M z\|_{L_t^1(\dot B_{2,1}^{\frac{d}{2}})} ^h+\|\tau^M u\cdot\nabla u\|_{L_t^1(\dot B_{2,1}^{\frac{d}{2}-1})} ^h  +\|\tau^M f(a) \nabla z\|_{L_t^1(\dot B_{2,1}^{\frac{d}{2}-1})} ^h\nonumber\\
& \qquad   +\|\tau^M  f(a)\Delta u\|_{L_t^1(\dot B_{2,1}^{\frac{d}{2}-1})} ^h +\|\tau^M  f(a)\nabla{\rm div}\, u\|_{L_t^1(\dot B_{2,1}^{\frac{d}{2}-1})} ^h+\int_0^t\tau^{M-1} \|u\|_{\dot B_{2,1}^{\frac{d}{2}-1}}^h{\rm d}\tau,
\end{align*}
which, combined with the nonlinear estimates on the right-hand side of \eqref{G5.25} and the estimate \eqref{G5.26}, yields
\begin{align}\label{G5.28}
\|\tau^M u\|_{\widetilde L^\infty_t(\dot B_{2,1}^{\frac{d}{2}-1})}^h+\|\tau^M u\|_{L^1_t(\dot B_{2,1}^{\frac{d}{2}+1})}^h\lesssim      \big(\eta+\mathcal{E}(t)\big) \mathcal{E}_{M}(t)+\frac{\mathcal{E}(t) }{\eta} t^{M-\frac{1}{2}(\frac{d}{2}-1-\sigma_0)}.
\end{align}
Thus, the combination of \eqref{G5.26} and \eqref{G5.28} gives rise to \eqref{G5.24}.
\end{proof}

With Lemmas \ref{L5.1}--\ref{L5.3} established, we are now in a position to prove Theorem \ref{T1.5}.

\begin{proof}[Proof of Theorem \ref{T1.5}]

From \eqref{G5.10} and \eqref{G5.24}, we have
\begin{align*} 
\mathcal{E}_{M}(t)\lesssim  \big(\eta+\mathcal{E}(t)\big) \mathcal{E}_{M}(t)+\frac{\mathcal{E}(t)+\mathcal{E}_{\ell,\sigma_0}(t)}{\eta} t^{M-\frac{1}{2}(\frac{d}{2}-1-\sigma_0)},    
\end{align*}
for any $t>0$ and  $M > \max\left\{\frac{1}{2}\big(\frac{d}{2} + 1 - \sigma_0\big), 1\right\}$. 
By choosing the constant $\eta$ sufficiently small and exploiting the smallness of $\mathcal{E}(t)$ together with Lemma \ref{L5.1}, one gets
\begin{align}\label{G5.29}
 \mathcal{E}_{M}(t)\lesssim   \delta_{*} t^{M-\frac{1}{2}(\frac{d}{2}-1-\sigma_0)},
\end{align}
for any $t>0$, where $\delta_*$ is given by \eqref{TE.4}.
Combining \eqref{G5.9} and \eqref{G5.29} gives
\begin{align}\label{G5.30}
\|u (t)\|_{\dot B_{2,1}^{\frac{d}{2}-1}}+\|z(t)\|_{\dot B_{2,1}^{\frac{d}{2}-1}\cap\dot B_{2,1}^{\frac{d}{2}}}\lesssim&\, \|u(t)\|_{{\dot B_{2,1}^{\frac{d}{2}-1}}}+ \|z(t)\|_{\dot B_{2,1}^{\frac{d}{2}-1}}^\ell+  \|z(t)\|_{\dot B_{2,1}^{\frac{d}{2}}}^h\nonumber\\
\lesssim&\, \delta_*(1+t)^{-\frac{1}{2}(\frac{d}{2}-1-\sigma_0)},
\end{align}
for any $t\geq 1$.
Using \eqref{A.1} and \eqref{G5.30}, we notice that
\begin{align*}
\|(u,z)(t)\|_{\dot B_{2,1}^{\sigma}}\lesssim&\,  \|(u^\ell,z^\ell)(t)\|_{\dot B_{2,1}^{\sigma}} +   \|(u^h,z^h)(t)\|_{\dot B_{2,1}^{\sigma}}\nonumber\\
\lesssim&\,  \|(u^\ell,z^\ell)(t)\|_{\dot B_{2,\infty}^{\sigma_0}}^{  \frac{{ {d}/{2}-1-\sigma}}{ {d}/{2}-1-\sigma_0} }  \|(u^\ell,z^\ell)(t)\|_{\dot B_{2,1}^{\frac{d}{2}-1}}^{  \frac{\sigma-\sigma_0}{ {d}/{2}-1-\sigma_0} }+\|u(t)\|^h_{\dot B_{2,1}^{\frac{d}{2}-1}}+\|z(t)\|^h_{\dot B_{2,1}^{\frac{d}{2}}}\nonumber\\
\lesssim&\, \delta_*^{  \frac{{ {d}/{2}-1-\sigma}}{ {d}/{2}-1-\sigma_0} }\Big(\delta_*\|(u^\ell,z^\ell)(t)\|_{\dot B_{2,1}^{\frac{d}{2}-1}}\Big)^{ \frac{\sigma-\sigma_0}{\ {d}/{2}-1-\sigma_0}}+\delta_{*} (1+t)^{-\frac{1}{2}( {d}/{2}-1-\sigma_0)}\nonumber\\
\lesssim&\, \delta_* (1+t)^{-\frac{1}{2}(\sigma-\sigma_0)},
\end{align*}
for any $\sigma \in \big(\sigma_0, \frac{d}{2} - 1\big]$ with $\sigma_0 \in \big[-\frac{d}{2}, \frac{d}{2} - 2\big)$. Hence, \eqref{TE.2} follows. 

To establish \eqref{TE.3}, we estimate $\|u(t)\|^h_{\dot B_{2,1}^{\frac{d}{2}+1}}$ below. From \eqref{TB.4}, it holds that there exists a time $t_*>0$ such that
\begin{align*}
 \|u(t_*)\| _{\dot B_{2,1}^{\frac{d}{2}+1}}\lesssim  \|(a_0,z_0)\|_{\dot B_{2,1}^{\frac{d}{2}-1}\cap  \dot B_{2,1}^{\frac{d}{2}}}+\|u_0\|_{\dot B_{2,1}^{\frac{d}{2}-1}} \lesssim \delta_*.  
\end{align*}
For notational simplicity, we assume without loss of generality that $t_* = 1$. Then, applying Lemma \ref{LA.7} to \eqref{G5.27} on  $[1,t]$ yields  
\begin{align*}
&\|\tau^M u(t)\|_{\widetilde L^\infty(1,t;\dot B_{2,1}^{\frac{d}{2}+1})}^h\nonumber\\
\lesssim\,& \|u(1)\|^h_{\dot B_{2,1}^{\frac{d}{2}+1}} +\|\tau^M z\|_{\widetilde L^\infty(1,t;\dot B_{2,1}^{\frac{d}{2}-1})}^h +\|\tau^{M } u\cdot\nabla u\|_{\widetilde L^\infty(1,t;\dot B_{2,1}^{\frac{d}{2}-1})}^h \nonumber\\
& +\|\tau^{M } f(a) \nabla z\|_{\widetilde L^\infty(1,t;\dot B_{2,1}^{\frac{d}{2}-1})}^h+\|\tau^{M} f(a)\Delta u\|_{\widetilde L^\infty(1,t;\dot B_{2,1}^{\frac{d}{2}-1})}^h \nonumber\\
&+\|\tau^{M} f(a)\nabla{\rm div}\,u\|_{\widetilde L^\infty(1,t;\dot B_{2,1}^{\frac{d}{2}-1})}^h+\|\tau^{M-1 } u\|_{\widetilde L^\infty(1,t;\dot B_{2,1}^{\frac{d}{2}-1})}^h  \nonumber\\
\lesssim \,&\|u(1)\|^h_{\dot B_{2,1}^{\frac{d}{2}+1}} +\mathcal{E}_{M}(t)+\Big( \|u\|_{\widetilde L^\infty(1,t;\dot B_{2,1}^{\frac{d}{2}-1})}  + \|a\|_{\widetilde L^\infty(1,t;\dot B_{2,1}^{\frac{d}{2} })}\Big)\|\tau^M u\|_{\widetilde L^\infty(1,t;\dot B_{2,1}^{\frac{d}{2}+1})} ^h  \nonumber\\
&+\Big( \|u\|_{\widetilde L^\infty(1,t;\dot B_{2,1}^{\frac{d}{2}-1})}  + \|a\|_{\widetilde L^\infty(1,t;\dot B_{2,1}^{\frac{d}{2} })}\Big)\|\tau^M u\|_{\widetilde L^\infty(1,t;\dot B_{2,1}^{\frac{d}{2}-1})} ^\ell  \nonumber\\
&+\|a\|_{\widetilde L^\infty(1,t;\dot B_{2,1}^{\frac{d}{2}})}\|\tau^M z\|_{\widetilde L^\infty(1,t;\dot B_{2,1}^{\frac{d}{2}})}\nonumber\\
&+\Big(1+\|a\|_{\widetilde L^\infty(1,t;\dot B_{2,1}^{\frac{d}{2}})}\Big)\|a\|_{\widetilde L^\infty(1,t;\dot B_{2,1}^{\frac{d}{2}-1})}\|\tau^M z\|_{\widetilde L^\infty(1,t;\dot B_{2,1}^{\frac{d}{2}})}\nonumber\\
\lesssim\,& \|u(1)\|^h_{\dot B_{2,1}^{\frac{d}{2}+1}}+\delta\|\tau^M u(t)\|_{\widetilde L^\infty(1,t;\dot B_{2,1}^{\frac{d}{2}+1})}^h+\delta_* t^{M-\frac{1}{2}(\frac{d}{2}-1-\sigma_0)},
\end{align*}
for any $t\geq 1$, which together with the smallness of $\delta$ gives rise to  
\begin{align}\label{G5.31}
\|u(t)\|_{\dot B_{2,1}^{\frac{d}{2}+1} }^h\lesssim \delta_* (1+t)^{ -\frac{1}{2}(\frac{d}{2}-1-\sigma_0)},    
\end{align}
for any $t\geq 1$.  Hence, \eqref{TE.3} follows from \eqref{G5.30} and \eqref{G5.31}. The proof of Theorem \ref{T1.5} is completed.
\end{proof}

\subsection{The upper bound estimate of solutions under the  smallness condition}

In this subsection, we establish time-decay estimates for higher-order spatial derivatives of $(u,z)$ under the assumption that $\|(u_0^\ell,z_0^\ell)\|_{\dot B_{2,\infty}^{\sigma_0}}$ is sufficiently small, where $\sigma_0 \in \big[-\frac{d}{2}, \frac{d}{2}-2\big)$. In contrast to Theorem \ref{T1.5}, an additional assumption on $a_0^\ell \in \dot B_{2,\infty}^{\sigma_0}$ is imposed to address technical difficulties caused by the absence of dissipation in the density $a$. The proof of Theorem \ref{T1.6} is primarily based on the method presented in \cite[Section 5.2]{Danchin-2018}.

Define the time-weighted energy functional as follows:
\begin{align}\label{G5.32}
\mathcal{Z}(t):=&\,\sup_{\sigma\in[\sigma_0+\zeta,\frac{d}{2}+1]} \|\langle\tau\rangle^{\frac{1}{2}(\sigma-\sigma_0)} (u,z)\|_{L_t^\infty(\dot B_{2,1}^{\sigma})}^\ell+\|\langle\tau\rangle^\alpha u\|_{\widetilde L_t^\infty(\dot B_{2,1}^{\frac{d}{2}-1})}^h\nonumber\\
&+\|\langle\tau\rangle^\alpha z\|_{\widetilde L_t^\infty(\dot B_{2,1}^{\frac{d}{2}})}^h+\| \tau ^\alpha u\|_{\widetilde L_t^\infty(\dot B_{2,1}^{\frac{d}{2}+1})}^h,
\end{align}
where  $\langle \tau \rangle:=\sqrt{1+\tau ^2}$ and $\alpha:=\frac{1}{2}(\frac{d}{2}+1-\sigma_0 )$, 
and $\zeta\in(0,1]$ is a sufficiently small constant.

Before establishing the uniform-in-time estimate of $\mathcal{Z}(t)$, it is essential to ensure the uniformly regular evolution of $a$. Since 
the density $a$ lacks a dissipative structure, we cannot directly obtain the estimate of $\|a\|_{\widetilde L_t^\infty(\dot B_{2,\infty}^{\sigma_0})}$; hence, a new method must be developed. By utilizing the auxiliary 
 mode $\omega = \gamma a - z$, we can derive the bounds for $\|\omega\|_{\widetilde L_t^\infty(\dot B_{2,1}^{\sigma})}$ and $\|a\|_{\widetilde L_t^\infty(\dot B_{2,1}^{\sigma})}$ with $\sigma \in \big(\sigma_0, \frac{d}{2}\big]$, respectively.

\begin{lem}\label{L5.4}
Let $(a,u,z)$ be the global strong solution to the Cauchy problem \eqref{A3}--\eqref{A3-1} established in  Theorem \ref{T1.2}. Then, under the assumptions of Theorem \ref{T1.6}, it holds that for any $t>0$,
\begin{align}  \label{G5.33}
 \|a\|_{\widetilde L_t^\infty(\dot B_{2,1}^{\sigma})}+ \|\gamma a-z\|_{\widetilde L_t^\infty(\dot B_{2,1}^{\sigma })}\lesssim&\, \delta_*,  \quad \sigma\in\Big(\sigma_0,\frac{d}{2} \Big],  
\end{align}
where $\delta_{*}$ is defined by   \eqref{TE.4}. 
\end{lem}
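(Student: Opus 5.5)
The plan is to estimate $\omega=\gamma a-z$ in $\widetilde L_t^\infty(\dot B_{2,\infty}^{\sigma_0})$ first and then recover the full range $\sigma\in(\sigma_0,\frac d2]$ by interpolation. The density $a$ will be obtained from the triangle inequality $\gamma a=\omega+z$, exactly as in Lemma \ref{L3.4}. The transport equation \eqref{A3-2}, rewritten as
\begin{align*}
\partial_t\omega+u\cdot\nabla\omega=-\gamma(a-z){\rm div}\,u,
\end{align*}
contains no linear term ${\rm div}\,u$. This is the reason for working with $\omega$ rather than with \eqref{A3}$_1$ directly.

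First, I apply the transport estimate of Lemma \ref{LA.6} in $\dot B_{2,\infty}^{\sigma_0}$. The exponent range is admissible: $\sigma_0\ge -\frac d2$, and the estimate only needs $\|\nabla u\|_{L_t^1(\dot B_{2,1}^{d/2})}\lesssim\mathcal{D}(t)$, which is small by \eqref{TB.4}. This gives
\begin{align*}
\|\omega\|_{\widetilde L_t^\infty(\dot B_{2,\infty}^{\sigma_0})}\lesssim \|\omega_0\|_{\dot B_{2,\infty}^{\sigma_0}}+\|(a-z){\rm div}\,u\|_{L_t^1(\dot B_{2,\infty}^{\sigma_0})}.
\end{align*}
For the initial term, split into frequencies. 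At low frequency, $\|\omega_0\|^\ell_{\dot B_{2,\infty}^{\sigma_0}}\lesssim \|(a_0^\ell,z_0^\ell)\|_{\dot B_{2,\infty}^{\sigma_0}}$. Here I use the extra hypothesis $a_0^\ell\in\dot B_{2,\infty}^{\sigma_0}$ of Theorem \ref{T1.6}, and I understand the constant $\delta_*$ in \eqref{G5.33} to absorb $\|a_0^\ell\|_{\dot B_{2,\infty}^{\sigma_0}}$. At high frequency, $\|\omega_0\|^h_{\dot B_{2,\infty}^{\sigma_0}}\lesssim\|(a_0,z_0)\|_{\dot B^{d/2}_{2,1}}$.

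For the source term, I use the product law $\dot B_{2,1}^{d/2-1}\times\dot B_{2,\infty}^{\sigma_0+1}\to\dot B_{2,\infty}^{\sigma_0}$, valid since $\sigma_0+1+\frac d2-1>0$ and $\sigma_0+1\le\frac d2$. This gives
\begin{align*}
\|(a-z){\rm div}\,u\|_{L_t^1(\dot B_{2,\infty}^{\sigma_0})}\lesssim \|(a-z)\|_{\widetilde L^\infty_t(\dot B_{2,1}^{d/2-1})}\|u\|_{L^1_t(\dot B_{2,\infty}^{\sigma_0+2})}.
\end{align*}
The factor $\|u\|_{L^1_t(\dot B_{2,\infty}^{\sigma_0+2})}$ is controlled by $\mathcal{E}_{\ell,\sigma_0}(t)+\|u\|^h_{L^1_t(\dot B^{d/2+1}_{2,1})}\lesssim\delta_*$, using Lemma \ref{L5.1} together with $\sigma_0+2<\frac d2+1$. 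The prefactor is $O(\delta)$ by Theorem \ref{T1.2}, so this term contributes at most $\delta\delta_*$.

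This step is the main obstacle. Lemma \ref{L5.1} was proved under the boundedness condition, and I must make sure its proof does not itself require $\|a\|_{\dot B_{2,\infty}^{\sigma_0}}$. Checking \eqref{G5.8}, it only uses $\|f(a)\|_{\dot B^{d/2-1}_{2,1}\cap\dot B^{d/2}_{2,1}}$, so there is no circularity. If the product law at the endpoint $\sigma_0=-\frac d2$ fails for the $\widetilde L^\infty\times L^1$ pairing, my fallback is to put $a-z$ into $\dot B_{2,1}^{d/2}$ and ${\rm div}\,u$ into $L_t^1(\dot B_{2,\infty}^{\sigma_0})$ at low frequencies, where $\|u\|^\ell_{L^1_t(\dot B^{\sigma_0+1}_{2,\infty})}$ is still controlled. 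A second option is the Bony decomposition, with the remainder handled by $\sigma_0+\frac d2\ge 0$ in $\dot B_{2,\infty}$.

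With $\|\omega\|_{\widetilde L_t^\infty(\dot B_{2,\infty}^{\sigma_0})}\lesssim\delta_*$ in hand, Lemma \ref{L3.3} gives $\|\omega\|_{\widetilde L^\infty_t(\dot B_{2,1}^{d/2})}\lesssim\delta_*$. The interpolation inequality \eqref{A.1}, using $\dot B_{2,\infty}^{\sigma_0}\cap\dot B_{2,1}^{d/2}\hookrightarrow\dot B_{2,1}^{\sigma}$ for $\sigma_0<\sigma\le\frac d2$ (the strict inequality gains the $\ell^1$ summability), then yields the $\omega$ part of \eqref{G5.33}.

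For $z$, Lemma \ref{L5.1} gives $\|z\|^\ell_{\widetilde L^\infty_t(\dot B^{\sigma_0}_{2,\infty})}\lesssim\delta_*$, and \eqref{TB.4} gives $\|z\|^h_{\widetilde L^\infty_t(\dot B^{d/2}_{2,1})}\lesssim\delta_*$. The same interpolation bounds $\|z\|_{\widetilde L_t^\infty(\dot B^\sigma_{2,1})}$. Finally, $\gamma a=\omega+z$ closes \eqref{G5.33}.
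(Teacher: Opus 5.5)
Your argument is correct and has the same skeleton as the paper's proof:
\begin{itemize}
\item a $\widetilde L^\infty_t(\dot B^{\sigma_0}_{2,\infty})$ transport estimate for $\omega=\gamma a-z$ via Lemma \ref{LA.6};
\item the bound on $z$ from Lemma \ref{L5.1} at low frequencies and from \eqref{TB.4} at high frequencies;
\item the identity $\gamma a=\omega+z$;
\item passage to $\dot B^{\sigma}_{2,1}$. The paper does this by splitting frequencies, using $\dot B^{\sigma_0}_{2,\infty}$ for $k\le 0$ and $\dot B^{\frac d2}_{2,1}$ for $k\ge -1$, which amounts to your interpolation.
\end{itemize}

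The one genuine difference is how you treat the source term. The paper writes $\gamma(a-z)=\omega-(\gamma-1)z$ and uses $\|(\omega,z)\,{\rm div}\,u\|_{\dot B^{\sigma_0}_{2,\infty}}\lesssim\|(\omega,z)\|_{\dot B^{\sigma_0}_{2,\infty}}\|u\|_{\dot B^{\frac d2+1}_{2,1}}$. This needs only the $\widetilde L^\infty_t$ half of Lemma \ref{L5.1} and the critical dissipation from Theorem \ref{T1.2}. However, it produces $\delta\|\omega\|_{\widetilde L^\infty_t(\dot B^{\sigma_0}_{2,\infty})}$ on the right, which must be absorbed. That absorption uses the smallness of $\mathcal{D}(t)$ and implicitly requires this norm to be finite. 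You instead keep $a-z$ at critical regularity and put the low-regularity burden on $u$, through $\|u\|^\ell_{L^1_t(\dot B^{\sigma_0+2}_{2,\infty})}$ from Lemma \ref{L5.1}. This gives a direct linear bound with no absorption, and it uses only boundedness, not smallness, of $\|a-z\|_{\widetilde L^\infty_t(\dot B^{\frac d2-1}_{2,1})}$.

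Two side remarks.
\begin{itemize}
\item Your fallback for $\sigma_0=-\frac d2$ is unnecessary. The third item of Lemma \ref{LA.3} requires only $s_1+s_2\ge 0$, and your choice $s_1=\frac d2-1$, $s_2=\sigma_0+1$ gives $s_1+s_2=\frac d2+\sigma_0\ge0$, so that endpoint is covered exactly. The real endpoint caveat, which the paper shares, is that Lemma \ref{LA.6} is stated only for $s>-\frac d2$. With $r=\infty$ the commutator estimate still holds at $s=-\frac d2$, because the remainder sum is then controlled in $\ell^\infty$, so nothing breaks.
\item Your reading of $\delta_*$ is the right one. The paper's proof also bounds $\|(a_0^\ell,u_0^\ell,z_0^\ell)\|_{\dot B^{\sigma_0}_{2,\infty}}$ by $\delta_*$, even though \eqref{TE.4} does not contain $\|a_0^\ell\|_{\dot B^{\sigma_0}_{2,\infty}}$. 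Without that term, \eqref{G5.33} cannot hold for $\sigma<\frac d2-1$, as one sees already at $t=0$.
\end{itemize}
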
    
 
\begin{proof}
 By applying Lemma \ref{LA.6} to the equation $  
\partial_t   \omega+ u\cdot\nabla\omega   =\gamma(a-z){\rm div}\,u 
 $, one has
\begin{align*}
 \|\omega\|_{\widetilde L_t^\infty(\dot B_{2,\infty}^{\sigma_0 })}\lesssim&\, e^{C \mathcal{D}(t)}  \Big(\|\omega_0\|_{ \dot B_{2,\infty}^{\sigma_0}  } + \|\gamma( a-z){\rm div}\,u\|_{L_t^1( \dot B_{2,\infty}^{\sigma_0})}\Big)\nonumber\\   
 \lesssim&\, \|\omega_0\|_{ \dot B_{2,\infty}^{\sigma_0}  } + \|\omega\|_{\widetilde L_t^\infty(\dot B_{2,\infty}^{\sigma_0})}\|u\|_{L_t^1(\dot B_{2,1}^{\frac{d}{2}+1})}+\|z\|_{\widetilde L_t^\infty(\dot B_{2,\infty}^{\sigma_0})}\|u\|_{L_t^1(\dot B_{2,1}^{\frac{d}{2}+1})}\nonumber\\
 \lesssim&\,\|\omega_0\|_{ \dot B_{2,\infty}^{\sigma_0}  }+\delta\|\omega\|_{\widetilde L_t^\infty(\dot B_{2,\infty}^{\sigma_0 })}+\delta\|z\|_{\widetilde L_t^\infty(\dot B_{2,\infty}^{\sigma_0 })},
\end{align*}
which together with the smallness of $\delta$, \eqref{TB.4} and \eqref{G5.2} yields
\begin{align}\label{G5.34}
\|\omega\|_{\widetilde L_t^\infty(\dot B_{2,\infty}^{\sigma_0 })}\lesssim &\,    \|\omega_0 \|_{ \dot B_{2,\infty}^{\sigma_0}}+\|z^\ell\|_{\widetilde L_t^\infty(\dot B_{2,\infty}^{\sigma_0 })}+\|z \|_{\widetilde L_t^\infty(\dot B_{2,1}^{\frac{d}{2} })}^h\nonumber\\
 \lesssim&\, \|(a_0^\ell,u_0^\ell,z_0^\ell)\|_{ \dot B_{2,\infty}^{\sigma_0}}+\|u_0\|_{\dot B_{2,1}^{\frac{d}{2}-1}}^h+\|(a_0,z_0)\|_{\dot B_{2,1}^{\frac{d}{2}}}^h\nonumber\\
 \lesssim&\, \delta_*.
\end{align}
Using \eqref{TB.4}, \eqref{G5.2} and \eqref{G5.34}, we find that
\begin{align}\label{G5.35}
  \|a\|_{\widetilde L_t^\infty(\dot B_{2,\infty}^{\sigma_0 })}\lesssim   \|\omega\|_{\widetilde L_t^\infty(\dot B_{2,\infty}^{\sigma_0 })} +\|z^\ell\|_{\widetilde L_t^\infty(\dot B_{2,\infty}^{\sigma_0 })}+\|z \|_{\widetilde L_t^\infty(\dot B_{2,1}^{\frac{d}{2} })}^h\lesssim \delta_*.
\end{align}
Adding \eqref{G5.34} and \eqref{G5.35} up, we eventually get
\begin{align*}
  \|a\|_{\widetilde L_t^\infty(\dot B_{2,1}^{\sigma})}+\|\omega\|_{\widetilde L_t^\infty(\dot B_{2,1}^{\sigma })}\lesssim &\,\|a\|_{\widetilde L_t^\infty(\dot B_{2,\infty}^{\sigma_0})}^\ell+\|\omega\|_{\widetilde L_t^\infty(\dot B_{2,\infty}^{\sigma_0})}^\ell + \|a\|_{\widetilde L_t^\infty(\dot B_{2,1}^{\frac{d}{2}})}^h+\|\omega\|_{\widetilde L_t^\infty(\dot B_{2,1}^{\frac{d}{2}})}^h \lesssim \delta_*.
\end{align*}
Thus, we  derive \eqref{TF.2}, thereby completing the proof of Lemma \ref{L5.4}.
\end{proof}
 
Next, we provide the estimates of $(u, z)$ in both the low and high frequency regimes, respectively.

\begin{lem} \label{L5.5}
Let $(a,u,z)$ be the global strong solution to the Cauchy problem \eqref{A3}--\eqref{A3-1} established in  Theorem \ref{T1.2}. Then, under the assumptions of Theorem \ref{T1.6}, it holds that for any $t>0$,
\begin{align}  \label{G5.36}
 \sup_{\sigma\in[\sigma_0+\zeta,\frac{d}{2}+1]} \big\|\langle\tau\rangle^{\frac{1}{2}(\sigma-\sigma_0)} (u,z)\big\|_{L_t^\infty(\dot B_{2,1}^{\sigma})}^\ell\lesssim  \delta_*+\delta_*\mathcal{Z}(t)+\mathcal{Z}^2(t),  
\end{align}
where $\delta_{*}$ and $\mathcal{Z}(t)$ are defined by  \eqref{TE.4} and \eqref{G5.32}, respectively.
\end{lem}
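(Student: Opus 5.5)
We will work on the Duhamel formula for the linear part of the $(u,z)$ system (the $(u,z)$-subsystem of \eqref{A3}) in low frequencies. The nonlinearity will be treated as a source
\begin{align*}
N:=\big(-u\cdot\nabla u-f(a)\nabla z+f(a)(\mu\Delta u+(\mu+\lambda)\nabla{\rm div}\,u),\ -u\cdot\nabla z-\gamma z\,{\rm div}\,u\big).
\end{align*}

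\textbf{Step 1: pointwise bound for the linear semigroup.} The Lyapunov structure \eqref{G3.9}--\eqref{G3.11} for $k\le 0$, with no forcing, gives for each dyadic block
\begin{align*}
\|\dot\Delta_k(u,z)(t)\|_{L^2}\lesssim e^{-c2^{2k}t}\|\dot\Delta_k(u_0,z_0)\|_{L^2}+\int_0^t e^{-c2^{2k}(t-\tau)}\|\dot\Delta_k N(\tau)\|_{L^2}\,{\rm d}\tau .
\end{align*}
If the Lyapunov approach is awkward for the commutator terms, we put all transport terms into $N$ instead. Multiplying by $2^{k\sigma}$, summing over $k\le 0$ and using $\sup_k 2^{k(\sigma-\sigma_0)}e^{-c2^{2k}t}\lesssim \langle t\rangle^{-\frac12(\sigma-\sigma_0)}$ (the restriction $\sigma\ge\sigma_0+\zeta$ makes the $\ell^1$ sum harmless), the initial-data contribution is $\lesssim\langle t\rangle^{-\frac12(\sigma-\sigma_0)}\|(u_0,z_0)\|^\ell_{\dot B^{\sigma_0}_{2,\infty}}\lesssim\delta_*\langle t\rangle^{-\frac12(\sigma-\sigma_0)}$. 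The source contributes
\begin{align*}
\int_0^t\langle t-\tau\rangle^{-\frac12(\sigma-\sigma_0)}\|N(\tau)\|^\ell_{\dot B^{\sigma_0}_{2,\infty}}\,{\rm d}\tau .
\end{align*}

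\textbf{Step 2: splitting the time integral.} Following \cite[Section 5.2]{Danchin-2018}, we split $[0,t]$ into $[0,t/2]$ and $[t/2,t]$.
\begin{itemize}
\item On $[0,t/2]$ we use the kernel with index $\sigma_0$ and bound $\|N(\tau)\|^\ell_{\dot B^{\sigma_0}_{2,\infty}}$ with decay $\langle\tau\rangle^{-\beta}$ for some $\beta>1$, or with integrable total mass.
\item On $[t/2,t]$ we use a kernel with smaller smoothing loss, e.g.\ index $\sigma-1$ or $\sigma-\frac d2$ after the embedding $L^1\hookrightarrow\dot B^{-d/2}_{2,\infty}$. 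The decay of $N$ at time $\tau\sim t$ then supplies the factor $\langle t\rangle^{-\frac12(\sigma-\sigma_0)}$.
\end{itemize}

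\textbf{Step 3: estimating the nonlinear terms.} The terms quadratic in $(u,z)$, namely $u\cdot\nabla u$, $u\cdot\nabla z$ and $z\,{\rm div}\,u$, are handled by the product laws of Lemmas \ref{LA.2}--\ref{LA.4}. Each factor is controlled by $\mathcal Z(t)$ with its time weight, which gives a contribution $\mathcal Z^2(t)$. The condition $\sigma_0<\frac d2-2$ ensures that the combined decay exponents exceed $1$, so the $[0,t/2]$ integral converges.

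The terms containing $a$, namely $f(a)\nabla z$, $f(a)\Delta u$ and $f(a)\nabla{\rm div}\,u$, carry no decay from $a$. Here we use Lemma \ref{L5.4}: $\|a\|_{\widetilde L^\infty_t(\dot B^{\sigma}_{2,1})}\lesssim\delta_*$ for $\sigma\in(\sigma_0,\frac d2]$. For example, in $\dot B^{\sigma_0}_{2,\infty}$,
\begin{align*}
\|f(a)\nabla z\|_{\dot B^{\sigma_0}_{2,\infty}}\lesssim \|a\|_{\dot B^{\frac d2-1}_{2,1}\cap\dot B^{\frac d2}_{2,1}}\big(\|z\|^\ell_{\dot B^{\sigma_0+1}_{2,1}}+\|z\|^h_{\dot B^{\frac d2}_{2,1}}\big),
\end{align*}
and $\|z(\tau)\|$ in these norms decays like $\langle\tau\rangle^{-\frac12}$ times $\mathcal Z$. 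Similarly, $f(a)\Delta u$ is controlled by $\delta_*\,\|u\|_{\dot B^{\sigma_0+2}}$, with low and high parts weighted by $\mathcal Z$. These contributions give the term $\delta_*\mathcal Z(t)$.

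\textbf{Main obstacle.} The hard step is integrability of the $a$-terms over $[0,t/2]$. The factor $\delta_*\langle\tau\rangle^{-\frac12(\sigma'-\sigma_0)}$ coming from $z$ or $u$ alone must decay faster than $\langle\tau\rangle^{-1}$ with the kernel at index $\sigma$, while $a$ provides no time decay. We expect to resolve this with two devices:
\begin{itemize}
\item placing $a$ in $\dot B^{\sigma_0}_{2,\infty}$ or a lower space via Lemma \ref{L5.4}, and putting the derivatives on $(u,z)$ at high regularity $\frac d2+1$ or $\frac d2$, using the product estimate $\dot B^{\sigma_0}\times\dot B^{d/2}\to\dot B^{\sigma_0}$, valid since $\sigma_0\ge-\frac d2$;
\item using the kernel index $\sigma-\sigma_0$ only on $[t/2,t]$.
\end{itemize}
If the $[0,t/2]$ integral still fails to close for $\sigma$ near $\frac d2+1$, we fall back on interpolating the top-regularity low-frequency norm between the $\sigma_0+\zeta$ bound and the high-frequency weighted norm in $\mathcal Z$.
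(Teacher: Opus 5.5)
Your overall architecture is the paper's: the low-frequency Duhamel bound \eqref{G5.37} with kernel $\langle t-\tau\rangle^{-\frac12(\sigma-\sigma_0)}$, the quadratic terms by product laws, and the $a$-terms handled through the extra low-regularity control of $a$ from Lemma~\ref{L5.4}. Your split $[0,t/2]\cup[t/2,t]$ can replace the paper's split $[0,1]\cup[1,t]$ plus Lemma~\ref{LA.9}, under one condition. On $[0,t/2]$ (and on all of $[0,t]$ when $t\le 2$) you must use the ``integrable total mass'' option, not the decay option. That is, use the unweighted bounds $\|u\|_{L^1_t(\dot B^{\frac d2+1}_{2,1})}+\|z\|_{L^1_t(\dot B^{\frac d2+1}_{2,1}+\dot B^{\frac d2}_{2,1})}\lesssim\delta_*$ from \eqref{TB.4}, together with Lemma~\ref{L5.1}. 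The reason is that $\mathcal Z$ weights $\|u\|^h_{\dot B^{\frac d2+1}_{2,1}}$ by $\tau^\alpha$, not $\langle\tau\rangle^\alpha$, and $\tau^{-\alpha}$ is not integrable at $0$. The genuine problem is Step 3 for the $a$-terms, which fails as written. Against a merely bounded $a$, a factor decaying like $\langle\tau\rangle^{-1/2}$ (from $\|z\|^\ell_{\dot B^{\sigma_0+1}_{2,1}}$) or like $\langle\tau\rangle^{-1}$ (from $\|u\|^\ell_{\dot B^{\sigma_0+2}_{2,\infty}}$) cannot give $\langle t\rangle^{-\frac12(\sigma-\sigma_0)}$ from the $[t/2,t]$ part for all $\sigma$ in the range. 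To reach $\sigma=\frac d2+1$, the source must decay like $\langle\tau\rangle^{-\alpha}$ with $\alpha=\frac12(\frac d2+1-\sigma_0)$. So, as you suspected, the claim that these terms give $\delta_*\mathcal Z(t)$ is unjustified.

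Your repair has the right idea: put $a$ in a low-regularity space and all derivatives on $(u,z)$ at top regularity. But the stated law $\dot B^{\sigma_0}_{2,\infty}\times\dot B^{\frac d2}_{2,1}\to\dot B^{\sigma_0}_{2,\infty}$ only handles $f(a)\nabla z^\ell$ and the low-frequency part of $f(a)\Delta u$. For $f(a)\Delta u^h$, $f(a)\nabla{\rm div}\,u^h$ and $f(a)\nabla z^h$ it would require $u^h\in\dot B^{\frac d2+2}_{2,1}$ and $z^h\in\dot B^{\frac d2+1}_{2,1}$. Neither $\mathcal Z$ nor Theorem~\ref{T1.2} controls these, since $z$ is only damped, not smoothed, at high frequencies. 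The missing step is to put $a$ one derivative \emph{higher}, not lower; nothing below $\sigma_0$ is available anyway.
\begin{itemize}
\item Lemma~\ref{L5.4} with $\sigma=\sigma_0+1\le\frac d2$ (legitimate since $\sigma_0<\frac d2-2$) gives $\|a\|_{\dot B^{\sigma_0+1}_{2,\infty}}\lesssim\delta_*$.
\item The law $\dot B^{\sigma_0+1}_{2,\infty}\times\dot B^{\frac d2-1}_{2,1}\to\dot B^{\sigma_0}_{2,\infty}$ holds because $\sigma_0+1<\frac d2$ and $\sigma_0+\frac d2\ge0$. It lets you pair $a$ with $\|u\|_{\dot B^{\frac d2+1}_{2,1}}$ and $\|z\|^h_{\dot B^{\frac d2}_{2,1}}$.
\item The endpoint bound $\|a\|_{\dot B^{\sigma_0}_{2,\infty}}\lesssim\delta_*$ is \eqref{G5.35} in the proof of Lemma~\ref{L5.4}, not its statement. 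Use it only against $\|\nabla z\|^\ell_{\dot B^{\frac d2}_{2,1}}$.
\item For $f(a)$ at possibly negative regularity, write $f(a)=-a-af(a)$ as in Lemma~\ref{L5.1}.
\end{itemize}
Then for $\tau\ge1$ every $a$-source decays like $\delta_*\mathcal Z(t)\langle\tau\rangle^{-\alpha}$, with $\alpha>\frac32$ and $\alpha\ge\frac12(\sigma-\sigma_0)$. This is exactly the paper's estimate \eqref{G5.42}, and it closes via Lemma~\ref{LA.9}. The modified kernel on $[t/2,t]$ and your fallback interpolation are then unnecessary. The fallback would not work anyway, since a high-frequency norm gives no control of low frequencies.
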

\begin{proof}
Applying Gr\"{o}nwall's inequality to \eqref{G3.9} and noting that $2^k \lesssim 1$ for all $k \leq 0$, we deduce   
\begin{align*} 
\|\dot\Delta_k (u,z)\|_{L^2} 
\lesssim&\,  e^{-2^{2k}t}   \|\dot \Delta_k (u_0,z_0)\|_{L^2}+\int_0^t e^{-2^{2k}(t-\tau)} \big(\|\dot\Delta_k(u\cdot\nabla u)\|_{L^2}+\|\dot\Delta_k(f(a)\Delta u)\|_{L^2}\big){\rm d}\tau\nonumber\\
& +\int_0^t e^{-2^{2k}(t-\tau)} \big( \|\dot\Delta_k(f(a)\nabla{\rm div}\, u)\|_{L^2}+\|\dot\Delta_k(f(a)\nabla z)\|_{L^2} +2^k\|\dot\Delta_k(z u)\|_{L^2}       \big){\rm d}\tau\nonumber\\
&+\int_0^t e^{-2^{2k}(t-\tau)} \|\dot\Delta_k(z{\rm div}\,u)\|_{L^2} {\rm d}\tau,
\end{align*}
 which implies that 
\begin{align} \label{G5.37}
\|(u,z)\|_{\dot B_{2,1}^\sigma}^\ell\lesssim&\,\int_0^t \langle t-\tau\rangle^{-\frac{1}{2}(\sigma-\sigma_0)} \Big(\|u\cdot\nabla u\|_{\dot B_{2,\infty}^{\sigma_0}}^\ell+\|f(a)\Delta u\|_{\dot B_{2,\infty}^{\sigma_0}}^\ell+\|f(a)\nabla {\rm div}\, u\|_{\dot B_{2,\infty}^{\sigma_0}}^\ell     \Big) {\rm d}\tau \nonumber\\
&+\int_0^t\langle t-\tau\rangle^{-\frac{1}{2}(\sigma-\sigma_0)} \Big(\|f(a)\nabla z\|_{\dot B_{2,\infty}^{\sigma_0}}^\ell +\|zu\|_{\dot B_{2,\infty}^{\sigma_0+1}}^\ell
 +\|z{\rm div}\,u\|_{\dot B_{2,\infty}^{\sigma_0 }}^\ell\Big){\rm d}\tau \nonumber\\
&+\langle t\rangle^{-\frac{1}{2}(\sigma-\sigma_0)}\|(u_0,z_0)\|_{\dot B_{2,\infty}^{\sigma_0}}^\ell,   
\end{align}
for any $\sigma>\sigma_0$,  where we have used the following fact(\!\!\cite[Lemma 2.35]{BCD-Book-2011}):
\begin{align*}
\sup_{t>0}\sum_{k\in\mathbb Z}t^{\sigma-\sigma_0} 2^{k({\sigma-\sigma_0})}e^{-ct2^{2k}}\lesssim 1.   
\end{align*}
Let us first address the dissipative structure coupling terms $u\cdot\nabla u$ and $zu$. Following a similar approach to that in \cite[Section 4.2]{LS-SIMA-2023}, we   obtain
\begin{align}\label{G5.38}
&\int_0^t \langle t-\tau\rangle^{-\frac{1}{2}(\sigma-\sigma_0)} \Big(\|u\cdot\nabla u\|_{\dot B_{2,\infty}^{\sigma_0}}^\ell+ \|z u\|_{\dot B_{2,\infty}^{\sigma_0+1}}^\ell+\|z{\rm div}\,u\|_{\dot B_{2,\infty}^{\sigma_0 }}^\ell  \Big)   {\rm d}\tau\nonumber\\  
&\quad\lesssim      ( \mathcal{Z}^2(t)+\delta_*)\langle t\rangle^{-\frac{1}{2}(\sigma-\sigma_0)}.
\end{align}
For conciseness, the details are omitted. 

Unlike the compressible isentropic Navier-Stokes system \cite{DX-ARMA-2017,Xj-CMP-2019,XX-JDE-2021}, where all nonlinear terms in $(a,u)$ have dissipative structures and no loss of spatial derivatives, the terms $f(a)\nabla z$, $f(a)\Delta u$, and $f(a)\nabla{\rm div}\,u$ in our case cause derivative loss due to the lack of dissipation in $a$, requiring careful analysis.  
To estimate the remaining terms on the right-hand side of \eqref{G5.27}, we analyze the cases $t \leq 2$ and $t \geq 2$ separately. 

For $t \leq 2$, we utilize \eqref{G5.6}, \eqref{G5.8} and $\langle t\rangle\backsim 1$ to derive 
\begin{align} \label{G5.39}
&\int_0^t \langle t-\tau\rangle^{-\frac{1}{2}(\sigma-\sigma_0)} \Big(\|f(a)\Delta u\|_{\dot B_{2,\infty}^{\sigma_0}}^\ell+\|f(a)\nabla{\rm div}\,u\|_{\dot B_{2,\infty}^{\sigma_0}}^\ell+ \|f(a)\nabla z\|_{\dot B_{2,\infty}^{\sigma_0}}^\ell \Big)   {\rm d}\tau  \nonumber\\
&\quad\lesssim   (1+\mathcal{E}(t)) \mathcal{E}(t)\big( \mathcal{E}_{\ell,\sigma_0}(t)+  \mathcal{E}(t)     \big) \langle t\rangle^{-\frac{1}{2}(\sigma-\sigma_0)} \lesssim  ( \mathcal{Z}^2(t)+\delta_*)\langle t\rangle^{-\frac{1}{2}(\sigma-\sigma_0)}.
\end{align}
For $t \geq 2$, we divide the integral into two parts:
\begin{align}\label{G5.40}
&\int_0^t \langle t-\tau\rangle^{-\frac{1}{2}(\sigma-\sigma_0)} \Big(\|f(a)\Delta u\|_{\dot B_{2,\infty}^{\sigma_0}}^\ell+\|f(a)\nabla{\rm div}\,u\|_{\dot B_{2,\infty}^{\sigma_0}}^\ell+ \|f(a)\nabla z\|_{\dot B_{2,\infty}^{\sigma_0}}^\ell \Big)   {\rm d}\tau   \nonumber\\
=&\,\bigg(\int_0^1+\int_1^t\bigg) \langle t-\tau\rangle^{-\frac{1}{2}(\sigma-\sigma_0)} \Big(\|f(a)\Delta u\|_{\dot B_{2,\infty}^{\sigma_0}}^\ell+\|f(a)\nabla{\rm div}\,u\|_{\dot B_{2,\infty}^{\sigma_0}}^\ell+ \|f(a)\nabla z\|_{\dot B_{2,\infty}^{\sigma_0}}^\ell \Big)   {\rm d}\tau \nonumber\\
\equiv:&\, I_1+I_2.
\end{align}
It follows from Lemma \ref{LA.3} and the fact that $\langle t - \tau \rangle \sim \langle t \rangle$ for all $0 \leq \tau \leq 1$ that
\begin{align}\label{G5.41}
I_1\lesssim&\,  \langle t \rangle^{-\frac{1}{2}(\sigma-\sigma_0)}\int_0^1 \Big(\|a\|_{\dot B_{2,1}^{\frac{d}{2}}} \|u\|_{\dot B_{2,\infty}^{\sigma_0+2}}+ \|a\|_{\dot B_{2,1}^{\frac{d}{2}}}\|z\|_{\dot B_{2,\infty}^{\sigma_0+1}}  \Big) {\rm d}\tau \nonumber\\
\lesssim&\, \big(\mathcal{Z}^2(t)+\delta_*   \big)\langle t \rangle^{-\frac{1}{2}(\sigma-\sigma_0)} .
\end{align}
Notice that $\sigma\in \big(\sigma_0,\frac{d}{2}+1\big]$ with $\sigma_0\in\big [-\frac{d}{2},\frac{d}{2}-2\big)$, which means
\begin{align*}
 \sigma-\sigma_0 \in\Big(0, \frac{d}{2}+1-\sigma_0\Big] \quad\text{and}\quad \frac{1}{2} \Big(\frac{d}{2} +1-\sigma_0  \Big )\in \Big(\frac{3}{2} ,d+1\Big].   
\end{align*}
From \eqref{TE.3}, \eqref{G5.32}, \eqref{G5.33} and \eqref{G5.35}, together with Lemmas \ref{LA.3} and \ref{LA.4} and Lemma \ref{LA.9},  one has
\begin{align}\label{G5.42}
I_2   \lesssim&\,  \int_0^t   \langle t-\tau\rangle^{-\frac{1}{2}(\sigma-\sigma_0)}\|a\|_{\dot B_{2,\infty}^{\sigma_0+1}} \Big(\|u\|^\ell_{\dot B_{2,1}^{\frac{d}{2}+1}}+\|u\|^h_{\dot B_{2,1}^{\frac{d}{2}+1}}\Big) {\rm d}\tau  \nonumber\\ 
&+\int_0^t   \langle t-\tau\rangle^{-\frac{1}{2}(\sigma-\sigma_0)}\Big(\|a\|_{\dot B_{2,\infty}^{\sigma_0}} \|\nabla z\|^\ell_{\dot B_{2,1}^{\frac{d}{2} }}+\|a\|_{\dot B_{2,\infty}^{\sigma_0+1}} \|\nabla z\|^h_{\dot B_{2,1}^{\frac{d}{2}-1}}\Big) {\rm d}\tau\nonumber\\
\lesssim&\, \delta_*\big(1+\mathcal{Z}(t)\big)\int_0^t\langle t-\tau\rangle^{-\frac{1}{2}(\sigma-\sigma_0)} \langle\tau \rangle^{-\frac{1}{2}(\frac{d}{2}+1-\sigma_0)} {\rm d}\tau \nonumber\\
\lesssim&\, \delta_*\big(1+\mathcal{Z}(t)\big)\langle t\rangle^{-\frac{1}{2}(\sigma-\sigma_0)}.
\end{align}
Substituting the estimates \eqref{G5.41} and \eqref{G5.42} into \eqref{G5.40} and combining the result with \eqref{G5.39}, we consequently obtain   
\begin{align}\label{G5.43}
&\int_0^t \langle t-\tau\rangle^{-\frac{1}{2}(\sigma-\sigma_0)} \Big(\|f(a)\Delta u\|_{\dot B_{2,\infty}^{\sigma_0}}^\ell+\|f(a)\nabla{\rm div}\,u\|_{\dot B_{2,\infty}^{\sigma_0}}^\ell+ \|f(a)\nabla z\|_{\dot B_{2,\infty}^{\sigma_0}}^\ell \Big)   {\rm d}\tau    \nonumber\\
&\quad \lesssim (\delta_*+\delta_*\mathcal{Z}(t)+\mathcal{Z}^2(t))\langle t\rangle^{-\frac{1}{2}(\sigma-\sigma_0)},
\end{align}
 for any $t>0$.
Plugging \eqref{G5.38} and \eqref{G5.43} into \eqref{G5.37} yields
\begin{align*}
\|(u,z)\|_{\dot B_{2,1}^\sigma}^\ell\lesssim      (\delta_*+\delta_*\mathcal{Z}(t)+\mathcal{Z}^2(t))\langle t\rangle^{-\frac{1}{2}(\sigma-\sigma_0)},
\end{align*}
for any $t>0$, thereby completing the proof of \eqref{G5.36}.
\end{proof}

\begin{lem} \label{L5.6}
Let $(a,u,z)$ be the global strong solution to the Cauchy problem \eqref{A3}--\eqref{A3-1} provided by Theorem \ref{T1.2}. Then, under the assumptions of Theorem \ref{T1.6}, it holds that for any $t>0$,
\begin{align}   \label{G5.44}
  \|\langle\tau\rangle^\alpha u\|_{\widetilde L_t^\infty(\dot B_{2,1}^{\frac{d}{2}-1})}^h+\|\langle\tau\rangle^\alpha z\|_{\widetilde L_t^\infty(\dot B_{2,1}^{\frac{d}{2}})}^h+\| \tau ^\alpha u\|_{\widetilde L_t^\infty(\dot B_{2,1}^{\frac{d}{2}+1})}^h\lesssim  \delta_*+\delta_*\mathcal{Z}(t),  
\end{align}
where $\delta_{*}$ and $\mathcal{Z}(t)$ are defined by                 \eqref{TE.4} and \eqref{G5.32}, respectively, $\alpha:=\frac{1}{2}(\frac{d}{2}+1-\sigma_0 )$.
\end{lem}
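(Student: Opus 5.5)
We obtain \eqref{G5.44} by redoing the high-frequency Lyapunov argument of Lemma \ref{L5.3}, with the weight $\langle\tau\rangle^\alpha$ in place of $\tau^M$. Then we recover the top-order norm of $u$ from the maximal regularity estimate of Lemma \ref{LA.7}, applied on $[1,t]$ with the weight $\tau^\alpha$.

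\textbf{Step 1 (the pair $(u,\nabla z)$ at high frequency).} For $k\ge-1$, inequality \eqref{G3.9} together with \eqref{G3.10}--\eqref{G3.11} gives $\frac{{\rm d}}{{\rm d}t}\mathcal{E}_k+c\,\mathcal{E}_k\lesssim(\text{nonlinear})\sqrt{\mathcal{E}_k}$, where $\mathcal{E}_k\sim\|\dot\Delta_k(u,\nabla z)\|_{L^2}^2$. By Gr\"onwall,
\[
\|\dot\Delta_k(u,\nabla z)(t)\|_{L^2}\lesssim e^{-ct}\|\dot\Delta_k(u_0,\nabla z_0)\|_{L^2}+\int_0^t e^{-c(t-\tau)}N_k(\tau)\,{\rm d}\tau .
\]
We multiply by $2^{k(\frac d2-1)}$, sum over $k\ge-1$, and use $\int_0^t e^{-c(t-\tau)}\langle\tau\rangle^{-\alpha}{\rm d}\tau\lesssim\langle t\rangle^{-\alpha}$. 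This reduces the $u^h$ and $z^h$ parts of \eqref{G5.44} to showing that every nonlinear term, measured in $\dot B^{\frac d2-1}_{2,1}$ at high frequency, decays like $\langle\tau\rangle^{-\alpha}$ with coefficient $\delta_*+\delta_*\mathcal{Z}$ or $\mathcal{Z}^2$. The terms to check are the commutators, $\nabla(z\,{\rm div}\,u)$, $f(a)\nabla z$, $f(a)\Delta u$ and $f(a)\nabla{\rm div}\,u$.

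The terms quadratic in $(u,z)$ are handled as in \eqref{G5.18}--\eqref{G5.20}. Each is bounded by $\|(u,z)\|_{\dot B^{\frac d2-1}_{2,1}\cap\dot B^{\frac d2}_{2,1}}\,\|u\|_{\dot B^{\frac d2+1}_{2,1}}$. The first factor is $\lesssim\delta_*$ by \eqref{TB.4}. For the second we use $\|u\|^\ell_{\dot B^{\frac d2+1}_{2,1}}\lesssim\mathcal{Z}\langle\tau\rangle^{-\alpha}$, which is the case $\sigma=\frac d2+1$ in $\mathcal{Z}$, and $\|u\|^h_{\dot B^{\frac d2+1}_{2,1}}\lesssim\mathcal{Z}\tau^{-\alpha}$. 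Near $\tau=0$ the singular weight is harmless because we may use the $L^1_t$ bound $\mathcal{D}(t)\lesssim\delta_*$ on $[0,1]$.

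\textbf{Step 2 (the density-coupled terms).} We treat $f(a)\nabla z$, $f(a)\Delta u$ and $f(a)\nabla{\rm div}\,u$ with Lemma \ref{LA.4}, keeping $a$ only in $\widetilde L^\infty_t(\dot B^{\frac d2}_{2,1}\cap\dot B^{\frac d2-1}_{2,1})$, which is $\lesssim\delta_*$ by \eqref{TB.4} and Lemma \ref{L5.4}.
\begin{itemize}
\item For the viscous terms, $\|f(a)\Delta u\|_{\dot B^{\frac d2-1}_{2,1}}\lesssim\delta_*\|u\|_{\dot B^{\frac d2+1}_{2,1}}\lesssim\delta_*\mathcal{Z}\langle\tau\rangle^{-\alpha}$ for $\tau\ge1$.
\item For $f(a)\nabla z$, the low-frequency part $\nabla z^\ell$ is bounded through $\|z\|^\ell_{\dot B^{\frac d2+1}_{2,1}}$, which decays at rate $\alpha$. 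The high-frequency part is bounded by $\|z\|^h_{\dot B^{\frac d2}_{2,1}}\lesssim\mathcal{Z}\langle\tau\rangle^{-\alpha}$.
\end{itemize}
This is exactly why no $\mathcal{Z}^2$ term appears in \eqref{G5.44}: $a$ does not decay, but it only enters through $\delta_*$.

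\textbf{Step 3 (the top-order norm of $u^h$).} We apply Lemma \ref{LA.7} to \eqref{G5.27} with $M$ replaced by $\alpha$, on $[1,t]$, measuring in $\widetilde L^\infty(\dot B^{\frac d2+1}_{2,1})^h$ exactly as in the proof of \eqref{G5.31}. The source terms are:
\begin{itemize}
\item $\alpha\tau^{\alpha-1}u$ and $\nabla(\tau^\alpha z)$, which are controlled by Step 1 (using $\|z\|^h_{\dot B^{\frac d2}_{2,1}}$);
\item the nonlinear terms, which are bounded as in Step 2 by $\delta_*\|\tau^\alpha u\|^h_{\widetilde L^\infty(\dot B^{\frac d2+1}_{2,1})}+\delta_*\mathcal{Z}$.
\end{itemize}
The first of these is absorbed using the smallness of $\delta$.

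On $[0,1]$ the weight $\tau^\alpha$ vanishes at $\tau=0$. There we use the smoothing effect for the parabolic part, or equivalently the choice of $t_*$ in the proof of Theorem \ref{T1.5}, to get the bound $\delta_*$.

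\textbf{Main obstacle.} The hardest step is the loss of derivatives in $f(a)\Delta u$ inside the $\dot B^{\frac d2+1}_{2,1}$ maximal-regularity estimate. There the source must be estimated in $\widetilde L^\infty(\dot B^{\frac d2-1}_{2,1})$ with $\dot B^{\frac d2+1}$ regularity for $u$, and $a$ must stay in $\dot B^{\frac d2}_{2,1}$ with no smoothing. This closes only because the corresponding term carries the small factor $\|a\|_{\dot B^{\frac d2}_{2,1}}\lesssim\delta$ and is absorbed on the left, which is the same mechanism as in the proof of \eqref{G5.31}.
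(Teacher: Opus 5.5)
Your proposal is correct and follows essentially the paper's route: the high-frequency Lyapunov/Duhamel bound for $(u,\nabla z)$ with weight $\langle\tau\rangle^\alpha$, the same treatment of $f(a)\nabla z$ through $\|f(a)\|_{\dot B^{\frac d2-1}_{2,1}}\|z\|^\ell_{\dot B^{\frac d2+1}_{2,1}}+\|a\|_{\dot B^{\frac d2}_{2,1}}\|z\|^h_{\dot B^{\frac d2}_{2,1}}$ (with $[0,1]$ handled by the $L^1_t$ bounds), and Lemma \ref{LA.7} applied to \eqref{G5.27} with weight $\tau^\alpha$ for the top-order norm. The only difference is in that last step. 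The paper works directly on $[0,t]$, because $\tau^\alpha u$ has zero initial datum and $\tau^{\alpha-1}\lesssim\langle\tau\rangle^{\alpha}$ (since $\alpha>1$), so your split at $\tau=1$ is unnecessary. Note also that the ``choice of $t_*$'' is not actually equivalent to that zero-data smoothing argument: it controls $u$ at a single time only, not $\sup_{\tau\le t_*}\tau^\alpha\|u(\tau)\|^h_{\dot B^{\frac d2+1}_{2,1}}$.
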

\begin{proof}
For any $k \geq -1$, it follows from \eqref{G3.9}--\eqref{G3.11} that
\begin{align*}
&\|\dot\Delta_k(u,\nabla z)\|_{L^2}\nonumber\\
&\quad  \lesssim e^{-t}\|\dot\Delta_k(u_0,\nabla z_0)\|_{L^2} +\int_0^t e^{-(t-\tau)} \big( \|\dot\Delta_k(f(a)\Delta u)\|_{L^2}+ \|\dot\Delta_k(f(a)\nabla{\rm div}\,u)\|_{L^2}  \big) {\rm d}\tau\nonumber\\
&\qquad+\int_0^t e^{-(t-\tau)} \big( \|\dot\Delta_k(f(a)\nabla z)\|_{L^2}+ 2^{k}\|\dot\Delta_k(z {\rm div}\,u)\|_{L^2} +\| \dot\Delta_k( u\cdot\nabla u)\|_{L^2} \big) {\rm d}\tau \nonumber\\
&\qquad+\int_0^te^{-(t-\tau)}\big(\|[u\cdot\nabla,\dot\Delta_k] z\|_{L^2}    +\sum_{j=1}^d\|[u\cdot\nabla,\partial_j\dot\Delta_k]z\|_{L^2}+2^k\|{\rm div}\,u\|_{L^\infty}\|\dot\Delta_k z\|_{L^2}\big){\rm d}\tau.
\end{align*}
Applying the time-weighted estimate to the above inequality, one has
\begin{align}\label{G5.45}
&\|\langle \tau\rangle^\alpha(u,\nabla z)\|_{\widetilde L_t^\infty(\dot B_{2,1}^{\frac{d}{2}-1})}^h\nonumber\\
 &\quad  \lesssim \|(u_0,\nabla z_0)\|_{\dot B_{2,1}^{\frac{d}{2}-1}}^h +\sum_{k\geq-1} \sup_{\tau\in[0,t]}\langle\tau\rangle^\alpha\int_0^t e^{-(\tau-s)}2^{k(\frac{d}{2}-1)}\sum_{i=1}^3 J_{i,k}\,{\rm d}s, 
\end{align}
where $J_{i,k}$ $(i=1,2,3)$ are given by
\begin{align*} 
J_{1,k}:=&\,\|\dot\Delta_k(u\cdot\nabla u)\|_{L^2}+2^k\|\dot\Delta_k(z{\rm div}\,u)\|_{L^2}+  \|[u\cdot\nabla,\dot\Delta_k] z\|_{L^2}    \nonumber\\
&+\sum_{j=1}^d\|[u\cdot\nabla,\partial_j\dot\Delta_k]z\|_{L^2}+2^k\|{\rm div}\,u\|_{L^\infty}\|\dot\Delta_k z\|_{L^2},\\
J_{2,k}:=&\, \|\dot\Delta_k(f(a)\Delta u)\|_{L^2}+\|\dot\Delta_k(f(a)\nabla{\rm div}\, u)\|_{L^2}, \\ 
J_{3,k}:=&\,\|\dot\Delta_k(f(a)\nabla z)\|_{L^2}.
\end{align*}
The nonlinear terms in $J_{1,k}$ can be handled similarly to those in \cite[Lemma 4.5]{LS-SIMA-2023}; here, we only present the resulting estimate:
\begin{align}\label{G5.46}
 \sum_{k\geq-1} \sup_{\tau\in[0,t]}\langle\tau\rangle^\alpha\int_0^t e^{-(\tau-s)}2^{k(\frac{d}{2}-1)}  J_{1,k}\,{\rm d}s \lesssim \delta_*+\delta_*\mathcal{Z}(t).
\end{align}
Since the density $a$ lacks a dissipative structure, the terms $J_{2,k}$ and $J_{3,k}$ need to be handled with particular care.
For $J_{2,k}$, following an argument similar to that in \cite[Lemma 4.5]{LNZ-2025-preprint}, we obtain  
\begin{align}\label{G5.47}
 \sum_{k\geq-1} \sup_{\tau\in[0,t]}\langle\tau\rangle^\alpha\int_0^t e^{-(\tau-s)}2^{k(\frac{d}{2}-1)}  J_{2,k}\,{\rm d}s \lesssim \delta_*+\delta_*\mathcal{Z}(t).
\end{align}

Therefore, it suffices to analyze the remaining term $J_{3,k}$.
We examine two cases: $t \leq 2$ and $t \geq 2$. For the case   $t < 2$,  we have 
\begin{align}\label{G5.48}
&\sum_{k\geq -1}\sup_{\tau\in[0,t]}\langle\tau \rangle^\alpha \int_0^\tau e^{-(\tau-s)} 2^{k(\frac{d}{2}-1)} J_{3,k} \,{\rm d}s\nonumber\\
\lesssim&\, \int_0^t \Big(\|f(a)\nabla z^\ell\|_{\dot B_{2,1}^{\frac{d}{2}-1}} +\|f(a)\nabla z^h\|_{\dot B_{2,1}^{\frac{d}{2}-1}} \Big) {\rm d}s\nonumber\\
\lesssim&\,  \Big( 1+\|a\|_{\widetilde L^\infty_t(\dot B_{2,1}^{\frac{d}{2}})}\Big)\|a\|_{\widetilde L^\infty_t(\dot B_{2,1}^{\frac{d}{2}-1})} \|z\|^{\ell}_{L_t^1(\dot B_{2,1}^{\frac{d}{2}+1})}  + \| a\|_{\widetilde L_t^\infty(\dot B_{2,1}^{\frac{d}{2}})} \|z\|^h_{  L_t^1(\dot B_{2,1}^{\frac{d}{2}})}  \nonumber\\ 
\lesssim&\,  \mathcal{E}^2(t).
\end{align}
For the case where $t \geq 2$, we divide the time interval $[0,t]$ into two subintervals: $[0, 1]$ and $[1, t]$. For the interval $[0,1]$, a direct calculation yields
\begin{align}\label{G5.49}
\sum_{k\geq -1}\sup_{\tau\in[2,t]}\langle\tau \rangle^\alpha \int_0^1 e^{-(\tau-s)} 2^{k(\frac{d}{2}-1)} J_{3,k} \,\mathrm{d}s  \lesssim   \mathcal{E}^2(1).
\end{align}
For the interval $[1,t]$, it follows from \eqref{G5.32} and Lemmas \ref{LA.3}--\ref{LA.4} that
\begin{align}\label{G5.50}
& \sum_{k\geq -1}\sup_{\tau\in[2,t]}\langle\tau \rangle^\alpha \int_1^\tau e^{-(\tau-s)} 2^{k(\frac{d}{2}-1)} J_{3,k}\, \mathrm{d}s\nonumber\\  
\lesssim&\, \Big( \| f(a)\|_{\widetilde L_t^\infty(\dot B_{2,1}^{\frac{d}{2}-1})} \|\langle\tau\rangle^\alpha   z^\ell\|_{\widetilde L_t^\infty(\dot B_{2,1}^{\frac{d}{2}+1})}+ \| a\|_{\widetilde L_t^\infty(\dot B_{2,1}^{\frac{d}{2}})} \|\langle\tau\rangle^\alpha  z^h\|_{\widetilde L_t^\infty(\dot B_{2,1}^{\frac{d}{2}})}\Big)\sup_{\tau\in [2,t]} \int_1^\tau e^{-(\tau-s)}s^{-\alpha} {\rm d}s\nonumber\\
\lesssim&\,  \mathcal{E}(t)\big(1+ \mathcal{E}(t)      \big) \Big(  \|\langle\tau\rangle^\alpha   z^\ell\|_{\widetilde L_t^\infty(\dot B_{2,1}^{\frac{d}{2}+1})}+ \|\langle\tau\rangle^\alpha  z^h\|_{\widetilde L_t^\infty(\dot B_{2,1}^{\frac{d}{2}})}\Big) \nonumber\\
 \lesssim&\, \delta_*  \mathcal{Z}(t),
\end{align}
where we have used the fact that
\begin{align*}
 \| \langle\tau\rangle^\alpha   z^\ell\|_{\widetilde L_t^\infty(\dot B_{2,1}^{\frac{d}{2}+1})}  \lesssim&\,   \big\| \langle\tau\rangle^{\frac{1}{2}(\frac{d}{2}+1-\sigma_0)}  z\big\|^\ell_{\widetilde L_t^\infty(\dot B_{2,1}^{\frac{d}{2}+1})}\lesssim \mathcal{Z}(t).
\end{align*}
Inserting the estimates  \eqref{G5.46}--\eqref{G5.50} into \eqref{G5.45}, we  conclude that
\begin{align}\label{G5.51}
 \|\langle \tau\rangle^\alpha(u,\nabla z)\|_{\widetilde L_t^\infty(\dot B_{2,1}^{\frac{d}{2}-1})}^h\lesssim \delta_*+\delta_*\mathcal{Z}(t),  
\end{align}
for any $t>0$. For the remaining $\dot B_{2,1}^{\frac{d}{2}+1}$-regularity estimate of $u$, multiplying \eqref{G5.27} by $t^\alpha$ and using Lemma \ref{LA.7} and \eqref{G5.51}, we deduce that 
\begin{align}\label{G5.52}
&\|\tau^\alpha u(t)\|_{\widetilde L^\infty_t(\dot B_{2,1}^{\frac{d}{2}+1})}^h\nonumber\\
\lesssim&\,  \|\langle\tau\rangle^{\alpha} z\|_{\widetilde L^\infty_t(\dot B_{2,1}^{\frac{d}{2}})}^h +\|\langle\tau\rangle^{\alpha} u\cdot\nabla u\|_{  \widetilde L^\infty_t(\dot B_{2,1}^{\frac{d}{2}-1})}^h+\|\langle\tau\rangle^{\alpha} f(a) \nabla z\|_{\widetilde L^\infty_t(\dot B_{2,1}^{\frac{d}{2}-1})}^h \nonumber\\
& +\|\langle\tau\rangle^{\alpha} f(a)\Delta u\|_{\widetilde L^\infty_t(\dot B_{2,1}^{\frac{d}{2}-1})}^h +\|\langle\tau\rangle^{\alpha} f(a)\nabla{\rm div}\,u\|_{\widetilde L_t^\infty(\dot B_{2,1}^{\frac{d}{2}-1})}^h+\|\langle\tau\rangle^{\alpha } u\|_{\widetilde L^\infty_t(\dot B_{2,1}^{\frac{d}{2}-1})}^h  \nonumber\\
\lesssim&\,\delta_*+\delta_*\mathcal{Z}(t),
\end{align}
for any $t>0$. Combining \eqref{G5.51} with \eqref{G5.52}, we directly obtain \eqref{G5.44}.
\end{proof}

\begin{proof}[Proof of Theorem \ref{T1.5}]
It follows from \eqref{G5.32}, \eqref{G5.36} and \eqref{G5.44} that
\begin{align*}
\mathcal{Z}(t)\lesssim  \delta_*+\delta_*\mathcal{Z}(t
)+\mathcal{Z}^2(t),   
\end{align*}
for any $t>0$.   From the smallness assumption \eqref{TF.1}, we directly obtain that $\delta_*$ is sufficiently small. This implies $\mathcal{Z}(t) \lesssim \delta_*$. Hence, the estimates \eqref{TF.3}--\eqref{TF.4} are valid, and the proof of Theorem \ref{T1.6} is completed.
\end{proof}

\subsection{The lower bound estimate of solutions} 
In this subsection, we investigate the lower bound of the decay estimates of $(u,z)$ associated with the Cauchy problem \eqref{A3}--\eqref{A3-1}.   

To begin with, we consider the following linearized problem 
(the  solution is still denoted by $(u,z)$ for notation simplicity) associated with the subsystem \eqref{A3}$_2$--\eqref{A3}$_3$:  
 \begin{equation} \label{G5.53}
\left\{  
\begin{aligned}  
& \partial_t u-\mu\Delta u-(\mu+\lambda)\nabla {\rm div}\,u+ \nabla z =0,\\
& \partial_t z +\gamma{\rm div}\,u=0, \\
&(u(0,x),z(0,x))=(u_0(x),z_0(x)).
\end{aligned}  
\right.  
\end{equation}  
Motivated by \cite{Danchin-00-IM}, we apply the so-called Hodge decomposition to the velocity field. The velocity $u$
is decomposed by introducing $m=\Lambda^{-1}{\rm div}\,u$ and $n=\Lambda^{-1}{\rm curl}\,u$. Using the identity   $\Delta = \nabla {\rm div}\, - {\rm curl}\,{\rm curl}$,  we obtain the following representation:  
\begin{align*}
u=-\Lambda^{-1}\nabla m-\Lambda^{-1}{\rm curl}\, n,    
\end{align*}
where ${\rm curl}_{ij}=\partial_ju^i-\partial_iu^j$. 
Then, the Cauchy problem \eqref{G5.53} can be rewritten as the
hyperbolic-parabolic equations
 \begin{equation} \label{G5.54}
\left\{  
\begin{aligned}  
& \partial_t m-(2\mu+\lambda)\Delta m -\Lambda z =0,\\
& \partial_t z +\gamma \Lambda m=0, \\
&(m(0,x),z(0,x))=(m_0(x),z_0(x))=(\Lambda^{-1}{\rm div}\,u_0(x),z_0(x
)),
\end{aligned}  
\right.  
\end{equation}  
and the heat equation
 \begin{equation} \label{G5.55}
\left\{  
\begin{aligned}  
& \partial_t n-\mu\Delta n =0,\\
& n(0,x)=n_0(x)=\Lambda^{-1}{\rm curl\,}u_0(x).
\end{aligned}  
\right.  
\end{equation}  

 Due to the dispersion property inherent in the hyperbolic component of the system \eqref{G5.54}, the decay theory developed in \cite{BS-2009-Adv,Bl-2016-SIMA,NS-2015-JLMS} is not applicable in this setting. Motivated by the approach in \cite{BSXZ-Adv-2024}, we derive point-wise estimates for $(u,z)$ associated with the system \eqref{G5.53}, which yield the following lemma.
\begin{lem}\label{L5.7}
Let $(u,z)$ satisfy \eqref{G5.53}. Then it holds that  
\begin{align}\label{G5.56}
|\widehat{u}(t,\xi)|+|\widehat{z}(t,\xi)|\gtrsim e^{-\max \left\{\mu+\frac{\lambda}{2},\mu \right\}|\xi|^2t}( \widehat{u}_0( \xi)|+|\widehat{z}_0( \xi)| ), \quad {\rm if}\quad |\xi|\leq  \beta,
\end{align}
where $\beta\in \big(0,\frac{2\sqrt{\gamma}}{2\mu+\lambda}\big]$ is sufficiently small.
\end{lem}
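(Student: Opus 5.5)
We argue directly on the Fourier side, using the Hodge decomposition \eqref{G5.54}--\eqref{G5.55}. Since $\widehat{m}=-i\frac{\xi}{|\xi|}\cdot\widehat u$ and $\widehat n$ collects the transverse part of $\widehat u$, we have $|\widehat u(t,\xi)|^2=|\widehat m(t,\xi)|^2+c|\widehat n(t,\xi)|^2$ with a harmless normalising constant, and likewise at $t=0$. It therefore suffices to bound $|\widehat n(t,\xi)|$ and $|\widehat m(t,\xi)|+|\widehat z(t,\xi)|$ from below separately, each by the corresponding initial quantity times an exponential.

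\emph{Step 1: the vorticity part.} By \eqref{G5.55}, $\widehat n(t,\xi)=e^{-\mu|\xi|^2t}\widehat n_0(\xi)$. This gives an exact identity, and in particular the lower bound with rate $\mu$.

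\emph{Step 2: the compressible part.} In Fourier variables, \eqref{G5.54} reads $\partial_t(\widehat m,\widehat z)^T=A(\xi)(\widehat m,\widehat z)^T$, where
\[
A(\xi)=\begin{pmatrix}-\nu|\xi|^2 & |\xi|\\ -\gamma|\xi| & 0\end{pmatrix},\qquad \nu:=2\mu+\lambda .
\]
Its eigenvalues are
\[
\lambda_\pm=-\tfrac{\nu}{2}|\xi|^2\pm i|\xi|\,\omega(\xi),\qquad \omega(\xi):=\sqrt{\gamma-\tfrac{\nu^2}{4}|\xi|^2}.
\]
For $|\xi|\le\beta$ with $\beta<\frac{2\sqrt\gamma}{\nu}$ chosen small, say so that $\omega\ge\sqrt\gamma/2$, the eigenvalues are distinct and complex conjugate, with common real part $-(\mu+\frac{\lambda}{2})|\xi|^2$.

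The key point is to avoid a plain energy estimate. The identity $\frac{{\rm d}}{{\rm d}t}(\gamma|\widehat m|^2+|\widehat z|^2)=-2\gamma\nu|\xi|^2|\widehat m|^2$ only yields the worse rate $e^{-\nu|\xi|^2t}$. Instead we diagonalise. After rescaling the eigenvectors by $|\xi|^{-1}$, we may take them as
\[
v_\pm(\xi)=\bigl(-\tfrac{\nu}{2}|\xi|\pm i\omega(\xi),\,-\gamma\bigr)^T .
\]
The matrix $P(\xi)=(v_+,v_-)$ has $|\det P|=2\gamma\omega\ge\gamma^{3/2}$, and its entries are bounded uniformly for $|\xi|\le\beta$. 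Hence $P$ and $P^{-1}$ are uniformly bounded on $\{|\xi|\le\beta\}$.

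Writing $(\widehat m,\widehat z)^T=P(c_+,c_-)^T$, we get $c_\pm(t)=e^{\lambda_\pm t}c_\pm(0)$, and therefore
\[
|c_+(t)|+|c_-(t)|=e^{-(\mu+\frac{\lambda}{2})|\xi|^2t}\bigl(|c_+(0)|+|c_-(0)|\bigr).
\]
The oscillating factors $e^{\pm i|\xi|\omega t}$ have modulus one, so the hyperbolic part disappears at this point. Combining this identity with the uniform bounds on $P$ and $P^{-1}$ gives
\[
|\widehat m(t,\xi)|+|\widehat z(t,\xi)|\gtrsim e^{-(\mu+\frac{\lambda}{2})|\xi|^2t}\bigl(|\widehat m_0(\xi)|+|\widehat z_0(\xi)|\bigr),
\]
with constants depending only on $\gamma,\mu,\lambda,\beta$.

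\emph{Step 3: conclusion.} Adding Steps 1 and 2 and bounding both exponentials below by $e^{-\max\{\mu+\frac{\lambda}{2},\mu\}|\xi|^2t}$ gives
\[
|\widehat u(t,\xi)|+|\widehat z(t,\xi)|\gtrsim e^{-\max\{\mu+\frac{\lambda}{2},\mu\}|\xi|^2t}\bigl(|\widehat n_0|+|\widehat m_0|+|\widehat z_0|\bigr)\gtrsim e^{-\max\{\mu+\frac{\lambda}{2},\mu\}|\xi|^2t}\bigl(|\widehat u_0(\xi)|+|\widehat z_0(\xi)|\bigr),
\]
which is \eqref{G5.56}. The only delicate step is the uniform invertibility of the eigenvector matrix, i.e.\ keeping $\omega(\xi)$ bounded away from $0$. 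This is exactly why $\beta$ must stay strictly below the threshold $\frac{2\sqrt\gamma}{2\mu+\lambda}$ at which the two eigenvalues collide.
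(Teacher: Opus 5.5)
Your proof is correct. Its skeleton matches the paper's: the splitting into $m=\Lambda^{-1}{\rm div}\,u$ and $n=\Lambda^{-1}{\rm curl}\,u$, the exact heat decay of $\widehat n$, the eigenvalues $-(\mu+\frac{\lambda}{2})|\xi|^2\pm i r$ of the $2\times 2$ block, and factoring out $e^{-(\mu+\frac{\lambda}{2})|\xi|^2t}$. The difference lies in how you show that the remaining oscillatory propagator has a uniformly bounded inverse.

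The paper writes the Green matrix $e^{t\mathcal{A}(\xi)}$ explicitly in terms of $\cos(rt)$ and $\sin(rt)/r$. It then sets $\widehat m^*=e^{q|\xi|^2t}\widehat m$, $\widehat z^*=e^{q|\xi|^2t}\widehat z$ and expands $|\widehat m^*|^2+|\widehat z^*|^2$ by hand. This shows it is $(1-O(|\xi|))$ times the initial quantity up to an $O(|\xi|)$ cross term, which gives the constant $\frac14$ once $\beta\le 1/(2C_q)$. You instead diagonalise with the rescaled eigenvectors and control the condition number $\|P\|\,\|P^{-1}\|$ through $|\det P|=2\gamma\omega$.

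Your route has three advantages:
\begin{itemize}
\item It needs no trigonometric bookkeeping.
\item It works for every $\beta$ strictly below $\frac{2\sqrt{\gamma}}{2\mu+\lambda}$, not only for small $\beta$.
\item It is insensitive to the non-symmetric entry $-\gamma|\xi|$ of $\mathcal{A}(\xi)$.
\end{itemize}
The paper's expansion is fragile on exactly this last point. Its formula for $\widehat z^*$ drops the factor $\gamma$ from the $(2,1)$ entry of the Green matrix. Once that factor is restored, the unweighted sum $|\widehat m^*|^2+|\widehat z^*|^2$ is no longer $O(|\xi|)$-close to $|\widehat m_0|^2+|\widehat z_0|^2$ when $\gamma\neq 1$: the diagonal coefficients become roughly $\cos^2(rt)+\gamma^{\pm1}\sin^2(rt)$, and an $O(1)$ cross term appears. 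The near-isometry $1-C_q|\xi|$ then holds only for the weighted quantity $\gamma|\widehat m^*|^2+|\widehat z^*|^2$ measured against $\gamma|\widehat m_0|^2+|\widehat z_0|^2$. That is precisely the energy in the identity you wrote down.

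In exchange, the paper's explicit formula gives a concrete numerical constant, whereas yours is only controlled by $\|P\|\,\|P^{-1}\|$, which blows up as $\beta$ approaches the threshold. Your Step 3 also correctly bounds both exponentials from below by the smaller one, $e^{-\max\{\mu+\frac{\lambda}{2},\mu\}|\xi|^2t}$.
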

\begin{proof}
In the framework of semi-group theory, by defining $U=(m,z)^\top$, we can rewrite \eqref{G5.54} as  
\begin{align}\label{G5.57}
U_t=\mathcal{B}U,\quad U|_{t=0}=U_0,    
\end{align}
where the operator $\mathcal{B}$ is given by  
\begin{align*}\mathcal{B}= \left(
\begin{matrix}
(2\mu+\lambda)\Delta  &      \Lambda\\
-\gamma \Lambda  &     0\\    
\end{matrix}\right).
\end{align*}
Applying the Fourier transform to the system \eqref{G5.57} gives
 \begin{align}\label{G5.58}
\widehat{U}_t=\mathcal{A}(\xi)\widehat{U},\quad \widehat{U}|_{t=0}=\widehat{U}_0,    
\end{align}
with $\widehat{U}(t,\xi)=\mathcal{F}( U(t,x))$, $\xi=(\xi_1,\xi_2,\dots,\xi_d)^\top$, and $\mathcal{A}(\xi)$ is expressed by
\begin{align*}\mathcal{A}(\xi)=\left(
\begin{matrix}
-(2\mu+\lambda)|\xi|^2 &  |\xi| \\
-\gamma|\xi|          &  0
\end{matrix}\right).    
\end{align*}
Similar to \cite{HZ-IUMJ-1995}, through a direct calculation, we derive the explicit expression for the Green matrix $\mathcal{G}(t,\xi) := e^{t\mathcal{A}(\xi)}$ as
\begin{align*}\mathcal{G}(t,\xi)= \left(
\begin{matrix}
\frac{\lambda_{+}e^{\lambda_{+} t}-\lambda_{-}e^{\lambda_{-}t}}{\lambda_+-\lambda_-}  &    \Big( \frac{ e^{\lambda_{+} t}- e^{\lambda_{-}t}}{\lambda_+-\lambda_-} \Big)|\xi|\\
 -\gamma\Big(\frac{ e^{\lambda_{+} t}- e^{\lambda_{-}t}}{\lambda_+-\lambda_-}\Big) |\xi|  &     \frac{\lambda_{+}e^{\lambda_{-} t}-\lambda_{-}e^{\lambda_{+}t}}{\lambda_+-\lambda_-} \\    
\end{matrix}\right),
\end{align*}
with the eigenvalues  
\begin{align*}
\lambda_{\pm}(\xi)=-\Big({\mu+\frac{\lambda}{2}}\Big)|\xi|^2 \pm i\sqrt{\gamma|\xi|^2- {\Big(\mu+\frac{\lambda}{2}\Big)^2} |\xi|^4},  
\end{align*} 
for $|\xi|\leq \frac{2\sqrt{\gamma}}{2\mu+\lambda}$.
For simplicity of notation, we denote $q = \mu + \frac{\lambda}{2}$ and $r = \sqrt{\gamma|\xi|^2 - \left( \mu + \frac{\lambda}{2} \right)^2 |\xi|^4}$. Then, $\lambda_{\pm}(\xi) = -q|\xi|^2 \pm i r$ for $|\xi|\leq \frac{2\sqrt{\gamma}}{2\mu+\lambda}$. 

At low frequencies, where $|\xi| \leq \frac{2\sqrt{\gamma}}{2\mu + \lambda}$, we have
\begin{align*}
 \frac{\lambda_{+}e^{\lambda_{+} t}-\lambda_{-}e^{\lambda_{-}t}}{\lambda_+-\lambda_-} =&\, e^{-q |\xi|^2}\Big( {\rm cos}(rt)-q\frac{{\rm sin}(rt)}{r}|\xi|^2   \Big), \nonumber\\  
  \frac{\lambda_{+}e^{\lambda_{-} t}-\lambda_{-}e^{\lambda_{+}t}}{\lambda_+-\lambda_-} =&\, e^{-q |\xi|^2}\Big( {\rm cos}(rt)+q\frac{{\rm sin}(rt)}{r}|\xi|^2   \Big), \nonumber\\
   \frac{ e^{\lambda_{+} t}- e^{\lambda_{-}t}}{\lambda_+-\lambda_-}  =&\, e^{-q|\xi|^2}\frac{{\rm sin}(rt)}{r}.
\end{align*}
Therefore, from \eqref{G5.58}, we can define $\widehat{m}^*(t,\xi)$ and $\widehat{z}\,^*(t,\xi)$ as follows:
\begin{align}\label{G5.59}
\widehat{m}^*(t,\xi)=:&\, e^{q |\xi|^2}\widehat{m}(t,\xi)=    \Big( {\rm cos}(rt)-q\frac{{\rm sin}(rt)}{r}|\xi|^2   \Big) \widehat{m}_0+ \frac{{\rm sin}(rt)}{r}|\xi|\widehat{z}_0, \\ \label{G5.60}
\widehat{z}\,^*(t,\xi)=:&\,e^{q |\xi|^2}\widehat{z}(t,\xi)=  -  \frac{{\rm sin}(rt)}{r}|\xi|\widehat{m}_0+ \Big( {\rm cos}(rt)+q\frac{{\rm sin}(rt)}{r}|\xi|^2   \Big) \widehat{z}_0. 
\end{align}
Following a similar approach to the proof of \cite[Lemma 3.1]{BSXZ-Adv-2024}, a direct computation yields 
\begin{align}\label{G5.61}
 &|\widehat{m}^*(t,\xi)|^2+  | \widehat{z}\,^*(t,\xi)|^2\nonumber\\
 =&\, \Big( |{\rm cos}(rt)|^2+q^2\frac{|{\rm sin}(rt)|^2}{|r|^2}|\xi|^4       -q\frac{{\rm sin}(2rt) }{r}|\xi|^2 + \frac{|{\rm sin}(rt)|^2}{|r|^2}|\xi|^2    \Big) |\widehat{ m}_0(\xi)|^2 \nonumber\\
 &+ \Big( |{\rm cos}(rt)|^2+q^2\frac{|{\rm sin}(rt)|^2}{|r|^2}|\xi|^4       +q\frac{{\rm sin}(2rt) }{r}|\xi|^2 + \frac{|{\rm sin}(rt)|^2}{|r|^2}|\xi|^2    \Big) |\widehat{ z}_0(\xi)|^2\nonumber\\
 &-2q\frac{|{\rm sin}(rt)|^2}{r^2}|\xi|^3\Big( \overline{{\widehat{m}_0}(\xi)}\widehat{z}_0(\xi)+{{\widehat{m}_0}(\xi)}\overline{\widehat{z}_0(\xi)}   \Big)\nonumber\\
 \geq&\, (1-C_q|\xi|) (|\widehat{m}_0(\xi)|^2+|\widehat{z}_0(\xi)|^2)-C_q|\xi||\widehat{m}_0(\xi)||\widehat{z}_0(\xi)|\nonumber\\
 \geq&\, \frac{1}{4}(|\widehat{m}_0(\xi)|^2+|\widehat{z}_0(\xi)|^2),
\end{align}
for any $|\xi|\leq \beta:=\min\big\{\frac{1}{2C_q},\frac{2\sqrt{\gamma}}{2\mu+\lambda}\big\}$, where $C_q$ is a constant that depends only on $q$. Combining \eqref{G5.59} and \eqref{G5.60} with the estimate \eqref{G5.61}, we eventually obtain
\begin{align}\label{G5.62}
|\widehat{m}(t,\xi)|^2+   |\widehat{z}(t,\xi)|^2 \geq \frac{1}{4}e^{-q|\xi|^2}(|\widehat{m}_0(\xi)|^2+|\widehat{z}_0(\xi)|^2).
\end{align}
On the other hand, for the system \eqref{G5.55}, by leveraging the properties of the heat equation (see Appendix in \cite{BV-2022}), we arrive at
\begin{align}\label{G5.63}  
|\hat{n}(t,\xi)|^2= e^{-\mu|\xi|^2 t} |\hat{n}_0(\xi)|^2, 
\end{align}
for any $|\xi|\leq \frac{2\sqrt{\gamma}}{2\mu+\lambda}$.
Adding  \eqref{G5.62} and \eqref{G5.63} up yields
\begin{align*}
|\widehat{u}(t,\xi)|^2+    |\widehat{z}(t,\xi)|^2\geq \frac{1}{4} \max\left\{e^{-\mu|\xi|^2}, e^{-\left(\mu+\frac{\lambda}{2}\right)|\xi|^2}\right  \}(|\widehat{m}_0(\xi)|^2+|\widehat{n}_0(\xi)|^2+|\widehat{z}_0(\xi)|^2),
\end{align*}
for $|\xi| \leq \beta$. Together with the equivalence relation $|\widehat{u}(t,\xi)|^2 \sim |\widehat{m}(\xi)|^2 + |\widehat{n}(\xi)|^2$, this implies that \eqref{G5.56} holds.    
\end{proof}

Below, we present the decay estimates for the linearized system \eqref{G5.53} as follows.

\begin{lem}[Linear analysis]\label{L5.8}
  Let  $d\geq 3$.  Assume that $(u_{L},z_L)$ is the global  solution to the   Cauchy problem \eqref{G5.53}, and the initial data $ (u_0,z_0) $ satisfies
\begin{align*}
  u_0^\ell,z_0^\ell\in \dot {\mathfrak B}_{2,\infty}^{\sigma_0}\quad \text{with}\quad \sigma_0\in\Big[-\frac{d}{2},\frac{d}{2}-2\Big),
\end{align*}  
then for all $t \geq 1$, there exists two universal constants $c_6>0$ and  $C_6 > 0$ such that  
\begin{align} \label{G5.64}
c_6(1+t)^{-\frac{1}{2}(\sigma-\sigma_0)}\leq \|(u_{L},z_L)(t)\|_{\dot B_{2,1}^\sigma}\leq C_6  (1+t)^{-\frac{1}{2}(\sigma-\sigma_0)},    
\end{align}
for all $\sigma\in\big(\sigma_0,\frac{d}{2}\big]$, where $\dot{\mathfrak{B}}_{2,\infty}^{\sigma_0}$ is defined by \eqref{G1.12}.
\end{lem}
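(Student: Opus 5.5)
We split each norm into low and high frequencies, $\|(u_L,z_L)(t)\|_{\dot B_{2,1}^\sigma}=\|\cdot\|^\ell+\|\cdot\|^h$, and estimate the two pieces separately.

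\emph{Upper bound.} Since \eqref{G5.53} is the linearization of \eqref{A3}$_2$--\eqref{A3}$_3$, the frequency-localized Lyapunov inequality \eqref{G3.9}--\eqref{G3.11} holds with zero right-hand side. For $k\le0$ this gives
\[
\|\dot\Delta_k(u_L,z_L)(t)\|_{L^2}\lesssim e^{-c2^{2k}t}\|\dot\Delta_k(u_0,z_0)\|_{L^2}.
\]
We multiply by $2^{k\sigma}=2^{k(\sigma-\sigma_0)}2^{k\sigma_0}$ and sum over $k\le0$, using $\sup_t\sum_k (t2^{2k})^{\frac12(\sigma-\sigma_0)}e^{-ct2^{2k}}\lesssim1$ as in the proof of Lemma \ref{L5.5}. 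This yields the low-frequency bound $\lesssim (1+t)^{-\frac12(\sigma-\sigma_0)}\|(u_0,z_0)\|^\ell_{\dot B_{2,\infty}^{\sigma_0}}$. For $k\ge-1$ the same inequality gives exponential decay $e^{-ct}$ of $\|\dot\Delta_k(u_L,\nabla z_L)\|_{L^2}$. Hence, for $\sigma\le\frac d2$, $\|(u_L,z_L)\|^h_{\dot B_{2,1}^\sigma}\lesssim e^{-ct}(\|u_0\|^h_{\dot B_{2,1}^{\frac d2-1}}+\|z_0\|^h_{\dot B_{2,1}^{\frac d2}})$. We are implicitly assuming the regularity of Theorem \ref{T1.2}. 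There is one subtlety: $u$ in $\dot B^{\sigma}$ with $\sigma>\frac d2-1$ needs smoothing. We would handle this by the parabolic smoothing of the heat-like part for $t\ge1$, e.g.\ maximal regularity as in Lemma \ref{LA.7}, or by bounding $2^{k(\sigma-\frac d2+1)}e^{-ct}$ through the explicit Green matrix, whose high-frequency eigenvalue $\lambda_-\sim-(2\mu+\lambda)|\xi|^2$ gives parabolic gain on $m$, and likewise on $n$ via \eqref{G5.55}. Combining the two pieces gives the upper bound in \eqref{G5.64}.

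\emph{Lower bound.} We keep only the low frequencies. Choose $k_1\le0$ with $2^{k_1}\cdot\frac83\le\beta$, so that Lemma \ref{L5.7} applies on the support of $\dot\Delta_k$ for every $k\le k_1$. By Plancherel and \eqref{G5.56},
\[
\|\dot\Delta_k(u_L,z_L)(t)\|_{L^2}\gtrsim e^{-C2^{2k}t}\|\dot\Delta_k(u_0,z_0)\|_{L^2},\qquad k\le k_1 .
\]
Here we use that $|\phi(2^{-k}\xi)\widehat u(t,\xi)|^2+|\phi\widehat z|^2$ is bounded below pointwise by $e^{-2C|\xi|^2t}(|\phi\widehat u_0|^2+|\phi\widehat z_0|^2)$, and that $|\xi|\sim2^k$. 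From $u_0^\ell,z_0^\ell\in\dot{\mathfrak B}^{\sigma_0}_{2,\infty}$ we get a sequence $k_j\to-\infty$ with $|k_j-k_{j-1}|\le M_0$ and $2^{\sigma_0k_j}\|\dot\Delta_{k_j}(u_0,z_0)\|_{L^2}\ge c_0$. (If the condition is stated separately for $u_0$ or $z_0$, it suffices for one of them.) For $t\ge1$ we pick $j$ with $2^{2k_j}t\in[c_*,c_*2^{2M_0}]$; this is possible because consecutive $k_j$ differ by at most $M_0$. Then
\[
\|(u_L,z_L)(t)\|_{\dot B_{2,1}^\sigma}\ge 2^{\sigma k_j}\|\dot\Delta_{k_j}(u_L,z_L)(t)\|_{L^2}\gtrsim 2^{(\sigma-\sigma_0)k_j}e^{-C2^{2k_j}t}c_0\gtrsim t^{-\frac12(\sigma-\sigma_0)}.
\]
Since $t\ge1$, this gives the lower bound with $(1+t)$ in place of $t$.

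The main obstacle I anticipate is transferring the pointwise Fourier lower bound to the dyadic block. The factor $e^{-\max\{\cdot\}|\xi|^2t}$ in \eqref{G5.56} appears to be missing a $t$ in parts of the proof of Lemma \ref{L5.7}, so we will use the form with $t$ as stated. We also need to ensure that the threshold $\beta$ is compatible with the choice of $k_j\le k_1$, which holds since $k_j\to-\infty$. The upper bound at high frequency for $u$ is routine but requires care with the parabolic smoothing noted above.
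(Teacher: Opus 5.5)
Your lower bound is essentially the paper's argument. Both restrict the pointwise Fourier bound \eqref{G5.56} to dyadic blocks with $k\le[\log_2\beta]$. Both then pick from $\{k_j\}$ a block with $2^{k_j}\sim t^{-1/2}$, using $|k_j-k_{j-1}|\le M_0$, so that the exponential factor stays bounded below and $2^{(\sigma-\sigma_0)k_j}\gtrsim(1+t)^{-\frac12(\sigma-\sigma_0)}$. Your window $2^{2k_j}t\in[c_*,c_*2^{2M_0}]$ is a cleaner phrasing of the paper's ``maximal $k_{j_0}\le-\frac12\log_2(1+t)$'' step. You are also right that the exponent in \eqref{G5.56} must carry the factor $t$; it is dropped in several displays in the proof of Lemma \ref{L5.7}.

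The genuine difference is the upper bound. The paper disposes of it in one line by citing Theorem \ref{T1.6}. You instead derive it directly from the frequency-localized Lyapunov inequality \eqref{G3.9}--\eqref{G3.11} with zero forcing. That gives decay $e^{-c2^{2k}t}$ for $k\le0$ and $e^{-ct}$ for $k\ge-1$, plus a separate parabolic-smoothing step for $\|u_L\|^h_{\dot B^{\sigma}_{2,1}}$ with $\sigma\in(\frac d2-1,\frac d2]$.

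Your route is more self-contained. Theorem \ref{T1.6} concerns the nonlinear problem and carries hypotheses (smallness, $a_0^\ell\in\dot B^{\sigma_0}_{2,\infty}$) that do not apply to the linear system \eqref{G5.53}. The paper's citation therefore really means ``rerun Lemmas \ref{L5.5}--\ref{L5.6} with all nonlinear terms set to zero,'' which is what you do explicitly. The paper's route buys only brevity; it also gets the high-frequency smoothing of $u$ for free from the $\dot B^{\frac d2+1}_{2,1}$ component of $\mathcal{Z}(t)$.

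The smoothing step you leave sketchy closes easily by Duhamel for the Lam\'e part on $[t-1,t]$. For $k\ge-1$ this gives $2^{k\frac d2}\|\dot\Delta_k u_L(t)\|_{L^2}\lesssim 2^{k(\frac d2-1)}\|\dot\Delta_k u_L(t-1)\|_{L^2}+\sup_{[t-1,t]}2^{k\frac d2}\|\dot\Delta_k z_L\|_{L^2}$, and both terms decay like $e^{-ct}$.

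One small point you share with the paper: a $k_j$ in your window is guaranteed only once $t$ is large enough that the window lies below $\max_j k_j$ and below your threshold $k_1$. For $t$ in the remaining bounded interval, use a single fixed block below $k_1$; it gives a positive constant lower bound, and $(1+t)^{-\frac12(\sigma-\sigma_0)}\le1$ there.
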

\begin{proof}
The upper bound of the decay estimate for $\|(u_L,z_L)\|_{\dot B_{2,1}^{\sigma}}$, where $\sigma\in \big(\sigma_0,\frac{d}{2}\big]$, follows directly from Theorem \ref{T1.6}; hence, we omit the details for brevity. 
Inspired by \cite{Bl-2016-SIMA} and \cite[Section 3]{BSXZ-Adv-2024}, below we only need to establish the remaining lower bound estimates of $(u_L, z_L)$ for the problem \eqref{G5.53}.

Without loss of generality, for $\{k_j\}_{j\in\mathbb{N}} \subset \mathbb{Z}$, we assume that $j = 1, 2, \dots,$  is less than $[\log_2 \beta]$ for the $\beta$ defined  in \eqref{G5.56}.
Applying  the Fourier-Plancherel theorem and  \eqref{G5.56} yields
\begin{align*}
\|(u_L,z_L)(t)\|_{\dot B_{2,1}^\sigma}\geq&\,  \|(u_L^\ell,z_L^\ell)(t)\|_{\dot B_{2,1}^\sigma}\nonumber\\
\geq&\,  \sum_{k\leq [\rm log_2\beta]} 2^{\sigma k} \|\dot\Delta_k (u_L,z_L)(t)\|_{L^2}   \nonumber\\
\geq&\, \sum_{k\leq [\rm log_2\beta]}2^{\sigma k} \|\phi (2^{-k}\cdot)(\widehat u_L,\widehat{z}_L)(t)\|_{L^2{}}\nonumber\\
\geq&\, \sum_{k\leq [\rm log_2\beta]}e^{-\frac{64\max\left\{\mu+\frac{\lambda}{2},\mu\right\} 2^{2k}t}{9}}2^{\sigma k} \|\dot\Delta_k (u_0,z_0)\|_{L^2},
\end{align*}
for any fixed $t\geq 1$.  From \eqref{G1.12}, there exists a maximal integer $k_{j_0}$ satisfying 
$k_{j_0} \leq -\frac{1}{2} \log_2(1+t).$
We claim that $k_{j_0} > -M_0 - \frac{1}{2} \log_2(1+t)$. 
Suppose, to the contrary, that there exists an integer $k_{j_0-1}$ satisfying 
$$k_{j_0-1} \leq k_{j_0} + M_0 \leq -\frac{1}{2} \log_2(1+t).$$
 Then $k_{j_0-1}$ would be a larger candidate than $k_{j_0}$ still satisfying the inequality, which contradicts the maximality of $k_{j_0}$. Noting that $2^{k_{j_0}} \sim \langle t \rangle^{-\frac{1}{2}}$, we further derive
\begin{align*}
\|(u_L,z_L)(t)\|_{\dot B_{2,1}^\sigma} \gtrsim&\,  \|(u_L^\ell,z_L^\ell)(t)\|_{\dot B_{2,1}^\sigma} \nonumber\\
\gtrsim&\, \sum_{k\leq [\rm log_2\beta]}e^{-\frac{64\max\left\{\mu+\frac{\lambda}{2}, \mu \right\} 2^{2k}t}{9}}2^{\sigma k} \|\dot\Delta_k (u_0,z_0)\|_{L^2}\nonumber\\
\gtrsim&\, e^{-\frac{64\max\left\{\mu+\frac{\lambda}{2}, \mu \right\} 2^{2 k_{j_0}} t  } {9}} 2^{(\sigma-\sigma_0)k_{j_0}} 2^{\sigma_0 k_{j_0}} \|\dot\Delta_k (u_0,z_0)\|_{L^2}\nonumber\\
\gtrsim&\,  c_02^{(\sigma-\sigma_0)k_{j_0}}\nonumber\\
\gtrsim&\, c_0(1+t)^{-\frac{1}{2}(\sigma-\sigma_0)},
\end{align*}
for all $t\geq 1$. Hence, we complete the proof of \eqref{G5.64}.
\end{proof}

To investigate the nonlinear components of the system \eqref{A3}$_2$--\eqref{A3}$_3$, we define the  variables
\begin{align*}
(u_N, z_N) := (u - u_L, z - z_L),    
\end{align*}
and subsequently analyze the following nonlinear Cauchy problem associated with $(u_N,z_N)$:  
\begin{equation}  \label{G5.65}
\left\{  
\begin{aligned}  
& \partial_t u_N - \mu\Delta u_N - (\mu+\lambda)\nabla {\rm div}\, u_N +\nabla z_N= F_1, \\  
& \partial_t z_N+\gamma{\rm div}\, u_N=F_2       ,\\
& (u_N(0,x),z_N(0,x)  )= (0,0),  
\end{aligned}  
\right.  
\end{equation}  
with
\begin{equation*}  
\left\{  
\begin{aligned}  
&  F_1:=-u\cdot\nabla u-f(a)\nabla z+f(a)(\mu\Delta u+(\mu+\lambda)\nabla {\rm div}\,u), \\  
& F_2:=-{\rm div}\,(uz)  -(\gamma-1) z{\rm div}\,u.  
\end{aligned}  
\right.  
\end{equation*}  

\begin{lem}[Nonlinear analysis]\label{L5.9}
Let $d\geq 3$. Assume that $(u_N,z_N)$ is
a solution to the nonlinear Cauchy problem \eqref{G5.65},
and the initial data $(a_0, u_0,z_0)$  defined in \eqref{A3-1}  satisfies
\begin{align*}
  a_0^\ell\in \dot B_{2,\infty}^{\sigma_0},\quad u_0^\ell,z_0^\ell\in \dot {\mathfrak B}_{2,\infty}^{\sigma_0},\quad \|(u_0^\ell,z_0^\ell)\|_{\dot B_{2,\infty}^{\sigma_0}}\leq\eps_3,\quad \text{with}\quad \sigma_0\in\Big[-\frac{d}{2},\frac{d}{2}-2\Big),
\end{align*}
where $\eps_3$  is sufficiently small positive constant. Then  it holds that for all $t > 0$,
\begin{align*} 
\|(u_N,z_N)(t) \|_{\dot B_{2,1}^\sigma}^\ell   \lesssim \delta_{*}^2  (1+t)^{-\frac{\sigma-\sigma_0}{2}},
\end{align*}
for all $\sigma\in\big(\sigma_0,\frac{d}{2} \big]$, where $\dot{\mathfrak{B}}_{2,\infty}^{\sigma_0}$ is defined via \eqref{G1.12}.
\end{lem}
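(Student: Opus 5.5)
The plan is to treat \eqref{G5.65} as the linear hyperbolic-parabolic system \eqref{G5.53} driven by the sources $(F_1,F_2)$, and to write $(u_N,z_N)$ by Duhamel's formula with zero initial data. At low frequencies ($k\le 0$) the Lyapunov argument \eqref{G3.9}--\eqref{G3.11} gives, for each block, $\|\dot\Delta_k(u_N,z_N)(t)\|_{L^2}\lesssim\int_0^t e^{-c2^{2k}(t-\tau)}\|\dot\Delta_k(F_1,F_2)(\tau)\|_{L^2}\,{\rm d}\tau$. We write $F_2=-{\rm div}(uz)-(\gamma-1)z\,{\rm div}\,u$ and keep the divergence structure, so that the first piece produces a factor $2^k$. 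Exactly as in \eqref{G5.37}, and using $\sup_t\sum_k t^{\frac{\sigma-\sigma_0}{2}}2^{k(\sigma-\sigma_0)}e^{-ct2^{2k}}\lesssim1$, we then get
\begin{align*}
\|(u_N,z_N)(t)\|^\ell_{\dot B^\sigma_{2,1}}\lesssim \int_0^t\langle t-\tau\rangle^{-\frac12(\sigma-\sigma_0)}\Big(\|F_1\|^\ell_{\dot B^{\sigma_0}_{2,\infty}}+\|uz\|^\ell_{\dot B^{\sigma_0+1}_{2,\infty}}+\|z{\rm div}\,u\|^\ell_{\dot B^{\sigma_0}_{2,\infty}}\Big){\rm d}\tau .
\end{align*}
This is the same integral that was controlled in the proof of Lemma \ref{L5.5}, with one difference: no initial-data term appears. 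Every remaining contribution is at least quadratic, so we expect it to be bounded by $\delta_*^2$ rather than by $\delta_*$.

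Next we feed in the decay already established. The smallness hypothesis on $(u_0^\ell,z_0^\ell)$ in $\dot B^{\sigma_0}_{2,\infty}$ puts us under Theorem \ref{T1.6}, so $\mathcal Z(t)\lesssim\delta_*$ for all $t$. Lemma \ref{L5.4} gives $\|a\|_{\widetilde L^\infty_t(\dot B^{\sigma}_{2,1})}\lesssim\delta_*$ for $\sigma\in(\sigma_0,\frac d2]$, and $\|a\|_{\dot B^{\sigma_0}_{2,\infty}}\lesssim\delta_*$ by \eqref{G5.35}. The terms are then handled as follows.
\begin{itemize}
\item For $u\cdot\nabla u$, $uz$ and $z\,{\rm div}\,u$, both factors decay. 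Redoing \eqref{G5.38} while tracking constants, each factor contributes $\mathcal Z(t)$ (or $\mathcal E(t)\lesssim\delta_*$), so these terms give $\mathcal Z^2\langle t\rangle^{-\frac12(\sigma-\sigma_0)}\lesssim\delta_*^2\langle t\rangle^{-\frac12(\sigma-\sigma_0)}$.
\item For $f(a)\Delta u$, $f(a)\nabla{\rm div}\,u$ and $f(a)\nabla z$, we follow \eqref{G5.40}--\eqref{G5.42} and split the time integral into $\int_0^1$ and $\int_1^t$.
\begin{itemize}
\item On $[0,1]$ we use $\langle t-\tau\rangle\sim\langle t\rangle$ together with the product estimates of Lemma \ref{LA.3}. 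This gives a bound of the form $\|a\|\cdot\|(u,z)\|_{L^1_1(\cdots)}\lesssim\delta_*\cdot\delta_*$.
\item On $[1,t]$ we use the paraproduct bounds $\|a\|_{\dot B^{\sigma_0+1}_{2,\infty}}\|u\|_{\dot B^{\frac d2+1}_{2,1}}$, $\|a\|_{\dot B^{\sigma_0}_{2,\infty}}\|\nabla z\|^\ell_{\dot B^{\frac d2}_{2,1}}$ and $\|a\|_{\dot B^{\sigma_0+1}_{2,\infty}}\|\nabla z\|^h_{\dot B^{\frac d2-1}_{2,1}}$. Here $\|(u,z)\|$ decays like $\langle\tau\rangle^{-\alpha}\mathcal Z(t)$ with $\alpha=\frac12(\frac d2+1-\sigma_0)$, by \eqref{TF.3}--\eqref{TF.4}, and the $a$-factor is bounded by $\delta_*$.
\end{itemize}
\end{itemize}
The convolution inequality of Lemma \ref{LA.9} then closes the bound:
\begin{align*}
\int_0^t\langle t-\tau\rangle^{-\frac12(\sigma-\sigma_0)}\langle\tau\rangle^{-\alpha}{\rm d}\tau\lesssim\langle t\rangle^{-\frac12(\sigma-\sigma_0)},
\end{align*}
which holds because $\alpha>1$ (indeed $\alpha>\frac32$ since $\sigma_0<\frac d2-2$) and $\alpha\ge\frac12(\sigma-\sigma_0)$.

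The main obstacle is the $a$-terms, because $a$ does not decay. Each of them must be given the form (bounded factor of size $\delta_*$) $\times$ (decaying factor of size $\mathcal Z\lesssim\delta_*$), and the decaying factor must carry the full rate $\langle\tau\rangle^{-\alpha}$ rather than a slower one. This matters most for the high-frequency part of $u$ in $\dot B^{\frac d2+1}_{2,1}$. Since $\mathcal Z$ contains $\tau^\alpha u$ without the bracket $\langle\cdot\rangle$, this piece is only usable for $\tau\ge1$, which is exactly why the split at $\tau=1$ is needed. We also need $a$ in $\dot B^{\sigma_0+1}_{2,\infty}$. This follows by interpolation between $\dot B^{\sigma_0}_{2,\infty}$ and $\dot B^{\frac d2}_{2,1}$, which requires $\sigma_0+1\le\frac d2$; that holds under $\sigma_0<\frac d2-2$. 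Collecting all contributions gives $\|(u_N,z_N)(t)\|^\ell_{\dot B^\sigma_{2,1}}\lesssim\delta_*^2(1+t)^{-\frac{\sigma-\sigma_0}2}$.
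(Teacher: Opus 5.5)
Your proposal is correct and is essentially the paper's proof. You use zero-data Duhamel from the low-frequency Lyapunov inequality, reduce to the same time integral as in \eqref{G5.37}, and reuse the estimates of Lemma \ref{L5.5} (split at $\tau=1$, rate $\alpha>1$, Lemma \ref{LA.9}) with $\mathcal{Z}(t)\lesssim\delta_*$; your bookkeeping of each $a$-term as $\delta_*\,\mathcal{Z}(t)$, rather than the $\delta_*(1+\mathcal{Z}(t))$ written in \eqref{G5.42}, is what the paper's final bound $\delta_*\mathcal{Z}(t)+\mathcal{Z}^2(t)$ tacitly uses. One caveat is inherited from the paper: the bounds on $a$ in $\dot B^{\sigma_0}_{2,\infty}$ and $\dot B^{\sigma_0+1}_{2,\infty}$ from Lemma \ref{L5.4} (via \eqref{G5.34}--\eqref{G5.35}) silently count $\|a_0^\ell\|_{\dot B^{\sigma_0}_{2,\infty}}$ as part of $\delta_*$, which \eqref{TE.4} does not include, so without that convention the $a$-terms only give $(\|a_0^\ell\|_{\dot B^{\sigma_0}_{2,\infty}}+\delta_*)\,\mathcal{Z}(t)$ rather than $\delta_*^2$.
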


\begin{proof}
As in \eqref{G3.9}--\eqref{G3.11}, we only need to analyze the low-frequency component. For $k \leq 0$, it follows that
\begin{align}\label{G5.66}
&\frac{{\rm d}}{{\rm d}t}\|\dot\Delta_k(u_N,z_N,\nabla z_N)\|_{L^2}^2 + 2^{2k}\|\dot\Delta_k(u_N,z_N,\nabla z_N)\|_{L^2}^2\nonumber\\
&\quad\lesssim  \|\dot\Delta_k(u_N,z_N,\nabla z_N)\|_{L^2} \|\dot\Delta_k (F_1,F_2)\|_{L^2},
\end{align}
where the inequality $2^k \lesssim 1$ has been utilized.
Applying Gr\"{o}nwall’s inequality to \eqref{G5.66}, together with the initial condition \eqref{G5.65}$_3$, yields  
\begin{align*} 
\|\dot\Delta_k (u_N,z_N,\nabla z_N)\|_{L^2}   \lesssim \int_0^t e^{-2^{2k}(t-\tau)}\|\dot\Delta_k (F_1,F_2)\|_{L^2}{\rm d}\tau , 
\end{align*}
which implies that 
\begin{align*}
\|(u_N,z_N)\|_{\dot B_{2,1}^{\sigma}}^\ell \lesssim&\, \int_0^t \langle t-\tau\rangle^{-\frac{1}{2}(\sigma-\sigma_0)} \|(F_1,F_2)\|_{\dot B_{2,\infty}^{\sigma_0}}^\ell {\rm d}\tau \nonumber\\    
\lesssim&\,  \int_0^t \langle t-\tau\rangle^{-\frac{1}{2}(\sigma-\sigma_0)} \Big(\|u\cdot\nabla u\|_{\dot B_{2,\infty}^{\sigma_0}}^\ell+\|f(a)\Delta u\|_{\dot B_{2,\infty}^{\sigma_0}}^\ell+\|f(a)\nabla {\rm div}\, u\|_{\dot B_{2,\infty}^{\sigma_0}}^\ell     \Big) {\rm d}\tau\nonumber\\
&+\int_0^t \langle t-\tau\rangle^{-\frac{1}{2}(\sigma-\sigma_0)} \Big( \|f(a)\nabla z\|_{\dot B_{2,\infty}^{\sigma_0}}^\ell+\|uz\|_{\dot B_{2,\infty}^{\sigma_0+1}}^\ell+\|z{\rm div}\,u\|_{\dot B_{2,\infty}^{\sigma_0 }}^\ell \Big) {\rm d}\tau,
\end{align*}
for any $\sigma\in\big( \sigma_0,\frac{d}{2} \big]$.
Taking the same estimates on the right-hand side of \eqref{G5.37} as those in Lemma \ref{L5.5}, we conclude that  
\begin{align*}
\|(u_N,z_N)\|_{\dot B_{2,1}^{\sigma}}^\ell \lesssim (\delta_*\mathcal{Z}(t) + \mathcal{Z}^2(t)) \langle t\rangle^{-\frac{1}{2}(\sigma-\sigma_0)} \lesssim \delta_*^2 (1+t)^{-\frac{1}{2}(\sigma-\sigma_0)},
\end{align*}
for any $t \geq 1$. Thus, the proof of Lemma \ref{L5.9} is completed.
\end{proof}

Finally, by making use of Lemmas \ref{L5.8}--\ref{L5.9}, we are now in a position to  prove Theorem \ref{T1.7}.

\begin{proof}[Proof of Theorem \ref{T1.7}]
Since $(u,z) = (u_L + u_N, z_L + z_N)$, by Duhamel's principle, we have  
\begin{align*}
\|(u,z)(t)\|_{\dot B_{2,1}^\sigma} \geq  \|(u^\ell,z^\ell)(t)\|_{\dot B_{2,1}^\sigma} 
\geq&\, \|(u^\ell_L,z_L^\ell)(t)\|_{\dot B^\sigma_{2,1}} -\|(u_N,z_N)(t)\|_{\dot B_{2,1}^\sigma}^\ell\nonumber\\
\geq&\, c_0 (1+t)^{-\frac{1}{2}(\sigma-\sigma_0)}-\delta_*^2(1+t)^{-\frac{1}{2}(\sigma-\sigma_0)}\nonumber\\
\geq&\, c_7(1+t)^{-\frac{1}{2}(\sigma-\sigma_0)},
\end{align*}
for any $\sigma \in \big(\sigma_0, \frac{d}{2}\big]$ and any $t \geq 1$, where $c_7 > 0$ is a universal constant. Moreover, Theorem \ref{T1.6} yields for any $\sigma\in\big(\sigma_0,\frac{d}{2}\big]$,
\begin{align*}
\|(u,z)(t)\|_{\dot B_{2,1}^\sigma} \lesssim \delta_* (1+t)^{-\frac{1}{2}(\sigma - \sigma_0)},
\end{align*}
for all $t \geq 1$. Thus, \eqref{TG.1} is established, which completes the proof of Theorem \ref{T1.7}.

\end{proof}

\appendix
\section{Analytic tools}
In this appendix, we present several useful lemmas that are frequently employed throughout the paper.

We now state several fundamental properties of Besov spaces along with product estimates. It is important to note that these properties remain valid for Chemin-Lerner type spaces, provided that the time exponent satisfies H\"{o}lder's inequality with respect to the time variable. The first lemma concerns Bernstein's inequalities.

\begin{lem}\label{LA.1}{\rm(\!\!\cite{BCD-Book-2011})}
Let $0<r<R$, $1\leq p\leq q\leq \infty$ and $m\in \mathbb{N}$. Define the ball $\mathcal{B}=\{\xi\in\mathbb{R}^{3}~| ~|\xi|\leq  R\}$ and the annulus $\mathcal{C}=\{\xi\in\mathbb{R}^{3}~|~ \lambda_1 r\leq |\xi|\leq \lambda_1 R\}$ . For any $ u\in L^p$ and $\lambda_1>0$, it holds that
\begin{equation*} \left\{
\begin{aligned}
{\rm{supp}}\, \mathcal{F}(u) \subset& \lambda_1 \mathcal{B} \Rightarrow \|D^{m}u\|_{L^q}\lesssim\lambda_1^{m+d(\frac{1}{p}-\frac{1}q{})}\|u\|_{L^p}, \nonumber\\
{\rm{supp}}\, \mathcal{F}(u) \subset& \lambda_1 \mathcal{C} \Rightarrow \lambda_1^{m}\|u\|_{L^{p}}\lesssim\|D^{m}u\|_{L^{p}}\lesssim \lambda_1^{m}\|u\|_{L^{p}}.
\end{aligned}\right.
\end{equation*}
 \end{lem}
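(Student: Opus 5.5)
The statement is Lemma \ref{LA.1}, the classical Bernstein inequalities. (The lemma writes $\mathbb R^3$, but the argument is identical in $\mathbb R^d$.) My plan is to reduce both estimates to the case $\lambda_1=1$ by scaling, and to prove that case by writing $u$ as a convolution with a fixed Schwartz kernel and applying Young's inequality.

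\emph{Scaling.} Set $u_{\lambda_1}(x):=u(x/\lambda_1)$. Then $\widehat{u_{\lambda_1}}$ is supported in $\mathcal{B}$ (respectively in $\{r\le|\xi|\le R\}$). Moreover
$$\|D^m u\|_{L^q}=\lambda_1^{m+d/q}\|D^m u_{\lambda_1}\|_{L^q}, \qquad \|u\|_{L^p}=\lambda_1^{d/p}\|u_{\lambda_1}\|_{L^p},$$
which produces exactly the factor $\lambda_1^{m+d(\frac1p-\frac1q)}$. So it suffices to treat $\lambda_1=1$.

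\emph{First inequality ($\lambda_1=1$).} Choose $\psi\in C_c^\infty$ with $\psi\equiv1$ on $\mathcal{B}$. Then $u=\mathcal{F}^{-1}\psi * u$, hence $D^m u=(D^m\mathcal{F}^{-1}\psi)*u$. Young's inequality with $1+\frac1q=\frac1s+\frac1p$ (admissible since $p\le q$) gives
$$\|D^m u\|_{L^q}\le \|D^m\mathcal{F}^{-1}\psi\|_{L^s}\|u\|_{L^p}.$$
The $L^s$ norm is finite because $D^m\mathcal{F}^{-1}\psi$ is a Schwartz function.

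\emph{Second inequality ($\lambda_1=1$).} The upper bound $\|D^m u\|_{L^p}\lesssim\|u\|_{L^p}$ is the case $q=p$ of the first inequality, since the annulus lies inside a ball.

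The lower bound is the only step that needs care. Take $\tilde\phi\in C_c^\infty$ supported away from $0$ with $\tilde\phi\equiv1$ on the annulus. For each multi-index $|\alpha|=m$ define $g_\alpha:=\mathcal{F}^{-1}\bigl((-i\xi)^\alpha|\xi|^{-2m}\tilde\phi(\xi)\bigr)$, which is Schwartz because $\tilde\phi$ vanishes near $0$. Using the identity $|\xi|^{2m}=\sum_{|\alpha|=m}\frac{m!}{\alpha!}\xi^{2\alpha}$ and inserting the factor $(i\xi)^\alpha\widehat u$ gives, up to unimodular constants,
$$u=\sum_{|\alpha|=m}\frac{m!}{\alpha!}\, g_\alpha *\partial^\alpha u.$$
Young's inequality with exponent $1$ on each kernel then yields
$$\|u\|_{L^p}\lesssim \sum_{|\alpha|=m}\|g_\alpha\|_{L^1}\|\partial^\alpha u\|_{L^p}\lesssim\|D^m u\|_{L^p}.$$
Scaling back gives $\lambda_1^m\|u\|_{L^p}\lesssim\|D^m u\|_{L^p}$, and all constants depend only on $r,R,m,d$.
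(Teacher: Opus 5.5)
Your proof is the standard one for Bernstein's lemma in the cited source (Bahouri--Chemin--Danchin, Lemma 2.1), which the paper quotes without proof: dilation to $\lambda_1=1$, Young's inequality against $D^m\mathcal{F}^{-1}\psi$, and the multinomial identity $|\xi|^{2m}=\sum_{|\alpha|=m}\frac{m!}{\alpha!}\xi^{2\alpha}$ with the kernels $g_\alpha$ for the reverse bound. There is one slip to fix in the scaling step. For $u_{\lambda_1}(x)=u(x/\lambda_1)$ the correct identities are $\|D^m u\|_{L^q}=\lambda_1^{m-d/q}\|D^m u_{\lambda_1}\|_{L^q}$ and $\|u\|_{L^p}=\lambda_1^{-d/p}\|u_{\lambda_1}\|_{L^p}$. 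Your displayed versions have the $d$-exponents with the wrong sign and, taken literally, would give $\lambda_1^{m-d(\frac1p-\frac1q)}$, although the factor you state at the end is the right one.
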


By virtue of Lemma \ref{LA.1}, the following properties of Besov spaces can be readily established.
\begin{lem}\label{LA.2}{\rm(\!\!\cite[Chapter 2]{BCD-Book-2011})}
The following properties hold:
\begin{itemize}
\item{} For $s\in\mathbb{R}$, $1\leq p_{1}\leq p_{2}\leq \infty$ and $1\leq r_{1}\leq r_{2}\leq \infty$, it holds that
\begin{equation}\notag
\begin{aligned}
\dot{B}^{s}_{p_{1},r_{1}}\hookrightarrow \dot{B}^{s-d (\frac{1}{p_1} -\frac{1}{p_2})}_{p_{2},r_{2}}.
\end{aligned}
\end{equation}
\item{} For $1\leq p\leq q\leq\infty$, we have the   chain of continuous embedding  as follows:
\begin{equation}\nonumber
\begin{aligned}
  \dot{B}^{0}_{p,1}\hookrightarrow L^{p}\hookrightarrow \dot{B}^{0}_{p,\infty}\hookrightarrow \dot B_{q,\infty}^{\varsigma},\quad \varsigma=-d\Big(\frac{1}{p}-\frac{1}{q}\Big).
\end{aligned}
\end{equation}
\item{} If $p<\infty$, then $\dot{B}^{\frac{d}{p}}_{p,1}$ is continuously embedded in the set of continuous functions decaying to 0 at infinity.

\item{}  The following real interpolation property is satisfied for $1\leq p\leq \infty$, $s_1<s_2$, and $\theta\in (0,1)$:
\begin{align}\label{A.1}
\|u\|_{\dot B_{p,1}^{\theta s_1+(1-\theta)s_2}}\lesssim \frac{1}{\theta(1-\theta)(s_2-s_1)}\|u\|_{\dot B_{p,\infty}^{s_1}}^{\theta} \|u\|_{\dot B_{p,1}^{s_2}}^{1-\theta}  .  
\end{align}

\item{}  For any $\lambda_1 > 0$, $s \in \mathbb{R}$, and $1 \leq p, r \leq \infty$,  the  following scaling relationship holds:
\begin{align} \label{scale}
  \|u(\lambda_1\cdot)\|_{\dot B_{p,r}^s}\backsim \lambda_1^{s-\frac{d}{p}}\|u\|_{\dot B_{p,r}^s}.  
\end{align}

\item{}  For any $\varepsilon>0$, it holds that
\begin{equation}\nonumber
\begin{aligned}
H^{s+\varepsilon}\hookrightarrow \dot{B}^{s}_{2,1}\hookrightarrow \dot{H}^{s}.
\end{aligned}
\end{equation}
\item{}
Let $\Lambda^{\sigma}$ be defined by $\Lambda^{\sigma}:=(-\Delta )^{\frac{\sigma}{2}}u:=\mathcal{F}^{-1}\big{(} |\xi|^{\sigma}\mathcal{F}(u) \big{)}$ for $\sigma\in \mathbb{R}$ and $u\in{\mathcal S}^{'}_{h}(\mathbb{R}^3)$, then $\Lambda^{\sigma}$ is an isomorphism from $\dot{B}^{s}_{p,r}$ to $\dot{B}^{s-\sigma}_{p,r}$.
\item{} Let $1\leq p_{1},p_{2},r_{1},r_{2}\leq \infty$, $s_{1}\in\mathbb{R}$ and $s_{2}\in\mathbb{R}$ satisfy
\begin{align*}
 s_{2}<\frac{d}{p_2}\quad\text{\text{or}}\quad s_{2}=\frac{d}{p_2}~\text{and}~r_{2}=1.
\end{align*}
   Then the  space $\dot{B}^{s_{1}}_{p_{1},r_{1}}\cap \dot{B}^{s_{2}}_{p_{2},r_{2}}$ endowed with the norm $\|\cdot \|_{\dot{B}^{s_{1}}_{p_{1},r_{1}}}+\|\cdot\|_{\dot{B}^{s_{2}}_{p_{2},r_{2}}}$ is a Banach space and has the weak compact and Fatou properties$:$ If $u_{n}$ is a uniformly bounded sequence of $\dot{B}^{s_{1}}_{p_{1},r_{1}}\cap \dot{B}^{s_{2}}_{p_{2},r_{2}}$, then an element $u$ of $\dot{B}^{s_{1}}_{p_{1},r_{1}}\!\cap \dot{B}^{s_{2}}_{p_{2},r_{2}}$ and a subsequence $u_{n_{k}}$ exist such that $u_{n_{k}}\rightarrow u$ in $\mathcal{S}'$ and
    \begin{align*} 
    \begin{aligned}
    \|u\|_{\dot{B}^{s_{1}}_{p_{1},r_{1}}\cap \dot{B}^{s_{2}}_{p_{2},r_{2}}}\lesssim \liminf_{n_{k}\rightarrow \infty} \|u_{n_{k}}\|_{\dot{B}^{s_{1}}_{p_{1},r_{1}}\cap \dot{B}^{s_{2}}_{p_{2},r_{2}}}.
    \end{aligned}
    \end{align*}
\end{itemize}
\end{lem}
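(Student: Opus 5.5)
The statement is Lemma \ref{LA.2}, a list of standard Besov-space properties quoted from \cite[Chapter 2]{BCD-Book-2011}. I would prove each item from Bernstein's inequalities (Lemma \ref{LA.1}) together with the Littlewood--Paley characterization of $\dot B^s_{p,r}$.

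\emph{Embeddings.} For $p_1\le p_2$, Bernstein on the annulus $2^k\mathcal C$ gives $\|\dot\Delta_k u\|_{L^{p_2}}\lesssim 2^{kd(\frac1{p_1}-\frac1{p_2})}\|\dot\Delta_k u\|_{L^{p_1}}$. Multiplying by $2^{k(s-d(\frac1{p_1}-\frac1{p_2}))}$ and using $\ell^{r_1}\subset\ell^{r_2}$ yields the first item. In the chain of embeddings:
\begin{itemize}
\item $\dot B^0_{p,1}\hookrightarrow L^p$ follows from the triangle inequality applied to $u=\sum_k\dot\Delta_k u$.
\item $L^p\hookrightarrow\dot B^0_{p,\infty}$ follows from Young's inequality, since $\|2^{kd}h(2^k\cdot)\|_{L^1}=\|h\|_{L^1}$.
\item The last embedding is the first item again.
\end{itemize}
For the continuity/decay property of $\dot B^{d/p}_{p,1}$, each block is smooth, and $\|\dot\Delta_k u\|_{L^\infty}\lesssim 2^{kd/p}\|\dot\Delta_k u\|_{L^p}$. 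The series therefore converges uniformly. For $p<\infty$ each block is an $L^p$ function with bounded derivatives, so it vanishes at infinity, and uniform limits preserve this property. For the Sobolev-type item, Plancherel gives $\|\dot\Delta_k u\|_{L^2}\simeq\|\phi(2^{-k}\xi)\hat u\|_{L^2}$. Then $\dot B^s_{2,1}\hookrightarrow\dot B^s_{2,2}=\dot H^s$. For $H^{s+\varepsilon}\hookrightarrow\dot B^s_{2,1}$, split $k\le0$ and $k>0$ and apply Cauchy--Schwarz with the weights $2^{-\varepsilon k}$. (For $k\le0$ this needs $s>-d/2$, or one should read the inclusion as holding for low-frequency-controlled data.) The $\Lambda^\sigma$ isomorphism is Bernstein's second estimate with $m$ replaced by the multiplier $|\xi|^\sigma$, which is smooth on the annulus: $\|\Lambda^\sigma\dot\Delta_k u\|_{L^p}\simeq2^{k\sigma}\|\dot\Delta_k u\|_{L^p}$.

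\emph{Interpolation \eqref{A.1} and scaling.} Set $s=\theta s_1+(1-\theta)s_2$ and fix a cutoff $N$. Then
$$\sum_k 2^{ks}\|\dot\Delta_k u\|_{L^p}\le\sum_{k\le N}2^{k(s-s_1)}\|u\|_{\dot B^{s_1}_{p,\infty}}+\sum_{k>N}2^{-k(s_2-s)}\|u\|_{\dot B^{s_2}_{p,1}}.$$
The geometric sums give $\frac{2^{N(1-\theta)(s_2-s_1)}}{(1-\theta)(s_2-s_1)}$ and $\frac{2^{-N\theta(s_2-s_1)}}{\theta(s_2-s_1)}$, up to constants. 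Optimizing $N$ so that $2^{N(s_2-s_1)}\approx\|u\|_{\dot B^{s_2}_{p,1}}/\|u\|_{\dot B^{s_1}_{p,\infty}}$ produces the claimed bound with constant $\frac1{\theta(1-\theta)(s_2-s_1)}$. For scaling, $\dot\Delta_k[u(\lambda_1\cdot)]$ is comparable to $(\dot\Delta_{k-\log_2\lambda_1}u)(\lambda_1\cdot)$. Exactly so when $\lambda_1$ is a power of $2$; otherwise one uses almost-orthogonality of neighbouring blocks. Together with $\|f(\lambda_1\cdot)\|_{L^p}=\lambda_1^{-d/p}\|f\|_{L^p}$, this yields the factor $\lambda_1^{s-d/p}$.

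\emph{Fatou and weak compactness: the main obstacle.} Completeness of the intersection space relies on the condition $s_2<d/p_2$, or $s_2=d/p_2$ with $r_2=1$. This condition guarantees that $\sum_k\dot\Delta_k u$ converges in $\mathcal S'$ rather than only modulo polynomials, so that $\dot B^{s_2}_{p_2,r_2}\subset\mathcal S'_h$ is a genuine space of distributions. I would establish this first, by bounding the low-frequency sum in $L^\infty$ using Bernstein. For a bounded sequence $u_n$, each $\dot\Delta_k u_n$ is bounded in $L^{p_2}$ with uniformly bounded derivatives. A diagonal argument over $k$, with local Arzel\`a--Ascoli or weak-$*$ compactness, extracts limits $v_k$. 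The uniform low-frequency summability then lets $\sum_k v_k$ define $u\in\mathcal S'$ with $u_{n_k}\to u$ in $\mathcal S'$. Finally, lower semicontinuity of each $\|\dot\Delta_k\cdot\|_{L^p}$ and Fatou's lemma for the $\ell^r$ sums give the liminf inequality. The delicate step is controlling the low frequencies uniformly, so that no polynomial ambiguity arises in the limit. Since all of this is textbook material, I would in practice cite \cite[Chapter 2]{BCD-Book-2011} and only sketch these arguments.
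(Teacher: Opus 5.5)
The paper gives no proof of this lemma; it cites \cite{BCD-Book-2011}, and your sketches are the standard textbook arguments. Your treatment of the embeddings, the $C_0$ property, scaling and the $\Lambda^\sigma$ isomorphism is fine. There is, however, a genuine error in the item $H^{s+\varepsilon}\hookrightarrow\dot B^s_{2,1}$. Cauchy--Schwarz with weights $2^{-\varepsilon k}$ only handles $k>0$. For $k\le 0$ what actually works is $\sum_{k\le0}2^{ks}\|\dot\Delta_k u\|_{L^2}\le(1-2^{-s})^{-1}\|u\|_{L^2}$, and this requires $s>0$, not $s>-\frac d2$. For every $s\le 0$ the embedding is false. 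Let $\widehat u(\xi)=\mathbf 1_{\{|\xi|\le1\}}|\xi|^{-d/2}\big(1+\log(1/|\xi|)\big)^{-1}$. Then $u\in L^2$ and $\widehat u$ has compact support, so $u\in H^{\sigma}$ for every $\sigma$. Since $\phi(2^{-k}\cdot)\equiv1$ on $\{\frac43 2^k<|\xi|<\frac32 2^k\}$, one gets $\|\dot\Delta_k u\|_{L^2}\gtrsim(1+|k|)^{-1}$ for $k\le-1$, hence $\sum_{k\le0}2^{ks}\|\dot\Delta_k u\|_{L^2}=\infty$ for all $s\le0$. So this item of the lemma needs the extra hypothesis $s>0$, which is harmless for the paper since it only uses $\dot B^0_{2,1}\hookrightarrow L^2$. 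Your proof should impose that hypothesis rather than hedge with a wrong threshold.

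Your Fatou/compactness sketch works when $s_2<\frac{d}{p_2}$. In that case $\big\|\sum_{k<J}\dot\Delta_k u_n\big\|_{L^\infty}\lesssim 2^{J(\frac{d}{p_2}-s_2)}\sup_n\|u_n\|_{\dot B^{s_2}_{p_2,r_2}}$ supplies the uniform low-frequency control you invoke. At the endpoint $s_2=\frac{d}{p_2}$, $r_2=1$ there is no such uniformity, because bounded sets of $\ell^1$ need not have uniformly small tails. Your step ``$u_{n_k}\to\sum_k v_k$ in $\mathcal S'$'' then actually fails. Take $\widehat g\ge0$, $\widehat g\not\equiv0$, supported in $\{\frac43<|\xi|<\frac32\}$, where $\phi\equiv1$ and every other $\phi(2^{-k}\cdot)$ vanishes, and set $u_n:=g(2^{-n}\cdot)$. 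Then $\dot\Delta_k u_n=\delta_{k,-n}u_n$, so $\|u_n\|_{\dot B^{d/p}_{p,1}}=\|g\|_{L^p}$ and $\|u_n\|_{\dot B^{s_1}_{p,r_1}}\le\|g\|_{L^p}$ for $s_1\ge\frac dp$. All your $v_k$ vanish, yet $u_n\to g(0)\neq0$ in $\mathcal S'$. Thus at the endpoint the limit exists only modulo polynomials, which is exactly the paper's convention $\mathcal S'_h=\mathcal S'/\mathcal P$. Under that convention low frequencies are harmless: test against functions whose Fourier transforms vanish to infinite order at $0$. So your ``no polynomial ambiguity'' step cannot be achieved at the endpoint, and it is not needed under the paper's convention.

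A smaller point concerns \eqref{A.1}. The identity $\sum_{k\le N}2^{kx}=2^{Nx}/(1-2^{-x})$ gives $\lesssim 2^{Nx}/x$ only for $x\lesssim1$, so your argument yields the constant $C\big(1+\frac{1}{\theta(1-\theta)(s_2-s_1)}\big)$. The bare prefactor in \eqref{A.1} cannot hold uniformly for large $s_2-s_1$: a function with a single nonzero dyadic block has all its Besov norms equal. This is immaterial for the fixed exponents used in the paper.
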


To effectively control the nonlinear terms, we require the following Morse-type product estimates.
\begin{lem} \label{LA.3}{\rm(\!\!\cite[Chapter 2]{BCD-Book-2011})}
The following statements hold:
\begin{itemize}
\item{} Let $s>0$, $1\leq p,r\leq \infty$.  Then $\dot B^{s}_{p,r}\cap L^\infty$ is an algebra and
 \begin{align*} 
\|uv\|_{\dot B_{p,r}^s}\lesssim \|u\|_{L^\infty}\|v\|_{\dot B_{p,r}^s}+\|v\|_{L^\infty}\|u\|_{\dot B_{p,r}^s},    
 \end{align*}
which in particular leads to
\begin{align*}
\|uv\|_{\dot B_{p,1}^{\frac{d}{p}}}\lesssim \|u\|_{\dot B_{p,1}^{\frac{d}{p}}}\|v\|_{\dot B_{p,1}^{\frac{d}{p}}} .    
\end{align*}

\item{} Let the real numbers $s_1$, $s_2$, $p$ and $r$ fulfill
\begin{align*}
 1\leq p, r\leq\infty, \quad s_1\leq\frac{d}{p},\quad s_2\leq\frac{d}{p},\quad s_1+s_2>0.
\end{align*}
  Then, it holds  that
 \begin{align*} 
\|uv\|_{\dot B_{p,r}^{s_1+s_2-\frac{d}{p}}}\lesssim \|u\|_{ \dot B_{p,1}^{s_1 }}\|v\|_{\dot B_{p,1}^{s_2}} .    
 \end{align*}

\item{} Let  the real numbers $s_1$, $s_2$, $p$ and $r$ fulfill
\begin{align*}
 s_1\leq \frac{d}{p},\quad s_2< \frac{d}{p},\quad s_1+s_2\geq 0.   
\end{align*}
Then, we have
 \begin{align*} 
\|uv\|_{\dot B_{p,\infty}^{s_1+s_2-\frac{d}{p}}}\lesssim \|u\|_{ \dot B_{p,1}^{s_1 }}\|v\|_{\dot B_{p,\infty}^{s_2}}.    
 \end{align*}
    \end{itemize}
\end{lem}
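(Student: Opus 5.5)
These three product estimates are standard consequences of Bony's paraproduct decomposition, and we would prove them all from the same splitting
\begin{align*}
uv=\dot T_u v+\dot T_v u+\dot R(u,v),\qquad \dot T_u v:=\sum_{j}\dot S_{j-1}u\,\dot\Delta_j v,\qquad \dot R(u,v):=\sum_{|j-j'|\leq 1}\dot\Delta_j u\,\dot\Delta_{j'}v,
\end{align*}
with $\dot S_{j-1}=\sum_{j'\leq j-2}\dot\Delta_{j'}$. Two facts about the frequency supports drive everything.
\begin{itemize}
\item Each term $\dot S_{j-1}u\,\dot\Delta_j v$ is spectrally supported in an annulus of size $2^j$, so no sign condition on the regularity index is needed for the paraproducts.
\item Each remainder block $\dot\Delta_j u\,\dot\Delta_{j'}v$ is supported only in a ball of size $2^j$. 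Summing it therefore needs a positive total regularity index, which is the source of the conditions $s>0$, $s_1+s_2>0$ and $s_1+s_2\geq0$.
\end{itemize}

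\textbf{First item (Moser estimate).} For the paraproducts we use $\|\dot S_{j-1}u\|_{L^\infty}\lesssim\|u\|_{L^\infty}$ and almost orthogonality. This gives
\begin{align*}
\|\dot T_u v\|_{\dot B^s_{p,r}}\lesssim \|u\|_{L^\infty}\|v\|_{\dot B^s_{p,r}},
\end{align*}
valid for every $s\in\mathbb R$, and the symmetric bound holds for $\dot T_v u$. For the remainder we bound $\|\dot\Delta_j u\|_{L^\infty}\lesssim\|u\|_{L^\infty}$ and then write
\begin{align*}
2^{ks}\|\dot\Delta_k\dot R(u,v)\|_{L^p}\lesssim \|u\|_{L^\infty}\sum_{j\geq k-3}2^{(k-j)s}\,2^{js}\|\dot\Delta_j v\|_{L^p}.
\end{align*}
Since $s>0$, the kernel $2^{(k-j)s}\mathbf 1_{j\geq k-3}$ belongs to $\ell^1$, and Young's inequality for series finishes the estimate. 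The special case $s=d/p$, $r=1$ follows from the embedding $\dot B^{d/p}_{p,1}\hookrightarrow L^\infty$ of Lemma \ref{LA.2}.

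\textbf{Second and third items.} Here we again split into $\dot T_u v$, $\dot T_v u$ and $\dot R$, but now use Bernstein's inequality (Lemma \ref{LA.1}) on the low-frequency factor. For $s_1<d/p$ this gives
\begin{align*}
2^{j(s_1-\frac dp)}\|\dot S_{j-1}u\|_{L^\infty}\lesssim\sum_{j'\leq j-2}2^{(j-j')(s_1-\frac dp)}\,2^{j's_1}\|\dot\Delta_{j'}u\|_{L^p},
\end{align*}
so $\dot S_{j-1}u$ is bounded in $L^\infty$ by $2^{-j(s_1-d/p)}$ times a sequence in $\ell^r$. In the endpoint $s_1=d/p$ we use instead the third-index-one norm $\dot B^{d/p}_{p,1}$ to get a plain $L^\infty$ bound. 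This yields
\begin{align*}
\|\dot T_u v\|_{\dot B^{s_1+s_2-\frac dp}_{p,r}}\lesssim\|u\|_{\dot B^{s_1}_{p,1}}\|v\|_{\dot B^{s_2}_{p,r}},
\end{align*}
and symmetrically for $\dot T_v u$. The third item asks for a $\dot B_{p,\infty}$-norm on $v$ and on the product, so there we place the $\ell^1$ summability on $u$ and $\ell^\infty$ on $v$. The strict inequality $s_2<d/p$ is exactly what makes the geometric sum for $\dot S_{j-1}v$ converge when only $\ell^\infty$ control of $v$ is available.

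\textbf{Remainder and the main obstacle.} For the remainder we use Hölder in the form $L^p\times L^p\to L^{p/2}$ (or $L^p\times L^\infty$), followed by Bernstein $L^{p/2}\to L^p$ (or $L^1\to L^p$ when $p\geq2$), which costs $2^{kd/p}$. This leads to
\begin{align*}
2^{k(s_1+s_2-\frac dp)}\|\dot\Delta_k\dot R\|_{L^p}\lesssim\sum_{j\geq k-3}2^{(k-j)(s_1+s_2)}\,c_j,
\end{align*}
with $(c_j)\in\ell^1$ from the two factors. When $s_1+s_2>0$ the kernel is summable and the second item follows. The borderline case $s_1+s_2=0$ in the third item is the step we expect to be the main obstacle. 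There the kernel is only bounded, which is precisely why the output must be measured in $\dot B_{p,\infty}$ and why one factor must carry $\ell^1$ summability. The uniform bound then comes from $\sup_k\sum_{j\geq k-3}c_j\leq\|(c_j)\|_{\ell^1}$. One must also handle the case $p>2$ separately, using $L^p\times L^\infty$ and Bernstein on the bounded-in-$L^\infty$ factor, so that the powers of $2$ close. All of this is carried out in detail in \cite[Chapter 2]{BCD-Book-2011}, to which we ultimately refer.
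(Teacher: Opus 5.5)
Your argument is the standard Bony-decomposition proof from \cite[Chapter 2]{BCD-Book-2011}, which is all the paper relies on, since it cites the lemma without proof. For $2\le p\le\infty$ your argument is sound, and this range includes $p=2$, the only case the paper uses.

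The gap is in the remainder step of the second and third items when $1\le p<2$. Hölder's inequality puts $\dot\Delta_j u\,\dot\Delta_{j'}v$ in $L^{p/2}$. The bound $\|\dot\Delta_k f\|_{L^p}\lesssim 2^{kd/p}\|f\|_{L^{p/2}}$ at the output frequency $2^k$ comes from Young's inequality for the kernel of $\dot\Delta_k$, so it requires $p/2\ge1$. For $p<2$ you can only apply Bernstein at the frequency $2^j$ of the product, or use your $L^p\times L^\infty$ variant. Both give the kernel $2^{(k-j)(s_1+s_2-\frac dp)}$, which is not summable over $j\ge k-3$ under the mere assumption $s_1+s_2>0$. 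Your remark that the case $p>2$ needs separate treatment therefore points at the wrong range: $p\ge 2$ is exactly where nothing extra is needed.

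No argument can close this, because for $p<2$ the second and third items are false as stated. Take the following data:
\begin{itemize}
\item $p=1$ and $s_1=s_2=\frac d4$, which is admissible in both items;
\item a nonzero $\psi\in\mathcal S$ with $\widehat\psi$ supported in $B(0,\frac1{10})$;
\item $u=\psi e^{ix_1}$ and $v=\overline{\psi}e^{-ix_1}$.
\end{itemize}
The Fourier supports of $u$ and $v$ are compact and avoid the origin, so they meet only finitely many dyadic annuli, and $\|u\|_{\dot B^{d/4}_{1,1}}$ and $\|v\|_{\dot B^{d/4}_{1,1}}$ are finite. However, $uv=|\psi|^2$ has $\widehat{uv}(0)=\|\psi\|_{L^2}^2>0$. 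Hence $\|\dot\Delta_k(uv)\|_{L^1}\gtrsim\|\widehat{\dot\Delta_k(uv)}\|_{L^\infty}\gtrsim\|\psi\|_{L^2}^2$ for all $k\ll0$, and so $\|uv\|_{\dot B^{-d/2}_{1,\infty}}=\infty$. This contradicts both items for every $r$.

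The correct hypothesis is $s_1+s_2>d\max\{0,\frac2p-1\}$, with $\ge$ in the third item. For $p<2$ it is proved in three steps:
\begin{itemize}
\item Hölder's inequality $L^p\times L^{p'}\to L^1$;
\item the embedding $\dot B^{s_2}_{p,r}\hookrightarrow\dot B^{s_2-d(\frac2p-1)}_{p',r}$ from Lemma \ref{LA.2};
\item Bernstein's inequality $L^1\to L^p$ at frequency $2^k$.
\end{itemize}
You should either state the lemma for $p\ge2$ or add this condition. With either fix your proof is complete, and nothing in the paper is affected.
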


We present the following lemma regarding the continuity of composite functions:

\begin{lem}\label{LA.4}{\rm(\!\!\cite[Chapter 2]{BCD-Book-2011})}
Let $F:I\rightarrow \mathbb{R}$ be a smooth function such that $F(0)=0$, where $I$ is an open interval in $\mathbb{R}$ containing $0$.
Then, for any $1\leq p,r\leq \infty$, $s>0$, and $u\in \dot{B}_{p,r}^s \cap L^\infty$, it holds that $F(u) \in \dot{B}_{p,r}^s \cap L^\infty$, and
\begin{align}\label{A.2}
\|F(u)\|_{\dot{B}^{s}_{p,r}} \leq C_u \|u\|_{\dot{B}_{p,r}^s},
\end{align}
where the constant $C_u > 0$ depends only on $\|u\|_{L^\infty}$, $F'$, $p$, $s$, and   $d$.
\end{lem}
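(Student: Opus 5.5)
We plan to follow Meyer's first linearization method, as in \cite[Chapter 2]{BCD-Book-2011}. Write $S_j:=\sum_{k\leq j-1}\dot\Delta_k$. We rely on two facts. First, $u\in\mathcal S'_h\cap L^\infty$ and $u\in\dot B^s_{p,r}$ with $s>0$, so $S_ju\to0$ as $j\to-\infty$ and $S_ju\to u$ as $j\to+\infty$. Second, $F(0)=0$. Together these give the telescoping identity
\begin{align*}
F(u)=\sum_{j\in\mathbb Z}\big(F(S_{j+1}u)-F(S_ju)\big)=\sum_{j\in\mathbb Z} m_j\,\dot\Delta_j u,\qquad m_j:=\int_0^1 F'\big(S_ju+\tau\dot\Delta_j u\big)\,{\rm d}\tau.
\end{align*}
The series should be understood in $\mathcal S'$; its convergence will follow from the bounds below.

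Since $\|S_ju+\tau\dot\Delta_ju\|_{L^\infty}\leq C\|u\|_{L^\infty}$, the arguments of $F'$ lie in a fixed bounded set. This set must be compactly contained in $I$. In the paper's applications $F=f$, $|a|$ is small, and $I=(-1,\infty)$, so this holds. In general one may first modify $F$ outside a neighbourhood of the range of $u$, which is harmless.

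The key pointwise bounds follow from the Faà di Bruno formula and Bernstein's inequality (Lemma \ref{LA.1}) applied to $S_ju$ and $\dot\Delta_ju$. For every multi-index $\alpha$ they read
\begin{align*}
\|\partial^\alpha m_j\|_{L^\infty}\leq C_\alpha\, 2^{j|\alpha|},
\end{align*}
with $C_\alpha$ depending only on $\|u\|_{L^\infty}$, finitely many derivatives of $F'$, and $d$.

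We then localize each piece with $\dot\Delta_q$ and split the sum according to $j\geq q$ and $j<q$.
\begin{itemize}
\item For $j\geq q$ we use only $\|\dot\Delta_q(m_j\dot\Delta_ju)\|_{L^p}\lesssim\|m_j\|_{L^\infty}\|\dot\Delta_ju\|_{L^p}$.
\item For $j<q$ we fix an integer $N>s$. Since $\dot\Delta_q$ localizes to frequencies of size $2^q$, Lemma \ref{LA.1} together with the Leibniz rule gives
\begin{align*}
\|\dot\Delta_q(m_j\dot\Delta_ju)\|_{L^p}\lesssim 2^{-qN}\|\nabla^N(m_j\dot\Delta_ju)\|_{L^p}\lesssim 2^{(j-q)N}\|\dot\Delta_ju\|_{L^p}.
\end{align*}
\end{itemize}
Multiplying by $2^{qs}$ produces
\begin{align*}
2^{qs}\|\dot\Delta_qF(u)\|_{L^p}\lesssim\sum_{j\geq q}2^{(q-j)s}\,2^{js}\|\dot\Delta_ju\|_{L^p}+\sum_{j<q}2^{(q-j)(s-N)}\,2^{js}\|\dot\Delta_ju\|_{L^p}.
\end{align*}
Both kernels are summable over $\mathbb Z$: the first because $s>0$, the second because $N>s$. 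Young's inequality for discrete convolutions in $\ell^r$ then yields \eqref{A.2}, and the same estimates justify convergence of the series. The $L^\infty$ bound on $F(u)$ is immediate from the mean value theorem.

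The only delicate step is the uniform derivative bound on $m_j$. It must hold with constants independent of $j$ and with growth exactly $2^{j|\alpha|}$. Each derivative falling on $S_ju$ or $\dot\Delta_ju$ costs one factor $2^j$ (Bernstein), while the number of factors produced by the chain rule is controlled by $|\alpha|$ times a power of $\|u\|_{L^\infty}$. This is exactly why $C_u$ depends on $\|u\|_{L^\infty}$ but not on any Besov norm of $u$.
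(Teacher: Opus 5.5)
Your proposal is correct and follows the same route as the source the paper cites for this lemma, \cite[Chapter 2]{BCD-Book-2011}, which uses Meyer's first linearization: the identity $F(u)=\sum_j m_j\dot\Delta_j u$, the bounds $\|\partial^\alpha m_j\|_{L^\infty}\lesssim 2^{j|\alpha|}$, and the split into $j\geq q$ (using $s>0$) and $j<q$ (using $N>s$ derivatives). Your caveats are well placed: the telescoping needs $\|S_ju\|_{L^\infty}\to0$ as $j\to-\infty$, which follows from the Bahouri--Chemin--Danchin definition of $\mathcal S'_h$, or from $s<d/p$ (or $s=d/p$ with $r=1$, as for $a\in\dot B^{d/2}_{2,1}$ in this paper), but not from the quotient $\mathcal S'/\mathcal P$ the paper uses; and when $I\neq\mathbb R$ the constant necessarily also depends on the distance from the range of $u$ to $\partial I$, not only on $\|u\|_{L^\infty}$.
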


Next, we present the following commutator {estimates}.
\begin{lem}\label{LA.5}
Let $1\leq p\leq \infty$ and $-\frac{d}{p}<s\leq 1+\frac{d}{p}$. Then, it holds that
\begin{align}\label{A.3}
\sum_{k\in\mathbb Z} 2^{ks}\|[u\cdot\nabla,\dot\Delta_k]a\|_{L^p}\lesssim&\, \|u\|_{\dot B^{\frac{d}{p}+1}_{p,1}}\|a\|_{\dot B_{p,1}^s}, \\\label{A.3-1}
\sum_{k\in\mathbb Z} 2^{k(s-1)}\|[u\cdot\nabla,\partial_j\dot\Delta_k]a\|_{L^p}\lesssim&\, \|u\|_{\dot B^{\frac{d}{p}+1}_{p,1}}\|a\|_{\dot B_{p,1}^s}, \quad j=1,2,\dots,d,
\end{align}
with the commutator $[A,B]:=AB-BA$.
\end{lem}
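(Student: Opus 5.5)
We would follow the classical Bony paraproduct argument (as in \cite[Lemma 2.100]{BCD-Book-2011}), treating \eqref{A.3} in detail. Then \eqref{A.3-1} is obtained by the same computation with $\dot\Delta_k$ replaced by $\partial_j\dot\Delta_k$. This operator is still frequency-localized to an annulus of size $2^k$, at the cost of one extra factor $2^k$, which is compensated by the weight $2^{k(s-1)}$.

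Summing over the repeated index $i$, we write $u\cdot\nabla a=\dot T_{u^i}\partial_i a+\dot T_{\partial_i a}u^i+\dot R(u^i,\partial_i a)$. We then split the commutator as
\begin{align*}
[u\cdot\nabla,\dot\Delta_k]a
=[\dot T_{u^i},\dot\Delta_k]\partial_i a
+\big(\dot T_{\partial_i\dot\Delta_k a}u^i+\dot R(u^i,\partial_i\dot\Delta_k a)\big)
-\dot\Delta_k\big(\dot T_{\partial_i a}u^i\big)-\dot\Delta_k\dot R(u^i,\partial_i a).
\end{align*}
Each piece is estimated in $L^p$, multiplied by $2^{ks}$, and summed over $k$.

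The first piece is the genuine commutator and is where the cancellation lives. By spectral localization, only $|k'-k|\le 4$ contribute, giving terms $[\dot S_{k'-1}u^i,\dot\Delta_k]\partial_i\dot\Delta_{k'}a$. We write $\dot\Delta_k$ as convolution with $2^{kd}h(2^k\cdot)$ and apply the mean value theorem, which yields
\begin{align*}
\big\|[\dot S_{k'-1}u^i,\dot\Delta_k]\partial_i\dot\Delta_{k'}a\big\|_{L^p}\lesssim 2^{-k}\|\nabla u\|_{L^\infty}\|\partial_i\dot\Delta_{k'}a\|_{L^p}\lesssim\|\nabla u\|_{L^\infty}\|\dot\Delta_{k'}a\|_{L^p}.
\end{align*}
Summing against $2^{ks}$ gives $\|\nabla u\|_{L^\infty}\|a\|_{\dot B^s_{p,1}}$ for every $s$. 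We conclude with $\dot B^{d/p}_{p,1}\hookrightarrow L^\infty$.

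The remaining pieces are handled by Bernstein's inequality (Lemma \ref{LA.1}) and the product laws (Lemma \ref{LA.3}).
\begin{itemize}
\item The term $\dot\Delta_k(\dot T_{\partial_i a}u^i)$ is bounded by $\|\dot S_{k'-1}\nabla a\|_{L^\infty}\|\dot\Delta_{k'}u\|_{L^p}$. Since $\|\dot S_{k'-1}\nabla a\|_{L^\infty}\lesssim\sum_{k''<k'}2^{k''(1+d/p-s)}2^{k''s}\|\dot\Delta_{k''}a\|_{L^p}$, this is where the upper bound $s\le1+d/p$ is needed. At the endpoint $s=1+d/p$, the summability index $1$ of $\dot B^s_{p,1}$ saves the sum.
\item The term $\dot T_{\partial_i\dot\Delta_k a}u^i$ is treated similarly.
\item The remainder terms $\dot R(u^i,\partial_i a)$ and $\dot R(u^i,\partial_i\dot\Delta_k a)$ are summed over high frequencies $k'\gtrsim k$. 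This is where the lower bound $s>-d/p$ is used; in the case $p\ge2$ relevant here, this is the standard condition.
\end{itemize}

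We expect the main obstacle to be the bookkeeping at the endpoints $s=1+d/p$ and $s\to-d/p$. In both cases one has to check that the geometric sums still converge thanks to the $\ell^1$ structure. The commutator term itself is routine once the mean value argument is in place.
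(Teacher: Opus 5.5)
The paper gives no proof of this lemma and quotes it as the standard commutator estimate (cf.\ \cite[Lemma 2.100]{BCD-Book-2011} and its endpoint remark). Your Bony-decomposition argument is exactly that standard proof, and it is correct.

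Your restriction to $p\ge2$ for the lower bound is actually necessary, not cosmetic. For $1\le p<2$, the high--high term $\dot\Delta_k\dot R(u^i,\partial_i a)$ can produce a low-frequency output with nonzero mean. For example, take $u=(\varphi\cos(2^Nx_1),0,\dots,0)$ and $a=2^{-N}\varphi\sin(2^Nx_1)$, with $\widehat\varphi$ a real even bump supported in $B(0,\frac1{10})$. Then $[u\cdot\nabla,\dot\Delta_k]a=-\frac12\dot\Delta_k(\varphi^2)$ for $k\le N-3$, and its $L^p$ norm is $\sim2^{kd/p'}$ as $k\to-\infty$, so the left side is infinite while the right side is finite. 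Hence the printed statement fails for $-\frac dp<s\le-\frac d{p'}$, and the range should read $s>-d\min\{\frac1p,\frac1{p'}\}$. This is harmless here, since the paper only uses $p=2$.
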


We study the following linear transport equation:
\begin{align} \label{A.4}
\left\{\begin{aligned}
&\partial_t \rho+\rho\cdot\nabla u=f, \quad\, x\in \mathbb{R}^d,\quad t>0,  \\
& \rho(x,0)=\rho_0(x),\quad \,\,\,\,\,  x\in \mathbb{R}^d.
 \end{aligned}
 \right.
\end{align}

\begin{lem}\label{LA.6}{\rm(\!\!\!\cite[Chapter 3]{BCD-Book-2011})}
Let $ T > 0 $, $ -\frac{d}{2} < s \leq \frac{d}{2} + 1 $,
 $ 1 \leq r \leq \infty $, $ \rho_0 \in \dot{B}_{2,r}^s $, $ u \in L^1(0,T; \dot{B}_{2,1}^{\frac{d}{2}+1}) $, 
 and $ f \in L^1(0,T; \dot{B}_{2,r}^s) $. Then there exists a constant $ C > 0 $, 
 independent of $ T $ and $ \rho_0 $, such that the solution $ \rho $ to \eqref{A.4} satisfies
\begin{align}\label{A.5}
\|\rho\|_{\widetilde L_T^\infty(\dot B_{2,r}^s)} 
\leq \exp\bigg\{C \|\nabla u\|_{L_T^1(\dot B_{2,1}^{\frac{d}{2}})} \bigg\} 
\bigg( \|\rho_0\|_{\dot B_{2,r}^s} + \int_0^T \|f\|_{\dot B_{2,r}^s}  {\rm d}\tau\bigg).
\end{align}
Moreover, if $ r < \infty $, then the solution $ \rho \in \mathcal{C}([0,T]; \dot B_{2,r}^s) $.
\end{lem}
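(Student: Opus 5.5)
The statement is the classical a priori estimate for the transport equation (A.4), understood as $\partial_t\rho+u\cdot\nabla\rho=f$. I will follow the standard Littlewood--Paley energy method of \cite[Chapter 3]{BCD-Book-2011}.

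\emph{Localization.} Apply $\dot\Delta_k$ to the equation to get
\begin{align*}
\partial_t\dot\Delta_k\rho+u\cdot\nabla\dot\Delta_k\rho=\dot\Delta_k f+[u\cdot\nabla,\dot\Delta_k]\rho .
\end{align*}
Take the $L^2$ inner product with $\dot\Delta_k\rho$. Integrating by parts, the transport term contributes $\frac12\int {\rm div}\,u\,|\dot\Delta_k\rho|^2$, which is bounded by $\frac12\|{\rm div}\,u\|_{L^\infty}\|\dot\Delta_k\rho\|_{L^2}^2$. Dividing by $(\|\dot\Delta_k\rho\|_{L^2}^2+\epsilon^2)^{1/2}$, integrating in time and letting $\epsilon\to0$ gives
\begin{align*}
\|\dot\Delta_k\rho(t)\|_{L^2}\le \|\dot\Delta_k\rho_0\|_{L^2}+\int_0^t\Big(\|\dot\Delta_k f\|_{L^2}+\|[u\cdot\nabla,\dot\Delta_k]\rho\|_{L^2}+\tfrac12\|{\rm div}\,u\|_{L^\infty}\|\dot\Delta_k\rho\|_{L^2}\Big){\rm d}\tau .
\end{align*}

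\emph{Commutator bound.} This is the key step and the main obstacle. I need the $\ell^r$ analogue of \eqref{A.3}:
\begin{align*}
\Big\|\big\{2^{ks}\|[u\cdot\nabla,\dot\Delta_k]\rho\|_{L^2}\big\}_k\Big\|_{\ell^r}\lesssim\|\nabla u\|_{\dot B_{2,1}^{d/2}}\|\rho\|_{\dot B^s_{2,r}},\qquad -\tfrac d2<s\le \tfrac d2+1 .
\end{align*}
To prove it, write the commutator via Bony's decomposition.
\begin{itemize}
\item The paraproduct part $[\dot T_{u^j},\dot\Delta_k]\partial_j\rho$ is handled by the first-order Taylor (mean value) formula for the kernel of $\dot\Delta_k$. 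This gains $2^{-k}\|\nabla u\|_{L^\infty}$, and the upper limit $s\le \frac d2+1$ enters through the terms $\dot T_{\partial_j\rho}u^j$.
\item The remainder $R(u^j,\partial_j\rho)$ needs $s>-\frac d2$ to be summable.
\end{itemize}
Each piece is bounded with Bernstein's inequality (Lemma \ref{LA.1}) and the embedding $\dot B^{d/2}_{2,1}\hookrightarrow L^\infty$. The only novelty compared with \eqref{A.3} is keeping track of the $\ell^r$ sum, so I plan to invoke \cite[Lemma 2.100]{BCD-Book-2011} directly.

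\emph{Conclusion.} Multiply the localized inequality by $2^{ks}$, take $\sup_{[0,t]}$, and take the $\ell^r$ norm in $k$, using Minkowski's inequality to pass the time integral outside. Setting $V(t)=\int_0^t\|\nabla u\|_{\dot B^{d/2}_{2,1}}{\rm d}\tau$, this yields
\begin{align*}
\|\rho\|_{\widetilde L^\infty_t(\dot B^s_{2,r})}\le\|\rho_0\|_{\dot B^s_{2,r}}+\int_0^t\|f\|_{\dot B^s_{2,r}}{\rm d}\tau+C\int_0^t V'(\tau)\,\|\rho\|_{\widetilde L^\infty_\tau(\dot B^s_{2,r})}{\rm d}\tau .
\end{align*}
Gronwall's lemma applied to the nondecreasing function $t\mapsto\|\rho\|_{\widetilde L^\infty_t(\dot B^s_{2,r})}$ then gives \eqref{A.5}.

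\emph{Continuity for $r<\infty$.} Each block $\dot\Delta_k\rho$ is continuous in time with values in $L^2$, by the equation. The high-frequency tail $\big(\sum_{|k|>N}\sup_t 2^{krs}\|\dot\Delta_k\rho\|^r_{L^2}\big)^{1/r}$ tends to $0$ uniformly in $t$: this follows by rerunning the above estimate restricted to $|k|>N$ and using $r<\infty$. Hence $\rho\in\mathcal C([0,T];\dot B^s_{2,r})$.
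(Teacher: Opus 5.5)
Your route is the one in \cite[Chapter 3]{BCD-Book-2011}, which the paper cites instead of giving its own proof. The localized $L^2$ estimate, Minkowski plus Gronwall on $t\mapsto\|\rho\|_{\widetilde L^\infty_t(\dot B^s_{2,r})}$, and the uniform-tail argument for continuity are all fine. The commutator bound is also fine for $-\frac d2<s<\frac d2+1$ (any $r$) and for $s=\frac d2+1$ when $r=1$.

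The step that fails is the commutator bound at $s=\frac d2+1$ with $r>1$. You correctly locate the endpoint in $\dot T_{\partial_j\rho}u^j$, but that term only gives
\begin{align*}
2^{ks}\|\dot\Delta_k\dot T_{\partial_j\rho}u^j\|_{L^2}\lesssim\sum_{|k'-k|\le4}2^{k'(\frac d2+1)}\|\dot\Delta_{k'}u\|_{L^2}\sum_{k''\le k'-2}2^{(k''-k')(\frac d2+1-s)}\,2^{k''s}\|\dot\Delta_{k''}\rho\|_{L^2}.
\end{align*}
For $s<\frac d2+1$ the inner sum is a convolution with a summable kernel, and Young's inequality yields the $\ell^r$ bound. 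At $s=\frac d2+1$ the kernel is constant, so the inner sum is controlled only by $\|\rho\|_{\dot B^{\frac d2+1}_{2,1}}$, essentially $\|\nabla\rho\|_{L^\infty}$. The $\dot B^{\frac d2+1}_{2,r}$ norm does not control this when $r>1$. That is why \eqref{A.3}, an $\ell^1$ statement, can include $s=1+\frac dp$, whereas \cite[Lemma 2.100]{BCD-Book-2011} allows this endpoint only with $r=1$. ``Keeping track of the $\ell^r$ sum'' is exactly where the endpoint is lost, so you cannot invoke that lemma there.

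No other argument can close this case, because \eqref{A.5} itself is false there. Here is a counterexample.
\begin{itemize}
\item Let $u=(\psi,0,\dots,0)$ with $\widehat\psi$ supported in $\{1\le|\xi|\le\frac{11}{10}\}$.
\item Let $\varphi$ be real with $\widehat\varphi$ supported in the same annulus and $\partial_1\varphi(0)\neq0$.
\item Set $\rho_0=\sum_{j=0}^{N-1}2^{-m_j}\varphi(2^{m_j}\cdot)$ with $m_j=-N_0-3j$, where $N_0$ is large but independent of $N$.
\end{itemize}
Then $\|\rho_0\|_{\dot B^{\frac d2+1}_{2,r}}\sim N^{1/r}$, while $\partial_1\rho_0(x)=N\partial_1\varphi(0)+O(2^{-N_0}|x|)$. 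Hence $\|\psi\,\partial_1\rho_0\|_{L^2}\gtrsim N\|\psi\|_{L^2}$.

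Since $\dot\Delta_k\rho_0=0$ for $k\ge-1$, the commutator at $k\in\{-1,0\}$ reduces to $-\dot\Delta_k(\psi\,\partial_1\rho_0)$, of total size $\gtrsim N\|\psi\|_{L^2}$. This already violates your commutator bound.

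For the estimate itself, expand the solution of $\partial_t\rho+u\cdot\nabla\rho=0$ to second order in time. The remainder is $O_u(t^2N)$, because $\|\nabla\rho\|_{L^\infty}+\|\nabla^2\rho\|_{L^\infty}\lesssim_u N$ along the flow. This gives $\|\rho(t)\|_{\dot B^{\frac d2+1}_{2,r}}\gtrsim tN$ for a small $t$ depending only on $u$. The right-hand side of \eqref{A.5} is $O(N^{1/r})$, so the estimate fails as $N\to\infty$.

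The lemma should therefore be stated for $-\frac d2<s<\frac d2+1$ with any $r$, or $s=\frac d2+1$ with $r=1$. The limit case $s=-\frac d2$, $r=\infty$ also holds, and it is what Lemma \ref{L5.4} actually uses when $\sigma_0=-\frac d2$. With that restriction your proof is complete, and none of the paper's applications touch the excluded endpoint.
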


We require the optimal regularity estimate for the following Lam\'{e} system:
\begin{align} \label{A.6}
\left\{\begin{aligned}
&\partial_t u-\mu\Delta  u-(\mu+\lambda)\nabla{\rm div}\,u=g, \quad\, x\in \mathbb{R}^d,\quad t>0,  \\
& u(x,0)=u_0(x),\quad \,\,\,\quad\quad\quad\quad\quad\quad\quad  x\in \mathbb{R}^d.
 \end{aligned}
 \right.
\end{align}
\begin{lem}\label{LA.7}{\rm(\!\!\cite[Chapter 3]{BCD-Book-2011})}
Let $ T > 0 $, $ \mu > 0 $, $ 2\mu +  \lambda > 0 $, $ s \in \mathbb{R} $, $ 1 \leq p, r \leq \infty $, and $ 1 \leq \varsigma_2 \leq \varsigma_1 \leq \infty $. Suppose that $ u_0 \in \dot{B}^{s}_{p,r} $ and $ g \in \widetilde{L}^{\varsigma_2}(0,T;\dot{B}_{p,r}^{s-2+\frac{2}{\varsigma_2}}) $. Then there exists a solution $u $ to \eqref{A.6} satisfying
\begin{align*}
\min\{\mu,2\mu+\lambda\}^{\frac{1}{\varsigma_1}}\|u\|_{\widetilde{L}^{\varsigma_1}_T(\dot B^{s+\frac{2}{\varsigma_1}}_{p,r})}\lesssim \|u_0\|_{\dot B_{p,r}^s}+\min\{\mu,2\mu+\lambda\}^{\frac{1}{\varsigma_2}-1}\|g\|_{L^{\varsigma_2}_T(\dot B_{p,r}^{s-2+\frac{2}{\varsigma_2}})}.
\end{align*}
\end{lem}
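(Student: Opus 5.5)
The statement is Lemma~\ref{LA.7}: maximal regularity for the Lam\'e system \eqref{A.6}. The plan is to localize in frequency, split into compressible and incompressible parts, and treat each as a heat equation with a known per-block decay rate.

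First, apply the projectors $\mathbb P$ and $\mathbb Q$ to \eqref{A.6}. Since they commute with $\dot\Delta_k$, $\Delta$ and $\partial_t$, and $\nabla{\rm div}\,u=\Delta\mathbb Q u$, the system decouples into
\begin{align*}
\partial_t\mathbb Pu-\mu\Delta\mathbb Pu=\mathbb Pg,\qquad \partial_t\mathbb Qu-(2\mu+\lambda)\Delta\mathbb Qu=\mathbb Qg .
\end{align*}
Both are heat equations, with diffusion coefficients $\mu>0$ and $2\mu+\lambda>0$. Set $\nu:=\min\{\mu,2\mu+\lambda\}$.

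Next, localize each equation with $\dot\Delta_k$ and use the standard frequency-localized heat estimate: for $\widehat f$ supported in the annulus $2^k\mathcal C$, one has $\|e^{\kappa t\Delta}f\|_{L^p}\le Ce^{-c\kappa 2^{2k}t}\|f\|_{L^p}$ for all $1\le p\le\infty$ (BCD, Lemma~2.4). For $p=2$ this is simply Plancherel. Duhamel's formula then gives
\begin{align*}
\|\dot\Delta_k u(t)\|_{L^p}\lesssim e^{-c\nu2^{2k}t}\|\dot\Delta_k u_0\|_{L^p}+\int_0^t e^{-c\nu2^{2k}(t-\tau)}\|\dot\Delta_k g(\tau)\|_{L^p}\,{\rm d}\tau .
\end{align*}
Here $\mathbb P,\mathbb Q$ restricted to a dyadic block are Fourier multipliers of order zero, so they are bounded on $L^p$ uniformly in $k$. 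At the endpoints $p=1,\infty$ this still holds because the symbols are smooth on the annulus.

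Then take $L^{\varsigma_1}(0,T)$ norms and apply Young's convolution inequality in time with exponent $1+\frac1{\varsigma_1}=\frac1{\varsigma_2}+\frac1{m}$. Since $\|e^{-c\nu2^{2k}\cdot}\|_{L^m}\sim(\nu2^{2k})^{-1/m}$, this yields
\begin{align*}
\|\dot\Delta_k u\|_{L^{\varsigma_1}_T(L^p)}\lesssim(\nu2^{2k})^{-\frac1{\varsigma_1}}\|\dot\Delta_k u_0\|_{L^p}+(\nu2^{2k})^{-1-\frac1{\varsigma_1}+\frac1{\varsigma_2}}\|\dot\Delta_k g\|_{L^{\varsigma_2}_T(L^p)} .
\end{align*}
Multiplying by $\nu^{1/\varsigma_1}2^{k(s+2/\varsigma_1)}$ and taking the $\ell^r$ norm over $k$ gives the claimed inequality in Chemin--Lerner form, with $\widetilde L^{\varsigma_2}_T$ on $g$. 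The plain $L^{\varsigma_2}_T$ norm on the right side, as written in the lemma, is recovered via Minkowski whenever $\varsigma_2\le r$. Existence follows from the explicit Duhamel formula, or by passing to the limit from smooth data.

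The only delicate step is the uniform-in-$k$ heat-kernel decay estimate on dyadic annuli for general $p$. For that step I would cite BCD, Lemma~2.4, rather than reprove it.
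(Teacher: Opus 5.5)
Your proposal is correct and is essentially the standard argument behind the result that the paper quotes from \cite{BCD-Book-2011} without proof: decouple into $\mathbb P u$ and $\mathbb Q u$, use the dyadic heat-kernel decay $e^{-c\kappa 2^{2k}t}$ with rates $\kappa\in\{\mu,2\mu+\lambda\}$ (both $\geq\min\{\mu,2\mu+\lambda\}$), apply Young's inequality in time block by block, and then sum in $\ell^r$.

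Your treatment of the right-hand side is also the right one: the argument (and the source) yields the $\widetilde L^{\varsigma_2}_T$ norm of $g$, which matches the hypothesis $g\in\widetilde L^{\varsigma_2}(0,T;\dot B^{s-2+\frac{2}{\varsigma_2}}_{p,r})$, and the plain $L^{\varsigma_2}_T$ form follows only when $\varsigma_2\leq r$. For $\varsigma_2>r$ the plain form is genuinely false, so the lemma should be read with $\widetilde L^{\varsigma_2}_T$. To see this, take $p=2$, $r=1$, $\varsigma_1=\varsigma_2=\infty$, $u_0=0$, and for $k=1,\dots,N$ let $g(\tau)$ be a single dyadic block at frequency $2^k$ with $\|g(\tau)\|_{\dot B^{s-2}_{2,1}}=1$ for $\tau\in[1-2^{-2k},1-2^{-2k-2})$. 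Then $\|g\|_{L^\infty_1(\dot B^{s-2}_{2,1})}=1$, while $\|u(1)\|_{\dot B^{s}_{2,1}}\gtrsim N$.
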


To investigate the low Mach number  limit of the compressible NST system \eqref{A4}, we consider the  following linear problem:
\begin{align} \label{A.7}
\left\{\begin{aligned}
&\partial_t  b+\frac{1}{\varepsilon} \Lambda d=f,\quad\quad\quad\quad\quad x\in\mathbb R^d,\quad t>0,  \\
&  \partial_t d-\frac{1}{\varepsilon}\Lambda b=g,\quad\quad\quad\quad\quad x\in\mathbb R^d,\quad t>0,  \\
&(b,d)(0,x)=(b_0,d_0)(x),\, \quad x\in\mathbb R^d.
 \end{aligned}
 \right.
\end{align}
\begin{lem}\label{LA.8}{\rm(\! \!\!\!\cite[Proposition 2.2]{Danchin-2002})}
Let $ T > 0 $, $ s \in \mathbb{R} $, $ p \geq 2 $,   $ \frac{2}{r} \leq \min\big\{1, (d-1)\big(\frac{1}{2} - \frac{1}{p}\big)\big\} $ and $ (r, p, d) \neq (2, \infty, 3) $. 
Assume that $ b_0, d_0 \in \dot{B}_{2,1}^s $ and $ f, g \in L^1(0,T; \dot{B}_{2,1}^s) $. Let $ (b, d) $ be a solution to \eqref{A.7}. Then it holds

\begin{align*}
\|(b,d)\|_{L^r_T(\dot B_{p,1}^{s+d\big(\frac{1}{p}-\frac{1}{2}\big)+\frac{1}{r}})}\lesssim \varepsilon^{\frac{1}{r}}\Big( \|(b_0,d_0)\|_{\dot B^s_{2,1}}+\|(f,g)\|_{L_T^1(\dot B_{2,1}^s)}      \Big).   
\end{align*}
    
\end{lem}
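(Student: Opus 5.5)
The plan is to diagonalize \eqref{A.7} into a half-wave equation and then import the classical frequency-localized Strichartz estimates, keeping track of $\varepsilon$ through a time rescaling. Set $w:=b+i d$. A direct computation from \eqref{A.7} gives
\begin{align*}
\partial_t w-\frac{i}{\varepsilon}\Lambda w=h:=f+ig,\qquad w|_{t=0}=w_0:=b_0+id_0 .
\end{align*}
Since $b$ and $d$ are real, $|b|,|d|\le|w|$, so it suffices to bound $w$. By Duhamel's formula,
\begin{align*}
w(t)=e^{\frac{it}{\varepsilon}\Lambda}w_0+\int_0^t e^{\frac{i(t-\tau)}{\varepsilon}\Lambda}h(\tau)\,{\rm d}\tau .
\end{align*}

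\textbf{Step 1 (dispersive estimate at unit frequency).} Let $\widetilde\phi$ be a smooth cutoff equal to $1$ on the support of $\phi$. Write $e^{it\Lambda}\dot\Delta_0$ as convolution with the kernel $K_t=\mathcal F^{-1}\big(e^{it|\xi|}\widetilde\phi(\xi)\big)$. Stationary phase on the annulus gives $\|K_t\|_{L^\infty}\lesssim(1+|t|)^{-\frac{d-1}{2}}$, because the phase $|\xi|$ has a Hessian of rank $d-1$ there. Hence
\begin{align*}
\|e^{it\Lambda}\dot\Delta_0u\|_{L^\infty}\lesssim(1+|t|)^{-\frac{d-1}{2}}\|u\|_{L^1}.
\end{align*}
Interpolating with $L^2$ unitarity gives the $L^{p'}\to L^p$ bound with decay $(1+|t|)^{-(d-1)(\frac12-\frac1p)}$. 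The standard $TT^*$ argument (Hardy--Littlewood--Sobolev, or simply integrability of the decay rate when $\frac2r<(d-1)(\frac12-\frac1p)$) then yields
\begin{align*}
\|e^{it\Lambda}\dot\Delta_0u\|_{L^r(\mathbb R;L^p)}\lesssim\|u\|_{L^2},\qquad \frac2r\le\min\Big\{1,(d-1)\Big(\frac12-\frac1p\Big)\Big\},\quad (r,p,d)\neq(2,\infty,3).
\end{align*}
I expect this step to be the main obstacle. The delicate case is the equality $\frac2r=(d-1)(\frac12-\frac1p)$, where one needs the Keel--Tao endpoint machinery (excluded exactly at $(2,\infty,3)$); I would simply quote it from \cite{BCD-Book-2011} or \cite{Danchin-2002}.

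\textbf{Step 2 (scaling in $k$ and $\varepsilon$).} By the dilation $x\mapsto2^kx$ together with Lemma \ref{LA.1}, the unit-frequency bound becomes
\begin{align*}
\|e^{it\Lambda}\dot\Delta_ku\|_{L^r_t(L^p)}\lesssim 2^{k(\frac d2-\frac dp-\frac1r)}\|\dot\Delta_ku\|_{L^2}.
\end{align*}
Replacing $t$ by $t/\varepsilon$ produces the factor $\varepsilon^{1/r}$:
\begin{align*}
\|e^{\frac{it}{\varepsilon}\Lambda}\dot\Delta_ku\|_{L^r_T(L^p)}\le\varepsilon^{\frac1r}\|e^{is\Lambda}\dot\Delta_ku\|_{L^r_s(\mathbb R;L^p)}.
\end{align*}

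\textbf{Step 3 (Duhamel term and summation).} For the inhomogeneous term, apply Minkowski's inequality in $\tau$: for each fixed $\tau$, apply the Step 2 homogeneous bound to $\mathbf 1_{t>\tau}e^{\frac{i(t-\tau)}{\varepsilon}\Lambda}\dot\Delta_k h(\tau)$. Because the source is only measured in $L^1_T$, no Christ--Kiselev lemma is needed. This gives
\begin{align*}
\|\dot\Delta_kw\|_{L^r_T(L^p)}\lesssim\varepsilon^{\frac1r}2^{k(\frac d2-\frac dp-\frac1r)}\Big(\|\dot\Delta_kw_0\|_{L^2}+\|\dot\Delta_kh\|_{L^1_T(L^2)}\Big).
\end{align*}
Multiply by $2^{k(s+\frac dp-\frac d2+\frac1r)}$ and sum over $k\in\mathbb Z$. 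The left side dominates $\|w\|_{L^r_T(\dot B^{s+d(\frac1p-\frac12)+\frac1r}_{p,1})}$ by Minkowski, since $r\ge1$ and the summation is in $\ell^1$. The right side is exactly $\varepsilon^{\frac1r}\big(\|(b_0,d_0)\|_{\dot B^s_{2,1}}+\|(f,g)\|_{L^1_T(\dot B^s_{2,1})}\big)$, which gives the lemma.
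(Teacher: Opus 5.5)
Your argument is correct. The paper does not prove this lemma; it imports it from \cite[Proposition 2.2]{Danchin-2002}. Your route is exactly the standard proof behind that citation: reduce \eqref{A.7} to the half-wave equation $\partial_t w-\frac{i}{\varepsilon}\Lambda w=f+ig$ for $w=b+id$, use the unit-frequency dispersive bound with $TT^*$ (Keel--Tao at the endpoint), rescale in $k$ and in time to get the factors $2^{k(\frac d2-\frac dp-\frac1r)}$ and $\varepsilon^{1/r}$, and treat the $L^1_T$ source by Minkowski in $\tau$.

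Your Step 3 actually gives the frequency-by-frequency (Chemin--Lerner $\widetilde L^r_T$) bound before summation, which is slightly stronger than the $L^r_T(\dot B_{p,1})$ norm stated here. If complex-valued $(b,d)$ were allowed, treating $b\pm id$ separately with $e^{\pm it\Lambda/\varepsilon}$ removes your realness assumption.
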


Finally, in order to obtain the optimal time decay rates of the solutions, we need  the following inequality:
\begin{lem}\label{LA.9} {\rm(\!\!\cite[Section 5]{Danchin-2018})}
Let $0<\beta_1\leq\beta_2$. If in addition $\beta_2>1$, then, it holds that
\begin{align}\label{A.8}
\int_0^t \langle t-\tau\rangle^{-\beta_1} \langle \tau\rangle^{-\beta_2} {\rm d}\tau\lesssim \langle  t\rangle^{-\beta_1}.    
\end{align}
\end{lem}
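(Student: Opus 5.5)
The plan is to split the time integral at the midpoint $t/2$. On each half, one of the two weights is comparable to $\langle t\rangle$ and can be pulled out, and the remaining one-variable integral is estimated by hand. All constants depend only on $\beta_1,\beta_2$.

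\emph{Case $t\le 2$.} Both weights are bounded by $1$, so the integral is at most $t\le 2$. Since $\langle t\rangle\le\sqrt5$ in this range, this is $\lesssim\langle t\rangle^{-\beta_1}$.

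\emph{Case $t\ge2$.} Write the integral as $\int_0^{t/2}+\int_{t/2}^t$.

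\textbf{First half.} For $0\le\tau\le t/2$ we have $t-\tau\ge t/2$, hence $\langle t-\tau\rangle\gtrsim\langle t\rangle$. Therefore
\begin{align*}
\int_0^{t/2}\langle t-\tau\rangle^{-\beta_1}\langle\tau\rangle^{-\beta_2}\,{\rm d}\tau\lesssim\langle t\rangle^{-\beta_1}\int_0^\infty\langle\tau\rangle^{-\beta_2}\,{\rm d}\tau\lesssim\langle t\rangle^{-\beta_1}.
\end{align*}
The last integral is finite precisely because $\beta_2>1$; this is the only place that hypothesis enters in an essential way.

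\textbf{Second half.} For $t/2\le\tau\le t$ we have $\langle\tau\rangle\sim\langle t\rangle$. Substituting $s=t-\tau$ gives
\begin{align*}
\int_{t/2}^t\langle t-\tau\rangle^{-\beta_1}\langle\tau\rangle^{-\beta_2}\,{\rm d}\tau\lesssim\langle t\rangle^{-\beta_2}\int_0^{t/2}\langle s\rangle^{-\beta_1}\,{\rm d}s.
\end{align*}
The remaining integral is treated according to the size of $\beta_1$.
\begin{itemize}
\item If $\beta_1<1$, the integral is $\lesssim\langle t\rangle^{1-\beta_1}$. The total is then $\lesssim\langle t\rangle^{-\beta_1}\langle t\rangle^{1-\beta_2}\le\langle t\rangle^{-\beta_1}$, because $\beta_2>1$.
\item If $\beta_1>1$, the integral is bounded uniformly in $t$. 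The total is $\lesssim\langle t\rangle^{-\beta_2}\le\langle t\rangle^{-\beta_1}$, because $\beta_1\le\beta_2$.
\end{itemize}

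\textbf{Borderline case $\beta_1=1$.} Here the integral is $\lesssim\log\langle t\rangle$, so the total is $\lesssim\langle t\rangle^{-\beta_2}\log\langle t\rangle$. The strict inequality $\beta_2>1$ is what saves this case: $\log\langle t\rangle\lesssim\langle t\rangle^{\beta_2-1}$, so the bound is $\lesssim\langle t\rangle^{-1}=\langle t\rangle^{-\beta_1}$. This is the one step needing care, since the logarithmic loss must be absorbed by the positive gap $\beta_2-\beta_1$. The implied constant blows up as $\beta_2\downarrow1$, but that is harmless because $\beta_1,\beta_2$ are fixed.

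Summing the two halves gives \eqref{A.8}.
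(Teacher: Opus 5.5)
Your proof is correct and complete. The paper gives no argument of its own for this lemma; it quotes it from Danchin's handbook. Your split of the integral at $\tau=t/2$, with the case distinction $\beta_1<1$, $\beta_1=1$, $\beta_1>1$ in the second half, is the standard proof of this elementary inequality.

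One minor expository point: $\beta_2>1$ is also used essentially to absorb the logarithm when $\beta_1=1$, as you note later. So the remark that the first half is ``the only place'' the hypothesis enters should be softened.
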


\bigskip 
{\bf Acknowledgements:} 
Li, Ni and  Wang are supported by NSFC (Grant No. 12331007).  
And Li is also supported by the ``333 Project" of Jiangsu Province.

\vspace{2mm}

\textbf{Conflict of interest.} The authors do not have any possible conflicts of interest.

\vspace{2mm}

\textbf{Data availability statement.}
 Data sharing is not applicable to this article as no data sets were generated or analyzed during the current study.

\vspace{2mm}

\bibliographystyle{plain}

\begin{thebibliography}{aaa}
%\bibitem{AlazardADE2005}
%T. Alazard, Incompressible limit of the nonisentropic Euler equations with the
%solid wall boundary conditions, {\it Adv. Differential Equations} {\bf 10 (1)} (2005)  19--44.
	
\bibitem{AlazardARMA2006}
T. Alazard, Low Mach number limit of the full Navier-Stokes equations, {\it Arch. Ration. Mech. Anal.} {\bf 180 (1)} (2006)  1--73.

\bibitem{almgren-06}
A. S. Almgren, J. B. Bell, C. A. Rendleman and M. Zingale, Low Mach number modeling of type ia supernovae. I. Hydrodynamics, \emph{Astrophys. J.} \textbf{637} (2006) 922–936.  


\bibitem{asa}
K. Asano, On the incompressible limit of the compressible Euler equations, {\it Japan J. Appl. Math.} {\bf 4 (3)} (1987)  455--488. 
	
\bibitem{BCD-Book-2011} H. Bahouri, J. Chemin, R. Danchin,
{\it Fourier Analysis and Nonlinear Partial Differential Equations,}
Grundlehren Math. Wiss., {\bf 343},
Springer, Heidelberg, 2011.

\bibitem{BV-2022} J. Bedrossian, V. Vicol,
{\it  The Mathematical Analysis of the Incompressible Euler and Navier-Stokes Equations --- An Introduction,}
 American Mathematical Society, Providence, RI, 2022.  

\bibitem{BS-2009-Adv} C. Bjorland,  M. E. Schonbek,  
Poincar\'{e}'s inequality and diffusive evolution equations,
{\it Adv. Differential Equations} {\bf 14} (2009)  241--260.



\bibitem{Bl-2016-SIMA} L. Brandolese,  
Characterization of solutions to dissipative systems with sharp algebraic decay,
{\it SIAM J. Math. Anal.} {\bf 48 (3)} (2016)  1616--1633.

\bibitem{BSXZ-Adv-2024} L. Brandolese,  L.-Y. Shou,  J. Xu,  P. Zhang, 
Sharp decay characterization for the compressible Navier-Stokes equations,
{\it Adv. Math.} {\bf 456} (2024) Paper No. 109905.

\bibitem{BDGL-SAP-2022}
D. Bresch, B.  Desjardins, E. Grenier, C.-K. Lin, Low Mach number limit of viscous polytropic flows: formal asymptotics in the periodic case, {\it Stud. Appl. Math.} {\bf 109 (2)}  (2002)  125--149.

\bibitem{CD-ARMA-2010} F. Charve,  R. Danchin,  
A global existence result for the compressible Navier-Stokes equations in the critical $L^p$ framework,
{\it Arch. Ration. Mech. Anal.} {\bf 198 (1)} (2010)   233--271.

\bibitem{C-N-1995} J.-Y. Chemin,   N. Lerner,  
Flot de champs de vecteurs non lipschitziens et \'{e}quations de Navier-Stokes,  
{\it J. Differential Equations} {\bf  121} (1995) 314--328.

\bibitem{CMZ-CPAM-2010} Q. Chen, C. Miao,  Z. Zhang,  
Global well-posedness for compressible Navier-Stokes equations with highly oscillating initial velocity,
{\it Comm. Pure Appl. Math.} {\bf 63 (9)} (2010) 1173--1224.

\bibitem{CTWZ-JDE-2020} Q. Chen,  Z. Tan,  G. Wu,  W. Zou,  
The initial value problem for the compressible Navier-Stokes equations without heat conductivity,
{\it J. Differential Equations} {\bf 268 (9)} (2020)  5469--5490.

%\bibitem{CKLSW-2019-MCWF} A. Chertock, A. Kurganov,  M. Lukáčová-%Medvid'ová,  P. Spichtinger, B. Wiebe, 
%Stochastic Galerkin method for cloud simulation,
%{\it Math. Clim. Weather Forecast.} {\bf 5 (1) (2019)}  65--106. 



\bibitem{Danchin-00-IM} R. Danchin,  
Global existence in critical spaces for compressible Navier-Stokes equations,
{\it Invent. Math.} {\bf 141 (3)} (2000)  579--614.

\bibitem{Danchin-01-CPDE} R. Danchin,  
Local theory in critical spaces for compressible viscous and heat-conductive gases,
{\it Comm. Partial Differential Equations} {\bf 26} (2001) 1183--1233.


\bibitem{Dr-2002-AJM}
 R. Danchin,  
Zero Mach number limit for compressible flows with periodic boundary conditions,
{\it Amer. J. Math.} {\bf 124 (6)} (2002) 1153--1219.

\bibitem{Danchin-2002} R. Danchin, 
Zero Mach number limit in critical spaces for compressible Navier-Stokes equations,
{\it Ann. Sci. \'{E}cole Norm. Sup. (4)} {\bf 35 (1)} (2002) 27--75.

%\bibitem{Danchin-2007-CPDE} R. Danchin,  
%Well-posedness in critical spaces for barotropic viscous fluids with truly not constant density,
%{\it Comm. Partial Differential Equations} {\bf 32} (2007)   1373--1397.

\bibitem{Danchin-2014-ANF} R. Danchin,  
A Lagrangian approach for the compressible Navier-Stokes equations,
{\it Ann. Inst. Fourier (Grenoble)} {\bf 64 (2)} (2014) 753--791.

 \bibitem{Danchin-2018} R.  Danchin,  
Fourier analysis methods for the compressible Navier-Stokes equations, {\it in: Handbook of Mathematical Analysis in Mechanics of Viscous Fluids}, 1843--1903, Y. Giga  and  A. Novotn\'{y} (eds),
Springer, International Publishing Switzerland, 2018.


\bibitem{Dr-2024-PMP}  R. Danchin, 
Global solutions for two-dimensional viscous pressureless flows with large variations of density,
{\it Probab. Math. Phys.} {\bf 5 (1)} (2024)  55--88.  

 
 


\bibitem{DH-MATHANN-2016} R. Danchin,  L. He,  
The incompressible limit in $L^p$ type critical spaces,
{\it Math. Ann.} {\bf 366  } (2016) 1365--1402.


\bibitem{DX-ARMA-2017} R. Danchin,  J. Xu, 
Optimal time-decay estimates for the compressible Navier-Stokes equations in the critical $L^p$ framework,
{\it Arch. Ration. Mech. Anal.} {\bf 224 (1)} (2017)  53--90.

\bibitem{Ebin}
D. G. Ebin, Motion of a slightly compressible fluid,  {\it Proc. Natl. Acad. Sci. USA,} {\bf 72} (1975) 539--542.

\bibitem{fanhuang}
J. Fan, X. Huang, Global strong solutions and asymptotic behavior for arbitrarily large initial data of the 2D compressible Navier-Stokes equations with transport entropy, arXiv:2512.18976, 2025.


\bibitem{FN-ARMA-2007} E. Feireisl,  A. Novotn\'{y},
The low Mach number limit for the full Navier-Stokes-Fourier
system, {\it Arch. Ration. Mech. Anal.} {\bf 186 (1)} (2007)  77--107.

\bibitem{FN-CMP-2013} E. Feireisl,  A. Novotn\'{y},
Inviscid incompressible limits of the full Navier-Stokes-Fourier system, {\it Comm. Math. Phys.} {\bf 321 (3)} (2013)  605--628.

\bibitem{FKNZ-M3AS-2016} E. Feireisl, R. Klein,  A. Novotn\'{y},  E. Zatorska, 
On singular limits arising in the scale analysis of stratified fluid flows,
{\it Math. Models Methods Appl. Sci.} {\bf 26 (3)}  (2016)  419--443.

\bibitem{FM-2024-MA} M. Fujii,  
Low Mach number limit of the global solution to the compressible Navier-Stokes system for large data in the critical Besov space,
{\it Math. Ann.} {\bf 388 (4)} (2024)  4083--4134.

\bibitem{FK-1964-ARMA} H. Fujita, T. Kato,  
On the Navier-Stokes initial value problem. I.
{\it Arch. Rational Mech. Anal.} {\bf 16} (1964)  269--315.

\bibitem{GW-CPDE-2012} Y. Guo,  Y. Wang,  
Decay of dissipative equations and negative Sobolev spaces,
{\it Comm. Partial Differential Equations} {\bf 37 (12)} (2012)  2165--2208.

\bibitem{Haspot-2011-JDE} B. Haspot,  
Well-posedness in critical spaces for the system of compressible Navier-Stokes in larger spaces,
{\it J. Differential Equations} {\bf 251 (8)} (2011) 2262--2295.


\bibitem{HZ-IUMJ-1995} D. Hoff, K. Zumbrun,
Multi-dimensional diffusion waves for the Navier-Stokes equations of compressible flow,
{\it Indiana Univ. Math. J.} {\bf 44 (2)} (1995)  603--676.


%\bibitem{HW-SIMA-2013}  X. Hu,  G. Wu,  
%Global existence and optimal decay rates for three-dimensional compressible viscoelastic flows,
%{\it SIAM J. Math. Anal.} {\bf 45 (5)} (2013)  2815--2833.

%\bibitem{ig}
%T. Iguchi, The incompressible limit and the initial layer of the compressible Euler equation in $\textrm{R}_{+}^{\textrm{n}}$, {\it Math. Methods Appl. Sci.}  {\bf 20} (1997)  945--958. 
		
\bibitem{isa}
H. Isozaki, 
Singular limits for the compressible Euler equation in an exterior domain, 
{\it J. Reine Angew. Math.} {\bf 381} (1987)  1--36. 
		
%\bibitem{isb}
%H. Isozaki, Singular limits for the compressible Euler equation in an exterior domain, II Bodies in a uniform flow, {\it Osaka J. Math.} {\bf 26} (1989)  399--410. 
		
\bibitem{jiangouJMPA}
S. Jiang, Y. Ou,  
Incompressible limit of the non-isentropic Navier-Stokes equations with well-prepared initial data in three-dimensional bounded domains,
{\it J. Math. Pures Appl. (9)} {\bf 96 (1)} (2011)  1--28.


%\bibitem{juouJMPA}
%Q. C. Ju and Y. B. Ou, Low Mach number limit of Navier-Stokes equations with large temperature variations in bounded domains, {\it J. Math. Pures Appl.}  {\bf 164} (2022)  131--157.

\bibitem{KM-2010}
S. Klainerman, A. Majda, Singular limits of quasilinear hyperbolic systems with large parameters and the incompressible limit of compressible fluids, {\it Comm. Pure Appl. Math.} {\bf 34 (4)} (1981)  481--524.



\bibitem{KM-1982}
S. Klainerman, A. Majda, Compressible and incompressible fluids, {\it Comm. Pure Appl. Math.}  {\bf 35 (5)} (1982)  629--651.

    
			\bibitem{klein}
R. Klein, An applied mathematical view of meteorological modeling, in  \emph{Applied Mathematics Entering the 21st Century}, 227--269, SIAM, Philadelphia, PA,  2004. 
%
% \bibitem{Klein-2004}  R. Klein, {\it An Applied Mathematical View of Meteorological Modelling},   Applied Mathematics Entering the 21st
%Century, SIAM, Philadelphia, PA, 2004.




\bibitem{LNZ-2025-preprint} F. Li, J. Ni, Z. Zhang,
Uniform stability and optimal time decay rate  of the compressible pressureless Navier-Stokes system in  the critical regularity framework, arXiv:2511.02321, 2025.

\bibitem{LS-SIMA-2023} H.-L. Li,  L.-Y. Shou, 
Global existence and optimal time-decay rates of the compressible Navier-Stokes-Euler system,
{\it SIAM J. Math. Anal.} {\bf 55 (3)} (2023)  1810--1846.

\bibitem{LSZ-2025-arXiv} H.-L. Li, L.-Y. Shou, Y. Zhang,  
Large-friction and incompressible limits for pressureless Euler/isentropic Navier-Stokes flows, arXiv:2508.20730, 2025.


\bibitem{LZ-M2AS-2011} H.-L. Li,  T. Zhang,  
Large time behavior of isentropic compressible Navier-Stokes system in $\mathbb R^3$.
{\it Math. Methods Appl. Sci.} {\bf 34 (6)} (2011)  670--682.

\bibitem{lions1998}
 P.-L. Lions, {\it Mathematical Topics in Fluid Mechanics:    Compressible Models}, Oxford Science
 Publication, Oxford, 1998.
 
\bibitem{PM-JMPA-1998} P.-L. Lions,  N. Masmoudi,  
Incompressible limit for a viscous compressible fluid,
{\it J. Math. Pures Appl. (9)} {\bf 77(6)} (1998)  585--627.

\bibitem{LM-CRASPSM-1999} P.-L. Lions, N. Masmoudi,  
Une approche locale de la limite incompressible,
{\it C. R. Acad. Sci. Paris S\'{e}r. I Math.} {\bf 329 (5)} (1999)  387--392.


\bibitem{LS-JMFM-2022} M. Luk\'{a}čov\'{a}-Medvid'ov\'{a}, A. Sch\"{o}mer,  
 Existence of Dissipative Solutions to the Compressible Navier-Stokes System with Potential Temperature Transport,
{\it J. Math. Fluid Mech.} {\bf 24 (3)} (2022) Paper No. 82.

\bibitem{LS-JMFM-2023} M. Luk\'{a}čov\'{a}-Medvid'ov\'{a}, A. Sch\"{o}mer,    
Compressible Navier-Stokes equations with potential temperature transport: stability of the strong solution and numerical error estimates,
{\it J. Math. Fluid Mech.} {\bf 25 (1)} (2023) Paper No. 1.

\bibitem{LS-JDE-2025} M. Luk\'{a}čov\'{a}-Medvid'ov\'{a}, A. Sch\"{o}mer,  
Conditional regularity for the compressible Navier-Stokes equations with potential temperature transport,
{\it J. Differential Equations} {\bf 423} (2025) 1--40.

\bibitem{M-JDE-2016}
 D. Maltese, M. Michálek, P.B. Mucha, A. Novotn\'{y}, M. Pokorn\'{y}, E. Zatorska, Existence of weak solutions
 for compressible Navier-Stokes equations with entropy transport, {\it J. Differential Equations} {\bf 261} (2016)  4448--4485.

\bibitem{MN-PJASAMS-1979} A. Matsumura,  T. Nishida,  
The initial value problem for the equations of motion of compressible viscous and heat-conductive fluids,
{\it  Proc. Japan Acad. Ser. A Math. Sci.} {\bf 55 (9)} (1979)  337--342.

\bibitem{MN-JMKU-1980} A. Matsumura,  T. Nishida,  
The initial value problem for the equations of motion of viscous and heat-conductive gases,
{\it J. Math. Kyoto Univ.} {\bf 20 (1)} (1980)  67--104.

\bibitem{MetivierSchochet2001}
G. M\'{e}tivier, S. Schochet, The incompressible limit of the non-isentropic euler equations, {\it Arch. Ration. Mech. Anal.} {\bf 158 (1)} (2001)  61--90.

\bibitem{MM-JMFM-2015}
M. Mich\'{a}lek,  
Stability result for Navier-Stokes equations with entropy transport,
{\it J. Math. Fluid Mech.} {\bf 17 (2)} (2015) 279--285.

 

\bibitem{NS-2015-JLMS} C. J. Niche,  M. E. Schonbek, 
Decay characterization of solutions to dissipative equations,
{\it J. Lond. Math. Soc. (2)} {\bf 91 (2)} (2015)  573--595.



%\bibitem{Se-00} P.   Secchi, 
%On the singular incompressible limit of inviscid compressible fluids. {\it J. Math. Fluid Mech.} {\bf 2}(2000)  107--125. 

\bibitem{SK-1985-HMJ} Y. Shizuta, S. Kawashima, 
Systems of equations of hyperbolic-parabolic type with applications to the discrete Boltzmann equation,
{\it Hokkaido Math. J.} {\bf 14 (2)} (1985)  249--275.

\bibitem{Ukai1986}
S. Ukai, 
The incompressible limit and the initial layer of the compressible Euler equation,
{\it J. Math. Kyoto Univ.} {\bf 26 (2)} (1986)  323--331.
       
\bibitem{vk}
V. A. Vaigant, A. V. Kazhikhov,
On the existence of global solutions of two-dimensional Navier-Stokes equations of a compressible viscous fluid. Siberian Math. J. \textbf{36 (2)} (1995)   1108--1141.

     
        
\bibitem{XX-JDE-2021} Z. Xin, J. Xu,  
Optimal decay for the compressible Navier-Stokes equations without additional smallness assumptions,
{\it J. Differential Equations} {\bf 274} (2021) 543--575.


\bibitem{Xj-CMP-2019} J. Xu, 
A low-frequency assumption for optimal time-decay estimates to the compressible Navier-Stokes equations,
{\it Comm. Math. Phys.} {\bf 371 (2)} (2019)  525--560.

\bibitem{ZLZ-CMS-2023} X. Zhai,  Y. Li, F. Zhou,  
Global strong solutions to the compressible Navier-Stokes system with potential temperature transport,
{\it Commun. Math. Sci.} {\bf 21 (8)} (2023)  2247--2260.
 


\end{thebibliography}

\end{document}